\title{}
\newcommand{\numberset}{\mathbb}
\newcommand{\Z}{\numberset{Z}}
\newcommand{\R}{\numberset{R}}
\newcommand{\C}{\numberset{C}}
\newcommand{\Tau}{\text{\Capricorn}}
\newcommand{\GG}{ {\mathbb{G}}}
\newcommand{\JJ}{\mathds{J}}
\newcommand{\Hy}{\mathds{H}}
\newcommand{\eps}{\varepsilon}
\newcommand{\x}{\underline x}
\theoremstyle{definition} 
\newtheorem{theoremintro}{Theorem}
\newtheorem{Definition}{Definition}[section]
\newtheorem*{Theorem*}{Theorem}
\newtheorem{Example}{Example}
\newtheorem{Remark}[Definition]{Remark}
\newtheorem{Proposition}[Definition]{Proposition}
\newtheorem{Prop}[Definition]{Proposition}
\newtheorem{Lemma}[Definition]{Lemma}
\newtheorem{Theorem}[Definition]{Theorem}
\newtheorem{Teo}[Definition]{Theorem}
\newtheorem{Cor}[Definition]{Corollary}
\theoremstyle{plain}
\newcommand{\PSL}{PSL(2,\C)}
\newcommand{\CP}{\mathbb{C}\mathds{P}}
\newcommand{\SL}{SL(2,\C)}
\newcommand{\asl}{\mathfrak{sl}(2,\C)}
\newcommand{\NN}{\mathcal N}
\newcommand{\io}{\sigma}
\newcommand{\upsi}{\underline \Psi}
\newcommand{\inner}[1] {\langle #1 \rangle}
\newcommand{\Proj}{\mathds{P}}
\newcommand{\GGG}{{\mathbb G}}
\newcommand{\II}{\underline {\mathrm{II}}}
\newcommand{\IIs}{\mathrm{II}}
\newcommand{\CTS}{\C TS}
\newcommand{\XX}{\mathtt{X}}
\newcommand{\CTM}{\C TM}
\newcommand{\YY}{\mathtt{Y}}
\newcommand{\ZZ}{\mathtt{Z}}
\newcommand{\utheta}{\underline \theta}
\newcommand{\SOnn}{SO(n+2,\C)}
\newcommand{\inners}{\inner{\cdot,\cdot}}
\newcommand{\Lieonn}{\mathfrak{o}(n+2,\C)}
\newcommand{\Lieg}{\mathfrak{g}}
\newcommand{\PML}{{\mathds{X}_{n+1}}}
\newcommand{\ML}{{\mathds{X}_{n+1}}}
\newcommand{\XXX}{\mathds X}
\newcommand{\PXX}{\mathds {PX}}
\title{On immersions of surfaces into $SL(2,\C)$ and geometric consequences}
\author{FRANCESCO BONSANTE AND CHRISTIAN EL EMAM}
\begin{document}
	\maketitle
	
	\tableofcontents
	
	\section{Introduction}
	It is a well known result in Riemannian and Pseudo-Riemannian Geometry that the theory of codimension-1 (admissible) immersions of a simply connected manifold $M$ into a space form is completely described by the study of two tensors on $M$, the \emph{first fundamental form} and the \emph{shape operator} of the immersion, corresponding respectively to the pull-back metric and to the derivative of a local normal vector field.
	
	 On the one hand, for a given immersion the first fundamental form and the shape operator turn out to satisfy two equations known as Gauss and Codazzi equations. On the other hand, given a metric and a self-adjoint (1,1)-tensor on $M$ satisfying these equations, there exists one admissibile immersion into the space form with that pull-back metric and that self-adjoint tensor respectively as first fundamental form and shape operator; moreover, such immersion is unique up to post-composition with ambient isometries of the space form. This class of results are often denoted as \emph{Gauss-Codazzi Theorems}, \emph{Bonnet Theorems} or as \emph{Fundamental Theorem of hypersurfaces}. See for instance \cite{Kobayashi-Nomizu 2},  \cite{Gauss-Codazzi in space forms}.
	 
	 In this article we approach the study of immersions of smooth manifolds into holomorphic Riemannian space forms of constant curvature $-1$. A proper definition of holomorphic Riemannian metrics and sectional curvature will be given later  in this paper (see Section 2), let us just mention that holomorphic Riemannian metrics are a natural analogue of Riemannian metrics in the complex setting and a good class of examples is given by complex semisimple Lie groups equipped with the Killing form (extended globally on the group via translations).
	 
	 The study of this kind of immersions turns out to have some interesting consequences, including some remarks concerning geometric structures on surfaces. In order to give a general picture:
	 \begin{itemize}
	 	\item it provides a formalism for the study of immersions of surfaces into $\SL$ and into the space of geodesics of $\Hy^3$, that we will denote by $\GG$;
	 	\item it generalizes the classical theory of immersions into non-zero curvature space forms, leading to a model to study the transitioning of hypersurfaces among $\Hy^n$, $AdS^n$, $dS^n$ and $S^n$;
	 	\item it furnishes a tool to construct holomorphic maps from complex manifolds to the character variety of $SO(n, \C)$ (including $\PSL\cong SO(3,\C)$), providing also an alternative proof for the holomorphicity of the complex landslide map (see \cite{cyclic});
	 	\item it introduces a notion of \emph{complex metrics} which extends Riemannian metrics and which turns out to be useful to describe the geometry of couples of projective structures with the same monodromy. We also show a uniformization theorem in this setting which is in a way equivalent to the classic Bers Theorem for quasi-Fuchsian representations.
	 \end{itemize}

	 \subsection{Holomorphic Riemannian manifolds} The notion of \emph{holomorphic Riemannian metrics on complex manifolds} can be seen as a natural analogue of Riemannian metrics in the complex setting: a holomorphic Riemannian metric on $\mathbb M$ is a holomorphic never degenerate section of $Sym_\C(T^*\mathbb M \otimes T^* \mathbb M)$, namely it consists pointwise of $\C$-bilinear inner products on the holomorphic tangent spaces whose variation on the manifold is holomorphic.
	  Such structures turn out to have several interesting geometric properties and have been largely studied (e.g. see the  works by LeBrun, Dumitrescu, Zeghib, Biswas as in \cite{holomorphic riemannian 1}, \cite{holomorphic riemannian 4} \cite{holomorphic riemannian 2}, \cite{holomorphic riemannian 3}). 
	 
	 In an attempt to provide a self-contained introduction to the aspects we will deal with, Section 2 starts with some basic
	 results on holomorphic Riemannian manifolds. After a short overview in the general setting - where we recall the notions of Levi-Civita connections (with the corresponding curvature tensors) and sectional curvature in this setting - we will focus on holomorphic Riemannian space forms, namely geodesically-complete simply-connected holomorphic Riemannian manifolds with constant sectional curvature.  We prove a theorem of existence and uniqueness of the holomorphic Riemannian space form of prescribed constant curvature and dimension and then focus on the ones of curvature $-1$, that we will denote as $\mathds X_n$. 
	 The space $\mathds X_n$ can be defined as 
	 \[\mathds X_n= \{(z_1, \dots, z_{n+1}) \in \C^{n+1}\ |\ \sum_{i=1}^{n+1} z_1^2 + \dots + z_{n+1}^2 =-1 \}
	 \] with the metric inherited as a complex submanifold on $\C^{n+1}$ equipped with the standard $\C$-bilinear inner product.
	 
	 This quadric model of $\mathds X_n$ may look familiar: it is definitely analogue to some models of $\Hy^n$, $AdS^n$, $dS^n$ and $S^n$. In fact, all the pseudo-Riemannian space forms of curvature $-1$ immerge isometrically in $\mathds X_n$: as a result, $\Hy^n$, $AdS^n$ embed isometrically while $dS^n$ and $S^n$ embed anti-isometrically, i.e. $-dS^n$ and $-S^n$ (namely, $dS^n$ and $S^n$ equipped with the opposite of their standard metric) embed isometrically.
	 
	 For $n=1,2,3$, $\mathds X_n$ turns out to be familiar also in another sense:
	 \begin{itemize}
	 	\item $\mathds X_1$ is isometric to $\C^*$ equipped with the metric given by the quadratic differential $\frac{dz^2}{z^2}$;
	 	\item $\mathds X_2$ is isometric to the space $\GG$ of oriented lines of $\Hy^3$, canonically identified with $\CP^1\times \CP^1\setminus \Delta$, equipped with the only $\PSL$-invariant holomorphic Riemannian metric of curvature $-1$;
	 	\item $\mathds X_3$ is isometric up to a scale to $\SL$ equipped with the Killing form. 
	 \end{itemize}
	 
	 \subsection{Immersions of manifolds into $\mathds X_n$} In Section 3, we approach the study of immersions of smooth manifolds into $\XXX_n$. The idea is to try to emulate the usual approach as in the pseudo-Riemannian case, but things turn out to be slightly more complicated.
	 
	 Given a smooth map $\sigma\colon M \to \XXX_n$, the pull-back of the holomorphic Riemannian metric is some $\C$-valued symmetric bilinear form, so one can extend it to a $\C$-bilinear inner product on $\CTM=TM \otimes \C$; it now makes sense to require it to be non-degenerate. 
	 This is the genesis of what we define as \emph{complex (valued) metrics} on smooth manifolds, namely smoothly varying non-degenerate inner products on each $\C T_x M$, $x\in M$. We will say that an immersion $\sigma\colon M \to \XXX_n$ is \emph{admissible} if the pull-back metric is a complex valued metric. By elementary linear algebra, $\sigma$ is admissible only if $dim(M)\le n$: the real codimension is $n$ and, despite it seems high, it cannot be lower than that. It therefore makes sense to redifine the $\emph{codimension}$ of 
	 $\sigma$ as $n-dim(M)$. In this paper we focus on immersions of codimension $1$ and $0$.
	 
	 It turns out that this condition of admissibility is the correct one in order to have extrinsic geometric objects that are analogue to the ones of the pseudo-Riemannian case. Complex metrics come with some complex analogue
	 of the Levi-Civita connection, which in turn allows to define a curvature tensor and a notion of sectional curvature. In codimension 1, admissible immersions come with a notion of local normal vector field (unique up to a sign) that allows to define a second fundamental form and a shape operator. 
	 
	 Section 3 ends with Theorem \ref{Teo Gauss-Codazzi} in which we deduce some analogue of Gauss and Codazzi equations.

	 Section 4 is mainly devoted to the proof of a Gauss-Codazzi theorem for immersions into $\mathds X_n$ in codimension 1, as stated in Theorem \ref{Teoremone}.
	 
For immersions of surfaces into $\mathds X_3\cong \SL$ Theorems \ref{Teo Gauss-Codazzi} and \ref{Teoremone} can be stated in the following way.
	 
	 	\begin{theoremintro}
	 	Let $S$ be a smooth simply connected surface. Consider a complex metric $g$ on $S$, with induced Levi-Civita connection $\nabla$ and curvature $K_g$, and a $g$-self adjoint bundle homomorphism $\Psi\colon \CTS \to \CTS$.
	 	
	 	The couple $(g,\Psi)$ satisfies
	 	\begin{align}
	 	1) &d^\nabla \Psi \equiv 0;\\
	 	2)	&K_g=-1+det(\Psi).
	 	\end{align}
	 	if and only if there exists an isometric immersion $\sigma \colon (S,g) \to \SL$ whose corresponding shape operator is $\Psi$.
	 	
	 	Such immersion $\sigma$ is unique up to post-composition with elements in $Isom_0(\SL)\cong \faktor{\SL \times \SL}{\{\pm(I_2,I_2)\}}$. 
	 \end{theoremintro}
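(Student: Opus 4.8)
The plan is to derive the statement as the $n=3$, surface case of the general structure results of Sections 3 and 4 (Theorems \ref{Teo Gauss-Codazzi} and \ref{Teoremone}), transported along the isometry $\mathds X_3\cong\SL$ recalled above; both directions are then two readings of the same structure equations, obtained by comparing the intrinsic data $(g,\nabla,\Psi)$ with the trivial flat connection $D^0$ of the ambient $\C^4$ in the quadric model $\mathds X_3=\{z\in\C^4:\langle z,z\rangle=-1\}$. For necessity I would take an isometric immersion $\sigma$ with unit normal $\nu$ and shape operator $\Psi$, and differentiate the frame $(d\sigma,\nu,\sigma)$ with $D^0$: the Gauss--Weingarten relations hold, and the vanishing of $R^{D^0}$ (the standard connection on $\C^4$ is flat) decomposes exactly into the tangential identity $K_g=-1+\det(\Psi)$—the quadratic second-fundamental-form term collapses to $\det(\Psi)$ because $S$ is a surface, and the summand $-1$ is the constant ambient curvature—and the mixed tangent--normal identity $d^\nabla\Psi\equiv0$.

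For sufficiency I would run this in reverse, building the immersion from the data. Form the rank-$4$ complex bundle $\mathcal E=\CTS\oplus\underline\C\,\mathbf e\oplus\underline\C\,\mathbf s$ (tangent, formal normal, formal position), equip it with the $\C$-bilinear metric $G=g\oplus(+1)\oplus(-1)$ and the connection $D$ prescribed by
\begin{align*}
D_X Y &= \nabla_X Y + g(\Psi X,Y)\,\mathbf e + g(X,Y)\,\mathbf s,\\
D_X\mathbf e &= -\Psi X,\qquad D_X\mathbf s = X,
\end{align*}
which encodes the Gauss--Weingarten equations of the hyperboloid-type model. A direct check shows $D$ is compatible with $G$, and that its curvature decomposes so that the $\CTS$-block vanishes precisely when (2) holds, the tangent--normal block vanishes precisely when (1) holds, and the remaining blocks vanish identically; hence (1) and (2) together are equivalent to the flatness of $D$.

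Since $S$ is simply connected and $D$ is a flat $G$-compatible connection, $(\mathcal E,D,G)$ is globally trivial: parallel transport yields a frame identifying it with $(S\times\C^4,D^0,\langle\cdot,\cdot\rangle)$, uniquely up to a constant gauge in $SO(4,\C)$. Under this identification the section $\mathbf s$ becomes a map $\sigma\colon S\to\C^4$; the relation $G(\mathbf s,\mathbf s)=-1$ places it in $\mathds X_3$, the relation $D_X\mathbf s=X$ makes it an isometric immersion with first fundamental form $g$, and $D_X\mathbf e=-\Psi X$ identifies its shape operator with $\Psi$. Composing with the fixed isometry $\mathds X_3\cong\SL$ gives the desired map. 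For uniqueness, two immersions with the same $(g,\Psi)$ induce two parallel trivializations differing by an element of $SO(4,\C)=Isom_0(\mathds X_3)$, and under the exceptional isomorphism $SO(4,\C)\cong(\SL\times\SL)/\{\pm(I_2,I_2)\}$ this is exactly post-composition with an element of $Isom_0(\SL)$.

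The main obstacle I anticipate is the passage from flatness to the developing frame in the $\C$-bilinear setting. One must ensure that parallel transport of $D$ genuinely preserves the indefinite complex metric $G$, so that the structure group reduces to $SO(4,\C)$ rather than $GL(4,\C)$, and that the complex metric $g$—which has no positivity and admits isotropic directions—still supports $\C$-orthonormal frames and a well-defined Levi-Civita connection validating the curvature computation. The integration itself is robust, relying only on simple connectedness and trivial monodromy; it is the metric compatibility and the reduction of the structure group where the holomorphic-Riemannian subtleties are concentrated.
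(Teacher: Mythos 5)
Your proposal is correct, and both directions ultimately rest on the same algebraic fact --- the equivalence of the Gauss--Codazzi equations with the flatness of a connection built from $(g,\nabla,\Psi)$ --- but your packaging is genuinely different from the paper's. For existence, the paper fixes local $g$-orthonormal frames, assembles the $\mathfrak{o}(4,\C)$-valued $1$-form $\omega$ whose blocks are the connection forms $\Theta$, the $\Psi^i$ and the $\theta^i$, checks that Gauss--Codazzi is exactly $d\omega+\omega\wedge\omega=0$, and integrates via the Maurer--Cartan lemma (Lemma \ref{Lemma chiave unicità}) to a map $\Phi\colon U\to SO(4,\C)$, setting $\sigma=\Phi\cdot \underline e$; since this construction is frame-dependent and local, the paper must then globalize over the simply connected $S$ by a good-cover/analytic-continuation argument. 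Your flat bundle $(\mathcal E=\CTS\oplus\underline{\C}\,\mathbf e\oplus\underline{\C}\,\mathbf s,\,D,\,G)$ is precisely the frame-free, global incarnation of the same object ($\omega$ is the connection form of your $D$ in an adapted orthonormal frame of $\mathcal E$), so flatness plus simple connectedness yields the parallel trivialization in one step, with no patching; over $\C$ there is no signature obstruction, so the reduction to $SO(4,\C)$-structure that you worry about does go through, exactly because all nondegenerate symmetric $\C$-bilinear forms are equivalent and $G$-compatibility makes parallel transport $G$-isometric. What the paper's formulation buys instead is the explicit group-valued map $\Phi$ and the Maurer--Cartan machinery, which are reused verbatim in Section 5 (Proposition \ref{Teorema esistenza foliazione}, Lemma \ref{Lemma CP}) to prove holomorphic dependence of the immersions and of their monodromies on the data; your trivialization carries the same information, but less directly. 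In the necessity direction you also deviate mildly: you differentiate the adapted frame $(d\sigma,\nu,\sigma)$ against the flat connection of $\C^4$, so the $-1$ in the Gauss equation arises from the position-vector term $g(X,Y)\,\mathbf s$, whereas the paper computes inside $\mathds X_3$ using the constant-curvature tensor (Lemma \ref{lemma space form}) and the fact that $R^D(V_1,V_2)V_3\in Span_\C(V_1,V_2)$ kills the normal component. Your sign conventions for $\IIs$ and the Weingarten map are opposite to the paper's, but this is immaterial: the shape operator is only defined up to the choice of normal, and both equations are invariant under $\Psi\mapsto-\Psi$.

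One step deserves the same scrutiny you reserve for metric compatibility: two parallel $G$-isometric trivializations a priori differ by a constant gauge in $O(4,\C)$, not $SO(4,\C)$. Landing in $Isom_0$ requires the orientation bookkeeping that the paper performs when it chooses $\nu$ and the frame so that the adapted basis $(\sigma_*X_1,\sigma_*X_2,\nu,-i\sigma)$ lies in the $SO(4,\C)$-orbit of the canonical basis; with that normalization both trivializations hit the same orbit and the transition has determinant $1$. This is fixable bookkeeping rather than a flaw, and the paper itself treats it with comparable brevity; after that, quoting $Isom_0(\SL)\cong\faktor{\SL\times\SL}{\{\pm(I_2,I_2)\}}$ from Section 2, as you do, converts the $SO(4,\C)$-uniqueness into the stated form.
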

 
	 The almost uniqueness of the immersion grants that if
	 $\Gamma$ is a group acting on $S$ preserving $g$ and $\Psi$, then the immersion $\sigma$ is $(\Gamma,Isom_0(\SL) )$-equivariant, namely there exists a representation \[mon_\sigma\colon \Gamma\to Isom_0(\SL)\] such that for all $\gamma\in \Gamma$
	 \[
	 mon_\sigma(\gamma)\circ \sigma = \sigma \circ\gamma. \]
	 
	 As a result if $S$ is not simply connected, solutions of Gauss-Codazzi equations correspond to $(\pi_1(S), Isom_0(\SL) )$- equivariant immersions of its universal cover into $\SL$.
	  \vspace{5mm}

\subsection{Immersions in codimension zero and into pseudo-Riemannian space forms}	 
	 The study of immersions into $\mathds X_n$ in codimension zero leads to an interesting result. 	 
	 \begin{theoremintro}
	 	\label{codimension 0 intro}
	 	Let $M=M^n$ be a smooth manifold, $g$ a complex metric on $M$.
	 	
	 	Then $g$ has constant sectional curvature $-1$ if and only if there exists an isometric immersion $(M,g)\to \mathds X_n$.
	 	
	 	Such immersion is unique up to post-composition with elements in $Isom(\mathds X_n)$.
	 \end{theoremintro}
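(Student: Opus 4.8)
The plan is to read this as a Cartan--Darboux integration problem on the complex orthonormal frame bundle, reproducing in the holomorphic-Riemannian setting the classical fundamental theorem for constant-curvature isometries. The reverse implication is the easy one. If $\sigma\colon(M,g)\to\XXX_n$ is an isometric immersion of codimension zero, then by definition the complexified differential $X\mapsto(d\sigma X)^{1,0}$ identifies $\CTM$ with the full holomorphic tangent space $T^{1,0}\XXX_n$ at each point; hence there is no complexified normal direction and the second fundamental form vanishes identically. The induced connection is then the tangential part of the ambient one, which by uniqueness of the Levi-Civita connection is $\nabla$ itself, so $\sigma$ carries the curvature of $g$ to that of $\XXX_n$. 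Since $\XXX_n$ has constant sectional curvature $-1$, the Gauss equation of Theorem \ref{Teo Gauss-Codazzi}, with vanishing second fundamental form, forces $K_g\equiv-1$.

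For existence I would encode $(g,\nabla)$ into a single $\Lieon$-valued $1$-form whose flatness is equivalent to the curvature condition. A nondegenerate $\C$-bilinear form admits $\C$-orthonormal bases --- over $\C$ there is no signature obstruction --- so $(M,g)$ carries a principal $SO(n,\C)$-bundle $\pi\colon\mathcal F\to M$ of $g$-orthonormal coframes, equipped with the tautological $\C^n$-valued $1$-form $\theta$ and the $\mathfrak o(n,\C)$-valued Levi-Civita connection form $\omega$. Modelled on the symmetric splitting $\Lieon=\mathfrak o(n,\C)\oplus\C^n$ attached to the quadric $\XXX_n\subset\C^{n+1}$, I set
\[
\alpha \;=\; \begin{pmatrix} \omega & -i\,\theta \\ i\,\theta^{T} & 0 \end{pmatrix},
\]
which is $\Lieon$-valued, the factor $i$ encoding the normalization $\langle p,p\rangle=-1$ of the normal position vector. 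A block computation of $d\alpha+\tfrac12[\alpha\wedge\alpha]$ then shows that its vanishing is equivalent to two identities: the off-diagonal block yields $d\theta+\omega\wedge\theta=0$, automatic since $\omega$ is torsion-free, while the $\mathfrak o(n,\C)$-block yields $d\omega+\omega\wedge\omega=-\,\theta\wedge\theta^{T}$, which is exactly the statement that $g$ has constant sectional curvature $-1$. Thus the hypothesis on $g$ is precisely the flatness of $\alpha$.

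Granting flatness and assuming $M$ simply connected, the Cartan--Darboux theorem --- applied to $\SOn$ as a real Lie group, since $\alpha$ is only smooth --- produces a map $F\colon\mathcal F\to\SOn$ with $F^{*}\alpha_{\mathrm{MC}}=\alpha$, unique up to left translation by a constant. Equivariance of $\alpha$ under the $SO(n,\C)$-action forces $F$ to be equivariant, hence it descends to $\sigma\colon M\to\SOn/SO(n,\C)\cong\XXX_n$; since the $\C^n$-part of $\alpha$ is the tautological coframe, $d\sigma$ is a pointwise isometry onto $T^{1,0}\XXX_n$ and so $\sigma$ is an isometric immersion of codimension zero inducing $g$. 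The left-translation ambiguity of $F$ corresponds to postcomposing $\sigma$ with an element of $\SOn\subseteq Isom(\XXX_n)$, which gives the asserted uniqueness. For non-simply-connected $M$ one runs the same integration on the universal cover and reads off a monodromy $\pi_1(M)\to Isom(\XXX_n)$, exactly as in the remark following Theorem A.

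I expect the main obstacle to lie not in the integration itself but in transferring the real, smooth frame-bundle machinery verbatim to complex metrics valued in a complex structure group: one must check that the Levi-Civita connection of a complex metric is genuinely $\mathfrak o(n,\C)$-valued in a $\C$-orthonormal coframe, that Cartan--Darboux applies with the complex target $\SOn$ treated as a real Lie group and $\alpha$ merely smooth, and --- the one genuinely delicate point --- that the $\C^n$-part of the Maurer--Cartan form of $\SOn$ pulls back to exactly $\theta$ under $\SOn\to\XXX_n$, so that the sign in $d\omega+\omega\wedge\omega=-\theta\wedge\theta^{T}$ (and thus the identification of curvature $-1$ with the quadric $\langle p,p\rangle=-1$) comes out right.
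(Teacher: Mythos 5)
Your route is genuinely different from the paper's. The paper never integrates an intrinsic structure on $(M,g)$ directly: it composes with the totally geodesic inclusion $\XXX_n\hookrightarrow\XXX_{n+1}$, so that codimension-zero isometric immersions into $\XXX_n$ correspond to codimension-one immersions into $\XXX_{n+1}$ with shape operator $\Psi\equiv 0$; the pair $(g,0)$ satisfies Gauss--Codazzi precisely when $K_g\equiv-1$, so existence follows from the already-proved Theorem \ref{Teoremone}, and one then checks, by differentiating the unit normal ($\dot\nu=\sigma_*\Psi(\dot\gamma)=0$), that the normal field is a constant vector of $\C^{n+2}$, so the image lies in $\nu^\perp\cap\XXX_{n+1}\cong\XXX_n$ (Theorem \ref{teoremone stessa dimensione}). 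Your plan instead redoes the integration one dimension lower, as a flat $\Lieon$-valued Cartan connection on the bundle $\mathcal F$ of $g$-orthonormal frames; your block computation and signs agree with the paper's (your $\alpha$ is exactly the paper's $\omega$ from the proof of Theorem \ref{Teoremone} with the $\Psi$-row and column deleted), so the analytic core is right. What the paper's detour buys is that all topological bookkeeping (local frames, good covers, continuation over simply connected $M$) was already done once inside Theorem \ref{Teoremone}; what your route would buy is a cleaner statement, with no auxiliary ambient dimension.

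That bookkeeping is where your proposal has a genuine gap. You apply Cartan--Darboux to $\alpha$ on the total space $\mathcal F$, but that theorem (Lemma \ref{Lemma chiave unicità} in the paper) requires a simply connected domain, and $\mathcal F$ is in general not simply connected even when $M$ is: for $M=\R^n$ one has $\mathcal F\cong M\times SO(n,\C)$, and $\pi_1(SO(n,\C))$ is $\Z$ for $n=2$ and $\Z_2$ for $n\ge 3$; in general the homotopy sequence of $\mathcal F\to M$ shows $\pi_1(\mathcal F)$ is generated by fiber loops. So the primitive $F$ does not exist for free; one must check that the periods of $\alpha$ along fiber loops vanish. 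This is true, and it is the same missing ingredient as your unproved claim that ``equivariance of $\alpha$ forces $F$ to be equivariant'': the Cartan-connection axiom $\alpha(A^*)=A$ for fundamental vector fields, $A\in\mathfrak{o}(n,\C)$, shows that a path inside a fiber develops onto the corresponding path in $SO(n,\C)\subset\SOn$ (hence fiber loops close up and the periods die) and that $F(u\exp(tA))=F(u)\exp(tA)$, whence $F(ua)=F(u)a$ by connectedness of $SO(n,\C)$. Without this, uniqueness only yields $F(ua)=c(a)F(u)a$ for some constant $c(a)\in\SOn$, and the descent of $F$ to $\sigma\colon M\to\SOn/SO(n,\C)$ is unjustified. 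Either supply this fiber-direction argument, or abandon the global frame bundle and patch local frames over a good cover, as the paper does.

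Separately, your uniqueness claim is off by a component: it is false that any two isometric immersions differ by an element of $\SOn$. If $r\in O(n+1,\C)\setminus\SOn$ is a reflection, then $r\circ\sigma$ is again an isometric immersion, and it cannot equal $\phi\circ\sigma$ with $\phi\in\SOn$, since an isometry of $\XXX_n$ fixing an open set is the identity. The source of the discrepancy is that the frame matrix $\big(\sigma'_*u_1|\cdots|\sigma'_*u_n|-i\sigma'(x)\big)$ of an arbitrary isometric immersion $\sigma'$ lies in $O(n+1,\C)$ and may have determinant $-1$, so not every immersion is the projection of a development into the connected group $\SOn$. Running your development argument in $O(n+1,\C)$ instead gives uniqueness up to $Isom(\XXX_n)\cong O(n+1,\C)$, which is exactly what the statement asserts and what the paper proves.
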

 
	This result can be deduced by the Gauss-Codazzi Theorem: in fact immersions in $\mathds X_n$ of codimension $0$ correspond to  codimension-$1$ totally geodesic immersions in $\mathds X_{n+1}$, namely immersions with shape operator $\Psi=0$. A full proof is in section 4.
	
	As a result, every pseudo-Riemannian space form of constant curvature $-1$ and dimension $n$ admits an essentially unique  isometric immersion into $\mathds X_n$.
	
\vspace{5mm}
	
	In fact, the last remark and the similar description of Gauss-Codazzi equations for immersions into pseudo-Riemannian space forms lead to Theorem \ref{da X_n a space forms}: as regards the case of $\SL$, the result can be stated in the following way.
	
	\begin{theoremintro}
		Let $\sigma\colon S\to \SL$ be an admissible immersion with pull-back metric $g$ and shape operator $\Psi$.
		\begin{itemize}
		\item $\sigma(S)$ is contained in the image of an isometric embedding of $\Hy^3$ if and only if $g$ is Riemannian and $\Psi$ is real.
		\item 
		$\sigma(S)$ is contained in the image of an isometric embedding of $AdS^3$ if and only if either $g$ is Riemannian and $i\Psi$ is real, or if $g$ has signature $(1,1)$ and $\Psi$ is real.
		\item $\sigma(S)$ is contained in the image of an isometric embedding of $-dS^3$ if and only if either $g$ has signature $(1,1)$ and $i\Psi$ is real, or if $g$ is negative definite and $\Psi$ is real.
		\item $\sigma(S)$ is contained in the image of an isometric embedding of $-S^3$ if and only if $g$ is negative definite and $i\Psi$ is real. 
	\end{itemize}
	\end{theoremintro}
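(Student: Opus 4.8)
The plan is to realize each of the four pseudo-Riemannian space forms as the fixed-point set of an anti-holomorphic isometric involution of $\mathds X_3$ and to detect the inclusion $\sigma(S)\subset N$ through the reality of the immersion data, using the isometric identification $\SL\cong\mathds X_3$ up to scale. Writing $\mathds X_3=\{\sum_i z_i^2=-1\}\subset\C^4$, every isometric embedding of a $3$-dimensional space form of curvature $-1$ is, up to post-composition with an ambient isometry, a standard real slice $\mathrm{Fix}(\tau)$ for a conjugate-linear involution $\tau$ of $\C^4$ preserving the bilinear form; the signature $(r,s)$ of the induced real quadratic form on $\mathrm{Fix}(\tau)$ determines which space form one obtains, because the induced metric on $\{\langle z,z\rangle=-1\}$ has signature $(r,s-1)$. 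For $n+1=4$ the four admissible signatures $(3,1),(2,2),(1,3),(0,4)$ yield respectively $\Hy^3$, $AdS^3$, $-dS^3$ and $-S^3$. The first step is therefore to fix these standard slices and to record that $Isom(\mathds X_3)$ acts transitively on the isometric embeddings of each fixed space form, so that it suffices to treat the standard slice in each case.

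For the forward implication, suppose $\sigma(S)$ lies in the image of an isometric embedding of $N$; after composing with an ambient isometry we may assume $\sigma$ takes values in a standard slice $\mathrm{Fix}(\tau)$, so that $\sigma\colon S\to N$ is an isometric immersion of a surface into the pseudo-Riemannian $3$-manifold $N$. Since the holomorphic Riemannian metric of $\mathds X_3$ restricts to the real metric of $N$ on $\mathrm{Fix}(\tau)$, the complex metric $g=\sigma^*\langle\cdot,\cdot\rangle$ is real and its signature equals that of the induced metric of the surface inside $N$, which gives the stated signature of $g$ (Riemannian, $(1,1)$, or negative definite). The second step is the bookkeeping of the normal direction: $\mathrm{Fix}(\tau)$ is a totally real slice of maximal dimension, so the complex normal line of $\sigma$ in $\mathds X_3$ is the complexification of the real normal line of $\sigma$ inside $N$, and normalizing to complex norm $1$ forces the complex unit normal $\nu$ to equal $\nu_N$ if $\nu_N$ is spacelike and $i\nu_N$ if $\nu_N$ is timelike. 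As the shape operator is linear in the chosen normal, $\Psi$ is real when the normal is spacelike and $i\Psi$ is real when it is timelike; combining this with the signature of $g$ recovers exactly the four dichotomies of the statement, the two sub-cases for $AdS^3$ and $-dS^3$ corresponding to spacelike versus timelike surfaces.

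For the converse, suppose $(g,\Psi)$ satisfies the reality conditions attached to $N$. After the normalization of the normal dictated by whether ``$\Psi$ real'' or ``$i\Psi$ real'' holds, the pair becomes real and satisfies the classical Gauss--Codazzi equations of surfaces in the space form $N$ of curvature $-1$; these are precisely the two equations of Theorem~A specialized to real data, the sign discrepancy in the Gauss equation between spacelike and timelike hypersurfaces being absorbed into the factor $i$ through $\det(i\Psi_N)=-\det(\Psi_N)$. By the fundamental theorem of surfaces in pseudo-Riemannian space forms there is an isometric immersion $\sigma'\colon S\to N$ with first fundamental form $g$ and shape operator $\Psi$, and composing with the embedding $N\hookrightarrow\mathds X_3$ produces an admissible immersion of $S$ into $\mathds X_3$ with the same pull-back metric and shape operator as $\sigma$. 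The uniqueness part of Theorem~\ref{Teoremone} then forces $\sigma$ and this immersion to agree up to an element of $Isom(\mathds X_3)$, so $\sigma(S)$ is contained in the image of an isometric embedding of $N$.

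I expect the principal difficulty to be the second step, namely the simultaneous tracking of the signature of the restricted quadratic form on the real slice, the signature of the induced metric on the surface, and the causal type of the normal that produces the factor $i$ relating $\Psi$ to the real shape operator; getting all signs consistent is what makes each combination of conditions select the correct space form. A secondary technical point is the justification that any two isometric embeddings of a given space form into $\mathds X_3$ differ by an ambient isometry, which is what licenses the reduction to the standard slices.
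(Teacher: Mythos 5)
Your proposal is correct and follows essentially the same route as the paper's proof of Theorem \ref{da X_n a space forms} in Section 4.5: realize each pseudo-Riemannian space form $\mathbb H^{3-p,p}$ inside $\mathds X_3$ via the standard real slice (your fixed locus of an anti-holomorphic involution is exactly the paper's explicit embedding $\iota$ of the quadric $Q^{3-p,p}$), reduce to that slice by the codimension-zero uniqueness theorem, match the data through the spacelike/timelike dichotomy of the normal (giving $\Psi=\psi$ or $\Psi=i\psi$, with $\det(i\psi)=-\det\psi$ absorbing the sign in the Gauss equation), and conclude the converse by the classical Bonnet theorem in $\mathbb H^{3-p,p}$ combined with the uniqueness part of Theorem \ref{Teoremone}. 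The only differences are presentational, so there is nothing of substance to flag.
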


\subsection{Holomorphic dependence of the monodromy on the immersion data}	
	Given a smooth manifold $M$ of dimension $n$, we say that $(g, \Psi)$ is a couple of \emph{immersion data} for $M$ if there exists a $\pi_1(M)$-equivariant immersion $\widetilde M\to \mathds X_{n+1}$ with pull-back metric $\widetilde g$ and shape operator $\widetilde \Psi$. As a result of the essential uniqueness of the immersion, each immersion data comes with a monodromy map $mon_\sigma\colon \pi_1(M)\to Isom_0(\mathds X_{n+1})$.
	
	In section 5, we consider families of immersion data $\{(g_\lambda, \Psi_\lambda)\}_{\lambda \in \Lambda}$ for $M$. 
	
	Let $\Lambda$ be a complex manifold. We say that the family $\{(g_\lambda, \Psi_\lambda)\}_{\lambda \in \Lambda}$ is  \emph{holomorphic} if for all $x\in M$, the functions
	\begin{align*}
	\Lambda &\to Sym^2(\C T^*_xM)\\ 
	\lambda &\mapsto g_\lambda(x)
	\end{align*}
	and 
	\begin{align*}
	\Lambda &\to End_\C(\C T_xM)\\ 
	\lambda &\mapsto \Psi_\lambda(x)
	\end{align*}
	are holomorphic. 
	
	For a fixed hyperbolic Riemannian metric $h$ on a surface $S$, an instructive example is given by the family $\{(g_z,\psi_z)\}_{z\in \C}$ defined by
	\[
	\begin{cases}
	g_z= \cosh^2(z) h; \\
	\psi_z= \tanh(z) id
	\end{cases}
	\] 
	whose monodromy is going to be the monodromy of an immersion into $\Hy^3$ for $z\in \R$ and the monodromy of an immersion into $AdS^3$ for $z\in i\R$. 
	
	The main result of section 5 is the following.
	
	\begin{theoremintro}
		\label{intro teo olomorfia}
		Let $\Lambda$ be a complex manifold and $M$ be a smooth manifold of dimension $n$. 
		
		Let $\{(g_\lambda, \Psi_\lambda)\}_{\lambda\in \Lambda}$ be a holomorphic family of immersion data for $\pi_1(M)$-equivariant immersions $\widetilde M\to \mathds X_{n+1}$. Then there exists a smooth map
		\[
		\sigma\colon \Lambda \times \widetilde M \to \mathds X_{n+1}
		\]
		such that, for all $\lambda\in \Lambda$ and $x\in M$:
		\begin{itemize}
			\item $\sigma_\lambda:= \sigma(\lambda, \cdot)\colon \widetilde M \to \mathds X_{n+1}$ is an admissible immersion with immersion data $(g_\lambda, \Psi_\lambda)$;
			\item $\sigma(\cdot,x)\colon \Lambda \to \mathds X_{n+1}$ is holomorphic.
		\end{itemize}
		Moreover, the monodromy map
		\begin{align*}
		\Lambda &\to \mathcal X (\pi_1(M), SO(n+2, \C))\\
		\lambda &\mapsto mon(\sigma_\lambda)
		\end{align*}
		is holomorphic.
	\end{theoremintro}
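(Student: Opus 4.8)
The plan is to revisit the construction underlying Theorem \ref{Teoremone} and to notice that the immersion is produced by integrating a \emph{flat connection} whose coefficients depend holomorphically on $\lambda$; once the problem is linearized in this way, holomorphicity of both the developing map and the monodromy will follow from the classical theorem on holomorphic dependence of solutions of linear ODEs on parameters. Concretely, on $\widetilde M$ I would fix once and for all the smooth rank $(n+2)$ bundle $\mathcal E=\underline{\C}\oplus\C T\widetilde M\oplus\underline{\C}$, where the two trivial lines encode the position vector and the unit normal. For each $\lambda$ the data $(g_\lambda,\Psi_\lambda)$ equip $\mathcal E$ with a nondegenerate $\C$-bilinear form $\langle\cdot,\cdot\rangle_\lambda$ (block-diagonal $-1\oplus g_\lambda\oplus 1$) and with a metric connection $D^\lambda$ assembled from the Levi--Civita connection $\nabla^\lambda$, the second fundamental form $\II_\lambda=g_\lambda(\Psi_\lambda\cdot,\cdot)$, and the soldering of the position line into $\C T\widetilde M$. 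This is exactly the object whose flatness is equivalent to the Gauss--Codazzi equations $d^{\nabla}\Psi\equiv 0$ and $K_g=-1+\det\Psi$: since $(g_\lambda,\Psi_\lambda)$ is immersion data for every $\lambda$, the connection $D^\lambda$ is flat, and $\langle\cdot,\cdot\rangle_\lambda$, $\nabla^\lambda$, $\II_\lambda$ all depend holomorphically on $\lambda$ and smoothly on $x$.

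Next I would build the developing map. Fix a basepoint $x_0$ and a holomorphic family of linear isometries $\psi_\lambda\colon(\mathcal E_{x_0},\langle\cdot,\cdot\rangle_\lambda)\to(\C^{n+2},\mathrm{std})$. Since $D^\lambda$ is flat and $\widetilde M$ is simply connected, parallel transport $P^\lambda_{x\to x_0}$ is path-independent, and composing it with $\psi_\lambda$ yields a $D^\lambda$-parallel isometric trivialization $\tau_\lambda$ of $\mathcal E$. Setting $\sigma(\lambda,x):=\tau_\lambda(s(x))$, where $s(x)$ is the canonical generator of the position line, one gets $\langle\sigma(\lambda,x),\sigma(\lambda,x)\rangle=-1$, so $\sigma_\lambda$ lands in $\ML$ and, by the uniqueness part of Theorem \ref{Teoremone}, is the admissible immersion with data $(g_\lambda,\Psi_\lambda)$. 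Holomorphicity in $\lambda$ then follows: $P^\lambda_{x\to x_0}$ is computed by solving along a path the linear ODE whose coefficients are the components of $D^\lambda$, these depend holomorphically on $\lambda$, and $\psi_\lambda$ is holomorphic while $s(x)$ is independent of $\lambda$; joint smoothness in $(\lambda,x)$ is the smooth counterpart of the same dependence statement.

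For the monodromy, the $\pi_1(M)$-equivariance of $\sigma_\lambda$ exhibits $mon(\sigma_\lambda)(\gamma)\in\SOnn$ as the holonomy of $D^\lambda$ along a loop representing $\gamma$, expressible through $\psi_\lambda$ and the parallel transport $P^\lambda$ along that loop together with the induced bundle action of the deck transformation. By the previous step each $\gamma$ yields a holomorphic map $\Lambda\to\SOnn$. Choosing generators $\gamma_1,\dots,\gamma_k$ of $\pi_1(M)$, the assignment $\lambda\mapsto(mon(\sigma_\lambda)(\gamma_1),\dots,mon(\sigma_\lambda)(\gamma_k))$ is a holomorphic map into $\SOnn^{\,k}$ which lands in the representation variety $\mathrm{Hom}(\pi_1(M),\SOnn)$; post-composing with the holomorphic quotient projection onto $\mathcal X(\pi_1(M),\SOnn)$ gives the claimed holomorphicity of $\lambda\mapsto mon(\sigma_\lambda)$.

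The analytic core is standard once everything is linearized, so I expect the genuine difficulties to be two global/choice issues. First, producing the holomorphic family $\psi_\lambda$ of initial isometries: for a complex metric the naive $g_\lambda$-orthonormalization meets isotropic vectors and square roots, so I would realize the frames as local holomorphic sections of the $\SOnn$-bundle of $\langle\cdot,\cdot\rangle_\lambda$-isometric frames over $\Lambda$, arguing either that this bundle trivializes on the relevant base or, since holomorphicity is a local condition on $\Lambda$ and the monodromy is canonical up to conjugation, working locally in $\Lambda$ and noting that the induced point of the character variety is independent of the choice. Second, and this is where I expect the most care to be needed, one must check that fiberwise holomorphicity together with joint smoothness of the developing map really descends to holomorphicity into the quotient $\mathcal X(\pi_1(M),\SOnn)$, which requires the complex structure on the character variety to be compatible with the path-integration defining the holonomy.
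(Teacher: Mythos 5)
Your proposal is sound and proves the theorem by a route that is genuinely different in its technical core from the paper's, although both rest on the same principle: linearize the immersion problem and deduce holomorphic dependence of the linearized solution on $\lambda$. The paper works inside the group $G=\SOnn$: it applies Gram--Schmidt jointly in $(\lambda,x)$ to get $\lambda$-holomorphic orthonormal frames, packages $(g_\lambda,\Psi_\lambda)$ into the $\Lieonn$-valued form $\omega_\lambda$ of Theorem \ref{Teoremone}, integrates the whole family at once by a parametric Frobenius argument on $\Lambda\times M\times G$ (Proposition \ref{Teorema esistenza foliazione}), proves that the resulting $\Phi(\lambda,x)$ satisfies the Cauchy--Riemann equations in $\lambda$ via a bespoke Cauchy-problem lemma (Lemma \ref{Lemma CP}) --- i.e.\ it establishes the holomorphic parameter dependence by hand --- and finally globalizes by analytic continuation. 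You instead keep the bundle $\underline{\C}\oplus\C T\widetilde M\oplus\underline{\C}$ frame-free, so your flat connection $D^\lambda$ is defined globally on $\widetilde M$ from the start (no patching in $x$), and you get holomorphicity of parallel transport from the classical theorem on holomorphic dependence of solutions of linear ODEs on parameters, which is exactly what Lemma \ref{Lemma CP} re-proves in the group-valued setting; your monodromy-as-holonomy description and the descent to the character variety then match the paper's final step. What the paper's formalism buys is that uniqueness of $\Phi$ up to left translation, the equivariance bookkeeping, and the reusable Proposition \ref{Teorema esistenza foliazione} come packaged together; what yours buys is economy: the only choice is an isometric frame on the single fiber over $x_0$, and ``flatness of $D^\lambda$ equals Gauss--Codazzi'' replaces the paper's computation of $d\omega_\lambda+\omega_\lambda\wedge\omega_\lambda$ (the two computations are the same, written in different gauges).

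Two caveats. First, your appeal to ``the uniqueness part of Theorem \ref{Teoremone}'' to conclude that $\sigma_\lambda=\tau_\lambda(s)$ has data $(g_\lambda,\Psi_\lambda)$ is not the right justification: uniqueness compares two immersions already known to have the same data, whereas here one must verify the data directly --- which is easy, since $\tau_\lambda$ is parallel and fiberwise isometric, so $d\sigma_\lambda(X)=\tau_\lambda(X)$, hence $\sigma_\lambda^*\inners=g_\lambda$, and differentiating the normal $\tau_\lambda(n)$ gives shape operator $\Psi_\lambda$. Second, your worry about a global holomorphic family $\psi_\lambda$ is legitimate (the bundle of $\lambda$-isometric frames need not be holomorphically trivial over a non-simply-connected $\Lambda$), but the paper faces the identical issue: its analytic-continuation step invokes simple connectedness of $\Lambda\times\widetilde M$, which holds only when $\Lambda$ is simply connected. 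Your fallback --- work locally on $\Lambda$, noting that holomorphicity is local and the induced point of $\mathcal X(\pi_1(M),\SOnn)$ is conjugation-independent --- yields the monodromy statement in full generality at the same level of rigor as the paper; for the globally defined $\sigma$ on all of $\Lambda\times\widetilde M$ one must, as the paper implicitly does, either restrict to simply connected $\Lambda$ or fix a coherent normalization, so this is a shared gap rather than a defect of your approach.
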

	
	In section 5.2 we show an alternative proof of the holomorphicity of the complex landslide using \ref{intro teo olomorfia}.

\subsection{Uniformizing complex metrics and Bers Theorem}	

In Section 6 we focus on complex metrics on surfaces. 

Even in dimension 2, complex metrics can have a rather wild behaviour. Nevertheless, we point out a neighbourhood of the Riemannian locus whose elements have some nice features: we will call these elements \emph{positive complex metrics} (Definition \ref{Def positive metrics}).

	In Theorem \ref{Gauss Bonnet} we prove that the standard Gauss-Bonnet Theorem also holds for positive complex metrics on closed surfaces. 	
		
		The most relevant result in Section 6 is a version of the Uniformization Theorem for positive complex metrics. 
		
	\begin{theoremintro}
		Let $S$ be a surface with $\chi(S)<0$. 
		
		For any positive complex metric $g$ on $S$ there exists a smooth function $f\colon S\to \C^*$ such that the positive complex metric $f\cdot g$ has constant curvature $-1$ and has quasi-Fuchsian monodromy.
	\end{theoremintro}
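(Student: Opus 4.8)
The proof I propose makes precise the assertion from the introduction that this statement is equivalent to Bers' simultaneous uniformization theorem. The starting point is a linear-algebra observation: at each $x$ the complex metric $g$ is a nondegenerate $\C$-bilinear symmetric form on the two-dimensional space $\C T_xS$, hence has exactly two distinct isotropic lines, and up to a nonzero scalar it is determined by them. Writing the resulting null line fields as $\ell^+$ and $\ell^-$, any two positive complex metrics sharing the pair $(\ell^+,\ell^-)$ therefore differ by a pointwise factor $f\colon S\to\C^*$. So the whole problem reduces to producing, with the correct null foliations, a representative of constant curvature $-1$ and quasi-Fuchsian monodromy, and then reading off $f$. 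I would first translate the positivity hypothesis (Definition \ref{Def positive metrics}) into the statement that $\ell^+$ and $\ell^-$ are the antiholomorphic lines of two genuine complex structures $J^+,J^-$ on $S$ inducing opposite orientations; equivalently, that $g$ determines a point $X^+\in\mathcal T(S)$ and a point $X^-\in\mathcal T(\overline S)$. Positivity is exactly the open condition ensuring that each null line meets $T_xS\otimes\R$ in the correct half, so that $J^\pm$ are honest almost-complex structures, automatically integrable since $\dim_\R S=2$.

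Next I would invoke Bers' theorem for the pair $(X^+,X^-)$, which is legitimate because $\chi(S)<0$: there is a quasi-Fuchsian representation $\rho\colon\pi_1(S)\to\PSL$ whose domain of discontinuity splits into two components $\Omega^+,\Omega^-\subset\CP^1$, together with $\rho$-equivariant biholomorphisms (developing maps) $\mathrm{dev}^\pm\colon\widetilde S\to\Omega^\pm$ uniformizing $\widetilde{X^\pm}$. Using the identifications $\mathds X_2\cong\GG=\CP^1\times\CP^1\setminus\Delta$ and $Isom_0(\mathds X_2)\cong\PSL$ recalled in the introduction, the pair
\[
\sigma:=(\mathrm{dev}^+,\mathrm{dev}^-)\colon\widetilde S\longrightarrow \CP^1\times\CP^1\setminus\Delta\cong\mathds X_2
\]
is a well-defined $\rho$-equivariant smooth map, since $\Omega^+$ and $\Omega^-$ are disjoint.

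I would then verify that $\sigma$ is an admissible immersion. The null directions of the holomorphic Riemannian metric $g_{\mathds X_2}$ at a point of $\GG$ are the tangents to the two $\CP^1$-factors, so a vector $v\in\C T_xS$ is null for $\sigma^*g_{\mathds X_2}$ exactly when $d\,\mathrm{dev}^+(v)=0$ or $d\,\mathrm{dev}^-(v)=0$; since $\mathrm{dev}^\pm$ is $J^\pm$-holomorphic, these kernels are precisely the two null lines of $g$. In particular the pull-back form is nondegenerate and positive, so $\hat g:=\sigma^*g_{\mathds X_2}$ is a positive complex metric with the same null foliations as $g$. By the codimension-zero result (Theorem \ref{codimension 0 intro}), $\hat g$ has constant curvature $-1$, and by $\rho$-equivariance its monodromy is the quasi-Fuchsian representation $\rho$.

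Finally, since $\hat g$ and $g$ are positive complex metrics with the same pair $(\ell^+,\ell^-)$ of null foliations, the determination-up-to-scalar observation yields a unique $f\colon S\to\C^*$ with $\hat g=f\cdot g$; smoothness of $f$ follows from smoothness of both metrics, and $f$ is nowhere zero because both are nondegenerate. This $f$ satisfies the conclusion. The main obstacle I expect lies not in Bers' theorem but in the two compatibility checks flanking it: first, that positivity is genuinely equivalent to $(\ell^+,\ell^-)$ defining complex structures of opposite orientation, so that Bers applies to an honest pair of Riemann surfaces rather than a degenerate configuration; and second, that the combined developing map $\sigma$ is truly an admissible immersion with the prescribed null foliations, which is precisely what forces $\hat g$ to lie in the conformal class of $g$.
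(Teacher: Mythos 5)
Your proposal is correct and follows essentially the same route as the paper's proof of Theorem \ref{Uniformization 1}: translate positivity of $g$ into a pair of complex structures of opposite orientation (a bicomplex structure), apply Bers' Simultaneous Uniformization to that pair, assemble the two developing maps into a $\rho$-equivariant admissible immersion $\widetilde S\to\GG\cong\mathds X_2$, invoke the codimension-zero result to get curvature $-1$, and conclude by the fact that complex metrics on a surface with the same isotropic line fields are conformal. The only cosmetic difference is that the paper packages your hands-on verification of the null directions and admissibility into Theorem \ref{Totally geodesic immersions} and the bijection \eqref{conformal structures and BC}.
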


	The proof of this result uses Bers Simultaneous Uniformization Theorem (Theorem \ref{Bers} in this paper) and in a sense is equivalent to it.

	Indeed, by Theorem \ref{codimension 0 intro}, complex metrics on $S$ with constant curvature $-1$ correspond to equivariant isometric immersions of $\widetilde S$ into $\GG=\CP^1\times \CP^1\setminus\Delta$: hence, they can be identified with some couple of maps $\widetilde S\to \CP^1$ with the same monodromy. In this sense, Bers Theorem provides a whole group of immersions into $\CP^1\times \CP^1\setminus\Delta$: such immersions correspond to complex metrics of curvature $-1$ which prove to be positive. The proof of the uniformization consists in showing that every complex positive metric is conformal to a metric constructed with this procedure.
	
\subsection*{Acknowledgements} 
We would like to thank Jean-Marc Schlenker, Gabriele Mondello and Andrea Seppi for several interesting discussions. The second author spent two months at Institut Fourier and one month at University of Luxembourg where he took the opportunity of deepening the theme of this project. He gratefully thanks the host institutions. The authors were partially supported by Blue Sky Research project "Analytic and geometric properties of low-dimensional manifolds".

	\section{Geometry of holomorphic Riemannian space forms}
	\subsection{Holomorphic Riemannian metrics}
	
	Let $\mathbb M$ be a complex analytic manifold, with complex structure $\JJ$, let $n=dim_\C \mathbb M$ and $T\mathbb M\to \mathbb M$ be the tangent bundle. 
	
	Local coordinates $(x_1, y_1, \dots, x_n, y_n)\colon U \to \R^{2n}\equiv \C^n$, $U\subset \mathbb M$, are \emph{holomorphic} if \[\JJ\bigg(\frac{\partial}{\partial x_k}\bigg)=\frac{\partial}{\partial y_k}\qquad
	\JJ\bigg(\frac{\partial}{\partial y_k}\bigg)= -\frac{\partial}{\partial x_k}.
	\]
	A function $f\colon U \to \C^N$ is \emph{holomorphic} if, in local holomorphic coordinates, it can be seen as a holomorphic function from an open subset of $\C^n$ to $\C^N$. Equivalently, $f$ is holomorphic if and only if $df\circ \JJ= i df$.
	
	A local vector field $X$ on $\mathbb M$ is \emph{holomorphic} if $dz_k (X)$ is a holomorphic function for all $k\in\{1, \dots, n\}$, where $dz_k=dx_k + i dy_k$.

	\begin{Definition}
		
		A \emph{holomorphic Riemannian metric} $\inner{\cdot, \cdot}$ on $\mathbb M$ is a symmetric $2$-form on $TM$, i.e. a section of $Sym^2 (T^*\mathbb M)$, such that: \begin{itemize}
			\item $\langle \cdot, \cdot \rangle$ is $\C$-bilinear, i.e. for all $X,Y \in T_p \mathbb M$ we have $\langle \JJ X, Y \rangle= \langle X, \JJ Y\rangle =i \langle X, Y\rangle$;
			\item non-degenerate as a complex bilinear form at each point;
			\item for all $X_1, X_2$ local holomorphic vector fields, $\inner{X_1, X_2}$ is a holomorphic function; equivalently, for all local coordinates $(x_1, y_1, \dots, x_n, y_n)$, the functions $
			\inner{\frac{\partial}{\partial x_k}, \frac{\partial}{\partial x_h}}$ (or equivalently the functions $ \inner{\frac{\partial}{\partial x_k}, \frac{\partial}{\partial y_h}}$ and $\inner{\frac{\partial}{\partial y_k}, \frac{\partial}{\partial y_h}}$)
			are all holomorphic. 
		\end{itemize}
		
		We also denote $\| X\|^2:= \inner{X,X}$.
	\end{Definition}
	
	Observe that, for a given holomorphic Riemannian metric $\inner{\cdot, \cdot}$, both the real part $Re\inner{\cdot, \cdot}$ and the imaginary part $Im\inner{\cdot, \cdot}$ are pseudo-Riemannian metrics on $\mathbb M$ with signature $(n,n)$. 
	
Taking inspiration from basic (Pseudo-)Riemannian Geometry, one can define several constructions associated to a holomorphic Riemannian metric, such as a Levi-Civita connection - leading to notions of a curvature tensor, (complex) geodesics and completeness - and sectional curvatures. We recall the basic notion, the reader may find a more detailed analysis in \cite{holomorphic riemannian 4}.

	There exists an analogous result to the Levi-Civita Theorem.
	
	\begin{Proposition} [See \cite{holomorphic riemannian 4}]
		Given a holomorphic Riemannian metric $\inner{\cdot, \cdot}$ on $\mathbb M$, there exists a unique connection $D$ over $T\mathbb M$, that we will call \emph{Levi-Civita connection}, such that for all $X,Y \in \Gamma(T \mathbb M)$ the following conditions stand:
		\begin{align}
		\label{compatibilita con metrica}
		d \inner{X, Y} &= \inner{D X, Y}+ \inner{X, D Y} \qquad && \text{($D$ is compatible with the metric)};\\
		\label{torsion free}
		[X,Y]&= D_X Y - D_Y X \qquad &&\text{($D$ is torsion free)}.
		\end{align}
		Such connection coincides with the Levi-Civita connections of $Re\inner{\cdot, \cdot}$ and $Im\inner{\cdot, \cdot}$ and $D\JJ=0$.
	\end{Proposition}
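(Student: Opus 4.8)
The plan is to mirror the classical proof of the Levi--Civita theorem; the one genuinely new point is that holomorphicity of $\inner{\cdot,\cdot}$ is exactly what allows a torsion-free metric connection to exist, and forces $\JJ$ to be parallel. I would begin with uniqueness. If $D$ satisfies \eqref{compatibilita con metrica} and \eqref{torsion free}, then writing the compatibility identity for the three cyclic orderings of a triple $X,Y,Z\in\Gamma(T\mathbb M)$, adding the first two and subtracting the third, and eliminating the symmetric contributions through \eqref{torsion free}, gives the Koszul formula
\begin{align*}
2\inner{D_XY,Z}={}&X\inner{Y,Z}+Y\inner{X,Z}-Z\inner{X,Y}\\
&+\inner{[X,Y],Z}-\inner{[X,Z],Y}-\inner{[Y,Z],X}.
\end{align*}
Since $\inner{\cdot,\cdot}$ is non-degenerate as a $\C$-bilinear form, the right-hand side determines $D_XY$ uniquely; this is the single place where non-degeneracy over $\C$ is used, and it settles uniqueness.

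For existence I would work in a local holomorphic chart $(x_1,y_1,\dots,x_n,y_n)$, so that $\partial_{y_k}=\JJ\partial_{x_k}$ and $g_{kl}:=\inner{\partial_{x_k},\partial_{x_l}}$ are holomorphic functions. As the coordinate fields commute, the bracket terms drop out and I can define $D$ on the frame $\{\partial_{x_l}\}$ by
\[
2\inner{D_{\partial_{x_j}}\partial_{x_k},\partial_{x_l}}=\partial_{x_j}g_{kl}+\partial_{x_k}g_{jl}-\partial_{x_l}g_{jk}\qquad\text{for all }l,
\]
which is well-posed because $(g_{kl})_l$ is the invertible complex Gram matrix of the $\C$-basis $\{\partial_{x_l}\}$; I then extend $D$ by $\C$-linearity and the Leibniz rule. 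Torsion-freeness \eqref{torsion free} is immediate from the symmetry of the right-hand side in $(j,k)$. Verifying compatibility \eqref{compatibilita con metrica} is the step that actually consumes the holomorphicity hypothesis: compatibility along the coordinate directions $\partial_{x_i}$ is the usual formal cyclic computation, but compatibility along the directions $\partial_{y_i}=\JJ\partial_{x_i}$ holds precisely because the $g_{kl}$ satisfy the Cauchy--Riemann relation $\partial_{y_i}g_{kl}=i\,\partial_{x_i}g_{kl}$. Since \eqref{compatibilita con metrica} and \eqref{torsion free} are intrinsic, uniqueness guarantees that the chartwise connections agree on overlaps and patch to a global $D$.

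The parallelism $D\JJ=0$ falls out of the same computation, and is the conceptual heart of the statement. As $D\JJ$ is tensorial it suffices to evaluate it on the coordinate frame; replacing $\partial_{x_k}$ by $\JJ\partial_{x_k}=\partial_{y_k}$ in the defining formula and using $\inner{\partial_{y_k},\partial_{x_l}}=i\,g_{kl}$ together with $\partial_{y_k}g_{jl}=i\,\partial_{x_k}g_{jl}$ one gets
\[
2\inner{D_{\partial_{x_j}}(\JJ\partial_{x_k}),\partial_{x_l}}=\partial_{x_j}(i g_{kl})+\partial_{y_k}g_{jl}-\partial_{x_l}(i g_{jk})=i\bigl(\partial_{x_j}g_{kl}+\partial_{x_k}g_{jl}-\partial_{x_l}g_{jk}\bigr),
\]
so the right-hand side is multiplied by the single factor $i$; since $i\inner{W,Z}=\inner{\JJ W,Z}$, non-degeneracy yields $D_{\partial_{x_j}}(\JJ\partial_{x_k})=\JJ D_{\partial_{x_j}}\partial_{x_k}$, i.e. $D\JJ=0$. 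Finally, the coincidence with the Levi--Civita connections of $Re\inner{\cdot,\cdot}$ and $Im\inner{\cdot,\cdot}$ is formal: taking real and imaginary parts of \eqref{compatibilita con metrica} exhibits $D$ as a torsion-free connection on $T\mathbb M$ compatible with each of these signature-$(n,n)$ pseudo-Riemannian metrics, so classical uniqueness identifies it with both of their Levi--Civita connections. I expect the main obstacle to be exactly the verification just highlighted — that holomorphicity, in the form $df\circ\JJ=i\,df$ applied to the coefficients $g_{kl}$, is what makes the Koszul/Christoffel prescription consistent and $\JJ$ parallel; for a merely smooth $\C$-bilinear metric neither would hold and no connection satisfying \eqref{compatibilita con metrica}--\eqref{torsion free} need exist.
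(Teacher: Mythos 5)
Your proposal is correct, but the comparison with the paper is asymmetric: the paper gives no proof of this Proposition at all --- it defers to LeBrun \cite{holomorphic riemannian 4} --- and merely records, in the Remark that follows, the Koszul identity \eqref{Levi-Civita}, which is exactly what your uniqueness step derives from \eqref{compatibilita con metrica} and \eqref{torsion free}. So your uniqueness argument coincides with the formula the paper takes as a ``direct computation,'' while your existence argument (defining $D$ on a holomorphic coordinate frame by the Christoffel prescription, extending $\C$-bilinearly, and observing that compatibility in the $\partial_{y_i}$-directions is precisely the Cauchy--Riemann relation $\partial_{y_i}g_{kl}=i\,\partial_{x_i}g_{kl}$) supplies precisely the content the paper leaves to the citation; this is the right mechanism, since the Koszul right-hand side is $\C$-linear in $Z$ only when the coefficients $\inner{X,Y}$ are holomorphic. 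Two small points deserve a sentence in a final write-up. First, your extension ``by $\C$-linearity and the Leibniz rule'' already builds $D_{\JJ X}Y=\JJ D_XY$ and $D_X(\JJ Y)=\JJ D_XY$ into the definition, so your subsequent computation of $D\JJ=0$ should be presented as the check that the connection characterized by \eqref{compatibilita con metrica}--\eqref{torsion free} (hence by Koszul) has this property; moreover you only verify it for directions $X=\partial_{x_j}$, and the directions $X=\partial_{y_j}$ need either the first-slot $\C$-linearity of your construction or one more application of Cauchy--Riemann. Second, your closing remark that for a merely smooth $\C$-bilinear metric no torsion-free compatible connection need exist is correct (already for $f\,dz^2$ on $\C$ with $f$ non-holomorphic, $\C$-linearity in $Z$ of the Koszul functional forces $\partial_y f=i\,\partial_x f$) and is a genuine addition not present in the paper: it explains why holomorphicity is the natural hypothesis rather than a convenience. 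The identification of $D$ with the Levi-Civita connections of $Re\inners$ and $Im\inners$ via classical pseudo-Riemannian uniqueness is the standard argument and matches the paper's statement.
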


	\begin{Remark}
		\begin{itemize}
			
			\item A direct computation shows that the Levi-Civita connection $D$ for a holomorphic Riemannian metric $\langle \cdot, \cdot \rangle$ is explicitly described, for all $X,Y, Z\in \Gamma(T\mathbb M)$, by
			\begin{equation}
			\label{Levi-Civita}
			\begin{split}
			\langle D_X Y, Z\rangle= \frac 1 2 \Big( X\inner{Y,Z} + Y\inner{Z,X} - Z\inner{X, Y} + \\
			+\inner{[X,Y],Z} -\inner{[Y,Z],X}+ \inner{[Z,X],Y} \Big).
			\end{split}
			\end{equation}
			
			\item
			Let $\mathbb M=G$ be a complex semisimple Lie group and let $\inners\colon Lie(G)\times Lie(G)\to \C$ be the Killing form. Since the Killing form is non-degenerate and $Ad$-invariant, it can be extended to a holomorphic Riemannian metric on $G$ equivalently by left or right multiplication. In this case, the induced Levi-Civita connection admits a simple description.
			
			Indeed, let $X, Y, Z$ be left-invariant vector fields for $G$ (namely, for all $g\in G$, $X(g)=(L_g)_* X(e_G)$, $Y(g)=(L_g)_* Y(e_G)$, $Z(g)=(L_g)_* Z(e_G)$).
			
			Then $\inner{X,Y},\inner{Y,Z}$ and $\inner{X,Z}$ are constant functions and $\inner{[Z,Y],X}+\inner{[Z,X],Y}=0$ since the Killing form is $ad$-invariant. In conclusion, by the explicit expression (\ref{Levi-Civita}), we get that 
			\[
			D_X Y =\frac 1 2 [X,Y]
			\]
			for all $X,Y$ left-invariant vector fields.

		\end{itemize}
	\end{Remark}

		The notion of Levi-Civita connection $D$ for the metric $\inners$ leads to the standard definition of the $(1,3)$-type and $(0,4)$-type \emph{curvature tensors}, that we will denote with $R$, defined by
		\[
	R(X,Y,Z, T):= -\inner{R(X,Y)Z,T}=- \inner{ \nabla_X \nabla_Y Z - \nabla_Y \nabla_X Z - \nabla_{[X,Y]} Z, T }
	\]
	for all $X,Y,Z, T \in \Gamma(T\mathbb M)$.

		Since $D$ is the Levi-Civita connection for $Re\inner{\cdot, \cdot}$ and for $Im\inner{\cdot, \cdot}$, it is easy to check that all of the standard  symmetries of curvature tensors for (the Levi-Civita connections of) pseudo-Riemannian metrics hold for (the Levi-Civita connections of) holomorphic Riemannian metrics, too. So, for instance, 
		\[
		R(X,Y,Z,T)= -R(X, Y, T, Z)= R(Z, T, X, Y)= -R(Z, T, Y, X).
		\]
		Since the $(0,4)$-type $R$ is obviously $\C$-linear on the last component, we conclude that it is $\C$-multilinear.

	\begin{Remark}	
		\label{curvature tensor Killing}
		Consider the Levi-Civita connection $D$ for the Killing form for a complex semisimple Lie group $G$. Let $[\cdot,\cdot]$ denote the Lie bracket on $T_e G=\Lieg$.  Then, for all $V_1, V_2, V_3\in \Lieg$ 
		\[
		R(V_1, V_2) V_3 =-\frac 1 4 [[V_1,V_2],V_3]
		\]
		See \cite{Milnor} for a proof.
	\end{Remark}
	
\begin{Definition}	A \emph{non-degenerate plane} of $T_p \mathbb M$ is a complex vector subspace $\mathcal V< T_p \mathbb M$ with $dim_\C\mathcal V=2$ and such that $\inner{\cdot, \cdot}_{|\mathcal V}$ is a non degenerate bilinear form.
	
	For holomorphic Riemannian metrics, we can define the {complex sectional curvature} of a nondegenerate complex plane $\mathcal V= Span_\C (V,W)<T_{p}M$ as
	\begin{equation}
	\label{def curvatura}
	K(Span_\C (V,W))=\frac{-\inner{R(V,W)V,W}}{\|V\|^2 \|W\|^2 - \inner{V,W}^2}\newline.
	\end{equation}
	This definition of $K(Span_\C (V,W))$ is well-posed since $R$ is $\C$-multilinear.
\end{Definition}

	\subsection{Holomorphic Riemannian space forms}
	
	We will say that a connected holomorphic Riemannian manifold $\mathbb M=(\mathbb M, \inner{\cdot, \cdot})$ is \emph{complete} if geodesic curves - with respect to the Levi-Civita connection - can be extended indefinitely, equivalently if the exponential map is defined on the whole $T\mathbb M$.
	
	We will call $\emph{holomorphic Riemannian space form}$ a complete, simply connected holomorphic Riemannian manifold with constant sectional curvature.
	
	\begin{Theorem}
		\label{Theorem space forms}
		For all $n\in \Z_+$ and $k\in \C$ there exists exactly one holomorphic Riemannian space form of dimension $n$ with constant sectional curvature $k$ up to isometry.
	\end{Theorem}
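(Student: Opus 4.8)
The plan is to separate the statement into existence and uniqueness: for existence I would exhibit explicit models, and for uniqueness I would run a developing-map (Cartan--Ambrose--Hicks type) argument.

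\textbf{Existence.} For $k=0$ I would take $\C^n$ with the constant $\C$-bilinear form $\sum_i dz_i^2$: its geodesics $t\mapsto p+tv$ are entire in $t\in\C$, so it is complete, it is flat, and it is simply connected. For $k\neq 0$ I would start from the quadric $\mathds X_n=\{z\in\C^{n+1}\mid \sum_i z_i^2=-1\}$ with the metric induced by $\sum_i dz_i^2$. The first point to verify is that this induced form is a genuine holomorphic Riemannian metric: the position vector $z$ is $\inners$-orthogonal to $T_z\mathds X_n$ and satisfies $\inner{z,z}=-1\neq 0$, so the normal line is nondegenerate and hence $T_z\mathds X_n$ is a nondegenerate complex hyperplane. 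To compute the curvature I would apply the Gauss equation to the immersion $\mathds X_n\hookrightarrow(\C^{n+1},\sum_i dz_i^2)$, whose ambient connection $\bar D$ is the flat directional derivative: the field $N=z$ is normal with $\inner{N,N}=-1$ and $\bar D_XN=X$, so the second fundamental form is $\IIs(X,Y)=\inner{X,Y}$ and the Gauss equation gives $\inner{R(X,Y)Z,W}=-\big(\inner{Y,Z}\inner{X,W}-\inner{X,Z}\inner{Y,W}\big)$, that is, constant sectional curvature $-1$. Since rescaling the metric by $\lambda\in\C^*$ multiplies the sectional curvature by $1/\lambda$, the choice $\lambda=-1/k$ produces a model $\mathbb M_k$ of curvature $k$, without affecting completeness or simple connectivity.

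\textbf{Completeness and topology of the model.} I would derive completeness of $\mathds X_n$ from the transitive isometric action of $\SOn$, which preserves $\sum_i z_i^2$ and hence the quadric: $\mathds X_n\cong \SOn/SO(n,\C)$ is a symmetric space, its geodesics through a point are orbits of one-parameter subgroups, and these are defined for all complex times. For simple connectivity, writing $z=x+iy$ with $x,y\in\R^{n+1}$ the quadric becomes $\{\,|x|^2-|y|^2=-1,\ \langle x,y\rangle=0\,\}$, which deformation retracts onto $\{x=0,\ |y|=1\}\cong S^n$ by scaling $x\mapsto tx$; thus $\mathds X_n$ is simply connected for $n\geq 2$. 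The case $n=1$ is special, since the sectional-curvature condition is vacuous in complex dimension one: there a coordinate change $w=\int\sqrt f\,dz$ reduces any complete simply connected holomorphic Riemannian $1$-manifold to $(\C,dw^2)$, settling that dimension for every $k$.

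\textbf{Uniqueness.} Here I would adapt the Cartan--Ambrose--Hicks theorem. The constant-curvature hypothesis forces $R(X,Y)Z=k\big(\inner{Y,Z}X-\inner{X,Z}Y\big)$, so the manifold is locally symmetric; a computation in holomorphic normal coordinates (the Taylor expansion of the metric being governed by $R$ and its covariant derivatives, all determined by $k$) then shows it is locally isometric to $\mathbb M_k$. Fixing base points and a $\C$-linear isometry $L$ of the tangent spaces, I would analytically continue this germ of isometry along all paths. Because holomorphic Riemannian metrics are real-analytic and the curvature condition persists under continuation, the monodromy is trivial on the simply connected $\mathbb M$, producing a globally defined local isometry $\mathrm{dev}\colon\mathbb M\to\mathbb M_k$. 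Finally, completeness of $\mathbb M$ makes $\mathrm{dev}$ a covering map, and since $\mathbb M_k$ is simply connected $\mathrm{dev}$ is a global isometry.

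\textbf{Main obstacle.} I expect the difficulty to lie in the global step, not in the local computations. In this $\C$-bilinear, necessarily indefinite, setting the exponential map is holomorphic but far from a diffeomorphism --- indeed $\mathds X_n$ is not contractible whereas $T_p\mathds X_n\cong\C^n$ is --- so one cannot simply invert $\exp$. The real work will be to establish the holomorphic analogue of the statement that a local isometry out of a complete manifold is a covering map, controlling path-lifting along complex geodesics, and to check that the analytic continuation of the isometry germ is single-valued; it is precisely here that completeness and simple connectivity of $\mathbb M$ are both used.
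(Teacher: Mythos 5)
Your proposal is correct, and it follows the paper's overall decomposition (existence via the explicit models $\C^n$ and $(\mathds{X}_n,-\tfrac{1}{k}\inners)$, uniqueness via a Cartan--Ambrose--Hicks style argument), but several key steps run through genuinely different lemmas. For the curvature of $\mathds{X}_n$ you apply the Gauss equation to the quadric in flat $\C^{n+1}$ with the position vector as normal; the paper instead observes that $SO(n+1,\C)$ acts transitively on nondegenerate tangent planes (hence the curvature is constant) and pins down the value $-1$ by exhibiting a totally geodesic isometric copy of $\Hy^2$. Your computation is more elementary, but note that no Gauss equation for hypersurfaces of holomorphic Riemannian manifolds is available at that stage (Theorem \ref{Teo Gauss-Codazzi} comes later and concerns real hypersurfaces of $\mathds X_{n+1}$), so you must verify it; this is a short formal check starting from $D_XY=(\bar D_XY)^T=\bar D_XY-\inner{X,Y}\,z$, identical to the pseudo-Riemannian derivation. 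For completeness the paper simply writes the exponential map in closed form, Formula \eqref{descrizione geodetiche}, whereas you invoke the symmetric-space structure of $\faktor{SO(n+1,\C)}{SO(n,\C)}$; that works, but it needs the identification of the Levi-Civita connection with the canonical connection of the symmetric space --- mere homogeneity would not suffice, since incomplete homogeneous pseudo-Riemannian metrics exist. For uniqueness the paper transports a linear isometry along piecewise geodesics, checks via Lemma \ref{lemma space form} that it intertwines the curvature tensors, and then quotes the Cartan--Ambrose--Hicks theorem for affine connections as a black box; you instead unpack that theorem into its classical ingredients: Cartan's local-isometry theorem in normal coordinates, analytic continuation of the germ (which uses completeness of the target), monodromy triviality (simple connectivity of the source), and the covering lemma for local isometries out of complete manifolds (completeness of the source, simple connectivity of the target). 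The ``main obstacle'' you flag is exactly the content the paper's citation purchases; it is a genuine but standard point, and it does hold here, because the real part of a holomorphic Riemannian metric is a pseudo-Riemannian metric of signature $(n,n)$ with the same Levi-Civita connection, so the classical broken-geodesic arguments apply verbatim in the affine setting. Finally, your separate treatment of $n=1$ is an improvement on the paper: in complex dimension one the sectional-curvature hypothesis is vacuous and $\mathds{X}_1\cong\C^*$ is not simply connected, so the unique one-dimensional space form is $(\C,dw^2)$ independently of $k$, exactly as you say.
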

	
	We first prove uniqueness, then existence will follow from an explicit description of the space forms. 
	
	\subsubsection{Uniqueness}
	
	In order to prove uniqueness, we extend some standard results in pseudo-Riemannian geometry to the context of holomorphic Riemannian manifolds. 
	
	\begin{Lemma}
		\label{mappa differenziale determina isometria}
		Let $f,g\colon \mathbb M\to \mathbb M'$ be two isometries between holomorphic Riemannian manifolds with $\mathbb M$ connected such that, for some point $p\in \mathbb M$, $f(p)=g(p)$ and $f_{*p}=g_{*p}\colon T_p \mathbb M\to T_{f(p)} \mathbb M'$. Then $f\equiv g$.
	\end{Lemma}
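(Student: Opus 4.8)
The plan is to imitate the classical proof in (pseudo-)Riemannian geometry that an isometry of a connected manifold is determined by its $1$-jet at a single point, taking advantage of the fact that all the differential-geometric ingredients we need are inherited from the underlying pseudo-Riemannian structure. Indeed, since $f$ is an isometry we have $f^*\inners' = \inners$, and taking real parts gives $f^*\operatorname{Re}\inners' = \operatorname{Re}\inners$; thus $f$ (and likewise $g$) is an isometry of the pseudo-Riemannian manifolds $(\mathbb M, \operatorname{Re}\inners)$ and $(\mathbb M', \operatorname{Re}\inners')$, which have signature $(n,n)$. Since the Levi-Civita connection $D$ of $\inners$ coincides with that of $\operatorname{Re}\inners$, the geodesics, the exponential map and the normal neighbourhoods of $D$ are exactly those of this genuine pseudo-Riemannian metric, and the standard ODE theory applies verbatim.

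First I would record the two facts that drive the argument. Because the Levi-Civita connection is uniquely determined by the metric, every isometry preserves $D$; consequently it carries $D$-geodesics to $D$-geodesics and intertwines the exponential maps, namely
\[
f\circ \exp_q = \exp_{f(q)}\circ f_{*q}
\]
wherever both sides are defined, and similarly for $g$. The second fact is the uniqueness of a geodesic with prescribed initial point and initial velocity, together with the existence, around every point, of a normal neighbourhood on which $\exp$ is a diffeomorphism; both are standard for the pseudo-Riemannian metric $\operatorname{Re}\inners$.

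Then I would consider the set
\[
A=\{\,q\in\mathbb M \ :\ f(q)=g(q)\ \text{ and }\ f_{*q}=g_{*q}\,\}.
\]
It contains $p$ by hypothesis and is closed by continuity of $f,g$ and of their differentials. To see that $A$ is open, fix $q\in A$ and a normal neighbourhood $U$ of $q$; any $q'\in U$ can be written $q'=\exp_q(v)$. The curve $t\mapsto\exp_q(tv)$ is mapped by $f$ to the $D$-geodesic through $f(q)$ with initial velocity $f_{*q}v$, and by $g$ to the geodesic through $g(q)$ with initial velocity $g_{*q}v$. Since $q\in A$ these two sets of initial data coincide, so the two image geodesics are equal; evaluating at $t=1$ gives $f(q')=g(q')$. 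Hence $f\equiv g$ on $U$, and in particular $f_{*}=g_{*}$ on $U$, so $U\subseteq A$. Thus $A$ is a non-empty, open and closed subset of the connected manifold $\mathbb M$, whence $A=\mathbb M$ and $f\equiv g$.

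The only point requiring care is the passage at the very beginning: checking that a holomorphic Riemannian isometry really does restrict to a pseudo-Riemannian isometry of the real part, so that the exponential-map machinery is available. Once that is in place the argument is entirely formal, and no genuinely new difficulty arises from the holomorphic (as opposed to smooth pseudo-Riemannian) setting; in fact the same computation shows that $f$ automatically commutes with $\JJ$, although this is not needed here.
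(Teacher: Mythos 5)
Your proof is correct and follows essentially the same route as the paper: both reduce the statement to pseudo-Riemannian geometry by observing that $f$ and $g$ are isometries for $\operatorname{Re}\inners$, whose Levi-Civita connection coincides with that of the holomorphic metric. The only difference is that the paper simply cites the standard pseudo-Riemannian rigidity result at that point, whereas you spell out the usual open-closed argument via normal neighbourhoods, which is a valid (and self-contained) way to finish.
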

	\begin{proof}
	It is sufficient to observe that $f$ and $g$ are isometries for the real part of the holomorphic Riemannian metric, then the thesis follows by standard Pseudo-Riemannian Geometry.
	\end{proof}
	
	\begin{Lemma}
		\label{lemma space form}
		If $\mathbb M$ is a manifold of constant sectional curvature $k\in \C$, then, for any $X,Y,Z\in \Gamma(T\mathbb M)$,
		\[
		R(X,Y)Z= -k (\inner{X,Z}Y - \inner{Y,Z}X).
		\]
		In particular, $R(X,Y)Z\in Span(X,Y)$.
	\end{Lemma}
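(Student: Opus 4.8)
The plan is to reduce the statement to the standard algebraic fact that an algebraic curvature tensor is determined by its sectional curvatures. First I would pass to the $(0,4)$-type tensor and introduce the model tensor
\[
\widetilde R(X,Y,Z,T):=k\big(\inner{X,Z}\inner{Y,T}-\inner{Y,Z}\inner{X,T}\big).
\]
Pairing the claimed identity $R(X,Y)Z=-k(\inner{X,Z}Y-\inner{Y,Z}X)$ with an arbitrary $T$ and using $R(X,Y,Z,T)=-\inner{R(X,Y)Z,T}$, the thesis becomes the equality $R(X,Y,Z,T)=\widetilde R(X,Y,Z,T)$ of $(0,4)$-tensors. Since $\inners$ is non-degenerate, the $(1,3)$-tensor is recovered from the $(0,4)$-tensor, so it suffices to show that $S:=R-\widetilde R$ vanishes identically.

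A direct computation shows that $\widetilde R$ is $\C$-multilinear and shares all the algebraic symmetries of a curvature tensor — antisymmetry in the first pair and in the last pair, symmetry under exchange of the two pairs, and the first Bianchi identity — while $R$ enjoys the same symmetries by the discussion preceding this lemma. Hence $S$ is a $\C$-multilinear algebraic curvature tensor. Moreover $\widetilde R(V,W,V,W)=k(\|V\|^2\|W\|^2-\inner{V,W}^2)$, which by the constant curvature hypothesis and the definition of complex sectional curvature equals $R(V,W,V,W)=-\inner{R(V,W)V,W}$ on every non-degenerate plane $Span(V,W)$. Therefore $S(V,W,V,W)=0$ whenever $Span(V,W)$ is non-degenerate.

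The one place where the complex setting demands an extra argument — and the step I expect to be the main point — is the passage from non-degenerate planes to arbitrary pairs $(V,W)$. Being non-degenerate means $\|V\|^2\|W\|^2-\inner{V,W}^2\neq 0$, a Zariski-open condition that is non-empty since $\inners$ is non-degenerate; thus such pairs are dense in $T_p\mathbb M\times T_p\mathbb M$. As $(V,W)\mapsto S(V,W,V,W)$ is polynomial in $V,W$ by $\C$-multilinearity and vanishes on this dense set, it vanishes everywhere. This replaces the continuity argument available in the pseudo-Riemannian case. Once $S(V,W,V,W)\equiv 0$, I would run the usual polarization: substituting $V\mapsto V+Z$ and using the pair symmetry yields $S(V,W,Z,W)=0$, then $W\mapsto W+T$ yields $S(V,W,Z,T)+S(V,T,Z,W)=0$; together with the symmetries this makes $S$ invariant under cyclic permutation of its first three arguments, so the first Bianchi identity forces $3S\equiv 0$, whence $S\equiv 0$. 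This gives $R=\widetilde R$ and the claimed formula, and the inclusion $R(X,Y)Z\in Span(X,Y)$ is then immediate from the right-hand side.
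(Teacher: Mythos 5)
Your proof is correct and follows essentially the same route as the paper: introduce the model tensor $k\big(\inner{X,Z}\inner{Y,T}-\inner{Y,Z}\inner{X,T}\big)$, note that the difference with the $(0,4)$-curvature tensor is an algebraic curvature tensor whose ``sectional curvatures'' vanish, and kill it by polarization together with the pair symmetry and the first Bianchi identity. If anything, your write-up is more complete than the paper's: the Zariski-density argument you give for passing from non-degenerate planes (where the constant-curvature hypothesis applies) to arbitrary pairs $(V,W)$ addresses a point that the paper's proof passes over silently when it asserts that $(R-T)(X,Y,X,Y)=0$ holds ``clearly.''
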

	\begin{proof}
		Define the tensor $T= k (\inner{X,Z}\inner{Y,W} - \inner{Y,Z}\inner{X,W})$. We show that $R^{0,4}-T \equiv 0$. Clearly, $(R-T)(X,Y,X,Y)=0$. Via the simmetries of the curvature tensor and of $T$, for all $X,Y,Z,U, \in \Gamma(T\mathbb M)$ we have
		\begin{align*}
		&(R-T)(X,Y+Z, X,Y+Z)= 0 \Longrightarrow (R-T)(X,Y,X,Z)= 0;\\
		&(R-T)(X+U,Y, X+U, Z)=0 \Longrightarrow (R-T)(X,Y,U,Z)=(R-T)(Y,U,X,Z);\\
		&(R-T)(X,Y,U,Z)+(R-T)(U,X,Y,Z)+(R-T)(Y,U,X,Z)=0 \Longrightarrow \\
		&	\qquad \Longrightarrow 2(R-T)(X,Y,U,Z)= (R-T)(X,U,Y,Z).
		\end{align*}
		Then $4(R-T)(X,Y,U,Z)=2(R-T)(X,U,Y,Z)=(R-T)(X,Y,U,Z)$. We conclude that $R-T\equiv 0$.
	\end{proof}

	We can now prove uniqueness in Theorem \ref{Theorem space forms}.
	\begin{proof}[Proof of Theorem \ref{Theorem space forms} -Uniqueness]
		Let $({\mathbb M},\inner{\cdot, \cdot}_{\mathbb M}),(\mathbb M',\inner{\cdot,\cdot}_{\mathbb M'})$ be two holomorphic Riemannian space forms with the same dimension $n$ and constant sectional curvature $k\in \C$. Fix any $p\in \mathbb M$ and $q\in \mathbb M'$. Since all the non-degenerate complex bilinear forms on a complex vector space are isomorphic, there exists a linear isometry $L\colon (T_p \mathbb M, \inner{\cdot, \cdot}_{\mathbb M})\to (T_q \mathbb M', \inner{\cdot, \cdot}_{\mathbb M'})$. 
		
	For all $X\in T_p M$ and for all $t\in [0,1]$, the composition of $L$ with the parallel transports via the geodesics $\gamma(t)= exp^{\mathbb M} (tX)$ and $\gamma' _L(t)= exp^{\mathbb M'}( t L(X))$ induces a linear isometry
	\[
	L_\gamma\colon T_{exp^{\mathbb M}_p(X)}{\mathbb M}\to T_{exp^{\mathbb M'}_q (L(X))}\mathbb M'.\]
		The explicit description of the curvature tensor in Lemma $\ref{lemma space form}$ and the fact that $L_\gamma$ is an isometry imply that 
	\[
		L_{\gamma(1)} ^* (R^{\mathbb M'})=  R^{\mathbb M}
		\] 
	both when $R$ is meant as a $(0,4)$-tensor and as a $(1,3)$ tensor.
	
	By iteration, for any piecewise geodesic curve $\gamma\colon [0,1]\to \mathbb M$ with $\gamma(0)=p$, one as a well-defined notion of corresponding piecewise geodesic curve $\gamma_L'\colon [0,1]\to \mathbb M'$, which induces a linear isometry 
	\[ L_\gamma\colon T_{\gamma(1)}\mathbb M  \to T_{\gamma_L' (1)}\mathbb M'
	\]
such that $L_\gamma ^* (R^{\mathbb M'})=  R^{\mathbb M}$.

The classical Cartan-Ambrose-Hicks Theorem for affine connections (e.g. see \cite{Piccione}) allows to conclude that there exist a diffeomorphism 
\[f\colon \mathbb M \to \mathbb M'\]
such that $f^*(D^{\mathbb M'})= D^{\mathbb M}$, $f(p)=q$ and such that, for every piecewise geodesic curve $\gamma\colon[0,1]\to \mathbb M$ with $\gamma(0)=p$, and $f_{*, \gamma(1)}= L_\gamma$. Since any point on $\mathbb M$ can be linked to $p$ by a piecewise geodesic curve, $f$ is an isometry.
	\end{proof}

	\subsubsection{Existence - the spaces $\mathds{X}_n \cong \faktor {SO(n+1, \C)}{SO(n, \C)}$}
	
	The simplest example of complex manifold with a holomorphic Riemannian metric is $\C^n$ with the usual inner product \[\inner{\underline z,\underline w}_0= ^t\underline {z} \cdot \underline w= \sum_{i=1}^n z_i w_i.\]

	In this paper, we will focus on another important class of examples we are going to work with in the article.
	
	Consider the complex manifold
	\[\mathds{X}_n=\{\underline z\in \C^{n+1}\ |\ ^t\underline z\cdot \underline z=-1 \}.\]
	
	The restriction to $\mathds X_n$ of the metric $\inners_0$ of $\C^n$ defines a holomorphic Riemannian metric. Indeed,
	\[
	T_p \mathds{X}_n = p^\bot= \{\underline z\in \C^{n+1}\ |\ \inner{p,\underline z}_0=0\}
	\]
	and the restriction of the inner product to $p^\bot$ is non degenerate since $<p,p>_{\C^{n+1}}\ne 0$; moreover, since $\mathds X_n\subset \C^{n+1}$ is a complex submanifold, local holomorphic vector fields on $\mathds X_n$ extend to local holomorphic vector fields on $\C^{n+1}$, proving that the inherited metric is in fact holomorphic.

Since $SO(n+1,\C)$ acts transitively by isometries on $\C^{n+1}$ with its inner product, it acts transitively by isometries on $\mathds{X}_n$ as well. Moreover, for $e=(0, \dots, 0, i)\in \mathds{X}_n$, \[Stab(e)=\begin{pmatrix}
	SO(n, \C) & \underline 0\\
	^t\underline 0 & 1
	\end{pmatrix}\cong SO(n,\C);\] we conclude that $\mathds X _n$ has a structure of homogeneous space \[\mathds{X}_n\cong \faktor {SO(n+1, \C)}{SO(n, \C)}.\]
	
	\begin{Theorem}
		\label{Theorem explicit space forms}
		The $n-$dimensional space form with constant sectional curvature $k\in \C$ is:
		\begin{itemize}
			\item $\C^n$ with the usual inner product $\inners_0$ for $k=0$;
			\item $(\mathds{X}_n, -\frac {1} k\inner{\cdot, \cdot})$ for $k \in \C^*$.
		\end{itemize}	
	\end{Theorem}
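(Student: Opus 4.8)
The plan is to lean on the uniqueness already proved and reduce the statement to an \emph{existence} check: it suffices to verify that each proposed model is a complete, simply connected holomorphic Riemannian manifold of the asserted constant curvature. The case $k=0$ is immediate, since $\C^n$ with $\inners_0$ is simply connected, its Levi-Civita connection is the standard flat connection (geodesics are affine lines, hence defined for all times), and its curvature tensor vanishes identically, so it is a space form of curvature $0$. All the real work therefore concerns $\mathds X_n$ together with the effect of rescaling its metric by a constant.

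The core step is a direct computation of the induced connection for the embedding $\mathds X_n\hookrightarrow\C^{n+1}$. First I would note that the position vector field $N(p)=p$ is normal to $\mathds X_n$, since $T_p\mathds X_n=p^\bot$, and satisfies $\inner{N,N}=-1$, while the ambient flat connection $\bar\nabla$ acts on it by $\bar\nabla_X N=X$ for every tangent field $X$. Decomposing $\bar\nabla_X Y$ into tangential and normal parts and using compatibility to write $\inner{\bar\nabla_X Y,N}=X\inner{Y,N}-\inner{Y,\bar\nabla_X N}=-\inner{X,Y}$, I obtain $\bar\nabla_X Y=D_X Y+\inner{X,Y}N$, where $D$ is the Levi-Civita connection of $\mathds X_n$. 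Substituting this into the flatness identity $\bar\nabla_X\bar\nabla_Y Z-\bar\nabla_Y\bar\nabla_X Z-\bar\nabla_{[X,Y]}Z=0$ and extracting the tangential component yields
\[
R(X,Y)Z=\inner{X,Z}Y-\inner{Y,Z}X,
\]
so that by the sectional-curvature formula (\ref{def curvatura}) the space $(\mathds X_n,\inners)$ has constant curvature $-1$. Since scaling the metric by a constant $c\in\C^*$ leaves $D$, and hence the $(1,3)$-tensor $R(X,Y)Z$, unchanged (this is evident from the explicit formula (\ref{Levi-Civita}), which is linear in the metric) while multiplying numerator and denominator of (\ref{def curvatura}) by $c$ and $c^2$ respectively, the curvature of $c\inners$ equals $-1/c$; taking $c=-\tfrac1k$ gives curvature $k$, which is exactly the asserted model $(\mathds X_n,-\tfrac1k\inners)$.

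It remains to record completeness and simple connectivity, which I expect to be the more technical (though routine) points. For completeness, a geodesic $\gamma$ satisfies $D_{\dot\gamma}\dot\gamma=0$, i.e.\ $\bar\nabla_{\dot\gamma}\dot\gamma=\inner{\dot\gamma,\dot\gamma}N$; as $\inner{\dot\gamma,\dot\gamma}\equiv c$ is constant along $\gamma$ and $N=\gamma$ along the curve, this becomes the linear ODE $\ddot\gamma=c\,\gamma$ in $\C^{n+1}$, whose solutions are entire and hence defined for all real times, so $\mathds X_n$ is geodesically complete (a short check confirms that the solution with admissible initial data remains on the quadric). For simple connectivity, writing $z=x+iy$ with $x,y\in\R^{n+1}$ shows that ${}^t z\cdot z=-1$ is equivalent to $x\cdot y=0$ and $|y|^2=|x|^2+1$, and normalizing $y$ identifies $\mathds X_n$ diffeomorphically with the tangent bundle $TS^n$, which is simply connected for $n\ge 2$. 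The main obstacle here is not conceptual but one of bookkeeping: pinning down the signs in the normal decomposition and in the rescaling precisely enough that the curvature emerges as $-1$ and then as $k$. (For $n=1$ the sectional-curvature condition is vacuous and $\mathds X_1\cong\C^*$ fails to be simply connected, so that borderline case is to be read with the understanding that one passes to the universal cover.)
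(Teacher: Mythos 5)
Your proposal is correct, and it reaches the crucial point --- that $(\mathds{X}_n,\inners)$ has constant curvature $-1$ --- by a genuinely different route than the paper. The surrounding skeleton is the same in both arguments: reduce to an existence check via the already-proved uniqueness, dispose of $k=0$ immediately, get completeness from the geodesic ODE $\ddot\gamma=\inner{\dot\gamma,\dot\gamma}\,\gamma$ (the paper packages this as the explicit exponential map in Formula \eqref{descrizione geodetiche}), get simple connectivity from the same diffeomorphism $\mathds{X}_n\cong TS^n$, and handle $k\neq -1$ by observing that rescaling the metric preserves the connection and divides the curvature. Where you diverge is the curvature computation itself: you run the classical hyperquadric argument, using the position field $N(p)=p$ as a unit-timelike normal, deriving $\bar\nabla_XY=D_XY+\inner{X,Y}N$, and extracting from flatness of the ambient connection the full tensor identity $R(X,Y)Z=\inner{X,Z}Y-\inner{Y,Z}X$, which gives $K\equiv-1$ on every nondegenerate plane at once. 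The paper instead argues in two steps: constancy of $K$ follows from homogeneity (the stabilizer $SO(n,\C)$ acts transitively on nondegenerate complex planes of $T_e\mathds{X}_n$), and the value $-1$ is then pinned down by exhibiting the isometric embedding $\Hy^2\hookrightarrow\mathds{X}_n$ induced by $\R^{2,1}\hookrightarrow\C^{n+1}$, which is totally geodesic by the exponential-map formula. Your computation is more self-contained and yields the explicit curvature tensor (exactly the identity of Lemma \ref{lemma space form} with $k=-1$) without invoking the group action or prior knowledge of $\Hy^2$; the paper's argument avoids any tensor calculation and recycles objects (the geodesic formula, the real forms sitting inside $\mathds{X}_n$) that it needs again later, e.g. in Proposition \ref{metrica su G}. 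One small implicit step on your side --- that the tangential part of the flat ambient connection is the Levi-Civita connection of the induced metric --- is routine and is asserted in the same spirit in the paper's Remark \ref{topologia delle geodetiche}; and your caveat about $n=1$ matches the paper's implicit restriction to $n\ge 2$ in those remarks.
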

	
	It is clear that $\C^n$ is the flat space form: its real and imaginary parts are indeed the pseudo-Riemannian spaces $\R^{n,n}$ which are flat pseudo-Riemannian space forms and for which the curvature tensor is constantly zero.
	
	It is also clear that, if we prove that $\mathds{X}_n=(\mathds{X}_n, \inner{\cdot, \cdot})$ is the space form for constant sectional curvature $-1$, then, for all $\alpha \in \C^*$, $(\mathds{X}_n, -\frac {1} \alpha\inner{\cdot, \cdot})$  has the same Levi-Civita connection and is the space form of constant sectional curvature $\alpha$.
	
	We give a proof of the fact that $\mathds{X}_n=(\mathds{X}_n, \inner{\cdot, \cdot})$ is the space form of constant sectional curvature $-1$ among the following remarks on the geometry of this space.
	
	\begin{Remark}		
	\label{topologia delle geodetiche}
	\begin{enumerate}

		\item 
		Define a \emph{complex orientation} for $\mathds{X}_n$ as a holomorphic non-zero $n$-form with the property of being $SO(n+1,\C)$-invariant. At least one complex orientation exists: any $\C$-multilinear $n$-form on $T_e \mathds{X}_n$ is $SO(n, \C)$-invariant (indeed, it is $SL(n, \C)$-invariant), so the action of $SO(n+1,\C)$ defines a well-posed $SO(n+1,\C)$-invariant $n$-form on $\mathds{X}_n$. 
		
		By Lemma \ref{mappa differenziale determina isometria}, we have that \[SO(n+1,\C)\cong Isom_0 (\mathds{X}_n)\cong Isom(\mathds{X}_n, \omega_0).\]
		
		Moreover, $Isom_0 (\mathds{X}_n)$ is a subgroup of index $2$ of $Isom(\mathds{X}_n)\cong O(n+1,\C)$.

		\item The Levi-Civita connection $D$ for $\mathds{X}_n$ is the tangent component of the canonical connection $d$ for $\C^{n+1}$: in other words, seeing $T \mathds X_{n}$ as a subbundle of $T\C^{n+1}_{|\mathds  X_{n} } \equiv \mathds X_n \times\C^{n+1}$,  smooth vector fields on $T \mathds X_{n}$ can be seen as smooth functions $\mathds X_n\to \C^{n+1}$, then 
		
		\[D_{X(p)} Y = (d_p(Y)\cdot X(p) )^T \] 
		
		where $  (d_p(Y)\cdot X(p) )^T $ is the tangent component of the vector $ (d_p(Y)\cdot X(p) )$ along $T_p \mathds{X}_n$.
		
		This follows by observing that $d^T$ is a linear connection for $\mathds{X}_n$ which satisfies the same properties as the Levi-Civita connection.
		
		\item The exponential map at a point $p\in \mathds{X}_n \subset \C^{n+1}$  is given by 
		\begin{equation}
		\label{descrizione geodetiche}
		\begin{split}
		\exp_p \colon T_p\mathds{X}_n &\to \mathds{X}_n\\
		v & \mapsto
		\begin{cases}
		\cosh(\sqrt{\langle v, v\rangle} ) p + \frac{\sinh \big({\sqrt{\langle v, v\rangle}}\big)}{\sqrt{\langle v, v\rangle}} v \quad &\text{if $\langle v, v \rangle\ne 0$}\\
		p + v \quad &\text{if $\langle v, v \rangle =0$.}
		\end{cases}
		\end{split}
		\end{equation}
		Indeed, if $\gamma$ is a geodesic on $\mathds{X}_n$, then, setting the conditions $D_{\dot\gamma(t)}\dot \gamma(t)= (\frac{d \dot\gamma(t)}{dt})^T=0$ and $\frac{d \inner{\dot \gamma(t),\dot \gamma (t)}}{dt}=0$, one gets the equation $\ddot \gamma(t)=  \inner{\dot \gamma(t), \dot \gamma(t)} \gamma(t)$ which brings to the expression above. 
		
		Notice that the description of the exponential map is independent of the choice of the square root.

		\item The space $\mathds{X}_n$ is diffeomorphic to $T S^n$. 
		
		Regard $T S^n$ as $\{(\underline u, \underline v)\in \R^{n+1}\times \R^{n+1}\ |\ \|\underline u\|_{\R^{n+1}}=1, \inner{\underline u, \underline v}_{\R^{n+1}} =  0 \}$. Then a diffeomorphism $\mathds{X}_n \xrightarrow{\sim} T S^n$ is given by
		\begin{equation*}
		\underline z = \underline x + i \underline y \mapsto  (\frac{ 1}{\|\underline y\|_{\R^{n+1}}}\underline y, \underline x),
		\end{equation*}
		which is well-posed since \[\inner{\underline x + i \underline y, \underline x +i\underline y}=-1 \Longleftrightarrow \begin{cases}
\|\underline y\|^2_{\R^{n+1}}= \|\underline x \|^2_{\R^{n+1}}+1>0 &\\
\inner{\underline x, \underline y}_0=0
		\end{cases}.\]
		
		In particular, $\mathds{X}_n$ is simply connected for $n \ge 2$.
		
		\item For $n\ge 2$, $\mathds{X}_n$ has constant sectional curvature $-1$. 
		
		It is clear that it has constant sectional curvature since $SO(n,\C)$ acts transitively on complex nondegenerate planes of $(T_e \mathds{X_n},\inners)$. In order to compute the value of the sectional curvature, observe that the embedding 
		\begin{align*}
		\R^{2,1} &\hookrightarrow \C^{n+1}\\
		(x_1, x_2, x_3) &\mapsto (x_1, x_2, 0, \dots, 0, ix_3)
		\end{align*}
		induces an isometric embedding 
		\[
		\Hy^2 \hookrightarrow \mathds X_n
		\]
		which is
		totally geodesic by Formula \eqref{descrizione geodetiche}.

		\item Consider the projective quotient $p_n \colon \C^{n+1}\setminus 0 \to \CP^n$.
		
		Then ${p_n}_{|\mathds{X}_n}$ is a two-sheeted covering on its image $\mathds{PX}_n$, which corresponds to the complementary in $\CP^n$ of the non-degenerate hyperquadric \[Q_n=\{z_1^2+ \dots +z_{n+1}^2=0\}.\]

		The $SO(n+1,\C)$-invariancy of the the metric on $\mathds{X}_n$ implies that the action of $SO(n+1,\C)$ on $\CP^n$ fixes $\mathds{PX}_n$ globally (hence the complementary hyperquadric) and acts by isometries on it.

		The group of isometries of  $\mathds {PX}_n$ is given by $\Proj O(n+1,\C)$ acting on the whole $\CP^n$ as a subgroup of $PGL(n+1,\C)$. Conversely, it is simple to check that $\Proj O(n+1,\C)$ coincides exactly with the subgroup of elements of $PGL(n+1,\C)$ that fix $\mathds {PX}_n$ globally (or, equivalently, that fix $Q_n$ globally).

        \item From the expression of the exponential map given in Formula \eqref{descrizione geodetiche} it is immediate to see that if $W$ is a complex subspace of $\mathbb C^{n+1}$ then $W\cap\XXX_n$ is a totally geodesic complex submanifold. Equivalently, the intersection of $\PXX_n$ with a projective subspace of $\CP^n$ is a totally geodesic complex submanifold. 
        
        It turns out that the intersection of $\PXX_n$ with a complex line $L$ concides with the exponential of a complex subspace of (complex) dimension $1$ of the tangent plane at any point of $L\cap \PXX_n$. As a result, any totally geodesic complex submanifold is in fact the intersection of $\PXX_n$ with a complex projective subspace.
        
        We remark that if $\inners|_{W}$ is not degenerate then $W\cap\XXX_n$ is isometric to $\XXX_{k}$, where k=$\dim_{\mathbb C}W-1$.
        
        \item If $L=\mathbb P(W)$ is a projective line in $\CP^n$, then $L\cap Q$ can contain either one point (if $L$ is tangent to $Q$) or two points (if the intersection is transverse). In the latter case the restriction of the product
        $\inners$ to $W$ admits two isotropic directions, and in particular is not degenerate. In the former case there is only one isotropic direction that in fact is contained in the orthogonal subspace of $W$. The product in this case is degenerate and the restriction of the metric on $L\cap Q$ is totally isotropic.

        \item We stress that there are totally geodesic submanifolds of $\XXX_n$ which are not complex submanifolds. 
        For instance, if $W$ is a real subspace such that $\inners_{|W}$ is real, then
        \eqref{descrizione geodetiche} shows that $W\cap \XXX_n$ is totally geodesic.
        In particular if $\inners_{|W}$ is not degenerate real bilinear form, then $W\cap \XXX_n$ is a pseudo-Riemannian space-form of constant sectional curvature $-1$. 
        
        \item In the above example, the condition that $\inners_{|W}$ is real is essential.
        
        For instance, let $v_1,v_2\in \C^{n+1}$ be such that $\inner{v_i, v_j}_0 =\delta_{ij}$ and define $W=Span_\R (iv_1, \sqrt i v_2)$. Then, $W\cap \XXX_n$ has real dimension $1$ and passes through $iv_1\in \mathds X_n$ where it is tangent to the vector $\sqrt i v_2$. 
        By Formula \eqref{descrizione geodetiche} it is clear that the geodesic of $\XXX_n$ by $\sqrt i v_2$  is not contained in $W$. 
        
        This example shows both that the intersection of $\XXX_n$ with a generic vector subspace of $\C^{n+1}$ need not be totally geodesic and that smooth totally geodesic submanifolds of $\XXX_n$ need not be planar.

	\end{enumerate}
	
		\end{Remark}

\subsection{$\mathds X_1$ as $\CP^1$ endowed with a holomorphic quadratic differential}

By regarding $\mathds X_1$ as $\{(z_1,z_2)\in \C^2\ |\ z_1^2 +z_2^2=-1\}=\{(i\cos(z),i\sin(z))\ |\ z\in \C\}$, one can define the map
\begin{align*}
 F_1\colon \mathds X_1 &\xrightarrow{\sim} \C^*\\
 (z_1,z_2)=(i\cos(z),i\sin(z))&\mapsto z_2-iz_1=e^{iz}
\end{align*}
which is in fact a biholomorphism.

Denote with $\inners$ also the push-forward Riemannian holomorphic metric on $\C^*$.

A straightforward calculation shows that the automorphisms of $(\C^*,\inners)$ correspond to multiplications by a constant: indeed, for all $\begin{pmatrix}
          \cos(\alpha) & - \sin(\alpha)\\
          \sin(\alpha) & \cos(\alpha)
         \end{pmatrix}\in SO(2,\C)$, $\alpha\in \C$, we have 
\[
 F_1\ ^t\bigg( \begin{pmatrix}
          \cos(\alpha) & - \sin(\alpha)\\
          \sin(\alpha) & \cos(\alpha)
         \end{pmatrix} \cdot \begin{pmatrix}
         i\cos(z)\\
         i\sin(z)
         \end{pmatrix} \bigg)= F_1( (i\cos(z+\alpha), i\sin(z+\alpha))= e^{i\alpha}e^{iz}.
\]

Using this isotropy, we can compute explicitly the holomorphic Riemannian metric. At each point $z\in \C^*$, we have 
\[\inners_z= \lambda(z) dz^2
\]
for some holomorphic function $\lambda\colon \C^*\to \C$. 
 
Invariance by constant multiplication implies $\lambda(z)=\frac {\lambda(1)} {z^2}$. 
In order to compute $\lambda (1)$, set the condition for which the vector $F_{*(0,i)} \bigg(\begin{pmatrix} 1 \\ 0 \end{pmatrix}\bigg)= -i \in T_i \C^*$ has norm $1$ to get $\lambda(1)=1$: we conclude that 
\[
 \inners= \frac {dz^2}{z^2}.
\]

In general, we can see $\mathds X_1$ as $\CP^1\setminus \{ p_1,p_2\}$ and the holomorphic Riemannian metric as some holomorphic quadratic differential on $\CP^1$ with exactly two poles of order $2$ in $p_1$ and $p_2$.

Finally, a direct computation via $F$ shows that geodesics in $(\C^*, \inners)$ are all of the form $t\mapsto \mu_1 e^{t\mu_2}$ with $\mu_1\in \C^*$, $\mu_2\in \C$: the images correspond either to a circumference with center $0$, a straight ray connecting $0$ and $\infty$ or a spiraling ray connecting $0$ and $\infty$. As a result, the geodesics for $\inners$ coincide with the geodesics for the flat structure $\frac{|dz|^2}{|z|^2}$ induced by the metric seen as a holomorphic quadratic differential. 
\vspace{6mm}

\begin{tikzpicture}[scale=0.4]
\draw[domain=-500:-200, hobby,densely dashed] plot ({-exp(\x/200)*sin(\x)},{exp(\x/200)*cos(\x)});
\draw[domain=-200:400, hobby] plot ({-exp(\x/200)*sin(\x)},{exp(\x/200)*cos(\x)});
\draw[domain=400:410, hobby, densely dashed] plot ({-exp(\x/200)*sin(\x)},{exp(\x/200)*cos(\x)});
\draw (12,0) ellipse (1 and 1); 
\draw (24,0.5)--(24,5);
\draw[densely dashed] (24,5)--(24,6);
\draw[densely dashed] (24,0)--(24,0.5);
\coordinate [label=above:$i$] (A) at (0,1); \fill (A) circle (3pt);
\coordinate [label=above:$i$] (B) at (12,1); \fill (B) circle (3pt);
\coordinate [label=right:$i$] (C) at (24,1); \fill (C) circle (3pt);
\coordinate [label=below:$0$] (D) at (0,0); \fill (D) circle (1.5pt);
\coordinate [label=below:$0$] (E) at (12,0); \fill (E) circle (1.5pt);
\coordinate [label=below:$0$] (F) at (24,0); \fill (F) circle (1.5pt);
\end{tikzpicture}

	\subsection{$\mathds{X}_2$ as the space of geodesics of $\Hy^3$ }	
	
	In order to get a useful description of $\mathds X _2$, we study $\mathds{P X}_2\subset \CP^2$ which corresponds to the complementary of the projective conic $\partial \mathds{P X}_2= Q:= \{z_1^2 +z_2^2 +z_3^2=0\}$. 
	
	Notice that $Q$ can be seen as the image of some adapted version of the Veronese embedding
	\begin{align*}
	v\colon \CP^1 &\to  Q\subset \CP^2\\
	(t_1 \colon t_2 ) &\mapsto (i(t_1^2 +t_2^2)\colon 2t_1 t_2\colon t_1^2-t_2^2)
	\end{align*} 

A straightforward computation shows that there exists an isomorphism 
\[\delta \colon \PSL \to SO(3,\C)\subset PGL(3,\C)\]
such that $v$ is $\delta$-equivariant with respect to the action of $\PSL$ on $\CP^1$ and of $PGL(3,\C)$ on $\CP^2$.

	We can therefore describe a holomorphic 2-sheeted covering, hence a universal covering, of $\Proj \mathds X_2$ via the map
	\begin{equation}
	\label{Mappa 2:1}
	\begin{split}
	u \colon \GG:=(\CP^1 \times \CP^1) \setminus \Delta &\to  \mathds{P X}_2 \\
	(p, q) &\mapsto \ell_{v(p)}  \cap \ell_{v(q)}
	\end{split}
	\end{equation}
	where $\ell_{v(x)}$ denotes the tangent line to the quadric $Q$ in the point $v(x)$. The fact that $u$ is a $2$-sheeted covering follows from the fact that $Q$ is a conic, hence, for every fixed external point, there are exactly two lines passing by that point and tangent to $Q$. Observe that $u$ is $\delta$-equivariant, too.
	
	A useful remark is that $u(p,q)=[\underline z]$ where $\underline z\in \C^3$ is the vector - unique up to a scalar- such that any representatives in $\C^3$ of $v(p)$ and $v(q)$ are orthogonal to $\underline z$, namely $u(p,q)=[v(p)\times v(q)]$.  
	
	By uniqueness of the universal cover, we conclude that $\XXX_{2}$ is biholomorphic to $\GG=\CP^1\times \CP^1\setminus \Delta$.
	Identifying oriented lines in $\Hy^3$ with the oriented couple of their endpoints in $\partial \Hy^3\cong \CP^1$, one can see $\GG$ as the space of maximal oriented unparametrized geodesics of $\Hy^3$.

	From now on, we see $\GG$ as $\CP^1 \times \CP^1 \setminus \Delta$ endowed with the pull-back holomorphic Riemannian metric through ($\ref{Mappa 2:1}$). 
	
	Let $(U,z)$ be an affine chart for $\CP^1$, so $(U\times U\setminus \Delta, z\times z)$ is a holomorphic chart for $\GG$, set $z\times z=:(z_1, z_2)$
		
	\begin{Prop}
	\label{metrica su G}
		The pull-back Riemannian holomorphic on $\GG$ is locally described by \[-\frac{4}{(z_1-z_2)^2} dz_1 dz_2,\] so $\GG$ endowed with this Riemannian holomorphic metric is isometric to $\mathds{X}_2$. 
		
		Moreover, $Isom_0 (\GG)\cong \PSL$ acting diagonally on $\GG$ via M\"obius maps and the immersion
		\begin{align*}
		    \Hy^2 &\to \GG\\
		    z &\mapsto (z,\overline z),
		\end{align*}
		with $\Hy^2$ in the upper half-plane model, is an equivariant isometric embedding.
	\end{Prop}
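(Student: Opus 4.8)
The plan is to work with the explicit covering $u$ of \eqref{Mappa 2:1} and to lift it to a local isometry into the quadric $\mathds X_2\subset \C^3$, thereby reducing the whole statement to a computation with the ambient form $\inner{\cdot,\cdot}_0$. In the affine chart $z=t_2/t_1$ the Veronese map reads $v(z)=(i(1+z^2),2z,1-z^2)$, and an elementary computation gives the key identities $\inner{v(z),v(z)}_0=0$ and $\inner{v(z_1),v(z_2)}_0=-2(z_1-z_2)^2$. Writing $w(z_1,z_2)=v(z_1)\times v(z_2)$ for the representative of $u(z_1,z_2)$ described in the excerpt, the Lagrange identity for $\inner{\cdot,\cdot}_0$ yields $\inner{w,w}_0=\inner{v(z_1),v(z_1)}_0\inner{v(z_2),v(z_2)}_0-\inner{v(z_1),v(z_2)}_0^2=-4(z_1-z_2)^4$. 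Hence $\tilde u:=\frac{1}{2(z_1-z_2)^2}\,w$ is a local lift of $u$ landing in $\mathds X_2$, and pulling back the metric amounts to computing the first fundamental form $\inner{d\tilde u,d\tilde u}_0$.

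The heart of the argument is to observe that the two coordinate directions are isotropic. Since $u(z_1,z_2)\in\ell_{v(z_2)}$ for every $z_1$, the curve $z_1\mapsto\tilde u(z_1,z_2)$ stays in the fixed two-plane $W\subset\C^3$ spanned by $\tilde u$ and $v(z_2)$ (the tangent line passes through its tangency point). Differentiating $\inner{\tilde u,\tilde u}_0=-1$ and using $\inner{\tilde u,v(z_2)}_0=0$ forces the $\tilde u$-component of $\partial_{z_1}\tilde u$ to vanish, so $\partial_{z_1}\tilde u$ is a multiple of $v(z_2)$, which is isotropic because $v(z_2)\in Q$. Symmetrically $\partial_{z_2}\tilde u$ is a multiple of $v(z_1)$. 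Therefore $\inner{\partial_{z_1}\tilde u,\partial_{z_1}\tilde u}_0=\inner{\partial_{z_2}\tilde u,\partial_{z_2}\tilde u}_0=0$, so the pull-back metric has no $dz_1^2$ or $dz_2^2$ term and equals $2B(z_1,z_2)\,dz_1dz_2$. The coefficient $B$ is then pinned down in one of two equivalent ways. Directly, one writes $\partial_{z_i}\tilde u=\beta_i v(z_j)$ and extracts $\beta_i$ by differentiating $\inner{\tilde u,v(z_i)}_0=0$, reducing the mixed term to the scalar triple products $\det[v(z_1),v(z_2),v'(z_i)]$. Conceptually, since $u$ is $\delta$-equivariant the metric is invariant under the diagonal Möbius action; as $\tfrac{dz_1dz_2}{(z_1-z_2)^2}$ is invariant of the right weight and $\PSL$ is transitive on ordered pairs of distinct points, the function $B(z_1,z_2)(z_1-z_2)^2$ is constant, so a single evaluation suffices. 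Either route gives $-\frac{4}{(z_1-z_2)^2}\,dz_1dz_2$.

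The remaining assertions are then formal. The isometry with $\mathds X_2$ follows because $u$ is by construction a local isometry, and, as recalled in Remark \ref{topologia delle geodetiche}, both $u$ and the projection $\mathds X_2\to\mathds {PX}_2$ are universal (two-sheeted) coverings of $\mathds {PX}_2$; uniqueness of the universal cover turns the resulting biholomorphism $\GG\xrightarrow{\sim}\mathds X_2$ into an isometry. The identification $Isom_0(\GG)\cong\PSL$ acting diagonally comes from combining $Isom_0(\mathds X_2)\cong SO(3,\C)$ (Remark \ref{topologia delle geodetiche}) with the isomorphism $\delta\colon\PSL\to SO(3,\C)$ and the $\delta$-equivariance of $u$, which transports the $SO(3,\C)$-action on $\mathds X_2$ to the diagonal Möbius action on $\GG$.

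Finally, for $z\mapsto(z,\overline z)$: equivariance is immediate, since a real Möbius transformation $\phi$ satisfies $\phi(\overline z)=\overline{\phi(z)}$, so the diagonal $\PSL$-action preserves the graph $\{(z,\overline z)\}$ and restricts there to the $PSL(2,\R)$-action on $\Hy^2$. Isometry is a one-line substitution: setting $z_1=z=x+iy$ and $z_2=\overline z=x-iy$ gives $(z_1-z_2)^2=-4y^2$ and $dz_1\,dz_2=dz\,d\overline z=dx^2+dy^2$, so the pull-back of $-\frac{4}{(z_1-z_2)^2}\,dz_1dz_2$ is $\frac{dx^2+dy^2}{y^2}$, the hyperbolic metric of the upper half-plane; this also cross-checks the normalising constant against the isometric copy of $\Hy^2$ in $\mathds X_2$ from Remark \ref{topologia delle geodetiche}. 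The main obstacle is the metric computation of the first two paragraphs — organising it so that the isotropy of $\partial_{z_1}\tilde u$ and $\partial_{z_2}\tilde u$ does most of the work and only the single constant $-4$ requires an honest evaluation.
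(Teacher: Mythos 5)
Your proposal is correct, and its core is genuinely different from the paper's proof. You determine the metric by direct computation: you construct the explicit local lift $\tilde u=\frac{1}{2(z_1-z_2)^2}\,v(z_1)\times v(z_2)$ of the covering \eqref{Mappa 2:1} into $\mathds X_2\subset\C^3$ (the normalization being legitimate by the Lagrange identity together with $\inner{v(z_1),v(z_2)}_0=-2(z_1-z_2)^2$, both of which check out), you get isotropy of the two coordinate directions from the containment of each curve $z_1\mapsto\tilde u(z_1,z_2)$ in the fixed $2$-plane $v(z_2)^\perp$, and you reduce the remaining mixed coefficient to triple products; carrying out that last evaluation does give $\partial_{z_1}\tilde u=\frac{i}{(z_1-z_2)^2}\,v(z_2)$ and $\partial_{z_2}\tilde u=-\frac{i}{(z_1-z_2)^2}\,v(z_1)$, whence $\inner{\partial_{z_1}\tilde u,\partial_{z_2}\tilde u}_0=-\frac{2}{(z_1-z_2)^2}$ and the claimed formula. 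The paper never computes through a lift: it obtains isotropy of the factor directions from Remark \ref{topologia delle geodetiche} (tangent lines to $Q$ meet $\PXX_2$ in totally isotropic sets), uses $\PSL$-invariance to reduce the metric to $\lambda_0\frac{dz_1dz_2}{(z_1-z_2)^2}$ --- essentially your ``conceptual'' second route --- and then fixes $\lambda_0=-4$ indirectly via curvature: for that value, $z\mapsto(z,\overline z)$ is an isometric embedding of $\Hy^2$ whose image is the fixed locus of the isometric involution $(z,w)\mapsto(\overline w,\overline z)$, hence totally geodesic, so the (constant, by homogeneity) curvature of $\GG$ must equal $-1$, as it must for $\mathds X_2$. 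What each approach buys: your route is self-contained, produces the constant $-4$ with no appeal to curvature or total geodesicity (the hyperbolic embedding becomes a one-line substitution check at the end), and it spells out the $Isom_0(\GG)$ and equivariance claims that the paper's proof leaves implicit; the paper's route avoids all coordinate work with the Veronese map and establishes, as a by-product that its normalization argument actually requires, that $\Hy^2\hookrightarrow\GG$ is totally geodesic --- a stronger piece of information than your verification that it is isometric.
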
	
	
	\begin{proof}
	\begin{itemize}
\item 
Isotropic complex geodesics in ${\mathbb G}$ correspond to the factor components, i.e. they are of the form $(\CP^1 \setminus \{p_2\}) \times \{p_2\}$ and $\{p_1\} \times (\CP^1 \setminus \{p_1\})$.

Indeed, the map $(\ref{Mappa 2:1})$ sends $\{p\} \times (\CP^1 \setminus \{p\})$ into the projective line $L(p)$ tangent to $Q$ at $v(p)$  by Remark $\ref{topologia delle geodetiche}.8$, we conclude that this line is totally isotropic.

\item
We fix an affine chart $(U,z)$ for $\CP^1$ and we want to write the metric on the chart $(U\times U \setminus \Delta, z\times z)$ for $\GG$. Denote $z\times z=(z_1,z_2)$.

	Since the isotropic directions for $\inners_\GG$ are the factor components, the metric at the point $(1,0)$ is of the form
	\[
	\inners_{1,0} = \lambda_0 dz_1 dz_2
	\]
	for some $\lambda_0 \in \C^*$.
	
	Since the metric is invariant by the action of $\PSL$, consider for all $(z_1, z_2)\in \C\times \C \setminus \Delta$ the morphism $\psi\colon w \mapsto  z_2w + z_1 (1-w)$ and use it to deduce that
	\[\inners_{(z_1, z_2)} = \psi^{-1}_* \inners_{0,1}=\lambda_0 \frac{1}{(z_1-z_2)^2} dz_1 dz_2.
	\]

\item	We show that for $\lambda_0=-4$ we have constant curvature $-1$. Emulating our proof of the fact that $\mathds{X}_2\subset \C^3$ has constant curvature $-1$, it is sufficient to show that, for this choice of $\lambda_0$, there exists an isometric embedding $\Hy^2\to (\GG,\inners)$ which is totally geodesic.

Consider the immersion
\begin{align*}
\sigma \colon H&\to \GG \\ 
	z &\mapsto (z,\overline z)
	\end{align*}
	where $H=\{z\in \C\ |\ Im(z)>0\}$ is the upper half plane.
	 
	For $\lambda_0=-4$, the pull-back metric on $H$ is
	 	\[
	\lambda_0 \frac 1 {- 4 (Im(z))^2} dz\cdot d\overline z=\frac 1 {(Im(z))^2} dz d\overline z,
	\]
	so $\sigma$ is an isometric immersion of the hyperbolic plane $\Hy^2$.
	
	Moreover, observe that $\sigma(H)$ is the fixed locus of the involution $(z, w)\mapsto (\overline w, \overline z)$. A direct computation shows that this involution is an isometry for the pseudo-Riemannian metric given by $Re\big( \frac{1}{(z_1-z_2)^2} dz_1 dz_2\big)$, hence it is an isomorphism for the induced Levi-Civita connection which coincides with the one induced by $\inners$: we conclude that $\sigma$ is totally geodesic.

\end{itemize}	
\end{proof}

	\subsection{$\mathds{X}_3$ as $\SL$}
	We show that $\mathds{X}_3$ is isometric (up to a scale) to the complex Lie group $\SL=\{A\in Mat(2,\C)\ |\ det(A)=1 \}$ equipped with the holomorphic Riemannian metric given by the Killing form, globally pushed forward from $I_2$ equivalently by left or right translation.
	
	Consider on $Mat(2,\C)$ the non-degenerate quadratic form given by $M\mapsto -det(M)$, which corresponds to the complex bilinear form 
	\[
	\inner{M,N}_{Mat_2}=\frac{1}{2}\bigg(tr(M\cdot N)- tr(M)\cdot tr(N) \bigg).
	\]
	In the identification $T Mat(2,\C)=Mat(2,\C)\times Mat(2,\C)$, this complex bilinear form induces a holomorphic Riemannian metric on $Mat(2,\C)$. 
	
	Observe that the action of $\SL\times \SL$ on $Mat(2,\C)$ given by $(A,B)\cdot M:= A 
	M B^{-1}$ is by isometries, because it preserves the quadratic form.

	Since all the non-degenerate complex bilinear forms on complex vector spaces of the same dimension are isomorphic, there exists a linear isomorphism $F\colon (\C^4,\inners_0) \to (Mat(2,\C),\inners_{Mat_2})$ which is also an isometry of holomorphic Riemannian manifolds: such isometry $F$ restricts to an isometry between $\mathds X_3$ and $SL(2,\C)$, where $SL(2,\C)$ is equipped with the submanifold metric.

For all $A\in \SL$,  $T_A \SL=   A\cdot T_{I_2} SL(2,\C)$ and \[T_{I_2} SL(2,\C)=\asl= \{M\in Mat(2,\C) \ |\ tr(M)=0 \}\] can be endowed with the structure of complex Lie algebra associated to the complex Lie group $\SL$.
	
	The induced metric on $\SL$ at a point $A$ is given by
	\[
	\inner{AV,AW}_A= \inner{V,W}_{I_2}= \inner{V,W}_{Mat_2}= \frac{1}{2} tr(V\cdot W),
	\]
	
	as a consequence we have $\inners_{I_2}=\frac{1}{8} Kill$ where $Kill$ is the Killing form of $\asl$, which is an $Ad$-invariant bilinear form (e.g see \cite{Killing Ad}) and $\inners_A$ is the symmetric form on $T_A \SL$ induced by pushing forward the Killing form equivalently by right or left translation by $A$.
	
	We will often focus on $\PSL=\faktor{\SL}{\{\pm I_2\}}\cong \Proj \mathds X_3$ too.

	\begin{Proposition}
		\[Isom_0 (\SL)\cong \faktor{\SL \times \SL}{\{\pm(I_2,I_2)\}}\] where the action of $\SL\times \SL$ is given by
		\begin{equation}
		\label{automorfismi SL}
		\begin{split}
		{\SL \times \SL}\times \SL \to \SL\\
		(A,B)\cdot C := A\ C\ B^{-1}
		\end{split}
		\end{equation}  
	\end{Proposition}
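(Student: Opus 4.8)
The plan is to realize the stated quotient as the image of an explicit homomorphism and then to identify that image with the whole identity component by a dimension count. First I would define
\[
\phi\colon \SL\times\SL \to Isom_0(\SL), \qquad \phi(A,B)\colon C\mapsto A\,C\,B^{-1}.
\]
Each $\phi(A,B)$ sends $\SL$ to itself, since $\det(ACB^{-1})=\det C$, and preserves the restriction of the form $\inner{\cdot,\cdot}_{Mat_2}$, which was already observed to be invariant under the action $(A,B)\cdot M = AMB^{-1}$ on $Mat(2,\C)$; hence $\phi(A,B)$ is an isometry of $\SL$. A direct check gives $\phi\big((A,B)(A',B')\big)=\phi(A,B)\,\phi(A',B')$, so $\phi$ is a group homomorphism into $Isom(\SL)$. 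Since $\SL\times\SL$ is connected and $\phi(I_2,I_2)=\mathrm{id}$, the image is a connected subset through the identity, so it lies in $Isom_0(\SL)$.

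Next I would compute the kernel. If $\phi(A,B)=\mathrm{id}$, then $ACB^{-1}=C$ for every $C\in\SL$; taking $C=I_2$ forces $B=A$, and then $ACA^{-1}=C$ for all $C$ says that $A$ lies in the centre of $\SL$, i.e. $A=\pm I_2$. Thus $\ker\phi=\{\pm(I_2,I_2)\}$, and $\phi$ descends to an injective homomorphism
\[
\bar\phi\colon \faktor{\SL\times\SL}{\{\pm(I_2,I_2)\}} \hookrightarrow Isom_0(\SL).
\]

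It remains to prove that $\bar\phi$ is surjective, which is the only substantial point. By Remark~\ref{topologia delle geodetiche} applied through the isometry $F\colon\XXX_3\to\SL$ constructed above, one has $Isom_0(\SL)\cong SO(4,\C)$, so $\dim_\C Isom_0(\SL)=6=\dim_\C(\SL\times\SL)$. Since $\bar\phi$ is injective with discrete kernel, its differential at the identity is injective between Lie algebras of equal dimension, hence an isomorphism; therefore $\bar\phi$ is a local biholomorphism at the identity, its image is an open subgroup of the connected group $Isom_0(\SL)$, and so $\bar\phi$ is onto. The main obstacle is precisely this surjectivity: a priori there is no reason for every isometry of $\SL$ to be of the bi-invariant form $C\mapsto ACB^{-1}$, and the argument leans essentially on the earlier identification $Isom_0(\XXX_3)\cong SO(4,\C)$ to pin down the dimension. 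Alternatively, one could make the exceptional isomorphism $\faktor{\SL\times\SL}{\{\pm(I_2,I_2)\}}\cong SO(4,\C)$ explicit via the action on $(Mat(2,\C),-\det)\cong(\C^4,\inners_0)$ and check that it intertwines with $Isom_0(\SL)$ through $F$.
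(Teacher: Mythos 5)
Your proposal is correct and follows essentially the same route as the paper: show the action $(A,B)\cdot C = ACB^{-1}$ is by isometries via $\det$-preservation, identify the kernel $\{\pm(I_2,I_2)\}$, and conclude by the dimension count $\dim_\C\big(\SL\times\SL\big)=6=\dim_\C Isom_0(\SL)\cong \dim_\C SO(4,\C)$. You merely spell out more carefully the final step the paper leaves terse (injective differential, hence local diffeomorphism, hence open subgroup of the connected group $Isom_0(\SL)$, hence surjective), which is a welcome addition but not a different argument.
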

	\begin{proof}
	Since $Isom_0(\mathds X_3)\cong Isom_0(\C^{4}, \inners_0)\cong SO(4, \C)$, one has  $Isom_0(\SL)\cong Isom_0(Mat(2,\C))\cong SO(4,C)$. Since, for all $A, B\in \SL$, $det(A\ M\ B^{-1} )= det(M)$, the action above preserves the quadratic form, hence it is by isometries. We therefore have a homomorphism $\SL\times \SL \to Isom(\SL)$ whose kernel is $\{\pm(I_2,I_2)\}$. Finally, since \[dim_\C \bigg(\faktor{\SL\times \SL}{ \{\pm(I_2,I_2)\} }\bigg)=6= dim_\C Isom(\SL),\] we conclude that $Isom_0 (\SL)\cong \faktor{\SL\times \SL}{ \{\pm(I_2,I_2)\}}$
	\end{proof}

	\subsubsection{The immersion $\GGG \hookrightarrow \SL$}
	
	There is a geometric description of an isometric embedding of $\GGG$ into $\SL$. 
	
	Observe that one example of isomorphism between $(\C^4, \inners_0)$ and $(Mat(2,\C), \inners_{Mat_2})$ is given by
	\begin{equation}
	\label{iso C4 Mat(2,C)}
	\begin{split}
	F\colon\C^4 &\to Mat(2,\C)\\
	(z_1, z_2, z_3, z_4)&\mapsto 
	\begin{pmatrix}
	-z_1- iz_4 &  -z_2-iz_3\\
	-z_2 + iz_3 & z_1-iz_4
	\end{pmatrix}.
	\end{split}
	\end{equation}
    As we previously observed, it restricts to an isometry $F\colon \mathds X_3 \to \SL$. By regarding $\mathds X_2$ as $\{(z_1, z_2, z_3,0)\in \mathds X_3\}$, $F$ induces an isometry $F\colon \mathds X_2 \to\asl \cap \SL$. With respect to the standard action of $\SL$ on $\Hy^3$, it is simple to check that $\asl \cap \SL$ corresponds exactly to orientation-preserving isometries of order $2$, i.e. to rotations of angle $\pi$ around some axis in $\Hy^3$. We therefore have a 2-sheeted covering map 
    \[
    Axis\colon \SL\cap \asl\to \GGG_\sim =\nicefrac{(\CP^1\times \CP^1\setminus \Delta)}{(x,y)\sim(y,x)}
    \]
    that sends each matrix to the Axis of the corresponding isometry in $\Hy^3$.
    
    By uniqueness of the universal cover, $Axis$ lifts to some diffeomorphism \[\widetilde{Axis}\colon \SL\cap \asl \to \GGG.\] One can also explicitly check that the following diagram commutes:
    \[
    \begin{tikzcd}
    \mathds X_2 \arrow[r, "F"] \arrow[d, "2:1"] & \SL\cap \asl \arrow[r, "Axis"] & \GGG_\sim \\
    \PXX_2 & &\GGG \arrow[u, "2:1"] \arrow[ll, swap, "(\ref{Mappa 2:1})"]
    \end{tikzcd}
    \]
   showing that $Axis$ is a biholomorphism and that the holomorphic Riemannian metric on $\GGG$ obtained by pushing-forward via $\widetilde{Axis}$ coincides with the one we had previously defined.

\subsubsection{Orientation and cross product}

	Given a complex vector space $\mathcal V$ equipped with a non-degenerate complex bilinear form, we will say that a vector $V \in \mathcal V$ is respectively \emph{spacelike, isotropic, timelike} if $\|V\|^2$ is a \textbf{real} number and, respectively, positive, equal to zero, negative. We also say that a real vector subspace $\mathcal W \subset \mathcal V$ is \emph{spacelike, isotropic, timelike} respectively if the Killing form restricted to $\mathcal W$ is real and positive-definite, degenerate, negative-definite. 
	
	We say that two vectors $V_1, V_2 \in \mathcal V$ are \emph{orthonormal} if $\inner{V_i, V_j}= \delta_{ij}$. An \emph{orthonormal basis} is a basis of pairwise orthonormal vectors.
	
	\begin{Lemma}
		For all $V,W\in \asl$,
		\[
		\| [V, W] \|^2 =-4 \|V\|^2 \|W\|^2 + 4 \inner{V,W}^2.
		\]
		Moreover, if $V, W$ are orthonormal, then $ (V, W, \frac 1 {2i} [V,W] )$ is an orthonormal basis with respect to $\inners$.
	\end{Lemma}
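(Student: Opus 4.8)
The plan is to reduce everything to explicit trace computations on trace-free $2\times2$ matrices and exploit Cayley--Hamilton. Since $V,W\in\asl$ are trace-free, the metric restricts to $\inner{V,W}=\frac12 tr(VW)$ and $\|V\|^2=\frac12 tr(V^2)$; moreover $[V,W]$ is again trace-free, so $\|[V,W]\|^2=\frac12 tr([V,W]^2)$. The first fact I would record is that Cayley--Hamilton for a trace-free $2\times2$ matrix gives $V^2=-det(V)\,I$, and since $tr(V^2)=-2\,det(V)$ this reads $V^2=\|V\|^2 I$. Polarizing (applying this to $V+W$ and subtracting the pure terms) yields the anticommutator relation
\[
VW+WV=2\inner{V,W}\,I.
\]

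Next I would expand $[V,W]^2=(VW-WV)^2$ and collapse it with cyclicity of the trace to $tr([V,W]^2)=2\,tr(VWVW)-2\,tr(V^2W^2)$. The two surviving traces follow from the identities above: from $V^2W^2=\|V\|^2\|W\|^2 I$ one gets $tr(V^2W^2)=2\|V\|^2\|W\|^2$, while substituting $WV=2\inner{V,W}I-VW$ into $V(WV)W$ gives $VWVW=2\inner{V,W}\,VW-V^2W^2$, hence $tr(VWVW)=4\inner{V,W}^2-2\|V\|^2\|W\|^2$. Assembling these, $tr([V,W]^2)=8\inner{V,W}^2-8\|V\|^2\|W\|^2$, and halving gives exactly $\|[V,W]\|^2=-4\|V\|^2\|W\|^2+4\inner{V,W}^2$.

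For the orthonormal statement, assume $\inner{V,V}=\inner{W,W}=1$ and $\inner{V,W}=0$. The identity just proved gives $\|[V,W]\|^2=-4$, so $\|\tfrac1{2i}[V,W]\|^2=\tfrac{1}{-4}(-4)=1$; in particular $[V,W]\neq0$. Orthogonality to $V$ and $W$ is immediate from cyclicity of the trace, since $\inner{V,[V,W]}=\frac12\big(tr(V^2W)-tr(VWV)\big)=0$ and likewise $\inner{W,[V,W]}=0$. Three mutually orthonormal vectors in the three-dimensional space $\asl$ are linearly independent, hence form a basis. The computations are routine; the only real content is spotting the polarized relation $VW+WV=2\inner{V,W}I$, which is what collapses the quartic trace $tr(VWVW)$ and where a naive expansion would otherwise stall. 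Conceptually the whole lemma records that under the isometry $\asl\cong(\C^3,\inners_0)$ the Lie bracket becomes $2i$ times the cross product, so that the first identity is Lagrange's identity and the second is the orthonormality of $(u,v,u\times v)$.
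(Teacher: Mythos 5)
Your proof is correct, but it follows a genuinely different route from the paper's. The paper's proof is a one-liner that leans on machinery built earlier in Section 2: ad-invariance of the Killing form gives $\inner{[V,W],[V,W]}=-\inner{W,[V,[V,W]]}$, Milnor's formula $R(V_1,V_2)V_3=-\frac14[[V_1,V_2],V_3]$ for bi-invariant metrics converts the double bracket into the curvature tensor, and the constant-curvature-$(-1)$ formula of Lemma \ref{lemma space form} then evaluates $-4\inner{W,R(V,W)V}=-4\|V\|^2\|W\|^2+4\inner{V,W}^2$. Your argument instead stays entirely inside $Mat(2,\C)$: Cayley--Hamilton for trace-free matrices gives $V^2=\|V\|^2 I$, polarization gives the anticommutator relation $VW+WV=2\inner{V,W}I$, and cyclicity of the trace finishes the computation. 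What the paper's route buys is brevity and the transparency of the Lie-theoretic mechanism (the identity is literally the Gauss equation for the bi-invariant metric); what your route buys is self-containedness — it needs neither the curvature formula, nor the prior identification $\mathds{X}_3\cong\SL$ with its curvature computation, so it is immune to any worry of circularity, at the price of being special to $2\times 2$ matrices. You also explicitly verify the \emph{moreover} clause (unit norm of $\frac{1}{2i}[V,W]$ and orthogonality to $V$ and $W$ via trace cyclicity), which the paper leaves implicit; in the paper this follows from ad-invariance, e.g. $\inner{[V,W],V}=-\inner{W,[V,V]}=0$. Your closing remark that under $\asl\cong(\C^3,\inners_0)$ the bracket is $2i$ times the cross product, making the identity Lagrange's, is exactly the point of the cross-product definition the paper gives right after the lemma.
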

	\begin{proof}
		Using Remark \ref{curvature tensor Killing}, Lemma \ref{lemma space form} and the fact that the Killing form is ad-invariant, one has
		\[
		\inner{[V,W], [V,W]}= - \inner{W, [V,[V,W]]}= -4 \inner{W, R(V,W)V}= -4 \|V\|^2 \|W\|^2 + 4 \inner{V,W}^2
		\]		
	\end{proof} 
	The previous lemma suggests the definition of a \emph{cross-product} 
	\[
	\times \colon \asl \times \asl \to \asl
	\] given by
	\[
	V \times W := \frac{1}{2i} [V,W].
	\]
	
	Since $\PSL$ acts on $T_{I_2}\SL$ via the adjoint representation and the bracket of $\asl$ is $ad$-invariant, we observe that, for all $\Phi \in Stab_0(I_2)\cong \PSL$, the image via $\Phi$ of a basis $(V,W, V\times W)$ for $T_{I_2} \SL$ is $(\Phi_*V, \Phi_*W, \Phi_*V\times \Phi_*W)$.
	
	As a result, defining a complex orientation $\omega_0$ for $\SL$ so that $\omega_0(V,W,V\times W)=1$ for some orthonormal basis $(V,W,V\times W)\subset T_{I_2}\SL$, we can conclude that, for any orthonormal frame $(V', W', X')$, $\omega_0 (V', W', V'\times W')=1$ if and only if $X' =V' \times W'$. We will call \emph{positive} such bases. 
	 $Isom_0(\SL)$ acts transitively on the set of positive orthonormal bases.

	\section{Immersed hypersurfaces in $\PML$}    
	In this section we study the geometry of smooth immersions of the form 
	\[M\to \XXX_{n+1}\]where $M$ is a smooth manifold of (real) dimension $n$ and $\XXX_{n+1}$ is the Riemannian holomorphic space form of constant sectional curvature $-1$ and complex dimension $n+1$. 
	
	As an immersion between smooth manifolds, it has very high codimension. Nevertheless, we can define a suitable class of immersions for which we can translate in this setting some aspects of the classical theory of immersions of hypersurfaces.
	In order to do it, we will introduce a new structure on manifolds that extends the notion of Riemannian metric: \emph{complex valued metrics}. 
	
	We will use $\XX, \YY, \ZZ$ to denote elements (and sections) of $TM$ and $X, Y, Z$ to denote elements (and sections) of the complexified tangent bundle $\CTM:= TM \oplus i TM= \C \otimes_\R TM$ whose elements can be seen as complex derivations of germs of complex-valued functions.

	Let $M$ be a smooth manifold of (real) dimension $m$ and $\sigma \colon M\to \mathds X_{n+1}$, with $n+1\ge m$, be a smooth immersion. Since $\mathds X_{n+1}$ is a complex manifold, the differential map $\sigma_*$ extends by $\C$-linearity to a map 
	\begin{align*}
	\sigma_* \colon \CTM &\to T\mathds X_n\\
	X= \XX + i \YY &\mapsto \sigma_*(\XX)+ \JJ\sigma_*(\YY)=: \sigma_*(X).
	\end{align*}

	Now, consider the $\C-$bilinear pull-back form $\sigma^*\inner{\cdot,\cdot}$ on $\CTM$ defined by 
	\begin{align*}
	\sigma^*\inner{\cdot, \cdot}_p\colon \C T_pM \times \C T_pM &\to \C\\
	(X, Y) &\mapsto \inner{\sigma_*X, \sigma_*Y}.
	\end{align*}
	$\sigma^*\inner{\cdot,\cdot}_p$ is $\C$-bilinear and symmetric since $\inner{\cdot, \cdot}_{\sigma(p)}$ is $\C$-bilinear and symmetric. 
	
	\begin{Definition}
		\begin{itemize}
			\item A \emph{complex (valued)} metric $g$ on $M$ is a non-degenerate smooth section of the bundle $Sym(\C T^*M\otimes \C T^*M)$, i.e. it is a smooth choice at each point $p\in M$ of a non-degenerate symmetric complex bilinear form
			\[
			g_p \colon \C T_pM\times \C T_pM \to \C.
			\]
			
			\item A smooth immersion $\sigma \colon M \to \PML$ is \emph{admissible} if $g=\sigma^*\inner{\cdot, \cdot}$ is a complex valued metric for $M$, i.e. if $\sigma^*\inner{\cdot,\cdot}_p$ is non-degenerate. 
			
			\item If $g$ is a complex metric on $M$, an immersion $\sigma\colon (M, g)\to \PML$ is \emph{isometric} if $\sigma^*\inner{\cdot, \cdot}=g$. 
		\end{itemize}
	\end{Definition}
	
	\begin{Remark}
		If $\sigma$ is an admissible immersion, then $\sigma_{*p} \colon \C T_p M\to T_{\io(p)} \PML$ is injective. Indeed, if $\sigma_{*p}(X)=0$ then clearly $\sigma^* \inner{X, \cdot }\equiv 0$, hence $X=0$.
		
		In particular, every admissible immersion is a topological immersion.
	\end{Remark}

	\subsection{Levi-Civita connection and curvature for complex metrics}    
	
	Let $M$ be a manifold of dimension $m$ and $g$ be a complex valued metric on $\CTM$. Recall that for sections $X, Y$ of $\CTM$ we have a well-posed Lie bracket $[X,Y]$ which coincides with the $\C$-bilinear extension of the usual Lie bracket for vector fields. 
	
	\begin{Definition}
		Define a \emph{connection on $\C TM$} as the $\C$-linear application
		\begin{align*}
		\nabla \colon \Gamma (\C TM) &\to \Gamma(Hom_{\C} (\C TM, \C TM))\\
		\alpha &\mapsto \nabla \alpha (\colon X \mapsto \nabla_X \alpha)
		\end{align*}
		such that, for all $f\in C^{\infty}(M, \C)$, $\nabla_X (f \alpha)= f \nabla_X \alpha + X(f) \alpha$.
	\end{Definition}
	
	In a similar way as in classical Riemannian geometry, a complex valued metric $g$ induces a canonical choice of a linear connection on $\CTM$.
	\begin{Proposition}
		For every complex valued metric $g$ on $M$, there exists a unique connection $\nabla$ on $\CTM$, that we will call \emph{Levi-Civita connection}, such that for all $X,Y \in \Gamma(\CTM)$ the following conditions stand:
		\begin{align*}
		d ( g(X, Y) )&= g(\nabla X, Y)+ g(X, \nabla Y) \qquad & & \text{($\nabla$ is compatible with the metric)};\\
		[X,Y]&= \nabla_X Y - \nabla_Y X \qquad & & \text{($\nabla$ is torsion free)}.
		\end{align*}
	\end{Proposition}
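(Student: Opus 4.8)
The plan is to transpose the classical proof of the Levi-Civita theorem verbatim to the complex-valued setting: the two defining conditions force the \emph{Koszul formula}, which pins down $\nabla$ uniquely precisely because $g$ is non-degenerate \emph{as a complex bilinear form}, and conversely the Koszul formula manufactures a connection enjoying the two required properties. The only place where the complex setting intervenes is that the musical isomorphism must be taken with respect to a $\C$-bilinear form rather than a positive-definite real one.

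For uniqueness, I would assume $\nabla$ satisfies metric compatibility and torsion-freeness, write the compatibility identity applied to $g(Y,Z)$, $g(Z,X)$ and $g(X,Y)$, add the first two and subtract the third, and replace every difference $\nabla_X Y-\nabla_Y X$ by $[X,Y]$. This yields
\begin{equation}
\label{Koszul formula}
\begin{split}
2\,g(\nabla_X Y, Z) = {} & X g(Y,Z) + Y g(Z,X) - Z g(X,Y) \\
& + g([X,Y],Z) - g([Y,Z],X) + g([Z,X],Y)
\end{split}
\end{equation}
for all $X,Y,Z\in\Gamma(\CTM)$. Since at each point $g_p$ is a non-degenerate $\C$-bilinear form, the map $W\mapsto g(W,\cdot)$ is a $\C$-linear isomorphism $\C T_pM\to \C T_p^*M$; hence \eqref{Koszul formula} determines $\nabla_X Y$ uniquely.

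For existence, I would \emph{define} $\nabla_X Y$ by declaring $g(\nabla_X Y,\cdot)$ to be half the right-hand side of \eqref{Koszul formula}. The point to verify is that this right-hand side is $C^\infty(M,\C)$-linear in $Z$, so that it defines a genuine section of $\C T^*M$. Replacing $Z$ by $fZ$, the terms carrying a derivative of $f$ in the $Y$-direction are $(Yf)\,g(Z,X)$ from $Y g(Z,fX)$-type expansion and $-(Yf)\,g(Z,X)$ from the bracket $[Y,fZ]$, which cancel directly; the terms in the $X$-direction are $(Xf)\,g(Y,Z)$ and $-(Xf)\,g(Z,Y)$, which cancel because $g$ is symmetric. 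Thus non-degeneracy of $g$ produces a unique smooth section $\nabla_X Y$. It then remains to check the algebraic identities — $\C$-linearity in $X$, the Leibniz rule $\nabla_X(fY)=f\nabla_X Y+(Xf)Y$, metric compatibility, and torsion-freeness — each following by the same bookkeeping with \eqref{Koszul formula} as in the real case, using only the symmetry of $g$ and the derivation property of elements of $\Gamma(\CTM)$ acting on complex-valued functions.

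The step requiring attention is the passage through non-degeneracy: in the real Riemannian case positivity is invoked only to guarantee invertibility of the metric, so what genuinely matters is that $g_p$ stays non-degenerate as a $\C$-bilinear form, which is built into the definition of a complex valued metric. Consequently the isomorphism $\C T_pM\to\C T_p^*M$ is available pointwise, varies smoothly with $p$, and no further analytic input is needed; the remainder of the argument is formally identical to the classical one.
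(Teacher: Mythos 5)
Your proof is correct and is exactly the standard Koszul-formula argument that the paper leaves implicit: the paper states this Proposition without any proof (the analogous statement for holomorphic Riemannian metrics is attributed to LeBrun, and the same formula appears as (\ref{Levi-Civita}) in Section 2, labelled ``a direct computation''), and your adaptation uses only $\C$-bilinearity, symmetry, non-degeneracy of $g$, and the fact that sections of $\CTM$ act as complex derivations with a $\C$-bilinearly extended Lie bracket — which is precisely what is needed. In particular, you correctly isolate the single point where the complex setting intervenes (non-degeneracy of $g_p$ as a $\C$-bilinear form, yielding the musical isomorphism $\C T_pM\to\C T_p^*M$), and your check that the right-hand side of the Koszul formula is $C^\infty(M,\C)$-linear in $Z$ is the right verification for existence.
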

	
	Observe that if $g$ is obtained as a $\C$-bilinear extension of some (pseudo-)Riemannian metric, then the induced Levi-Civita connection is the complex extension of the Levi-Civita connection for the (pseudo-)Riemannian metric on $M$.

	We can also define the $(0,4)$-type and the $(1,3)$-type \emph{curvature tensors} for $g$ defined by 
	\[
	R(X,Y,Z, T):= -g(R(X,Y)Z,T)=- g\bigg( \nabla_X \nabla_Y Z - \nabla_Y \nabla_X Z - \nabla_{[X,Y]} Z, T \bigg)
	\]
	with $X,Y,Z, T \in \Gamma(\CTS)$.
	The curvature tensor is $\C$-multilinear and has all of the standard symmetries of the curvature tensors induced by Riemannian metrics.
	
	Finally, for avery complex plane $Span_\C (X,Y)\in \C T_p M$ such that $g_{|Span_\C (X,Y)}$ is non-degenerate, we can define the {sectional curvature} $K(X,Y):=K(Span_\C (X,Y) )$ as
	\begin{equation}
	\label{def curvatura}
	K(X,Y)=\frac{-g(R(X,Y)X,Y)}{g(X,X)g(Y,Y) -g(X,Y)^2}
	\end{equation}
	where the definition of $K(X,Y)$ is independent from the choice of the basis $\{X,Y\}$ for $Span_\C (X,Y)$.\newline
	
	It is simple to check, via the Gram-Schmidt algorithm, that in a neighbourhood of any point $p\in M$ it is possible to construct a local $g-$orthonormal frame $(X_j)_{j=1}^m$ on $M$. We show it explicitly.
	
	 Fix a orthonormal basis $(W_j (p))_{j=1}^m$ for $\C T_p M$ and locally extend, in a neighbourhood of $p$, each $W_j(p)$ to a complex vector field $W_j$. Up to shrinking the neighbourhood in order to make the definition well-posed, define by iteration the local vector fields $Y_j$ by
	\[
	Y_j:= W_j - \sum_{k=1}^{j-1} \frac{g(W_j, Y_k)}{g(Y_k, Y_k)}Y_k
	\]
	which are such that $Y_j(p)= W_j(p)$ and by construction the $Y_j$'s are pairwise orthogonal.
	Finally, up to shrinking the neighbourhood again, the vectors $X_j= \frac{Y_j}{\sqrt{g(Y_j,Y_j)}}$ (defined for any local choice of the square root) determine a local $g$-orthonormal frame around $p$. Similarly, every set of orthonormal vector fields can be extended to a orthonormal frame.\newline

	Let $(X_j)_{j=1}^m$ be a local orthonormal frame for $g$, with $X_j\in \CTM$. Let $(\theta^j)_{j=1}^m$ be the correspondent coframe, $\theta^i\in \C T^*M$, defined by $\theta^i= g(X_i, \cdot)$.
	
	We can define the \emph{Levi-Civita connection forms} $\theta^i_j$ for the frame $(X_i)_i$ by
	\[
	\nabla X_i = \sum_h \theta_i ^h \otimes X_h.
	\] 
or, equivalently, by the equations
	\[\begin{cases}
	d \theta^i =- \sum_j \theta_j ^i \wedge \theta^j\\
	\theta_j^i=-\theta_i ^j
	\end{cases}.
	\]

	\subsection{Extrinsic geometry of hypersurfaces in $\PML$}

	From now on, assume $dim (M)=n$.
	
	Let $\sigma\colon M \to \PML$ be an admissible immersion and $g=\sigma^*\inner{\cdot, \cdot}$ be the induced complex metric.
	Denote with $D$ be the Levi-Civita connection on $\PML$ and with $\nabla$ be the Levi-Civita connection for $g$. We want to adapt the usual extrinsic theory for immersed hypersurfaces to our setting.

	Define the pull-back vector bundle $\Lambda=\io^* (T\PML)\to M$, which is a complex vector bundle and is endowed on each fiber with the pull-back complex bilinear form. 
	
	If $U$ is an open subset of $M$ over which $\io$ restricts to an embedding, then $\Lambda_{|U} \cong T\PML_{| \io (U)}$.
	
	Observe that $\Lambda$ has a structure of complex vector bundle defined by $i\sigma^*(v):=\sigma^*(\mathds J v)$ for all $v\in T\PML$. Each fiber can also be equipped with the pull-back complex bilinear form that we will still denote with $\inners$.

	Since the map $\io_* \colon \C TM \to \PML$ is injective, $\C TM$ can be seen canonically as a complex sub-bundle of $\Lambda$ via the correspondence
	\begin{align*}
	\C TM &\hookrightarrow \Lambda\\
	X &\mapsto \io^* ({\io_* X}),
	\end{align*}
     Moreover, the complex bilinear form on $\Lambda$ corresponds to the one on $\C TM$ when restricted to it, since $\io$ is an isometric immersion.

	We pull back the Levi-Civita connection $D$ of $T\PML$ in order to get a $\R$-linear connection $\overline \nabla$ on $\Lambda$,
	\[
	\overline \nabla =\sigma^*D  \colon \Gamma(\Lambda) \to \Gamma \Big( Hom_{\R}(TM, \Lambda) \Big)=  \Gamma \Big( Hom_{\C}(\C TM, \Lambda) \Big).
	\] Observe that $\overline \nabla$ is completely defined by the Leibniz rule and by the condition
	\[
	\overline \nabla_\XX \ \io^*\xi := \io^*\Big( D_{ \io_* \XX}\ \xi   \Big) \qquad \forall \xi \in \Gamma(T \PML), \XX\in \Gamma(TM).
	\]
	By $\C$-linearity, we can see $\overline \nabla$ as a map 
	\[ \overline \nabla \colon \Gamma(\Lambda) \to \Gamma \Big( Hom_{\C}(\C TM, \Lambda) \Big).  \]
	by defining \[
	\overline \nabla_{\XX_1 + i \XX_2} \hat \xi := \overline \nabla_{\XX_1} \hat \xi + i \overline \nabla_{\XX_2} \hat \xi.
	\]

Via the canonical immersion of bundles $\CTM\hookrightarrow \Lambda$, it makes sense to consider the vector field $\overline \nabla _X Y$ with $X,Y\in \Gamma(\CTM)$.

Since $D\JJ=0$ on $\PML$, for all $\XX \in \Gamma(TM)$ and $Y\in \Gamma(\C TM)$ we have that
	\begin{align*}
	\overline{\nabla}_\XX (i  \sigma^*(\xi)) &= \sigma^* \Big(D_{\sigma_* \XX} { (\mathds J \xi)} \Big) =  \sigma^* \Big(D_{\io_* \XX} \JJ\ \xi \Big)=\\
	&= \sigma^* \Big(\JJ D_{\io_* \XX} \ \xi \Big)= i  \overline \nabla_\XX \sigma^*\xi.
	\end{align*}
	We can conclude that $\overline \nabla$ is $\C$-bilinear.
	
	We observed that $\C TM$ is a sub-bundle of $\Lambda$ over which the restriction of the complex bilinear form of $\Lambda$ is non-degenerate. Hence, we can consider the \emph{normal bundle} $\NN= \C TM^{\bot}$ over $M$ defined as the orthogonal complement of $\C TM$ in $\Lambda$. $\NN$ is a rank-1 complex bundle on $M$. 
	
	For all local fields $X,Y \in \Gamma( \C TM)$ we can define $\II (X,Y)$ as the component in $\NN$ of $\overline \nabla_X Y$. 
	
	\begin{Proposition}
		For all $X,Y \in \Gamma(\C TM)$, the component in $\C TM$ of $\overline \nabla_X Y$ is $\nabla_X Y$, where $\nabla$ is the Levi-Civita connection on $M$. In other words:
		\[
		\overline \nabla_X Y = \nabla_X Y + \II (X,Y).
		\]
		Moreover, $\II$ is a symmetric, $\C$-bilinear tensor.
	\end{Proposition}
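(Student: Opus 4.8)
The plan is to recover the statement from the uniqueness of the Levi-Civita connection of the complex metric $g$, exactly as in the classical Gauss formula. Write the orthogonal decomposition $\Lambda = \CTM \oplus \NN$ along $M$, and denote by $(\cdot)^T$ and $(\cdot)^\perp$ the projections onto $\CTM$ and $\NN$ respectively. For $X,Y\in\Gamma(\CTM)$ I set $\nabla'_X Y := (\overline\nabla_X Y)^T$, so that by construction $\overline\nabla_X Y = \nabla'_X Y + \II(X,Y)$ with $\II(X,Y)=(\overline\nabla_X Y)^\perp$. The whole point is then to show that $\nabla'$ is \emph{the} Levi-Civita connection $\nabla$ of $g$, which by uniqueness reduces to checking that $\nabla'$ is a connection on $\CTM$, is torsion-free, and is compatible with $g$.

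First I would verify that $\nabla'$ is a connection: $\C$-linearity in the subscript is inherited from the $\C$-bilinearity of $\overline\nabla$ already established in the excerpt, and the Leibniz rule follows since $\overline\nabla_X(fY)=f\,\overline\nabla_X Y + X(f)\,Y$ and $X(f)Y\in\CTM$ is purely tangential, so projecting onto $\CTM$ preserves the identity. For metric compatibility I would use that $\overline\nabla=\sigma^*D$ is compatible with the pull-back form $\inners$ on $\Lambda$ (a direct consequence of the compatibility of $D$ on $\PML$), i.e. $d\,g(X,Y)=\inner{\overline\nabla X,Y}+\inner{X,\overline\nabla Y}$ for $X,Y\in\Gamma(\CTM)$; since $\II(\cdot,\cdot)\in\NN=\CTM^\perp$, the normal parts pair trivially against tangential vectors, giving $\inner{\overline\nabla_Z X,Y}=g(\nabla'_Z X,Y)$ and hence $d\,g(X,Y)=g(\nabla' X,Y)+g(X,\nabla' Y)$.

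The key input, and the step I expect to require the most care, is torsion-freeness, because the pull-back connection on the abstract bundle $\Lambda$ has no intrinsic torsion and one must genuinely use that $\sigma$ is an immersion. Locally where $\sigma$ embeds, I extend $\sigma_*\XX,\sigma_*\YY$ (for real fields $\XX,\YY\in\Gamma(TM)$) to ambient fields $\xi,\eta$ on $\PML$; these are $\sigma$-related to $\XX,\YY$, so $[\xi,\eta]\circ\sigma=\sigma_*[\XX,\YY]$, and the torsion-freeness of $D$ yields $D_{\sigma_*\XX}\eta-D_{\sigma_*\YY}\xi=\sigma_*[\XX,\YY]$ along $\sigma$. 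Pulling back gives $\overline\nabla_\XX\YY-\overline\nabla_\YY\XX=[\XX,\YY]$ in $\CTM\subset\Lambda$, and $\C$-bilinear extension upgrades this to $\overline\nabla_X Y-\overline\nabla_Y X=[X,Y]$ for all $X,Y\in\Gamma(\CTM)$. Taking tangential parts (noting $[X,Y]\in\CTM$) gives $\nabla'_X Y-\nabla'_Y X=[X,Y]$, and taking normal parts gives at once $\II(X,Y)=\II(Y,X)$, the symmetry of $\II$.

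Having identified $\nabla'=\nabla$ by the uniqueness Proposition, it remains to record that $\II$ is a $\C$-bilinear tensor. The $\C$-linearity of $\II$ in the first slot is immediate from that of $\overline\nabla$ composed with the $\C$-linear projection onto $\NN$. Tensoriality in the second slot follows from $\II(X,fY)=(\overline\nabla_X(fY))^\perp=(f\,\overline\nabla_X Y+X(f)\,Y)^\perp=f\,\II(X,Y)$, since $Y\in\CTM$ forces $Y^\perp=0$; symmetry then transfers tensoriality to the first slot, so $\II$ is a genuine $\C$-bilinear symmetric tensor as claimed.
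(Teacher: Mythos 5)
Your proposal is correct and follows essentially the same route as the paper: the paper also defines the tangential part $A_X Y:=\overline\nabla_X Y-\II(X,Y)$, checks it is a torsion-free, metric-compatible connection on $\C TM$, and invokes uniqueness of the Levi-Civita connection, reducing to real fields $\XX,\YY\in\Gamma(TM)$ and extending by $\C$-bilinearity exactly as you do. Your write-up simply supplies the details the paper leaves implicit (the local extension argument for torsion-freeness via $\sigma$-related ambient fields, and the tensoriality and symmetry of $\II$), all of which are carried out correctly.
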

	\begin{proof}
		 By $\C$-linearity of $\nabla$ and $\overline \nabla$, it is enough to prove it for $X,Y\in \Gamma(TM)$. The proof just follows the standard proof in the Riemannian case: defining $A_X Y:= \overline{\nabla}_X Y - \II(X,Y)$, one can show that $A$ is a connection on $\CTM$, that it is torsion free and compatible with the metric, hence $A=\nabla$.
	\end{proof}

	For all $p\in M$, consider on a suitable neighbourhood $U_p \subset M$ a norm-1 section $\nu$ of $\NN$: we call such $\nu$ a local \emph{normal vector field} for $\sigma$. 

A local normal vector field fixed, we can locally define the \emph{second fundamental form} of the immersion $\sigma$ as the tensor  
\[
\IIs:=\inner{\II, \nu}= \inner{\overline \nabla, \nu}.
\]

Since there are two opposite choices for the section $\nu$, $\IIs$ is defined up to a sign.

We define the \emph{shape operator} $\Psi$ associated to the immersion $\io\colon M \to \PML$ as the tensor
\begin{align*}
\Psi \in \Gamma \Big( Sym(\C T^*M \otimes_{\C} \C TM)\Big)
\end{align*}
such that, $\forall p\in M$ and $\forall \XX,\YY \in T_p M$,  $g(\Psi(\XX),\YY)= -\inner{\II(\XX,\YY), \nu} =-\IIs(\XX,\YY)$.
As $\IIs$ is defined up to a sign, $\Psi$ is defined up to a sign as well.

We will say that $\io$ is \emph{totally geodesic} if and only if $\II\equiv 0$, i.e. $\Psi \equiv 0$.

	\begin{Remark}
		Recall that the curvature tensor of $\overline\nabla$ is the pull-back of the curvature tensor $D$. That is:
     	\begin{align*}
		\overline R(\XX,\YY)Z = \io^*\Big( R^D (\io_* \XX, \io_* \YY)\io_*Z \Big)
		\end{align*}
		
	Hence, 
		\begin{align*}
		g(\overline R (\XX,\YY)\YY, \XX)= \inner{R(\io_* \XX, \io_* \YY)\io_*\YY, \io_*\XX}\\
		=-(g(\XX,\XX)^2 g(\YY,\YY)^2-g(\XX, \YY)^2).
		\end{align*}
	\end{Remark}
	\noindent

	\begin{Proposition}
		Let $\nu$ be an normal vector field and let $\Psi$ be the corresponding shape operator. Then,
		\[
		\Psi =\overline\nabla \nu
		\]
	\end{Proposition}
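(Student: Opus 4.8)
The plan is to establish this Weingarten-type identity in two steps: first show that $\overline\nabla_X \nu$ is everywhere \emph{tangent}, i.e. a section of $\CTM\subset\Lambda$, and then identify it with $\Psi(X)$ by pairing against an arbitrary tangent field. Throughout I would use that $\overline\nabla=\sigma^*D$ inherits metric compatibility from $D$: for all sections $\xi,\eta$ of $\Lambda$ and all $X\in\Gamma(\CTM)$ one has $X\inner{\xi,\eta}=\inner{\overline\nabla_X\xi,\eta}+\inner{\xi,\overline\nabla_X\eta}$, which follows by pulling back the compatibility of $D$ with $\inners$ and extending $\C$-bilinearly (recall $\overline\nabla$ was shown to be $\C$-bilinear). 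Since both $\overline\nabla\nu$ and $\Psi$ are $\C$-linear, it suffices to check the equality on sections $X$ of $\CTM$, and the computation below is $\C$-bilinear.

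First I would differentiate the normalization $\inner{\nu,\nu}\equiv 1$. Metric compatibility gives $0=X\inner{\nu,\nu}=2\inner{\overline\nabla_X\nu,\nu}$, so $\overline\nabla_X\nu$ is orthogonal to $\nu$. Since $\NN=\CTM^{\bot}$ has rank $1$ and carries a non-degenerate form (being the orthogonal complement of the non-degenerate sub-bundle $\CTM$ inside $\Lambda$), the fiberwise orthogonal complement of $\nu$ in $\Lambda$ is exactly $\CTM$; hence $\overline\nabla_X\nu\in\Gamma(\CTM)$.

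It then remains to compute $g(\overline\nabla_X\nu,Y)$ for $Y\in\Gamma(\CTM)$ and compare it with $g(\Psi(X),Y)$. Starting from $\inner{\nu,Y}=0$ (valid since $\nu\in\NN=\CTM^{\bot}$) and differentiating, metric compatibility yields $0=X\inner{\nu,Y}=\inner{\overline\nabla_X\nu,Y}+\inner{\nu,\overline\nabla_X Y}$. Using the decomposition $\overline\nabla_X Y=\nabla_X Y+\II(X,Y)$ together with $\inner{\nu,\nabla_X Y}=0$, the last term is $\inner{\nu,\II(X,Y)}=\IIs(X,Y)$. Therefore $\inner{\overline\nabla_X\nu,Y}=-\IIs(X,Y)=g(\Psi(X),Y)$, the final equality being the definition of $\Psi$. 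As both $\overline\nabla_X\nu$ and $\Psi(X)$ lie in $\CTM$ and $g$ is non-degenerate there, I conclude $\overline\nabla_X\nu=\Psi(X)$, that is $\Psi=\overline\nabla\nu$.

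There is no genuine difficulty here beyond bookkeeping; the one point that deserves care is the tangency claim, which rests on $\NN$ being a rank-one sub-bundle on which the pulled-back form is non-degenerate, so that $\nu^{\bot}$ meets $\Lambda$ in precisely $\CTM$ at each point. Everything else is the $\C$-bilinear transcription of the classical Weingarten computation, licensed by the metric compatibility of $\overline\nabla$.
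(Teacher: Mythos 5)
Your proof is correct and follows essentially the same route as the paper: both arguments differentiate $\inner{\nu,\nu}=1$ to get tangency of $\overline\nabla_X\nu$, differentiate $\inner{\nu,Y}=0$ via metric compatibility to identify $\inner{\overline\nabla_X\nu,Y}$ with $-\IIs(X,Y)=g(\Psi(X),Y)$, and conclude by non-degeneracy and $\C$-linearity. The only difference is cosmetic (you establish tangency first and spell out why $\nu^{\bot}\cap\Lambda=\CTM$, while the paper does the pairing computation first), so there is nothing to add.
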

	\begin{proof}
		For all $\XX \in \Gamma(TM)$ and $Y\in \Gamma(\C TM)$,
		\begin{align*}
		g(\Psi(\XX), Y)&=-\IIs(\XX,Y) =-\inner{\overline \nabla_\XX Y, \nu} =\\
		&= {\inner{ Y, \overline \nabla_\XX \nu}} - d(\inner{Y,\nu})(\XX)=\\
		&= \inner{ Y, \overline \nabla_\XX \nu}.
		\end{align*}
		Moreover, 
		\begin{equation*}
		\inner{\overline\nabla_\XX \nu, \nu}=  \frac 1 2 d(\inner{\nu, \nu})(\XX)=0.
		\end{equation*}
		Hence, $\overline \nabla_\XX \nu \in \C TM$ and $\Psi(\XX)= \overline\nabla_\XX \nu$. The proof follows by $\C$-linearity.
	\end{proof}
	
\subsection{Gauss and Codazzi equations}	
	
	Consider the exterior covariant derivative $d^\nabla$ associated to the Levi-Civita connection $\nabla$ on $\C TM$, namely
	\[
	(d^\nabla \Psi)(X,Y)= \nabla_X (\Psi (Y)) - \nabla_Y (\Psi(X)) - \Psi([X,Y])
	\]
	for all $X,Y\in \Gamma(\C TM)$.
	
	Fix a local orthonormal frame $(X_i)_{i=i}^n$ with coframe $(\theta^i)_{i=1}^n$.  We can see the shape operator $\Psi$ in coordinates:
	\[
	\Psi=: \Psi^i_j \cdot \theta^j \otimes X_i =: \Psi^i \otimes X_i,
	\]
	where $\Psi^i_j \in \C^{\infty}(U,\C)$, with $\Psi^i_j=\Psi^j_i$, and $\Psi^i\in \Omega^1(\CTM)$. Notice that
	\begin{equation}
	\label{equazione Psi_i}
	\IIs(X_i, \cdot)= -\inner{\Psi(X_i), \cdot}= - \inner{\Psi^j_i X_j, \cdot}= -\Psi^j_i \theta^j=-\Psi_j^i \theta^j=-\Psi^i
	\end{equation}

	We are ready to state the first part of an adapted version of Gauss-Codazzi Theorem.
	
	\begin{Theorem}[Gauss-Codazzi, first part]
		\label{Teo Gauss-Codazzi}
		Let $\io \colon M^n \to \PML$ be an admissible immersion and $g=\sigma^* \inner{\cdot, \cdot}$ be the induced complex metric.
		
		Let $\nabla$ be the Levi-Civita connection on $M$, $R^M$ the curvature tensor of $g$ and $\Psi$ the shape operator associated to $\sigma$. 
		
		Fix a $g$-orthonormal frame $(X_i)_i$ with corresponding coframe $(\theta^i)_i$.
		
		Then the following equations hold: \begin{align}
			\label{GC1}
			&1)d^\nabla \Psi \equiv 0 \qquad &&\text{(Codazzi equation)};\\
			\label{GC2}
			&2)    R^M(X_i, X_j, \cdot, \cdot) - \Psi^i \wedge \Psi^j= -\theta^i\wedge \theta^j \qquad &&\text{(Gauss equation)}. 
			\end{align}
	\end{Theorem}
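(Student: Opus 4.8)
The plan is to imitate the classical derivation of the Gauss and Codazzi equations, the only genuinely new feature being that every object involved is $\C$-(multi)linear; since $\overline\nabla$, $\II$, $\Psi$ and $R^M$ are all $\C$-linear in each slot, it is enough to verify both identities on real vector fields $\XX,\YY,\ZZ,\mathtt{T}\in\Gamma(TM)$ and then extend by $\C$-linearity. First I would assemble the two structural formulas already available: the Gauss formula $\overline\nabla_X Y=\nabla_X Y+\II(X,Y)$, together with $\II(X,Y)=h(X,Y)\nu=-g(\Psi X,Y)\nu$ where $h:=\IIs$, and the Weingarten formula $\overline\nabla_\XX\nu=\Psi(\XX)$ from the preceding Proposition. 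These make the tangential and normal parts of $\overline\nabla$ completely explicit.

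The heart of the argument is to compute the curvature $\overline R(\XX,\YY)\ZZ=\overline\nabla_\XX\overline\nabla_\YY\ZZ-\overline\nabla_\YY\overline\nabla_\XX\ZZ-\overline\nabla_{[\XX,\YY]}\ZZ$ in two ways. On one side $\overline R$ is the pull-back of the ambient curvature $R^D$; since $\PML$ has constant sectional curvature $-1$, Lemma \ref{lemma space form} gives $R^D(U,V)W=\inner{U,W}V-\inner{V,W}U$, so that
\[
\overline R(\XX,\YY)\ZZ=g(\XX,\ZZ)\YY-g(\YY,\ZZ)\XX,
\]
which is purely tangential. On the other side, substituting the Gauss and Weingarten formulas and separating components, the normal part telescopes — after the torsion-free terms cancel the $\overline\nabla_{[\XX,\YY]}$ contribution — to $\big[(\nabla_\XX h)(\YY,\ZZ)-(\nabla_\YY h)(\XX,\ZZ)\big]\nu$, while the tangential part equals $R^M(\XX,\YY)\ZZ-g(\Psi\YY,\ZZ)\Psi\XX+g(\Psi\XX,\ZZ)\Psi\YY$.

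Matching the two computations finishes the proof. The normal component of the ambient expression vanishes, so $(\nabla_\XX h)(\YY,\ZZ)=(\nabla_\YY h)(\XX,\ZZ)$; rewriting via $h=-g(\Psi\,\cdot\,,\cdot)$ and the metric-compatibility of $\nabla$ gives $g\big((\nabla_\XX\Psi)\YY-(\nabla_\YY\Psi)\XX,\ZZ\big)=0$ for all $\ZZ$, i.e. $(d^\nabla\Psi)(\XX,\YY)=0$, which is \eqref{GC1}. Matching tangential parts and pairing with an arbitrary $\mathtt{T}$ yields
\[
g(R^M(\XX,\YY)\ZZ,\mathtt{T})+g(\Psi\XX,\ZZ)g(\Psi\YY,\mathtt{T})-g(\Psi\XX,\mathtt{T})g(\Psi\YY,\ZZ)=g(\XX,\ZZ)g(\YY,\mathtt{T})-g(\YY,\ZZ)g(\XX,\mathtt{T}),
\]
and specializing $\XX=X_i,\YY=X_j$ and reading this off through $\theta^i=g(X_i,\cdot)$, $\Psi^i=g(\Psi X_i,\cdot)$ and $R^M(X_i,X_j,\cdot,\cdot)=-g(R^M(X_i,X_j)\cdot,\cdot)$ is exactly \eqref{GC2}.

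I do not expect a genuine obstacle: the content is bookkeeping. The one delicate point is keeping the sign conventions aligned — the paper's convention $R(X,Y,Z,T)=-g(R(X,Y)Z,T)$ and the sign in $\II=-g(\Psi\,\cdot\,,\cdot)\nu$ — so that the derived identity reproduces \eqref{GC2} and not its negative. The conceptual input doing all the work is the vanishing of the normal component of $\overline R$, which is precisely the constant-curvature-$(-1)$ hypothesis fed in through Lemma \ref{lemma space form}.
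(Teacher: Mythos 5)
Your proposal is correct and takes essentially the same route as the paper's proof: both compute $\overline R(\XX,\YY)Z$ once as the pull-back of the ambient curvature via Lemma \ref{lemma space form} and once by substituting the Gauss formula $\overline\nabla_X Y=\nabla_X Y+\II(X,Y)$ and the Weingarten identity $\overline\nabla\nu=\Psi$, then extract Codazzi from the vanishing of the normal component and Gauss from the tangential one, evaluated on an orthonormal frame with the conventions $R^M(X,Y,Z,T)=-g(R^M(X,Y)Z,T)$ and $\Psi^i=g(\Psi X_i,\cdot)$. The only cosmetic differences are that the paper picks commuting extensions $[\XX,\YY]=0$ and lets $Z$ be complex from the start, whereas you keep the bracket term and invoke $\C$-multilinearity at the end.
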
\vspace{-3ex}
	\begin{proof}
	In a neighbourhood $U$ of a point $p\in M$, fix a local normal vector field $\nu$. Let $Z\in \Gamma(\C TU)$, let $\XX(p),\YY(p) \in T_p M$ and let $\XX,\YY\in \Gamma(TU)$ be local extensions of $\XX(p)$ and $\YY(p)$ such that $[\XX, \YY]=0$ (such extensions can be constructed through a local chart). 
		
		By definition,
		\[
		\overline \nabla_ { \YY} Z= \nabla_{\YY} Z + \II (Z, \YY), 
		\]
		hence, by second derivation, 
		\begin{align*}
		\overline \nabla_\XX \overline{\nabla}_{\YY} Z&= \overline \nabla_\XX \nabla_{\YY} Z + \overline\nabla_\XX (\IIs(Z,\YY)\nu)=\\
		&= \nabla_\XX \nabla_{ \YY} Z + \IIs(\XX, \nabla_\YY Z)\nu + \overline \nabla_\XX (\IIs(Z,\widetilde \YY) \nu)=\\
		&=\nabla_\XX \nabla_{\YY} Z + \II(\XX, \nabla_{\YY} Z)+ d(\IIs(Z,{\YY}))(\XX) \nu + \IIs(Z,\YY) \overline\nabla_\XX \nu=\\
		&= \nabla_\XX \nabla_{\YY} Z +  \IIs(Z,\YY)\Psi(\XX) + \Big( \IIs(\XX, \nabla_\YY Z) + d(\IIs(Z,\YY))(\XX) \Big)\nu.
		\end{align*}
		
		We compute $\overline R (\XX,\YY)Z$.  Since $[ \XX,  \YY]=0$, we have
		\begin{align*}
		\overline R (\XX,\YY)Z =& R^M (\XX,\YY) Z - g(\Psi(\YY),Z)\Psi(\XX)+g(\Psi(\XX),Z)\Psi(\YY) + \\
		&+ \Big( \IIs(\XX, \nabla_\YY Z)- d(\IIs(Z,  \XX))(\YY)- \IIs(\YY, \nabla_\XX Z)+ d(\IIs(Z,  \YY))(\XX) \Big)\nu=\\
		=& R^M (\XX,\YY) Z - g(\Psi(\YY),Z)\Psi(\XX)+g(\Psi(\XX),Z)\Psi(\YY) +\\
		&+\Big( (\nabla_\YY \IIs)(Z,  \XX) - (\nabla_\XX \IIs)(Z,  \YY) + \IIs(\nabla_\XX  \YY - \nabla_\YY  \XX, Z) \Big) \nu=\\
		=& R^M (\XX,\YY) Z - g(\Psi(\YY),Z)\Psi(\XX)+g(\Psi(\XX),Z)\Psi(\YY) +\Big( (\nabla_\YY \IIs)(Z,  \XX) - (\nabla_\XX \IIs)(Z,  \YY)\Big) \nu.
		\end{align*}
		Recall that $\overline R (\XX,\YY)Z= \io^* \Big( R^D(\io_*\XX, \io_*\YY) \io_*Z \Big)$. 
		
		Also recall that by Lemma \ref{lemma space form}, for all $V_1,V_2,V_3\in T_{\underline z} \PML$
		\[R^D(V_1, V_2)V_3 \in Span_\C(V_1, V_2)\]
		thus $\inner{\overline R(\XX,\YY)Z , \nu}=0$.
		
		As a result, we have the equalities 
		\begin{align*}
		\text{(a)}:&\  (\nabla_\XX \IIs)(Z, \YY) - (\nabla_\YY \IIs)(Z,  \XX) =0;\\
		\text{(b)}:&\  \overline R (\XX,\YY)Z =  R^M (\XX,\YY) Z - g(\Psi(\YY),Z)\Psi(\XX)+g(\Psi(\XX),Z)\Psi(\YY).
		\end{align*}
		We deduce (\ref{GC1}) from (a) and (\ref{GC2}) from (b).

		By manipulation of (a), we get:
		\begin{align*}
				(\nabla_\YY \IIs)(Z,  \XX) - (\nabla_\XX \IIs)(Z,  \YY)
		=&-d(g(\Psi( \XX),Z) )(\YY)+ g(\nabla_\YY Z, \Psi(\XX)) + g(Z, \Psi(\nabla_\YY  \XX))+\\
		&+d(g(\Psi( \YY),Z) )(\XX) -g(\nabla_\XX Z, \Psi(\YY)) - g(Z, \Psi(\nabla_\XX  \YY) ) =\\
		=& g( Z, \nabla_\XX \Psi ( \YY) ) - g( Z, \nabla_\YY \Psi( \XX) ) =\\
=& g(Z, d^{\nabla} \Psi (\XX,\YY))
		\end{align*}
		
		Since this holds for all $Z$ in $\Gamma(\C TM)$, $d^{\nabla} \Psi (\XX,\YY)=0$ for all $\XX,\YY \in T_p M$, hence $d^{\nabla} \Psi=0$.

		In order to prove $(\ref{GC2})$, observe that by $\C$-linearity (b) is equivalent to
		\[
		b': \overline R (X,Y)Z =  R^M (X,Y) Z - g(\Psi(Y),Z)\Psi(X)+g(\Psi(X),Z)\Psi(Y)
		\]
		for all $X,Y,Z\in \Gamma(\CTM)$. 
		
		Recalling Lemma \ref{lemma space form} and equation (\ref{equazione Psi_i}), in the orthonormal frame we have
		\begin{align*}
		\overline R (X_i, X_j, X_k, X_h)=& R^M(X_i, X_j, X_k, X_h)+\\
		&+g(\Psi(X_i), X_h)g(\Psi(X_j), X_k) - g(\Psi(X_j), X_h)g(\Psi(X_i),X_k) \Longrightarrow
		\end{align*}
		\[-\big(g(X_i, X_k)g(X_j, X_h)- g(X_i, X_h)g(X_j, X_k)\big)=
		R^M(X_i, X_j, X_k, X_h) + (\Psi^i\wedge \Psi^j) (X_h, X_k) \Longrightarrow\]
		\[
		-\theta^i\wedge \theta^j= R^M(X_i, X_j, \cdot, \cdot)- \Psi^i\wedge \Psi^j.
		\]
		The proof follows.

	\end{proof}

	For $n=2$, the Gauss equation $(\ref{GC2})$ can be written in an simpler way. Fixed an orthonormal frame $\{X_1,X_2\}$ on the surface $M$, the curvature tensor $R$ is completely determined by its value $R(X_1,X_2,X_1,X_2)$ which is the curvature of $M$. Similarly, $\Psi^1\wedge \Psi^2= (\Psi_1^1 \Psi_2^2- \Psi^2_1\Psi^1_2)\theta^1\wedge \theta^2= det(\Psi)\theta^1\wedge \theta^2$. Therefore, equation $(\ref{GC2})$ is equivalent to 
	\begin{equation}
	\label{GC2superfici}
	\text{Gauss equation for surfaces in $\PSL$:}\qquad \qquad K- det(\Psi)=-1
	\end{equation}

	\section{Integration of Gauss-Codazzi equations and immersion data}
	
	The main aim of this section is to show that the converse of Theorem \ref{Teo Gauss-Codazzi} is also true for simply connected manifolds, in the way explained in the following theorem.
	
	\begin{Teo} [Gauss-Codazzi, second part]
		\label{Teoremone} 
		Let $M$ be a smooth simply connected manifold of dimension $n$. Consider a complex metric $g$ on $M$ with induced Levi-Civita connection $\nabla$, and a $g$-symmetric bundle-isomorphism 
		$\Psi\colon \CTM \to \CTM$.

		Assume $g$ and $\Psi$ satisfy
		\begin{align*}
		1) &d^\nabla \Psi \equiv 0;\\
		2)	&R(X_i, X_j, \cdot, \cdot) - \Psi^i \wedge \Psi^j= -\theta^i\wedge \theta^j\\
		&\text{for every local $g$-frame $(X_i)_{i=1}^n$ with corresponding coframe $(\theta^i)_{i=1}^n$ and with $\Psi= \Psi^i \otimes X_i$}.
		\end{align*}
		
		Then, there exists an isometric immersion $\sigma \colon (M,g) \to \ML$ with shape operator $\Psi$.
		
		Such $\sigma$ is unique up to post-composition with an element in $Isom_0(\XXX_{n+1})\cong \nobreak {SO(n+2,\C)}$. More precisely, if $\sigma'$ is another isometric immersion with the same shape operator, there exists a unique $\phi\in Isom_0(\XXX_{n+1})$ such that $\sigma'(x)= \phi \cdot \sigma(x)$ for all $x\in M$
	\end{Teo}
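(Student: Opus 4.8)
The plan is to run the classical moving-frame (Bonnet) argument, but packaged intrinsically as the construction of a \emph{flat} metric connection on an auxiliary rank-$(n+2)$ bundle, so that simple connectivity of $M$ produces a global developing map into the quadric model $\ML\subset\C^{n+2}$. First I would introduce the complex vector bundle $E=\CTM\oplus\underline{\C}\,\nu\oplus\underline{\C}\,\zeta$, where $\nu,\zeta$ are fixed generators of two trivial complex line bundles, and equip $E$ with the $\C$-bilinear form $h$ that extends $g$ on $\CTM$, makes the three summands mutually orthogonal, and sets $h(\nu,\nu)=1$, $h(\zeta,\zeta)=-1$. On $E$ I define the $\C$-linear connection
\[
\nabla^{E}_{X}Y=\nabla_{X}Y-g(\Psi X,Y)\,\nu+g(X,Y)\,\zeta,\qquad
\nabla^{E}_{X}\nu=\Psi X,\qquad
\nabla^{E}_{X}\zeta=X
\]
for $X,Y\in\Gamma(\CTM)$, extended by the Leibniz rule. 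These are exactly the Gauss--Weingarten equations any isometric immersion $\sigma$ with unit normal must satisfy, once one recalls from Remark \ref{topologia delle geodetiche} that the ambient flat derivative of a tangent field differs from $D$ by the term $\inner{\cdot,\cdot}\,\zeta$, with $\zeta$ playing the role of the position vector $\sigma$ (which is normal to the quadric). Differentiating $h$ on each pair of generators shows $\nabla^{E}$ is compatible with $h$; self-adjointness of $\Psi$ is used precisely to cancel the $\nu$-component.

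The heart of the proof is that $\nabla^{E}$ is flat, and that its flatness is equivalent to the two hypotheses. The tensor $R^{E}(X,Y)\zeta$ uses only torsion-freeness of $\nabla$ and symmetry of $\Psi$, so it vanishes automatically; $R^{E}(X,Y)\nu$ has $\CTM$-component equal to $(d^{\nabla}\Psi)(X,Y)$, whose vanishing is Codazzi (1); and for $Z\in\Gamma(\CTM)$ the tensor $R^{E}(X,Y)Z$ has $\nu$- and $\zeta$-components that again reduce to $d^{\nabla}\Psi$, while its $\CTM$-component is
\[
R^{\nabla}(X,Y)Z-g(\Psi Y,Z)\Psi X+g(\Psi X,Z)\Psi Y+g(Y,Z)X-g(X,Z)Y,
\]
which vanishes exactly by the Gauss equation (2) combined with the explicit curvature of the space form in Lemma \ref{lemma space form}. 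In short, this is the algebraic reverse of the proof of Theorem \ref{Teo Gauss-Codazzi}, and it is the step I expect to be the main bookkeeping obstacle; no genuinely new phenomenon appears, since all the linear algebra is formally identical to the pseudo-Riemannian case once everything is made $\C$-bilinear.

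Once flatness is established I would integrate. As $M$ is simply connected, the flat connection $\nabla^{E}$ has path-independent parallel transport, so $E$ is trivial and, being compatible with the nondegenerate form $h$, admits a global $h$-orthonormal parallel frame. This yields a bundle isometry $\Phi\colon(E,h)\to M\times(\C^{n+2},\inners_0)$ carrying $\nabla^{E}$ to the trivial connection $d$, which I choose orientation-preserving. Set $\sigma:=\Phi\circ\zeta\colon M\to\C^{n+2}$; since $h(\zeta,\zeta)=-1$ and $\Phi$ is an isometry, $\sigma$ lands in $\ML$. Because $\Phi$ intertwines the connections, $\sigma_{*}X=d_{X}(\Phi\zeta)=\Phi(\nabla^{E}_{X}\zeta)=\Phi(X)$ for $X\in\CTM$; hence $\sigma_{*}=\Phi|_{\CTM}$ is injective with image orthogonal to $\sigma$, so $\sigma$ is an admissible immersion into $\ML$ with pull-back metric $\sigma^{*}\inner{X,Y}=\inner{\Phi X,\Phi Y}_0=h(X,Y)=g(X,Y)$. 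Its unit normal in $\ML$ is $\Phi(\nu)$, and the shape operator is the tangential part of $d(\Phi\nu)=\Phi(\nabla^{E}\nu)=\Phi(\Psi\,\cdot)=\sigma_{*}(\Psi\,\cdot)$, i.e. exactly $\Psi$.

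For uniqueness, given a second isometric immersion $\sigma'$ with the same shape operator $\Psi$, its own Gauss--Weingarten equations show that $Y\mapsto\sigma'_{*}Y$, $\nu\mapsto\nu'$, $\zeta\mapsto\sigma'$ define another parallel $h$-orthonormal trivialization $\Phi'$ of the \emph{same} flat bundle $(E,\nabla^{E},h)$. Then $\Phi'\circ\Phi^{-1}$ is a $d$-parallel isometric automorphism of $M\times\C^{n+2}$, hence a constant $A\in O(n+2,\C)\cong Isom(\ML)$, and $\sigma'=A\cdot\sigma$; matching the co-orientations $\nu\mapsto\nu'$ (the same, not opposite, shape operator) forces $A$ to preserve the complex orientation, so $A\in Isom_0(\ML)\cong SO(n+2,\C)$. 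Finally, if $\phi_1,\phi_2\in Isom_0(\ML)$ both carry $\sigma$ to $\sigma'$, then $\phi_2^{-1}\phi_1$ fixes a point $\sigma(p)$ and its differential fixes $T_{\sigma(p)}\ML$ pointwise, so by Lemma \ref{mappa differenziale determina isometria} it is the identity; thus the isometry $\phi=A$ is unique.
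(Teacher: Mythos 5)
Your existence argument is correct, and it reaches the result by a genuinely different route than the paper's. The paper works with a local $g$-orthonormal frame $(X_i)$, packages the connection forms $\Theta=(\theta^i_j)$, the coframe $(\theta^i)$ and the components $\Psi^i$ into an $\Lieonn$-valued $1$-form $\omega$, verifies the Maurer--Cartan equation $d\omega+\omega\wedge\omega=0$ (this is where Gauss and Codazzi enter), integrates it through Lemma \ref{Lemma chiave unicità} into a map $\Phi\colon U\to\SOnn$, sets $\sigma=\Phi\cdot\underline e$, and then must globalize from frame-admitting open sets to all of $M$ by the good-cover/analytic-continuation argument of its Step 2, which leans on Corollary \ref{Corollario Teo unicità}. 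Your $\nabla^{E}$ is precisely this $\omega$ read as a connection matrix in the frame $(X_1,\dots,X_n,\nu,\zeta)$ (up to the normalization $\zeta\leftrightarrow -i\sigma$), and your flatness computation is the same computation as the paper's verification of the Maurer--Cartan equation; but since $E$, $h$ and $\nabla^{E}$ are defined globally and frame-free, flatness plus simple connectedness yields the parallel trivialization, hence $\sigma$, on all of $M$ at once, and the gluing step disappears. What the paper's route buys instead is the explicit group-valued developing map $\Phi\colon M\to\SOnn$, reused (via Lemma \ref{Lemma chiave unicità} and its generalization, Proposition \ref{Teorema esistenza foliazione}) for the holomorphic-dependence results of Section 5. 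One small slip: the $\zeta$-component of $R^{E}(X,Y)Z$ vanishes identically by torsion-freeness and metric compatibility alone; it does not ``reduce to $d^{\nabla}\Psi$''. This is harmless.

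The genuine gap is the last step of your uniqueness argument, the passage from $O(n+2,\C)$ to $SO(n+2,\C)$. Your parallel-trivialization argument correctly produces a constant $A\in O(n+2,\C)$ with $\sigma'=A\cdot\sigma$, i.e.\ uniqueness up to $Isom(\ML)\cong O(n+2,\C)$. But the claim that sending $\nu\mapsto\nu'$ (both normals realizing $+\Psi$) forces $\det A=1$ does not follow: $\det A$ is the relative orientation of the adapted frames $(\sigma_*X_i,\nu,\sigma)$ and $(\sigma'_*X_i,\nu',\sigma')$ with respect to the standard basis of $\C^{n+2}$, and this sign is not determined by $(g,\Psi)$. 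Concretely, let $\sigma$ be any solution whose image spans $\C^{n+2}$ linearly (for example a geodesic sphere in $\Hy^{n+1}\subset\XXX_{n+1}$: it spans an affine hyperplane avoiding the origin, hence spans $\C^{n+2}$, and its shape operator is an invertible multiple of the identity), and let $B\in O(n+2,\C)$ with $\det B=-1$. Since $B$ is an ambient isometry it commutes with $D$, so $D_{B\sigma_*X}(B\nu)=B\,D_{\sigma_*X}\nu=\sigma'_*(\Psi X)$ for $\sigma'=B\circ\sigma$: thus $\sigma'$ is an isometric immersion with the same metric and the same shape operator $\Psi$, computed with respect to the normal $B\nu$. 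In your language $\Phi'=B\circ\Phi$ is again a parallel $h$-orthonormal trivialization sending $\nu$ to a $+\Psi$-normal, yet $A=B$ has determinant $-1$; and fullness of $\sigma$ forces any $\phi$ with $\phi\circ\sigma=\sigma'$ to agree with $B$ on all of $\C^{n+2}$, so no element of $\SOnn$ carries $\sigma$ to $\sigma'$ at all.

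You are in good company here: the paper's own proof of Proposition \ref{Teo unicità} passes over the same point, since construction (\ref{definizione Phi}) replaces $\nu$ by $-\nu$ in order to force the adapted frame into the $\SOnn$-orbit of the canonical basis, and that replacement changes the sign of the realized shape operator, which is no longer free once $\Psi$ is prescribed. What your argument honestly proves is uniqueness up to $Isom(\ML)\cong O(n+2,\C)$; the refinement to $Isom_0(\ML)\cong\SOnn$ requires building an orientation normalization into the data, e.g.\ prescribing $\Psi$ relative to the unique normal $\nu$ for which $(\sigma_*X_1,\dots,\sigma_*X_n,\sigma_*\nu,-i\sigma)$ is positively oriented, and comparing $\sigma$ and $\sigma'$ under that same convention. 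Either weaken your conclusion to $Isom(\ML)$ or make this convention explicit; as written, the sentence ``matching the co-orientations forces $A$ to preserve the complex orientation'' is a non sequitur.
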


	We state the case of surfaces in $\SL$ as a corollary.
	\begin{Cor}
		Let $S$ be a smooth simply connected surface. Consider a complex metric $g$ on $S$, with induced Levi-Civita connection $\nabla$, and a $g$-symmetric bundle-isomorphism 
		$\Psi\colon \CTS \to \CTS$.
		
		Assume $g$ and $\Psi$ satisfy
		\begin{align}
\label{Codazzi inversa}	1) &d^\nabla \Psi \equiv 0;\\
	2)	&K=-1+det(\Psi).
	\end{align}
		Then, there exists an isometric immersion $\sigma \colon S \to \SL$ whose corresponding shape operator is $\Psi$.
		
		Moreover, such $\sigma$ is unique up to post-composition with elements in $Isom_0(\SL)= \Proj(\SL \times \SL)$. 
	\end{Cor}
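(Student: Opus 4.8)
The plan is to obtain this statement as the two-dimensional case of Theorem \ref{Teoremone}, composed with the isometric identification $\XXX_3\cong\SL$. First I would match the hypotheses. Condition (1) is literally the Codazzi hypothesis $d^\nabla\Psi\equiv 0$ of Theorem \ref{Teoremone}. For condition (2), I would invoke the reduction leading to \eqref{GC2superfici}: on a surface the curvature tensor is determined by the single scalar $K=R(X_1,X_2,X_1,X_2)$, while $\Psi^1\wedge\Psi^2=det(\Psi)\,\theta^1\wedge\theta^2$, so the Gauss equation $R(X_i,X_j,\cdot,\cdot)-\Psi^i\wedge\Psi^j=-\theta^i\wedge\theta^j$ of Theorem \ref{Teoremone} collapses to $K-det(\Psi)=-1$, which is exactly hypothesis (2) here. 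Thus both hypotheses of Theorem \ref{Teoremone} hold with $n=2$.

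Applying Theorem \ref{Teoremone} then yields an isometric immersion $\sigma_0\colon(S,g)\to\XXX_3$ with shape operator $\Psi$, unique up to post-composition with $Isom_0(\XXX_3)\cong SO(4,\C)$. Next I would transport $\sigma_0$ through the biholomorphic isometry $F\colon\XXX_3\to\SL$ furnished earlier by the linear isometry $(\C^4,\inners_0)\to(Mat(2,\C),\inners_{Mat_2})$, where $\SL$ carries the submanifold metric $\inners_{Mat_2}$ (equal to $\frac{1}{8}$ of the Killing form). Setting $\sigma:=F\circ\sigma_0$ gives an isometric immersion $(S,g)\to\SL$. Since $F$ is an isometry of holomorphic Riemannian manifolds it sends the unit normal field of $\sigma_0$ to a unit normal field of $\sigma$ and intertwines the ambient Levi-Civita connections, so it carries $\II$, $\IIs$, and the shape operator of $\sigma_0$ to those of $\sigma$; hence $\sigma$ again has shape operator $\Psi$.

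For uniqueness I would conjugate the uniqueness clause of Theorem \ref{Teoremone} by $F$: any two such immersions into $\SL$ with shape operator $\Psi$ differ by an element of $F\,Isom_0(\XXX_3)\,F^{-1}=Isom_0(\SL)$, and as shown earlier $Isom_0(\SL)\cong\faktor{\SL\times\SL}{\{\pm(I_2,I_2)\}}=\Proj(\SL\times\SL)$. I do not anticipate any genuine obstacle here: all of the analytic content sits inside Theorem \ref{Teoremone}, and what remains is the bookkeeping of the identifications $\XXX_3\cong\SL$ and $Isom_0(\XXX_3)\cong\Proj(\SL\times\SL)$, together with the elementary fact that an ambient isometry preserves the shape operator.
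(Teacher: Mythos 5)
Your proposal is correct and is essentially the paper's intended argument: the corollary is stated there as an immediate specialization of Theorem \ref{Teoremone} to $n=2$, with the Gauss equation collapsing to $K-\det(\Psi)=-1$ exactly as in \eqref{GC2superfici}, and with the identifications $\XXX_3\cong\SL$ and $Isom_0(\SL)\cong\Proj(\SL\times\SL)$ from Section 2 handling both the transport of the immersion data and the uniqueness clause. No gaps.
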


	Let $G$ be a Lie group. Recall that the \emph{Maurer-Cartan form} of $G$ is the $1-$form $\omega_G\in \Omega^1(G, Lie(G))$ defined by
	\[
	(\omega_G)_g (\dot g)= (L_{g}^{-1})_* (\dot g), 
	\]	hence it is invariant by left translations. Moreover, it is completely characterized by the differential equation
	\[
	d\omega_{G} + [\omega_G, \omega_G ]=0.
	\]
	
	The proofs of both existence and uniqueness of Theorem \ref{Teoremone} are based on the following result on Lie groups.

	\begin{Lemma}
		\label{Lemma chiave unicità}
		Let $M$ be a simply connected manifold and $G$ be a Lie group.
		
		Let $\omega \in \Omega^1 (M, Lie(G))$.
		
		Then 
		\begin{equation}
		\label{differenziale MC}
	d \omega+[\omega,\omega]=0
		\end{equation}
		 if and only if there exists a smooth $\Phi\colon M \to G$ such that $\omega=\Phi^* \omega_G$, where $\omega_G$ denotes the Maurer-Cartan form of $G$.
		
		Moreover, such $\Phi$ is unique up to post-composition with some $L_g$, $g\in G$.
	\end{Lemma}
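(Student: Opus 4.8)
The plan is to recognize \eqref{differenziale MC} as the Maurer--Cartan equation and to prove the statement by the classical integration argument: one builds on $M \times G$ a flat Ehresmann connection on the trivial principal bundle $M \times G \to M$ whose horizontal leaves are precisely the graphs of the maps $\Phi$ we are after. The hypotheses then enter in a clean way: the Maurer--Cartan equation guarantees flatness of this connection, and the simple connectedness of $M$ guarantees that a global leaf exists and is single-valued.

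The two ``easy'' directions I would dispatch first. For necessity, if $\omega = \Phi^*\omega_G$ then, since pullback commutes with $d$ and with the bracket of $\mathfrak g$-valued forms, $d\omega + [\omega,\omega] = \Phi^*\bigl(d\omega_G + [\omega_G,\omega_G]\bigr) = 0$, so \eqref{differenziale MC} holds. For uniqueness, I would exploit that the left $G$-action $\mathrm{id}_M \times L_g$ on $M\times G$ preserves the distribution $\mathcal D$ defined below: this is the one-line computation $(\omega_G)_{gh}\circ (L_g)_\ast = (\omega_G)_h$, which shows $(v,w)\in\mathcal D_{(p,h)}$ implies $(v,(L_g)_\ast w)\in \mathcal D_{(p,gh)}$. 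Since the graph of any solution is a leaf of the associated foliation and this action permutes leaves transitively over a connected base, two solutions differ by a left translation $L_g$.

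For existence I introduce $\alpha = \mathrm{pr}_M^\ast \omega - \mathrm{pr}_G^\ast \omega_G \in \Omega^1(M\times G,\mathfrak g)$ and set $\mathcal D = \ker\alpha$. Because $\omega_G$ is a fiberwise isomorphism, $\alpha$ is fiberwise surjective, so $\mathcal D$ has rank $\dim M$; moreover $(0,w)\in\mathcal D$ forces $(\omega_G)_g(w)=0$, hence $w=0$, so $\mathcal D$ is complementary to the vertical subbundle and defines a connection. Writing $a=\mathrm{pr}_M^\ast\omega$, $b=\mathrm{pr}_G^\ast\omega_G$ and using that the bracket of $\mathfrak g$-valued $1$-forms is symmetric, the structure equations give $d\alpha = -[a,a]+[b,b] = -[\alpha,\,a+b]$, and this vanishes on any pair of vectors lying in $\ker\alpha$. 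By the Frobenius theorem $\mathcal D$ is integrable, i.e. the connection is flat.

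The main work is to upgrade the local integral manifolds to a single global section, and this is where I expect the only genuine difficulty. The horizontal lift of a curve $\gamma$ in $M$ solves the time-dependent left-invariant equation $\dot g(t) = (L_{g(t)})_\ast\,\omega_{\gamma(t)}(\dot\gamma(t))$ on $G$, which has solutions defined for all time because time-dependent left-invariant vector fields on a Lie group are complete; hence $\mathcal D$ is a complete connection and parallel transport is globally defined along every path. Flatness makes parallel transport invariant under homotopies rel endpoints, and since $M$ is simply connected it is therefore path-independent, so the leaf through $(p_0,e)$ is the graph of a well-defined smooth map $\Phi\colon M\to G$ with $\Phi(p_0)=e$. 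Tangency of this graph to $\ker\alpha$ reads exactly $(\omega_G)_{\Phi(p)}(d\Phi_p)=\omega_p$, that is $\Phi^\ast\omega_G = \omega$, which completes the construction. I expect the completeness of the ODE on $G$ together with the homotopy-invariance of the holonomy to be the only nontrivial points; everything else is linear algebra and Frobenius.
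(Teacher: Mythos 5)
Your proof is correct, and its skeleton coincides with the paper's: the paper proves this lemma as the special case $\Lambda=\{pt\}$ of the parametric Proposition \ref{Teorema esistenza foliazione}, whose proof builds exactly your distribution $\ker\left(\pi_G^*\omega_G-\pi_M^*\omega\right)$ on the trivial bundle $M\times G$, gets integrability from Frobenius plus the Maurer--Cartan equation, identifies a leaf with the graph of $\Phi$, and derives uniqueness from the left $G$-invariance of the distribution -- all steps you reproduce. The one genuine divergence is the globalization step, which you correctly single out as the crux. The paper shows that $\pi_{M}$ restricted to a maximal leaf is a local diffeomorphism and then proves it is \emph{proper}: given a sequence in the leaf whose projection converges, the leaf through a point over the limit is carried onto the given leaf by the $G$-action, which forces subconvergence inside the leaf; properness plus local diffeomorphism makes the restriction a covering map, hence a diffeomorphism since $M$ is simply connected. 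You instead prove the connection is \emph{complete}: horizontal lifts solve the time-dependent ODE $\dot g(t)=(L_{g(t)})_*\,\omega_{\gamma(t)}(\dot\gamma(t))$, whose left-invariance makes the maximal existence time independent of the initial point and hence infinite; flatness then gives homotopy invariance of parallel transport, and simple connectedness gives path-independence, so the leaf through $(p_0,e)$ is a graph. Both arguments run on the same fuel (left $G$-invariance of the distribution), but yours isolates the analytic content -- why lifts cannot escape to infinity -- in a clean ODE statement, whereas the paper's properness argument packages the same point topologically, in a form that transfers verbatim to the parametric version with the extra factor $\Lambda$ that the paper needs for Theorem \ref{Teo dipendenza olomorfa}.
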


A proof of Lemma \ref{Lemma chiave unicità} follows by constructing a suitable Cartan connection - depending on $\omega$ - on the trivial principal bundle $\pi_M \colon M\times G \to M$ so that its curvature being zero is equivalent to condition \eqref{differenziale MC}. 
In Section 5 we will prove Proposition \ref{Teorema esistenza foliazione} which can be seen as a more general version of Lemma \ref{Lemma chiave unicità}.

	Lemma \ref{Lemma chiave unicità} provides a remarkable one-to-one correspondence between immersions $M\to SO(n+2,\C)$ up to translations and elements in $\Omega^1(M, \Lieonn)$ satisfying $(\ref{differenziale MC})$.

\subsection{Proof of Theorem \ref{Teoremone}}	
	
	We start the proof of Theorem \ref{Teoremone} providing a way, which will turn out to be very useful for our pourposes, to construct (local) immersions into $SO(n+2,\C)$ from immersions into $\mathds{X}_{n+1}$.

	Let $dim(M)=n$ and let $\sigma \colon (M,g) \to \PML$ be an isometric immersion with shape operator $\Psi$.

	Let $(X_i)_{i=1}^n$ be a local orthonormal frame for $\CTM$ on some open subset $U\subset M$ and $\nu$ be a normal vector field for the immersion $\sigma$. 
	Recalling that $\mathds X_{n+1}\subset \C^{n+2}$, notice that  $( \sigma_*(X_1), \dots \sigma_* (X_n), \sigma_*\nu(x), -i\sigma(x) )$ is an orthonormal basis for $\C^{n+2}$. Up to switching $\nu$ with $-\nu$, we can assume that this basis of $\C^{n+2}$ lies in the same $\SOnn$-orbit as the canonical basis $(\underline v_1^0, \dots, \underline v_{n+2}^0)$ of $\C^{n+2}$. Recall that we defined \[\underline e=\begin{pmatrix}
	0\\
	\dots\\
	0\\
	i
	\end{pmatrix}=i\underline v^0_{n+2}\in \mathds X_{n+1}.\]
	
	Given an immersion $\sigma\colon U\to \mathds X_{n+1}$, we construct the smooth map \[\Phi\colon U \to \SOnn\] defined, for all $x\in U$, by
	\begin{equation}
	\label{definizione Phi}
	\begin{split}
	\Phi(x) ( \underline v_i) &= \sigma_{*x} (X_i) \qquad \qquad i=1,\dots n\\
	\Phi(x) ( \underline v_{n+1}) &= \sigma_{*x} (\nu),\\	\Phi(x)(\underline v_{n+2})&=-i\Phi(x) (\underline e) =   -i\sigma(x) 	
	\end{split}
	\end{equation}
We denote by $\alpha, \beta, \gamma$ indices in $\{1, \dots, n+2\}$ and by $i, j, k, h$ indices in $\{1, \dots, n\}$
	
	Let $\omega_G$ be the Maurer-Cartan form for $\SOnn$ and define $\omega_\Phi:= \Phi^* \omega_G\in \Omega^1(\C TU,\Lieonn)$, where we consider the canonical identification $\Lieonn=Skew({n+2}, \C)$.

	By definition of the Maurer-Cartan form, $\omega_\Phi$ satisfies
	\[
	\omega_\Phi (x) (\XX)= (L_{\Phi(x)} ^{-1})_* \circ \Phi_{*x} (\XX)= \Phi(x)^{-1} \cdot \big(\Phi_{*x} (\XX)\big) 
	\]

	We are now able to prove the following.
	\begin{Prop} [Uniqueness in Theorem \ref{Teoremone}]
		\label{Teo unicità}
		Let  $M$ be a connected smooth manifold of dimension $n$. Let $\sigma, \sigma'\colon M \to \ML$ be two admissible immersions of a hypersurface with the same induced complex metric $g=\sigma^*\inners=(\sigma')^*\inners$ and the same shape operator $\Psi$. Then, there exists a unique $\phi\in Isom_0(\ML)$ such that $\sigma' (x)=\phi \cdot \sigma(x)$ for all $x\in M$.
	\end{Prop}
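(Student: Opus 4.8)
The plan is to reduce the statement to the uniqueness half of Lemma \ref{Lemma chiave unicità} by showing that the adapted frame map $\Phi$ of (\ref{definizione Phi}) has a Maurer--Cartan pullback $\omega_\Phi:=\Phi^*\omega_G$ that depends only on the intrinsic data $(g,\Psi)$ and on the chosen orthonormal frame, and not on the individual immersion. First I would fix $p\in M$, a local $g$-orthonormal frame $(X_i)_{i=1}^n$ around $p$ with coframe $(\theta^i)_i$ and connection forms $\theta^i_j$, and a normal field $\nu$ whose sign is pinned down exactly as in (\ref{definizione Phi}), by requiring the adapted basis to lie in the $\SOnn$-orbit of the canonical one; this is the choice for which $\Psi$ is the shape operator, and matching this orientation for $\sigma$ and $\sigma'$ is what is meant by their having \emph{the same} $\Psi$.

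Next I would compute $\omega_\Phi=\Phi^{-1}d\Phi$ columnwise. Writing $F_i=\sigma_*X_i$, $F_{n+1}=\sigma_*\nu$ and $F_{n+2}=-i\sigma$ for the columns of $\Phi$, the ambient derivative splits through the second fundamental form of $\XXX_{n+1}\subset\C^{n+2}$, namely $d_\XX W=D_\XX W+\inner{\sigma_*\XX,W}\sigma$ for $W$ tangent to $\XXX_{n+1}$ (the relation behind the geodesic equation in Remark \ref{topologia delle geodetiche}). Combining this with $\overline\nabla_\XX X_i=\nabla_\XX X_i+\II(\XX,X_i)$, equation (\ref{equazione Psi_i}) giving $\IIs(X_i,\cdot)=-\Psi^i$, and the identity $\Psi=\overline\nabla\nu$, every derivative $d_\XX F_\alpha$ re-expands in the frame $(F_\beta)_\beta$ with coefficients that are precisely $\theta^i_j$, $\Psi^i$ and $\theta^i$. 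I expect $\omega_\Phi$ to come out as the skew-symmetric matrix
\[
\omega_\Phi=\begin{pmatrix} (\theta^i_j) & (\Psi^i) & -i(\theta^i)\\ (-\Psi^j) & 0 & 0\\ i(\theta^j) & 0 & 0 \end{pmatrix}\in\Lieonn,
\]
which manifestly carries no information about $\sigma$ beyond $(g,\Psi)$ and the frame. Hence, if $\sigma$ and $\sigma'$ induce the same $g$ and $\Psi$, then using the same local frame and matching normals one gets $\omega_\Phi=\omega_{\Phi'}$.

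I would then exploit this rigidity. After composing $\sigma$ with the unique $\phi\in Isom_0(\XXX_{n+1})\cong\SOnn$ carrying the adapted basis of $\sigma$ at $p$ to that of $\sigma'$ at $p$, I may assume $\Phi(p)=\Phi'(p)$. The two frame maps now solve the same total differential equation $d\Phi=\Phi\,\omega_\Phi=\Phi'\,\omega_{\Phi'}=d\Phi'$ with the same initial value, so by uniqueness of solutions along paths---the uniqueness part of Lemma \ref{Lemma chiave unicità}, which needs only connectedness of $M$---they coincide. Since $\sigma=\Phi(\cdot)\underline e$ and $\sigma'=\Phi'(\cdot)\underline e$, this gives $\sigma=\sigma'$, i.e. the original $\sigma'=\phi\circ\sigma$. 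To make this valid on the merely connected $M$, where a global frame need not exist, I would run an open--closed argument: the locus where $\sigma$ and $\sigma'$ agree together with their full adapted frames is closed by continuity and open by the local ODE-uniqueness just described, hence equal to $M$.

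Finally, uniqueness of $\phi$ follows from Lemma \ref{mappa differenziale determina isometria}: if $\phi\circ\sigma=\phi'\circ\sigma$, then $\phi^{-1}\phi'$ fixes the point $\sigma(p)$ and, because $\phi$ and $\phi'$ both send the adapted frame $(\sigma_*X_i,\sigma_*\nu)$ to $(\sigma'_*X_i,\sigma'_*\nu)$, it fixes a full basis of $T_{\sigma(p)}\XXX_{n+1}$; thus $\phi^{-1}\phi'=\mathrm{id}$. The main obstacle I anticipate is the bookkeeping in the columnwise computation of $\omega_\Phi$---in particular correctly tracking the position-vector contribution $\inner{\sigma_*\XX,W}\sigma$ and the factors of $i$ introduced by $F_{n+2}=-i\sigma$---so that the output is visibly skew-symmetric and frame-intrinsic. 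Once that is in hand, the globalization over a non-simply-connected $M$ is a routine open--closed argument rather than a genuine difficulty.
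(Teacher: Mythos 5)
Your proposal is correct and follows essentially the same route as the paper: compute $\omega_\Phi=\Phi^*\omega_G$ for the adapted frame map of (\ref{definizione Phi}), observe that it is expressed purely in terms of $\theta^i$, $\theta^i_j$, $\Psi^i$ (hence depends on $\sigma$ only through $(g,\Psi)$), invoke the uniqueness half of Lemma \ref{Lemma chiave unicità} (which indeed needs only connectedness), patch over a connected $M$, and prove uniqueness of $\phi$ from the fact that it fixes pointwise the $(n+1)$-dimensional subspace $Span_\C(\sigma(x),\sigma_*(T_xM))$. The only discrepancy is an overall sign in the $\Psi$-blocks of your matrix relative to the paper's, which is a normal-orientation convention and harmless since both immersions are treated with the same convention.
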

	\begin{proof}We first prove that $\phi$ is unique. Assume $\phi_1 \circ \sigma=\phi_2 \circ \sigma$ for some $\phi_1,\phi_2\in SO(n+2, \C)$, then, for any $x\in U$, $\phi_2^{-1} \circ \phi_1$ coincides with the identity on $Span_\C(\sigma(x), \sigma_*(T_x M)  )\subset \C^{n+2}$ which is a complex vector subspace of dimension $n+1$, so $\phi_1=\phi_2$.
		
		We now prove that $\phi$ exists. Observe that it is enough to show that the statement holds for any open subset of $M$ that admits a global $g$-orthonormal frame, then the thesis follows by the uniqueness of $\phi$ and by the fact that $M$ is connected. We therefore assume in the proof that $M$ admits a global ortonormal frame without loss of generality.
	
		Consider an isometric immersion $\sigma\colon M \to \ML$ with immersion data $(g,\Psi)$, fix a $g$-frame $(X_i)_i$ construct the lifting $\Phi\colon M \to Isom_0(\ML)$ as in (\ref{definizione Phi}).
		
		In order to prove the statement, it is enough to show that the form $\omega_\Phi$ depends on $\sigma$ only through $g$ and $\Psi$. Indeed, assume that $\sigma$ and $\sigma'$ are two isometric immersions with the same immersion data $(g,\Psi)$: the induced maps $\Phi, \Phi'\colon M \mapsto G= SO(n+2,\C)$ would be such that $\Phi^* \omega_G = (\Phi')^* \omega_G$, hence, by Lemma \ref{Lemma chiave unicità}, there exists $\phi\in \SOnn$ such that $\Phi'(x)=\phi\cdot \Phi(x)$ for all $x\in M$, hence $\sigma'(x)=\phi\cdot \sigma(x)$. \newline

		For $\alpha=1, \dots, n+2$, define \[\Phi_\alpha (x):= \Phi(x)\underline v^0_\alpha.\]
Notice that $ i\Phi_\alpha\in \mathds{X}_{n+1}$ for all $\alpha$ and that $\Phi_\beta(x)\in T_{i\Phi_\alpha(x)} \mathds{X}_{n+1}$ for all $\alpha\ne \beta$. In particular, for $\beta \ne n+2$,  $\Phi_\beta$ can be seen as a vector field on $\mathds X_{n+1}$ along $\sigma(U)$.

	As usual, denote with $\theta^i_j$ be the Levi-Civita connection forms for $(X_i)_{i=1}^n$, so $
\nabla X_i = \theta_i ^j \otimes X_j$. Also recall the notation $\Psi= \Psi^i \otimes X_i$.

For all $x\in U$ and $\YY\in T_x U$,

\begin{align*}
\Phi(x)\omega_\Phi (\YY) \underline v_\alpha^0 &= \Phi_{*x} (\YY) \underline v_\alpha^0 = \big(\Phi(\cdot) \underline v_\alpha^0 \big)_{*x} (\YY)= (\Phi_{\alpha})_ {*x} (\YY).
\end{align*}
In particular, \begin{align*}
\omega_\Phi(\YY) \cdot \underline v_{n+2}^0 &= (\Phi(x))^{-1}(\Phi(\cdot) \underline v^0_{n+2})_* (\YY) =-i(\Phi(x))^{-1} \sigma_*(\YY)=\\
&= \sum_{k=1}^n -i\inner{\YY, X_k} \big( (\Phi(x))^{-1}\sigma_*(X_k)\big)= \sum_{k=1}^n -i\inner{\YY, X_k} \underline v_k^0= \sum_{k=1}^n -i\theta^k(\YY) \underline v_k^0
\end{align*}

We now compute $\omega_\Phi \cdot \underline v_j^0$ with $j=1, \dots, n$. Recall that the Levi-Civita connection on $\mathds X_{n+1}\subset \C^{n+2}$ is the tangent component of the standard differentiation on $\C^{n+2}$ to deduce that
\begin{align*}
\Phi(x)\omega_\Phi (\YY) \underline v_j^0&= (\Phi_j)_{*x}(\YY)= \frac{\partial \Phi_j}{\partial \YY} = \sigma_{*x} (\overline \nabla_ \YY \Phi_j) + \inner{ ( \Phi_j)_*{ \YY}, \Phi_{n+2}} \Phi_{n+2}=\\
&= \sigma_* (\nabla_\YY X_j) + \IIs (\YY, X_j) \sigma_*(\nu) -\inner{\Phi_{j}, (\Phi_{n+2})_*{ \YY}} \Phi_{n+2}= \\
&=\sum_k \theta_j ^k (\YY) \Phi_k +\IIs (\YY, X_j) \Phi_{n+1}+ i\inner{ \Phi_j, \sigma_*(\YY)} \Phi_{n+2}.
\end{align*}

As a result, 
\begin{align*}
\omega_\Phi(\YY) \underline v_j^0&= \theta_j^k \underline v_k^0 + \IIs (\YY, X_j)\underline v_{n+1}^0 +i \inner{X_j, \YY} \underline v_{n+2}^0=\\
&= \theta_j^k \underline v_k^0 - \Psi^j(\YY)\underline v_{n+1}^0 +i \theta^j(\YY) \underline v_{n+2}^0.
\end{align*}
 
We therefore have a complete description of $\omega_\Phi \in \Gamma(\CTM, \Lieonn)= Skew(n, \Gamma(\CTM))$ in terms of $g$ (and of the induced connection $\nabla$) and $\Psi$ only:

		\[
		\omega_\Phi=
		\begin{pmatrix}
		{\scalebox{2} {$\Theta$} }& \begin{matrix}
		-\Psi^1 & -i\theta^1 \\
		\dots &\dots \\
		-\Psi^n & -i\theta^n
		\end{matrix} \\
		\begin{matrix}
		\Psi^1 & \dots & \Psi^n\\
		i\theta^1 & \dots & i\theta^n
		\end{matrix} & 
		\begin{matrix}0\text{    } & 0\text{    } \\ 
		0\text{    } & 0\text{    }
		\end{matrix}
		\end{pmatrix}  
		\]
		where $\Theta=(\theta^i_j)_{i,j}$. The proof follows.

	\end{proof}

	\begin{Cor}
		\label{Corollario Teo unicità}
		Let $\sigma, \sigma'\colon (M^n,g) \to \ML$ be two isometric immersions with the same shape operator $\Psi$. Assume $\sigma(x)=\sigma'(x)$ and $\sigma_{*x}=\sigma'_{*x}$, then $\sigma\equiv\sigma'$.
	\end{Cor}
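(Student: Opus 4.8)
The plan is to deduce this corollary directly from Proposition~\ref{Teo unicità}, of which it is essentially an immediate specialization. First I would invoke that proposition: since $\sigma$ and $\sigma'$ are two isometric immersions of $(M,g)$ with the same shape operator $\Psi$, there exists a unique $\phi\in Isom_0(\ML)\cong SO(n+2,\C)$ such that $\sigma'(y)=\phi\cdot\sigma(y)$ for every $y\in M$. The whole task then reduces to showing that the pointwise conditions at $x$ force $\phi$ to be the identity of $\C^{n+2}$, which immediately gives $\sigma'\equiv\sigma$.

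Next I would translate the two hypotheses into linear-algebraic constraints on $\phi$, recalling that $\phi$ acts on $\XXX_{n+1}\subset\C^{n+2}$ as the restriction of a $\C$-linear map. The assumption $\sigma(x)=\sigma'(x)$ gives $\phi\cdot\sigma(x)=\sigma(x)$, so $\phi$ fixes the vector $\sigma(x)$. Differentiating the identity $\sigma'=\phi\cdot\sigma$ at $x$, and using that the differential of a linear map is the map itself (so $(\phi\cdot\sigma)_{*x}=\phi\circ\sigma_{*x}$), I obtain $\sigma'_{*x}=\phi\circ\sigma_{*x}$. The hypothesis $\sigma_{*x}=\sigma'_{*x}$ then yields that $\phi$ is the identity on the image $\sigma_{*x}(\C T_x M)$; since $\phi$ is $\C$-linear, agreement on the real tangent image suffices to conclude agreement on the full complexified image. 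Hence $\phi$ restricts to the identity on
\[
W:=\mathrm{Span}_\C\big(\sigma(x),\,\sigma_{*x}(\C T_x M)\big)\subset\C^{n+2},
\]
a complex subspace of dimension $n+1$.

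Finally I would upgrade ``identity on $W$'' to ``identity on $\C^{n+2}$'', exactly as in the uniqueness step inside the proof of Proposition~\ref{Teo unicità}. The subspace $W$ is non-degenerate: it is the orthogonal direct sum of the non-degenerate line $\C\,\sigma(x)$, where $\inner{\sigma(x),\sigma(x)}=-1$, and the image $\sigma_{*x}(\C T_x M)\subset T_{\sigma(x)}\XXX_{n+1}=\sigma(x)^\perp$, on which the ambient form restricts to the non-degenerate metric $g$; the two summands are orthogonal precisely because the tangent space of $\XXX_{n+1}$ is the orthogonal complement of the position vector. Therefore $W^\perp$ is a non-degenerate complex line which $\phi$ must preserve, acting there by a scalar $\lambda$ with $\lambda^2=1$. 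Since $\det\phi=1$ and $\phi|_W=\mathrm{id}$, we get $\lambda=1$, so $\phi=\mathrm{id}$ and $\sigma'=\phi\cdot\sigma=\sigma$.

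I expect no genuine obstacle here, as all the substance already lives in Proposition~\ref{Teo unicità}. The only points requiring (minor) care are the correct translation of the condition $\sigma_{*x}=\sigma'_{*x}$ through the linearity of $\phi$, and the determinant-one argument establishing that an element of $SO(n+2,\C)$ fixing an $(n+1)$-dimensional non-degenerate subspace pointwise must be trivial.
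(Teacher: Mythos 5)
Your proof is correct and follows essentially the same route as the paper: invoke Proposition~\ref{Teo unicità} to obtain $\phi\in SO(n+2,\C)$ with $\sigma'=\phi\cdot\sigma$, observe that the hypotheses at $x$ force $\phi$ to fix pointwise the $(n+1)$-dimensional subspace $\mathrm{Span}_\C\big(\sigma(x),\sigma_{*x}(\C T_xM)\big)$, and conclude $\phi=\mathrm{id}$. The paper merely asserts this last implication (``at most one matrix in $SO(n+2,\C)$ sends $n+1$ given vectors to $n+1$ given vectors''), whereas you supply the justification via non-degeneracy of that subspace and the determinant-one argument, which is a welcome, but not substantively different, elaboration.
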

	\begin{proof}
		By Proposition \ref{Teo unicità}, there exists $\phi \in \SOnn$ such that $\phi \circ \sigma=\sigma'$. Since there exists at most one matrix in $\SOnn$ sending $n+1$ given vectors in $\C^{n+2}$ into other $n+1$ given vectors, we have $d_{\sigma(x)} \phi=id$, hence $\phi=id$.
	\end{proof}

We are finally able to prove existence.

	\begin{proof} [Proof of Existence in Theorem \ref{Teoremone}] We prove the statement in two steps.
		
		\emph{Step 1.}     
		Assume $M=U$ is such that there exists a globally defined $g$-orthonormal frame $(X_i)_{i=1}^n$ in $\C T U$ with dual frame $(\theta^i)_i$.
		
		Our aim is to construct a suitable form $\omega\in \Omega^1(U, \Lieonn)$ satisfying \eqref{differenziale MC} in order to be able to apply Lemma \ref{Lemma chiave unicità}. 
		
		Let $\Theta=(\theta^i_j)$ be the skew-symmetric matrix of the Levi-Civita connection forms for $(X_i)_i$ and recall the notation
		$\Psi= \Psi^i \otimes X_i=\Psi^i_j \cdot \theta^j \otimes X_i.
		$.
		Also define \[\upsi:= \begin{pmatrix}
		\Psi^1 \\
		\Psi^2\\
		\dots\\
		\Psi^n
		\end{pmatrix} \quad \text{and}\quad \utheta:= \begin{pmatrix}
		\theta^1 \\
		\theta^2\\
		\dots\\
		\theta^n
		\end{pmatrix} .\]

		The proof in Proposition \ref{Teo unicità} suggests to define $\omega$ as
		\[		
		\omega=
		\begin{pmatrix}
		{\scalebox{2} {$\Theta$} }& \begin{matrix}
		-\Psi^1 & -i\theta^1 \\
		\dots &\dots \\
		-\Psi^n & -i\theta^n
		\end{matrix} \\
		\begin{matrix}
		\Psi^1 & \dots & \Psi^n\\
		i\theta^1 & \dots & i\theta^n
		\end{matrix} & 
		\begin{matrix}0\text{    } & \text{    } 0 \\ 
		0\text{    } & \text{    } 0
		\end{matrix}
		\end{pmatrix}  
		\]
		
		We want to prove that $d\omega + \omega \wedge \omega=0$. \newline
		
		By the explicit computation, we get that 
		\[
		d\omega + \omega \wedge \omega =
		\begin{pmatrix}
		d\Theta + \Theta\wedge \Theta - \upsi \wedge ^t\upsi+ \utheta \wedge ^t\utheta & -d\upsi - \Theta \wedge \upsi & -i d\utheta -i \Theta \wedge \utheta \\
	d\upsi + \Theta \wedge \upsi	 &  0 & -i^t\upsi\wedge \underline \theta \\
		i ^td\utheta+ i ^t \utheta \wedge \Theta & -i ^t\utheta \wedge \upsi & 0
		\end{pmatrix}
		\]
		We now observe the following.
		\begin{itemize}
			\item By definition of the Levi-Civita connection forms,
			\[
			d\theta^j +\theta^i_j\wedge \theta^j=0,\]
			hence $d\utheta+ \Theta\wedge \utheta=0$.
			\item Since $\Psi$ is symmetric, we have 
			\[^t\upsi\wedge \utheta= \sum_{j,k} \Psi^j_k \theta^j\wedge \theta^k=0.
			\]
			\item Expanding $d^{\nabla} \Psi$, one gets
			\begin{align*}
			g\big((d^\nabla \Psi) (X_i, X_j), X_h\big) =& \big(\nabla_{X_i} (\Psi^k(X_j) X_k), X_h\big)-\\
			&-\big(\nabla_{X_j} \Psi^k(X_i)X_k, X_h\big) - \Psi^h([X_i, X_j])=\\
			=& X_i (\Psi^h (X_j)) + \Psi^k (X_j)\theta_k^h (X_i)-\\
			&- X_j(\Psi^h(X_i)) - \Psi^k (X_i) \theta_k ^h (X_j) - \Psi^h([X_i, X_j])=\\
			=& (d\Psi^h)(X_i, X_j) +  (\Theta\wedge \upsi)^{(h)} (X_i, X_j).
			\end{align*}
			which leads to the standard formula
			\[
			\underline {d^\nabla \Psi}= d\upsi +  \Theta \wedge \upsi,
			\]
			(see also \cite{Kobayashi-Nomizu 1}). Thus, by Codazzi equation,
			\[
			d\upsi +  \Theta \wedge \upsi=0
			\]

			\item  By a straightforward computation \[ d\theta_j^i+ \sum_t \theta_k^i\wedge \theta^k_j =R(X_i, X_j, \cdot, \cdot).\]
		Now use equation (\ref{Codazzi inversa}) to get
			\[d\theta_j^i + \sum_k \theta^i_k\wedge \theta_j^k - \psi^i \wedge \psi^j+ \theta^i \wedge \theta^j=0 \]
			 for all $i, j=1,\dots, n$, i.e. \[d\Theta +\Theta \wedge \Theta-\upsi\wedge^t\upsi + \utheta\wedge^t\utheta=0.\]

		\end{itemize}
		We can finally conclude that $d\omega + \omega \wedge \omega=0$. \newline

		By Lemma \ref{Lemma chiave unicità} there exists $\Phi\colon U \to \SOnn$ such that $\omega= \omega_\Phi= \Phi^* \omega_{G}$ where $\omega_{G}$ is the Maurer-Cartan form of $\SOnn$.
		
		Define $\sigma\colon M \to \ML$ as $\sigma(x):= \Phi(x)\cdot \underline e=i\Phi(x)\underline v_{n+2}^0$.
		
		Observe that
		\begin{align*}
		\sigma_{*x}(X_j) &= i \big( \Phi(\cdot) \underline v_{n+2}^0 \big)_*(X_j)=i \Phi_{*x}(X_j)\underline v_{n+2}^0=\\
		&=i \Phi(x)\omega(X_j) \underline v^0_{(n+2)}= \theta^k(X_j) \Phi(x) \underline v_k^0= \Phi(x)\underline v_j^0.
			\end{align*}
		We conclude that $\sigma$ is an immersion at every point and that it is an isometric immersion since its differential sends an orthonormal basis into orthonormal vectors. 
			
		Observe that this construction is in fact inverse to construction $(\ref{definizione Phi})$. 	\newline
		
		\emph{Step 2.} We extend the result for simply connected manifold $M$ with immersion data $(g,\Psi)$. We will say that an open subset of $M$ is \emph{immersible} if it admits an isometric immersion into $\ML$.
		
		Fix a point $x\in M$ and an immersible open neighbourhood $U_0$ of $x$ and an isometric immersion $\sigma_0 \colon U_0 \to \ML$. In fact, one can just fix the germ of an immersion.
		
		For all $y\in M$, let $\alpha\colon [0,1] \to M$ be a simple path connecting $x$ to $y$. Consider a collection of open subsets $\{U_i\}_{i=0}^m$ of $\ML$ with the property of being a \emph{good cover} for $\alpha([0,1])$, i.e. such that: \begin{itemize}
			\item[\_] $\alpha([0,1]) \subset \bigcup_{i=0}^m U_i$;
			\item[\_] $U_i$ is immersible for all $i$;
			\item[\_] $\alpha^{-1}(U_i)$ is a connected interval;
			\item[\_] $U_{i}\cap U_j\cap \alpha([0,1]) \ne \emptyset$ iff $|i-j|\le 1$ and $U_i\cap U_{j}$ is either empty or connected for all $i,j$.
		\end{itemize}
		
		By Corollary \ref{Corollario Teo unicità}, we can construct a unique family $\{\sigma_i \}_{i=0}^m$ such that $\sigma_i \colon U_i \to \ML$ is an isometric immersion and ${\sigma_i}_{|U_i \cap U_{i-1}}= {\sigma_{i-1}}_{|U_i \cap U_{i-1}}$ for all $i=1, \dots m$.
		
		We prove that the germ of $\sigma_m$ around $q$ does not depend on the choice of the good cover for $\alpha$.
		Let $\{U'_j\}_{j=0}^m$, with $U'_0=U_0$, be another good cover for $\alpha$ with associated set of immersions $\{\sigma'_j\}_{j=0}^p$, with $\sigma'_0=\sigma_0$. For all $i\in \{0, \dots, m\}$ and $j\in \{0, \dots, p \}$, $U_i\cap U'_j\cap \alpha([0,1])$ is either empty or connected, since it is the image of a connected interval; we define $C_{i,j}$ as the connected component of $U_i \cap U'_j$ which intersects $\alpha([0,1])$.
		
		By contradiction, assume $(i_0, j_0)\in \{0, \dots, m\}\times\{0,\dots, p\}$ be such that $U_{i_0}\cap U'_{j_0}\cap \alpha(I)\ne \emptyset$ and such that $\sigma_{i_0}$ and $\sigma'_{j_0}$ do not coincide over $C_{i_0, j_0}$; also take $(i_0, j_0)$ so that this does not hold for any other couple $(i, j)$ with $i\le i_0$ and $j\le j_0$. One can see that, for such $(i_0, j_0)$, either $U_{i_0 -1}\cap U_{i_0}\cap U'_{j_0}\ne \emptyset$ or $U'_{j_0-1}\cap U_{i_0}\cap U_{j_0}\ne \emptyset$; assume the former without loss of generality. Then, by minimality $\sigma_{i_0-1}$ and $\sigma'_{j_0}$ coincide over $C_{i_0-1, j_0}$, while by construction $\sigma_{i_0-1}$ and $\sigma_{i_0}$ coincide over $U_{i_0}\cap U_{i_0-1}$: as a result $\sigma_{i_0}$ and $\sigma'_{j_0}$ coincide on $C_{i_0,j_0} \cap U_{i_0}\cap U_{i_0-1}$ which is nonempty, hence they coincide on $C_{i_0, j_0}$, which is a contradiction.
		
		Finally, we use the simply-connectedness of $M$ to prove that the germ of $\sigma_m$ over $y$ does not depend on the path $\alpha$ either. Indeed, any two paths from $x$ to $y$ are linked by some homotopy $H\colon [0,1]\times [0,1]\to M$; by compactness, it is clear that if $\{U_i\}_{i=0}^m$ is a good cover for the path $\alpha_t:=H(t,\cdot)$, then there exists $\eps_t$ such that $\{U_i\}_i$ is a good cover for $\alpha_s$ for all $|s-t|<\eps$; as a result, the function that assigns to each time $t\in [0,1]$ the germ of the corresponding $\sigma_n^t$ in $q$ constructed via $\alpha_t$ is locally constant, hence constant.
		
		We can therefore extend $\sigma_0$ to an isometric immersion $\sigma\colon M \to \ML$.
	\end{proof}
	
	\begin{Remark}
		\begin{itemize}
			\item[\_]
By Theorem \ref{Teoremone}, every pseudo-Riemannian space form of constant curvature $-1$ of dimension $n$ admits an essentially unique isometric immersion into $\mathds{X}_{n+1}$.
\item[\_]
	Let $(M, g)$ be a manifold with a complex metric $g$ and a symmetric $(1,1)$ form $\Psi$ such that $(g,\Psi)$ satisfy the Gauss-Codazzi equation. 
	
	Consider its universal cover $(\widetilde M, \widetilde g)$, over which $\pi_1(M)$ acts by isometries, and the lifting $\widetilde \Psi$ of $\Psi$, which is $\pi_1(S)$-invariant; by the previous result, there exists an isometric immersion \[\sigma\colon (\widetilde M,\widetilde g) \to \PML\] with shape operatore $\widetilde \Psi$, unique up to an ambient isometry. It is now trivial to check that $\sigma$ is $(\pi_1 (M), \SOnn)$-equivariant. Indeed, for all $\alpha \in \pi_1 (M)$, $\sigma \circ \alpha$ is a new isometric embedding, hence there exists a unique $\psi =:mon(\alpha)\in \SOnn$ such that \[\sigma \circ \alpha= mon(\alpha) \circ \sigma.\]
	
	We will call such a pair $(g, \Psi)$ \emph{immersion data} for $S$.	
		\end{itemize}
	\end{Remark}

	\subsection{Totally geodesic hypersurfaces in $\PML$}
	
	A particular case of immersions $M\to \mathds X_{n+1}$ is given by totally geodesic immersions, namely immersions with $\Psi=0$. The study of this case leads to several interesting results.
	
	\begin{Lemma}
		If $g$ is a complex metric on a smooth manifold $M$ with constant sectional curvature $k\in \C$, then, for any $X,Y,Z,W\in \Gamma(\C TM)$,
		\[
		R(X,Y,Z,W)= k (\inner{X,Z}\inner{Y,W} - \inner{Y,Z}\inner{X,W}).
		\]
		In particular, $R(X,Y)Z\in Span_{\C}(X,Y)$.
	\end{Lemma}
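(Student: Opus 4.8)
The statement is precisely the $(0,4)$-tensor reformulation of Lemma \ref{lemma space form}, transported to the setting of complex metrics, so the plan is to reproduce that purely algebraic argument. First I would introduce the model tensor
\[
T(X,Y,Z,W):= k\big(\inner{X,Z}\inner{Y,W}-\inner{Y,Z}\inner{X,W}\big),
\]
and record that, as noted earlier in this section, the $(0,4)$-curvature tensor $R$ of a complex metric is $\C$-multilinear and satisfies all the usual symmetries, namely $R(X,Y,Z,W)=-R(Y,X,Z,W)=-R(X,Y,W,Z)=R(Z,W,X,Y)$ together with the first Bianchi identity; the tensor $T$ manifestly obeys the same symmetries. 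The aim is then to show $R-T\equiv 0$.

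The input coming from the hypothesis is that, for every \emph{non-degenerate} complex plane $Span_\C(X,Y)$, the definition of complex sectional curvature together with $K(X,Y)=k$ gives
\[
R(X,Y,X,Y)=k\big(\inner{X,X}\inner{Y,Y}-\inner{X,Y}^2\big),
\]
that is, $(R-T)(X,Y,X,Y)=0$ whenever $\inner{X,X}\inner{Y,Y}-\inner{X,Y}^2\neq 0$. This is the one step that is genuinely not formal: the polarization argument below requires the vanishing $(R-T)(X,Y,X,Y)=0$ for \emph{all} pairs $(X,Y)$, whereas constant curvature only controls the non-degenerate locus. I would deal with this by a density argument: $(R-T)(X,Y,X,Y)$ is a polynomial expression in the components of $X$ and $Y$, and the condition $\inner{X,X}\inner{Y,Y}-\inner{X,Y}^2\neq 0$ cuts out the complement of a proper (hence nowhere dense) analytic subset, so the identity extends by continuity to every pair $(X,Y)$.

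With $(R-T)(X,Y,X,Y)\equiv 0$ in hand for all arguments, the rest is the multilinear computation of Lemma \ref{lemma space form}, verbatim. Polarizing in the second and fourth slots, $(R-T)(X,Y+Z,X,Y+Z)=0$ yields $(R-T)(X,Y,X,Z)=0$; polarizing then in the first and third slots, $(R-T)(X+U,Y,X+U,Z)=0$ gives $(R-T)(X,Y,U,Z)=(R-T)(Y,U,X,Z)$; substituting this into the first Bianchi identity produces $2(R-T)(X,Y,U,Z)=(R-T)(X,U,Y,Z)$, and one more application forces $4(R-T)(X,Y,U,Z)=2(R-T)(X,U,Y,Z)=(R-T)(X,Y,U,Z)$, whence $R-T\equiv 0$. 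Finally, for the ``in particular'' clause I would use $R(X,Y,Z,W)=-g(R(X,Y)Z,W)$: since the functional $W\mapsto T(X,Y,Z,W)$ equals $g\big(k(\inner{X,Z}Y-\inner{Y,Z}X),\,\cdot\,\big)$, the non-degeneracy of $g$ gives $R(X,Y)Z=-k(\inner{X,Z}Y-\inner{Y,Z}X)\in Span_\C(X,Y)$, recovering also the $(1,3)$-formula of Lemma \ref{lemma space form}. The only non-routine point is the density extension of the curvature identity off the non-degenerate locus; everything else is formal.
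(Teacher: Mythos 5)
Your proof is correct and takes essentially the same approach as the paper: the paper's proof of this lemma is literally the single line ``the proof is exactly as in Lemma~\ref{lemma space form}'', and that earlier proof is precisely the polarization argument you reproduce, including the definition of the model tensor $T$ and the chain of identities forcing $4(R-T)(X,Y,U,Z)=(R-T)(X,Y,U,Z)$. The only place you go beyond the paper is the density argument extending $(R-T)(X,Y,X,Y)=0$ from pairs spanning non-degenerate planes to all pairs; the paper dismisses this vanishing with ``clearly'', so your extra step fills in a detail of the same argument (and a worthwhile one, since the constant-curvature hypothesis only constrains the non-degenerate locus) rather than taking a different route.
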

	\begin{proof}The proof is exactly as in Lemma \ref{lemma space form}.
	\end{proof}
	
	\begin{Theorem}
		\label{teoremone stessa dimensione}        
		Let $M$ be a smooth manifold of dimension $n$.
		
		Then, $g$ is a complex metric for $M$ with constant sectional curvature $-1$ if and only if there exists an isometric immersion
		\[(\widetilde M, \widetilde g)\to \mathds{X}_n\] which is unique up to post-composition with elements in $Isom(\mathds X _{n})$ and therefore is $(\pi_1(M), O(n+1,\C) )$-equivariant.
	\end{Theorem}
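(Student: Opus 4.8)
The plan is to deduce the statement from the codimension-one Gauss--Codazzi Theorem \ref{Teoremone} applied to the degenerate shape operator $\Psi\equiv 0$. The geometric principle is that a totally geodesic hypersurface of $\XXX_{n+1}$ is a copy of $\XXX_n$: by Remark \ref{topologia delle geodetiche}.7, if $W<\C^{n+2}$ is a complex hyperplane with $\inners|_{W}$ non-degenerate, then $W\cap \XXX_{n+1}$ is a totally geodesic submanifold isometric to $\XXX_n$, and conversely the totally geodesic copies of $\XXX_n$ arise in this way. So the task is to match codimension-zero isometric immersions into $\XXX_n$ with totally geodesic ($\Psi\equiv 0$) isometric immersions into $\XXX_{n+1}$.

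For the forward implication I lift $g$ to $\widetilde g$ on the simply connected cover $\widetilde M$, which again has constant curvature $-1$, and check that $(\widetilde g,\Psi\equiv 0)$ satisfies the hypotheses of Theorem \ref{Teoremone}: the Codazzi equation $d^\nabla\Psi\equiv 0$ is vacuous, while the Gauss equation collapses to $R(X_i,X_j,\cdot,\cdot)=-\theta^i\wedge\theta^j$, which in an orthonormal frame is exactly the constant curvature $-1$ condition by the Lemma preceding this statement. Theorem \ref{Teoremone} then produces a totally geodesic isometric immersion $\sigma\colon\widetilde M\to\XXX_{n+1}$, unique up to $\SOnn$. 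The main technical point is to confine $\sigma$ to a totally geodesic copy of $\XXX_n$. Let $\nu$ be a local unit normal; since the Proposition gives $\overline\nabla\nu=\Psi\equiv 0$ and, by Remark \ref{topologia delle geodetiche}.2, $\overline\nabla=\sigma^\ast D$ with $D$ the tangential part of the flat connection of $\C^{n+2}$, viewing $\nu$ as a $\C^{n+2}$-valued map along $\sigma$ gives $d\nu\cdot\XX\in\C\,\sigma$ for every $\XX$. Differentiating $\inner{\nu,\sigma}=0$ and using $\inner{\nu,\sigma_\ast\XX}=0$ yields $\inner{d\nu\cdot\XX,\sigma}=0$, and since $\inner{\sigma,\sigma}=-1$ this forces $d\nu\equiv 0$. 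Thus $\nu\equiv\nu_0$ is a constant unit vector, $\sigma(\widetilde M)\subset\nu_0^\perp\cap\XXX_{n+1}$, and Remark \ref{topologia delle geodetiche}.7 identifies the latter with a totally geodesic $\XXX_n$; hence $\sigma$ factors as $j\circ\iota$ for an isometric immersion $\iota\colon\widetilde M\to\XXX_n$.

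For the converse, given an isometric immersion $\iota\colon\widetilde M\to\XXX_n$ I postcompose with a fixed totally geodesic embedding $j\colon\XXX_n\hookrightarrow\XXX_{n+1}$. The composite is an admissible codimension-one immersion, and since $j(\XXX_n)$ is totally geodesic in $\XXX_{n+1}$ its second fundamental form vanishes, so $\Psi\equiv 0$. Feeding this into the Gauss equation of Theorem \ref{Teo Gauss-Codazzi} gives $R(X_i,X_j,\cdot,\cdot)=-\theta^i\wedge\theta^j$, i.e. constant curvature $-1$ by the same Lemma; as curvature is a local invariant, $g$ has constant curvature $-1$ on $M$.

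It remains to convert the uniqueness statement. Two immersions $\iota,\iota'\colon\widetilde M\to\XXX_n$ with the same data yield, after composing with $j$, totally geodesic immersions into $\XXX_{n+1}$, so Theorem \ref{Teoremone} provides a unique $\phi\in\SOnn$ with $j\circ\iota'=\phi\circ(j\circ\iota)$. Because a totally geodesic $\XXX_n$ is determined by the tangent spaces of a codimension-zero immersion into it, $\phi$ must preserve $Y:=j(\XXX_n)$ and hence restricts to $\psi=\phi|_Y\in Isom(\XXX_n)\cong O(n+1,\C)$; conversely every $\psi\in O(n+1,\C)$ extends uniquely to such a $\phi\in\SOnn$, acting as $\det(\psi)$ on the line $\C\,\nu_0$, so the $\SOnn$-ambiguity of $\sigma$ descends to precisely the $Isom(\XXX_n)$-ambiguity of $\iota$. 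Finally, equivariance is automatic: each deck transformation $\gamma\in\pi_1(M)$ is a $\widetilde g$-isometry, so $\iota\circ\gamma$ has the same immersion data as $\iota$, and uniqueness produces $mon(\gamma)\in O(n+1,\C)$ with $\iota\circ\gamma=mon(\gamma)\circ\iota$, the cocycle relation following again from uniqueness. The single genuinely geometric obstacle is the constant-normal argument that pins the totally geodesic immersion inside a copy of $\XXX_n$; everything else is bookkeeping layered on Theorems \ref{Teoremone} and \ref{Teo Gauss-Codazzi}.
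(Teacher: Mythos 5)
Your proposal is correct and follows essentially the same route as the paper: both directions reduce to Theorem \ref{Teoremone} with $\Psi\equiv 0$, the key step in both is the constant-normal argument showing $d\nu\equiv 0$ so that the image lies in $\nu_0^\perp\cap\XXX_{n+1}\cong\XXX_n$, and uniqueness comes from identifying the stabilizer of this hyperplane section inside $SO(n+2,\C)$ with $O(n+1,\C)$. Your treatment of that last stabilizer step (extending $\psi\in O(n+1,\C)$ by $\det(\psi)$ on the normal line) is in fact spelled out more carefully than in the paper, which merely asserts it.
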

	\begin{proof}
		Let $\iota \colon \mathds \C^{n+1}\hookrightarrow \mathds \C^{n+2}$ be the immersion
		$\iota(z_1,\dots z_{n+1})=(z_1,\dots, z_{n+1}, 0)$.
		
		Assume there exists an isometric immersion $\sigma\colon (M,g)\to \mathds X_n$, then $\overline \sigma=\iota \circ \sigma\colon (M,g) \to \mathds X_{n+1}$ is another immersion and it has $\overline \sigma^* \underline v^{n+2}_0$ as a global normal vector field. The induced shape operator is therefore \[\Psi=\overline \nabla (\overline \sigma^*\underline v^{n+2}_0)=0.\] By Gauss equation, this means that, for every local orthonormal frame $(X_i)_i$, $R(X_i, X_j, X_i, X_j)=-1$, i.e. $(M,g)$ has constant sectional curvature $-1$.

		Conversely, assume $(M,g)$ has constant sectional curvature $-1$. Then, by taking $\Psi\equiv 0$, the couple $(g,\Psi)$ trivially satisfies the Codazzi equation and, by the previous lemma, we have $R(X_i, X_j,\cdot, \cdot)=-\theta^1\wedge \theta^j$, so the Gauss equation holds as well.
		By Theorem \ref{Teoremone}, there exists an isometric $\pi_1(M)$-equivariant immersion $\overline \sigma\colon (M,g)\to \mathds{X}_{n+1}$. Let $\nu=\overline \sigma^* \nu_0$ be a normal local vector field w.r.t. $\overline\sigma$, with $\nu_0(x)\in T_{\sigma(x)}\mathds X_{n+1}$.
		As $\nu_0$ is unitary it turns out that $\dot\nu_0$ is orthogonal to $\nu_0$. On the other hand, differentiating $0=\inner{\nu_0(t),\gamma(t)}$ and using that $\nu_0$ is orthogonal to $\dot\gamma$, we deduce that $\dot\nu_0$ is orthogonal to the vector $\gamma(t)$. Thus $\dot\nu_0$ is contained in $\overline\sigma_*(\mathbb CTM)$, and 
		\[
		    \dot\nu_0=\sigma_*\Psi(\dot\gamma)=0.
		\]

We conclude that $\dot \nu_0(t)\equiv0$, hence $\nu_0$ is a constant vector and $Im(\overline\sigma)\subset \nu_0^\bot$. Up to composition with elements in $\SOnn$, we can assume $\nu_0= (0, \dots, 0, 1)$, so $Im \overline \sigma \subset \mathds X_n$.
		
		Finally, an isometric immersion of $(\widetilde M, \widetilde g)$ in $\mathds{X}_n$ is unique up to composition with elements in $\SOnn$ that stabilize $\mathds X_n$, namely up to elements in $O(n,\C)$.
	\end{proof}

	An interesting case we are going to treat in Section \ref{six} is the case $n=2$. In this setting, the previous theorem can be stated in the following way.
	\begin{Proposition}
	\label{surfaces in G first part}	
		Let $(S,g)$ be a surface equipped with a complex metric, denote with $(\widetilde S, \widetilde g)$ the universal covering. Then:
		\begin{itemize}
		 \item $(S,g)$ has constant curvature $-1$ if and only if there exists an isometric immersion 
		\[
		\sigma=(f_1,f_2) \colon (\widetilde S,\widetilde g)\to {\mathds G}= (\CP^1 \times \CP^1 \setminus \Delta, -\frac{4}{(z_1 - z_2)^2}dz_1 dz_2) 
		\]
		which is $(\pi_1 (S), Isom(\GGG))$-equivariant. In particular, $g$ induces a monodromy map 
		\[
		mon_g \colon \pi_1 (S) \to \PSL \times \Z_2
		\] defined up to conjugation. Being $\sigma$ admissible, the maps $f_j\colon \widetilde S \to \CP^1$ are such that $rk( (f_j)_*) \geq 1$.
		\item
		By composing with some affine chart $(U\times U\setminus \Delta, z\times z)$ of $\GGG$, a complex metric $g$ with constant curvature $-1$ can be locally expressed as
		\[
		g= -\frac{4}{(f_1 - f_2)^2}df_1 df_2.
		\]
		\item The maps $f_1$, $f_2$ are local diffeomorphisms if and only if no real vector $\XX \in TS\setminus \{0\}$ is isotropic for $g= \sigma^*\inners= (f_1,f_2)^*\inners$.
				\end{itemize}
	\end{Proposition}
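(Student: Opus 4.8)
The plan is to read all three statements off Theorem~\ref{teoremone stessa dimensione} in the case $n=2$, transported through the explicit model of $\mathds X_2$ provided by Proposition~\ref{metrica su G}.

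For the first bullet I would invoke Theorem~\ref{teoremone stessa dimensione} with $n=2$: a complex metric $g$ on $S$ has constant curvature $-1$ if and only if there is an isometric immersion $(\widetilde S,\widetilde g)\to\mathds X_2$, unique up to $Isom(\mathds X_2)$ and hence $(\pi_1(S),O(3,\C))$-equivariant. Reading this through the biholomorphic isometry $\mathds X_2\cong\GGG$ of Proposition~\ref{metrica su G} turns the immersion into a map $\sigma=(f_1,f_2)\colon\widetilde S\to\CP^1\times\CP^1\setminus\Delta$. Since $Isom_0(\GGG)\cong\PSL$ acts diagonally (Proposition~\ref{metrica su G}) and is an index-$2$ subgroup of $Isom(\GGG)\cong O(3,\C)$ (Remark~\ref{topologia delle geodetiche}(1)), with the nonidentity component generated by the factor swap $(z,w)\mapsto(w,z)$ (a one-line check shows it preserves $-\tfrac{4}{(z_1-z_2)^2}dz_1dz_2$), equivariance yields $mon_g\colon\pi_1(S)\to\PSL\times\Z_2$, well defined up to conjugation exactly because $\sigma$ is determined up to post-composition with $Isom(\GGG)$ (replacing $\sigma$ by $\phi\circ\sigma$ conjugates the monodromy by $\phi$). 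For the rank statement I would argue by contradiction: the two $\CP^1$-factors are totally isotropic for $\inners$ (first step of the proof of Proposition~\ref{metrica su G}), so if $df_1$ vanished at some $p$ then $\sigma_*(\C T_pS)$ would lie in the isotropic factor $0\oplus T_{f_2(p)}\CP^1$ and $g=\sigma^*\inners$ would degenerate at $p$, contradicting admissibility; hence $rk((f_j)_*)\ge 1$.

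The second bullet is then immediate: in a chart $(U\times U\setminus\Delta,z\times z)$ the metric of $\GGG$ is $-\tfrac{4}{(z_1-z_2)^2}dz_1dz_2$ by Proposition~\ref{metrica su G}, so reading the components of $\sigma=(f_1,f_2)$ in this chart and pulling back gives $g=-\tfrac{4}{(f_1-f_2)^2}df_1\,df_2$. For the third bullet, which carries the only genuine computation, I would use that the factor directions are exactly the isotropic ones, so that for a real vector $\XX\in T_pS$ the local expression reads
\[
g(\XX,\XX)=-\frac{4}{(f_1-f_2)^2}\,df_1(\XX)\,df_2(\XX),
\]
whence $\XX$ is isotropic if and only if $df_1(\XX)=0$ or $df_2(\XX)=0$. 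On the other hand $f_j$ is a local diffeomorphism at $p$ precisely when the $\R$-linear map $df_j\colon T_pS\to\C$ is invertible, i.e. when $df_j(\XX)\ne 0$ for every nonzero real $\XX$. Combining the two and using that $\C$ has no zero divisors, no nonzero real $\XX$ is isotropic if and only if $df_1(\XX)$ and $df_2(\XX)$ never vanish, i.e. if and only if both $f_1$ and $f_2$ are local diffeomorphisms.

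Since the argument is essentially bookkeeping on top of Theorem~\ref{teoremone stessa dimensione} and Proposition~\ref{metrica su G}, I do not expect a serious obstacle. The one point demanding care is the third bullet, where one must cleanly translate ``$f_j$ is a local diffeomorphism'' into the non-vanishing of the $\C$-valued $1$-form $df_j$ on \emph{real} tangent vectors, keeping the real differential distinct from its $\C$-linear extension to $\C T_pS$ that is used to define $g$.
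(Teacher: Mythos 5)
Your proposal is correct and follows essentially the same route as the paper: the paper also treats the first two bullets as direct consequences of Theorem~\ref{teoremone stessa dimensione} (codimension-zero immersions into $\mathds X_2$) read through the model of Proposition~\ref{metrica su G}, and reserves its only actual argument for the third bullet, where it makes precisely your observation that $g(\XX,\XX)=-\tfrac{4}{(f_1-f_2)^2}\,df_1(\XX)\,df_2(\XX)$ vanishes for some nonzero real $\XX$ if and only if one of the real differentials $df_1,df_2$ has nontrivial kernel. Your additional details (the index-two swap isometry giving $\PSL\times\Z_2$, and the rank bound via total isotropy of the factor directions) are correct fillings-in of steps the paper leaves implicit.
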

	\begin{proof}
		We only need to prove the last part of the proposition.
		There exists $\XX\in T_x S$ such that $g(\XX,\XX)=0$ if and only if $df_1(\XX)\cdot df_2(\XX)=0$ for some $v$, which holds if and only if one between $f_1$ and $f_2$ is not a local diffeomorphism.
	\end{proof}
	 \begin{Example}
	 	The hyperbolic plane $\Hy^2$ in the upper half-plane model admits the isometric immersion $z \mapsto (z,\overline z)$ as described in section 2.
	 	
	 	Consider the immersion of $S^2$ given by \begin{align*}
	 	(f_1,f_2)\colon S^2 \approx \overline \C &\to \GGG\\
	 			z &\mapsto (z,-\frac{1}{\overline z})
	 	\end{align*}
	 	which in fact embeds $S^2$ into the graph of the antipodal map. The pull-back metric is given by
	 	\[(f_1,f_2)^* \inners= - \frac{4}{(1+ |z|^2)^2} dzd\overline{z}
	 	 \]
	 	which coincides with the negative definite space form of curvature $-1$, namely $-S^2$, the sphere equipped with the opposite of the standard elliptic metric.
	 	
	 	Another example is given by the universal cover of the Anti-de Sitter plane $\widetilde {AdS^2}$ which is isometric to each of the two connected components of $\R\times \R \setminus \Delta\subset \GGG$. 
	 \end{Example}

\subsection{Connections with immersions into pseudo-Riemannian space forms}

For all $n,p\in \mathbb N$ with $n\ge 2$ and $n\ge p \ge 0$, denote with $\mathbb H^{n-p,p}$ the $n$-dimensional pseudo-Riemannian space form of signature $(n-p,p)$ and constant sectional curvature $-1$. One has:
\begin{align*}
&\mathbb H^{n,0}\cong \mathbb H^n  && \text{the hyperbolic space}\\
&\mathbb H^{n-1,1}\cong \widetilde{AdS^n} && \text{the universal Anti-de Sitter space}\\
&\mathbb H^{1, n-1}\cong -\widetilde{dS^n} && \text{the de Sitter space with opposite metric ($\widetilde {dS^n}=dS^n$ for $n\ge3$)}\\
&\mathbb H^{0, n}\cong -S^n && \text{the Riemannian sphere with opposite metric}.
\end{align*}

It is well known that $\mathbb H^{n-p,p}$ is isometric to the universal covering of 
\[
Q^{n-p,p}= \{(x_1, \dots, x_{n+1})\ |\ x_1^2 + \dots + x_{n-p}^2 - x_{n-p+1}^2 - \dots - x_{n+1}^2 =-1  \}\subset \R^{n-p,p+1}
\]
where $\R^{n-p,p+1}$ is the Minkowski space of signature $(n-p, p+1)$. 

By Theorem \ref{teoremone stessa dimensione}, one has a unique isometric immersion of $\mathbb{H}^{n-p,p}$ $\to\mathds X_{n}$ up to composition with ambient isometries. In fact, one can explicitly check that the embedding
\begin{align*}
Q^{n-p, p} &\hookrightarrow \mathds X_{n+1} \subset \C^{n+1}\\
(x_1, \dots, x_{n+1}) &\mapsto (x_1, \dots, x_{n-p}, ix_{n-p+1}, \dots, ix_{n+1})
\end{align*}
	is isometric, so its lifting to the universal covering provides an isometric immersion of $\mathbb H^{n-p,p}$ into $\mathds X_{n+1}$.
	
	There exists a general theory of immersions of hypersurfaces into $\mathbb H^{n-p,p}$, see for instance \cite{Gauss-Codazzi in space forms}. One can define an immersion $M^{n-1}\to \mathbb H^{n-p, p}$ to be \emph{admissible} if the pull-back metric is a (non-degenerate) pseudo-Riemannian metric. By composition with the isometric immersion $\iota\colon \mathbb H^{n-p,p}\to \mathds X_n$, one can see immersions into $\mathbb H^{n-p,p}$ as immersions into $\mathds X_n$ and the extrinsic geometry of the latter extends the one of the former. 
	
	Indeed:
	\begin{itemize}
\item The map $\iota_*$ induces a canonical bundle inclusion of $\sigma^* T \mathbb H^{n-p,p}$ into $\bar\sigma^*T \mathds X_{n}$: indeed, since $Span_\C (\iota_*(T_{\underline x} \mathbb H^{n-p,p}) )= T_{\iota(\underline x)} \mathds X_n$, one has
\[
\sigma^* T \mathbb H^{n-p,p} \otimes_\R \C \cong \bar \sigma^* T \mathds X_n.
\] Since $\iota$ is isometric, the pull-back Levi-Civita connections coincide on $\sigma^* T \mathbb H^{n-p,p}$.
\item If $\bar\sigma^*\inners$ has signature $(n-p-1, p)$, then one can define a local normal vector field $\nu$ as a local section of $\sigma^* T \mathbb H^{n-p,p}$ with $\inner{\nu, \nu}=1$; we denote this case as case $a)$. Conversely, if $\bar\sigma^*\inners$ has signature $(n-p, p-1)$, then any local section of $\sigma^* T \mathbb H^{n-p,p}$ orthogonal to $TM$ is timelike, so one can define a local normal vector field $\nu$ as a local section of $\sigma^* T\mathbb H^{n-p,p}$ with $\inner{\nu, \nu}=-1$; denote this case as case $b)$. 

In case $a)$, $\nu$ is also a normal vector field for $\bar \sigma$ as we defined in section 3; in case $b)$, $i\nu$ is norm-$1$ and is a normal vector field for $\bar \sigma$.
\item The exterior derivative of the local normal vector field defines a shape operator $\psi$, which coincides in case $a)$ with the shape operator $\Psi$ induced by $\bar \sigma$, while in case $b)$ one has $\Psi=i\psi$.

\item For immersions into $\mathbb H^{n-p,p}$ there exists a Bonnet theorem analogous to the one we proved for immersions into $\mathds X_n$.
\begin{Proposition}
	Let $h$ be a pseudo-Riemannian metric on $M$ with signature $(n-p-1 + \delta, p -\delta)$ with $\delta\in \{0,1\}$. Let $\psi\colon TM\to TM$ be a $h$-self adjoint $(1,1)$-form. The data $(h,\psi)$ satisfy
	\begin{align*}
	1) &d^{\nabla_h} \psi \equiv 0;\\
	2)	&R(X_i, X_j, \cdot, \cdot) - (2\delta -1)\psi^i \wedge \psi^j= -\theta^i\wedge \theta^j\\
	&\text{for every local $h$-frame $(X_i)_{i=1}^n$ with corresponding}\\ &\text{ coframe $(\theta^i)_{i=1}^n$ and with $\psi= \psi^i \otimes X_i$}.
	\end{align*}
	if and only if there exists a $\pi_1(S)$-equivariant isometric immersion $\sigma\colon (\widetilde M,\widetilde h)\to \mathbb H^{n-p, p}$ with shape operator $\psi$.
\end{Proposition}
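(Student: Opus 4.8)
The strategy is to reduce the statement to the codimension-one Gauss--Codazzi theorem for $\mathds X_n$ (Theorem \ref{Teoremone}, applied to the $(n-1)$-manifold $M$) through the totally geodesic embedding $\iota\colon\mathbb H^{n-p,p}\hookrightarrow\mathds X_n$ described above. To the data $(h,\psi)$ I associate the complex metric $g:=h\otimes_\R\C$ on $\CTM$, i.e. the $\C$-bilinear extension of $h$, together with the $\C$-linear endomorphism $\Psi:=i^\delta\psi$. Since $\psi$ is $h$-selfadjoint, $\Psi$ is $g$-selfadjoint, and the assignment $(h,\psi)\mapsto(g,\Psi)$ is the pointwise shadow of post-composition with $\iota$: by the bullet points preceding the statement, an isometric immersion $\sigma\colon(M,h)\to\mathbb H^{n-p,p}$ with shape operator $\psi$ has $\bar\sigma=\iota\circ\sigma\colon(M,g)\to\mathds X_n$ as an admissible isometric immersion with shape operator exactly $i^\delta\psi=\Psi$ (the case $\delta=0$ giving a spacelike normal and $\Psi=\psi$, the case $\delta=1$ a timelike normal and $\Psi=i\psi$).

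The first, purely algebraic, ingredient is that the two systems of equations are equivalent: $(h,\psi)$ satisfies $1)$ and $2)$ if and only if $(g,\Psi)$ satisfies the Gauss--Codazzi equations of Theorem \ref{Teoremone}. For Codazzi this is immediate, since the Levi--Civita connection of $g$ is the $\C$-bilinear extension of $\nabla_h$, whence $d^\nabla\Psi=i^\delta\,\big(d^{\nabla_h}\psi\big)\otimes_\R\C$ and the two vanish together. For Gauss one evaluates the tensorial identity $R(X,Y,Z,W)=-\big(g(X,Z)g(Y,W)-g(Y,Z)g(X,W)\big)-g(\Psi Y,Z)g(\Psi X,W)+g(\Psi X,Z)g(\Psi Y,W)$ on real vectors, where $g=h$ and $R^g=R^h$; the two quadratic terms in $\Psi=i^\delta\psi$ carry a global factor $i^{2\delta}=(-1)^\delta$, which produces precisely the sign recorded by the coefficient $(2\delta-1)$, according to the causal character of the normal. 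This is the routine computation I would not spell out; it turns the $\mathds X_n$ Gauss equation into equation $2)$, and proves the direct implication via Theorem \ref{Teo Gauss-Codazzi} applied to $\bar\sigma$.

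For the converse, assume $(h,\psi)$ satisfies $1)$ and $2)$, lift to $\widetilde M$, and set $\widetilde g=\widetilde h\otimes_\R\C$, $\widetilde\Psi=i^\delta\widetilde\psi$. By the equivalence above, $(\widetilde g,\widetilde\Psi)$ meets the hypotheses of Theorem \ref{Teoremone}, which yields a $\pi_1(M)$-equivariant admissible isometric immersion $\bar\sigma\colon(\widetilde M,\widetilde g)\to\mathds X_n$ with shape operator $\widetilde\Psi$, unique up to post-composition with $Isom_0(\mathds X_n)\cong\SOn$. It remains to show, after post-composing with a suitable ambient isometry, that $\bar\sigma$ takes values in $\iota(\mathbb H^{n-p,p})=W\cap\mathds X_n$, where $W=\R^{n-p}\oplus i\R^{p+1}$. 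Here I would use the reality of the data: let $\rho(\underline z)=J\,\overline{\underline z}$, with $J=\mathrm{diag}(1,\dots,1,-1,\dots,-1)$ ($n-p$ entries $+1$, $p+1$ entries $-1$), be the antiholomorphic involutive isometry of $\mathds X_n$ whose fixed locus is $W\cap\mathds X_n$, totally geodesic and isometric to $\mathbb H^{n-p,p}$ by Remark \ref{topologia delle geodetiche}. Since $\widetilde g$ is conjugation-invariant and $\rho$ is an antiholomorphic isometry, $\rho\circ\bar\sigma$ is again isometric for $\widetilde g$, with shape operator $\overline{\widetilde\Psi}=(-i)^\delta\widetilde\psi$, which agrees with $\widetilde\Psi=i^\delta\widetilde\psi$ up to the sign ambiguity of the unit normal. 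Thus $\rho\circ\bar\sigma$ and $\bar\sigma$ carry the same immersion data, so by Proposition \ref{Teo unicità} they differ by an element of $\SOn$; normalizing $\bar\sigma$ at a point $x_0$ so that $\bar\sigma(x_0)\in W\cap\mathds X_n$ and the frame $(\bar\sigma_{*x_0}X_i,\nu)$ is $\rho$-adapted, the two maps share the same $1$-jet at $x_0$, and Corollary \ref{Corollario Teo unicità} forces $\rho\circ\bar\sigma\equiv\bar\sigma$. Hence $\bar\sigma(\widetilde M)\subseteq W\cap\mathds X_n$ and $\sigma:=\iota^{-1}\circ\bar\sigma$ is the desired isometric immersion with shape operator $\psi$; the relation $mon(\gamma)\circ\bar\sigma=\bar\sigma\circ\gamma$ then shows the monodromy preserves $W$, hence lands in $Isom(\mathbb H^{n-p,p})$, giving the equivariance.

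The main obstacle is this last transfer of the immersion from $\mathds X_n$ back into the real form $\mathbb H^{n-p,p}$. Unlike the totally geodesic situation of Theorem \ref{teoremone stessa dimensione}, here $\Psi\neq0$ and the normal is not a constant vector, so one cannot argue directly that the image lies in a fixed subspace; the reality/involution argument combined with the rigidity of Corollary \ref{Corollario Teo unicità} is what replaces that computation, and the delicate bookkeeping lies in matching the shape operators of $\bar\sigma$ and $\rho\circ\bar\sigma$ while correctly accounting for the orientation of the normal.
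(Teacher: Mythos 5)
The paper never actually proves this Proposition: it is quoted as the classical Bonnet theorem for hypersurfaces of pseudo-Riemannian space forms, with a pointer to the literature (\cite{Gauss-Codazzi in space forms}), and the surrounding bullet points only set up the dictionary $(h,\psi)\leftrightarrow(g,\Psi)=(h,\,i^{\delta}\psi)$ under post-composition with $\iota$, which is then used to deduce Theorem \ref{da X_n a space forms}. Your proposal is therefore a genuinely different, self-contained route: you derive the Proposition from the paper's own Theorem \ref{Teoremone}, using that same dictionary for the equations, and then force the image of the resulting immersion $\bar\sigma\colon\widetilde M\to\mathds X_n$ into the real form $W\cap\mathds X_n$ by means of the anti-holomorphic reflection $\rho$ together with the rigidity statements (Proposition \ref{Teo unicità}, Corollary \ref{Corollario Teo unicità}). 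This is the natural generalization of the paper's own argument for Theorem \ref{teoremone stessa dimensione}, where $\Psi=0$ and the constancy of the normal plays the role of your involution trick; what it buys is that no external citation is needed and all the real forms are handled at once. Two steps are stated but not justified and should be: (i) the existence of $\phi\in SO(n+1,\C)$ realizing your ``$\rho$-adapted'' normalization of the $1$-jet at $x_0$ (this follows from transitivity of $SO(n+1,\C)$ on complex orthonormal bases, applied to the basis obtained from an $h$-pseudo-orthonormal frame by multiplying the timelike vectors by $i$, together with the unit normal and $-i\bar\sigma(x_0)$); and (ii) the final passage $\sigma:=\iota^{-1}\circ\bar\sigma$, which is not literally available since $\mathbb H^{n-p,p}$ is only a covering of $Q^{n-p,p}=W\cap\mathds X_n$ (non-trivial for $p=1$, two components for $p=0$), so one must lift $\bar\sigma$ and the monodromy through that covering, using simple connectedness of $\widetilde M$.

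There is, however, one genuine error in the write-up: the sign bookkeeping in the Gauss equation. Substituting $\Psi=i^{\delta}\psi$ into the equation of Theorem \ref{Teoremone} gives $\Psi^i\wedge\Psi^j=i^{2\delta}\,\psi^i\wedge\psi^j=(-1)^{\delta}\psi^i\wedge\psi^j$, hence the equation $R(X_i,X_j,\cdot,\cdot)-(1-2\delta)\,\psi^i\wedge\psi^j=-\theta^i\wedge\theta^j$, because $(-1)^{\delta}=1-2\delta$. This is the \emph{negative} of the printed coefficient $(2\delta-1)$, so your claim that the factor $i^{2\delta}$ ``produces precisely the sign recorded by the coefficient $(2\delta-1)$'' is false as an identity. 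In fact your computation yields the classically correct equation: for $\delta=0$ and $n=3$ it gives $K=-1+\det\psi$ for surfaces in $\Hy^3$, which is exactly what the paper itself uses in Section 5 (the data $(h_s,b_s)=(\cosh^2(\tfrac s2)h,\tanh(\tfrac s2)b)$ with $K_{h_s}=-1/\cosh^2(\tfrac s2)$), whereas the coefficient $(2\delta-1)$ as printed would give $K=-1-\det\psi$. So the statement as printed almost certainly carries a sign typo and your reduction proves the corrected statement; but a proof cannot silently identify $(-1)^{\delta}$ with $(2\delta-1)$ — you must either carry the coefficient $(1-2\delta)$ through and flag the discrepancy with the printed statement, or reconcile the conventions explicitly.
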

If $\sigma\colon M\to \mathbb H^{n-p, p}$ is an admissible immersion with data $(h,\psi)$, then $\iota \circ \sigma\colon M\to \mathds X_n$ is an admissible immersion with immersion data $(g,\Psi)=(h,(i\delta + 1-\delta)\psi)$. By the other hand, the data $(g, \Psi)$ uniquely determines the immersion of $\widetilde M$ into $\mathds X_n$ up to ambient isometry. Since also $\iota$ is the unique isometric immersion of $\mathbb H^{n,p-n}$ into $\mathds X_n$ up to ambient isometry, one can conclude the following.
		\end{itemize}
	\begin{Theorem}
		\label{da X_n a space forms}
		Let $M=M^{n-1}$, $(g,\Psi)$ be immersion data for a $\pi_1(M)$-equivariant immersion of $\widetilde M$ into $\mathds X_{n}$.
		Assume that $g$ is real, namely that $g_{|TM}$ is pseudo-Riemannian, and has signature $(n-1-p,p)$, $0\le p\le n-1$. 
		
		Then, if $\Psi$ is real, i.e. if $\Psi$ restricts to a bundle homomorphism $\Psi\colon TM\to TM$, there exists an isometric $\pi_1(S)$-equivariant immersion $\sigma\colon \widetilde M\to \mathds X_n$ such that $\sigma( \widetilde M)\subset \iota(\mathbb H^{n-p,p})$.
		
				Similarly, if $i\Psi$ is real, then there exists an isometric $\pi_1(M)$-equivariant immersion $\sigma\colon \widetilde M\to \mathds X_n$ such that $\sigma( \widetilde M)\subset \iota(\mathbb H^{n-p-1,p+1})$.
	\end{Theorem}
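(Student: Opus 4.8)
The plan is to deduce the statement from the Bonnet-type Proposition for immersions into $\mathbb H^{n-p,p}$ stated just above, together with the identity $(g,\Psi)=(h,(i\delta+1-\delta)\psi)$ relating the shape operators, so that the only genuine computation is the translation of the Gauss--Codazzi system from $\mathds X_n$ to the relevant pseudo-Riemannian space form. First I would recall that, $(g,\Psi)$ being immersion data, there already is a $\pi_1(M)$-equivariant isometric immersion $\widetilde M\to\mathds X_n$ with shape operator $\Psi$, unique up to $Isom_0(\mathds X_n)\cong SO(n+1,\C)$ by Theorem \ref{Teoremone}. It therefore suffices to produce \emph{one} isometric equivariant immersion $\sigma$ with $\sigma(\widetilde M)\subset\iota(\mathbb H^{n-p,p})$ (resp. $\iota(\mathbb H^{n-p-1,p+1})$); uniqueness then identifies it, up to an ambient isometry, with any immersion carrying the data $(g,\Psi)$.

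Concretely, when $\Psi$ is real I would set $\delta=0$, $h=g|_{TM}$ (a pseudo-Riemannian metric of signature $(n-1-p,p)$) and $\psi=\Psi|_{TM}$; when $i\Psi$ is real I would set $\delta=1$, $h=g|_{TM}$ and $\psi=(i\delta+1-\delta)^{-1}\Psi=-i\Psi$, which is again real and $h$-self-adjoint. In both cases I claim $(h,\psi)$ satisfies the Gauss--Codazzi equations of the preceding Proposition for an immersion into $\mathbb H^{n-p,p}$ (resp. into $\mathbb H^{n-p-1,p+1}$, i.e. with $p$ replaced by $p+1$). The Codazzi equation is immediate: $\nabla_h$ is the restriction of $\nabla$ and $\psi$ is a constant complex multiple of $\Psi$, so $d^{\nabla_h}\psi=0$ follows from $d^\nabla\Psi\equiv0$. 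For Gauss I would write both systems in their invariant form. The ambient curvature term agrees for $\mathds X_n$ and for $\mathbb H^{n-p,p}$, since both have constant sectional curvature $-1$ and $\iota$ is isometric, while the shape term $g(\Psi Y,Z)\Psi X-g(\Psi X,Z)\Psi Y$ of the $\mathds X_n$ equation is matched against $\epsilon\big(h(\psi Y,Z)\psi X-h(\psi X,Z)\psi Y\big)$ in the $\mathbb H$ equation, where $\epsilon=\langle\nu,\nu\rangle=-(2\delta-1)$ is the causal sign of the normal. Substituting $\psi=(i\delta+1-\delta)^{-1}\Psi$ produces a factor $(i\delta+1-\delta)^{-2}=\epsilon$ from the two occurrences of $\psi$; together with the prefactor this gives $\epsilon^2=1$, so the $\mathbb H$ Gauss equation becomes literally the $\mathds X_n$ Gauss equation restricted to real tangent vectors. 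One checks the signatures match: $(n-1-p,p)=(n-p-1,p)$ as required with $\delta=0$, and $(n-p-1,p)$ again with $\delta=1$ after $p\mapsto p+1$.

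Having verified the hypotheses, the preceding Proposition yields a $\pi_1(M)$-equivariant isometric immersion $\tau\colon(\widetilde M,\widetilde h)\to\mathbb H^{n-p,p}$ (resp. $\mathbb H^{n-p-1,p+1}$) with shape operator $\psi$. By the identity $(g,\Psi)=(h,(i\delta+1-\delta)\psi)$, the composition $\sigma:=\iota\circ\tau\colon\widetilde M\to\mathds X_n$ is an admissible isometric immersion with immersion data exactly $(g,\Psi)$; it is $\pi_1(M)$-equivariant by the equivariance remark following Theorem \ref{Teoremone}, and by construction $\sigma(\widetilde M)\subset\iota(\mathbb H^{n-p,p})$ (resp. $\iota(\mathbb H^{n-p-1,p+1})$), which is the desired conclusion. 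I expect the main obstacle to be bookkeeping rather than conceptual: one must keep straight, in each of the two cases, the causal type of the normal, the value of $\delta$, the induced conversion $\Psi=(i\delta+1-\delta)\psi$, and the signature of the target space form, and confirm that the sign $\epsilon$ in the $\mathbb H$ Gauss equation is cancelled precisely by the $i$-factors coming from $\psi$, as above.
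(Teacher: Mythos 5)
Your proposal is correct and takes essentially the same route as the paper: the paper's own argument is exactly to feed $(h,\psi)=\bigl(g|_{TM},(i\delta+1-\delta)^{-1}\Psi\bigr)$ into the Bonnet Proposition for $\mathbb H^{n-p,p}$, compose the resulting equivariant immersion with $\iota$, and invoke the uniqueness statement of Theorem \ref{Teoremone}, which is what you do. Your sign bookkeeping (shape prefactor $\epsilon=\langle\nu,\nu\rangle=-(2\delta-1)$, cancelled by $(i\delta+1-\delta)^{-2}=\epsilon$) is the internally consistent one, and in fact it silently corrects the coefficient $(2\delta-1)$ as literally printed in the paper's Proposition, which disagrees with its own signature convention $(n-p-1+\delta,\,p-\delta)$ (e.g. it would give $K=-1-\det\psi$ for surfaces in $\Hy^3$) and is evidently a typo.
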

	
	\subsection{From immersions into $\Hy^3$ to immersions into $\GG$}

	Given $\sigma\colon \widetilde S\to \Hy^3$ a
$\pi_1(S)$-equivariant immersion with immersion data $(h, \psi)$ and normal field $\nu$, one can define the immersion 
\[
\overline\sigma\colon \widetilde S\to \GG
\]
where $\overline \sigma(x)$ is the oriented maximal geodesic of $\Hy^3$ tangent to $\nu(x)$. In the identification $\GG=\CP^1 \times \CP^1\setminus \Delta$, one has $\overline \sigma= (\sigma_{+\infty}, \sigma_{-\infty})$ corresponding to the endpoints of the geodesic rays starting at $\sigma(x)$ with tangent directions respectively $\nu(x)$ and $-\nu(x)$. 
 	
The map $\overline \sigma$ is $\pi_1(S)$-equivariant with the same monodromy as $\sigma$. 

We want to prove the following formula for the pull-back metric for $\overline \sigma$.
\begin{Prop}
	\label{Prop metrica in G}
	Let $J$ be the complex structure induced by $h$ and inducing the same orientation as $\nu$. Under the notation above, the pull-back $g=\overline \sigma^* \inners$ of the metric is the $\pi_1(S)$-invariant complex bilinear form on $\CTS$ given by
	\[g = h((id + iJ\psi)\cdot, (id + iJ\psi)\cdot )= h- h(\psi\cdot, \psi\cdot)+ i h( (\psi J - J\psi)\cdot, \cdot)
	\]
	which is non degenerate (i.e. a complex metric) in $x$ if and only if $K_h (x)\ne 0$ 
\end{Prop}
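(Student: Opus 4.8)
The plan is to compute the pull-back $g=\overline\sigma^*\inners$ pointwise by describing the differential $\overline\sigma_{*x}$ through the classical identification of the tangent space to the space of geodesics with Jacobi fields. Fix $x$ and let $\gamma(t)=\exp_{\sigma(x)}(t\,\nu(x))$ be the oriented geodesic $\overline\sigma(x)$, parametrised by arclength with $\dot\gamma(0)=\nu(x)$. For $\XX\in T_xS$, the variation by the nearby geodesics $\exp_{\sigma(x_s)}(t\,\nu(x_s))$ produces a normal Jacobi field $Y_\XX$ along $\gamma$, and this assignment realises $\overline\sigma_{*x}(\XX)\in T_{\overline\sigma(x)}\GG$ as $Y_\XX$. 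Its initial data are read off directly: $Y_\XX(0)=\sigma_{*x}(\XX)$ (tangent to the surface, hence normal to $\gamma$) and $\tfrac{D}{dt}Y_\XX(0)=\nabla_\XX\nu=-\psi(\XX)$, again tangent to the surface. Since $\sigma$ is an isometric immersion into $\Hy^3$, I identify $T_xS$ with the normal space $N=\dot\gamma(0)^{\perp}$ via the $h$-isometry $\sigma_{*x}$, so that $Y_\XX(0)=\XX$ and $Y_\XX'(0)=-\psi\XX$.

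The heart of the argument is an intrinsic formula for the holomorphic Riemannian metric of $\GG\cong\XXX_2$ in these terms. Writing $J$ for the $\tfrac{\pi}{2}$-rotation of the normal bundle of $\gamma$ compatible with the orientation determined by $\dot\gamma$ (which corresponds under $\sigma_{*x}$ to the complex structure $J$ of the statement), I claim that for normal Jacobi fields $Y_1,Y_2$ the value of the metric on the corresponding tangent vectors is
\[
\inner{Y_1,Y_2}=\big(\inner{Y_1(0),Y_2(0)}-\inner{Y_1'(0),Y_2'(0)}\big)+i\big(\inner{Y_1(0),J Y_2'(0)}+\inner{J Y_1'(0),Y_2(0)}\big).
\]
Two things must be checked. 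First, that the right-hand side is independent of the base point: using the curvature-$-1$ Jacobi equation $Y''=Y$ and a parallel orthonormal normal frame one writes $Y_j(t)=\alpha_j e^{t}+\beta_j e^{-t}$, and both the real and imaginary parts reduce to base-point–free pairings of $\alpha_j,\beta_j$ (the terms $e^{\pm 2t}$ cancel by antisymmetry of the area form $\inner{\cdot,J\cdot}$). This shows the expression descends to a well-defined, $\C$-bilinear, symmetric, $\mathrm{Isom}(\GG)$-invariant form on $T\GG$. Second, that it is normalised to be exactly the curvature-$-1$ metric: by invariance and transitivity of $\PSL$ on oriented geodesics with orthonormal normal frames it suffices to match the expression against the chart metric $-\tfrac{4}{(z_1-z_2)^2}dz_1\,dz_2$ of Proposition \ref{metrica su G} at a single standard geodesic, evaluating there the differentials of the endpoint maps $\sigma_{\pm\infty}$; the totally geodesic $\Hy^2\hookrightarrow\GG$, $z\mapsto(z,\overline z)$, gives a convenient consistency check, since there $\psi\equiv 0$ and the formula must return $h_{\Hy^2}$.

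Granting this formula, the proposition follows by substitution. Plugging in $Y_\XX(0)=\XX$, $Y_\XX'(0)=-\psi\XX$ and using that $J$ is $h$-orthogonal and $\psi$ is $h$-self-adjoint, the real part becomes $h(\XX,\YY)-h(\psi\XX,\psi\YY)$ and the imaginary part becomes $h((\psi J-J\psi)\XX,\YY)$, which is exactly $g=h((\mathrm{id}+iJ\psi)\cdot,(\mathrm{id}+iJ\psi)\cdot)$ in the two equivalent forms stated; the sign of the imaginary part is precisely what fixes the convention $\nabla_\XX\nu=-\psi\XX$. For non-degeneracy, $g=(\mathrm{id}+iJ\psi)^*h$, so $g$ is non-degenerate at $x$ iff $\det(\mathrm{id}+iJ\psi)\neq 0$. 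On the surface $\mathrm{tr}(J\psi)=0$ and $\det(iJ\psi)=-\det\psi$, whence $\det(\mathrm{id}+iJ\psi)=1-\det\psi$; the classical Gauss equation for the immersion into $\Hy^3$, namely $K_h=-1+\det\psi$ (the real specialisation of \eqref{GC2superfici}), gives $\det(\mathrm{id}+iJ\psi)=-K_h$, which vanishes exactly when $K_h(x)=0$.

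I expect the main obstacle to be the second step: pinning down the intrinsic Jacobi-field expression for the metric with the correct complex structure and, above all, the correct normalisation constant, since this is where the orientation conventions (the forward and backward endpoint maps $\sigma_{+\infty},\sigma_{-\infty}$ are conformal of opposite orientations) and the factor $-4$ enter. Once that identification is secured, the remaining two steps are the bookkeeping above and the purely algebraic Gauss-equation computation.
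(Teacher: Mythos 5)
Your proposal follows the same skeleton as the paper's own proof. The paper also factors $\overline\sigma$ as $\Pi\circ\nu$ through the unit tangent bundle, identifies $T_{(p,v)}T^1\Hy^3$ with $T_p\Hy^3\oplus v^\perp$ (equivalently, with initial conditions $(Y(0),Y'(0))$ of Jacobi fields), and proves your first step as Lemma \ref{Lemma 1 metrica pull back G}; your claimed intrinsic formula is exactly the polarization of Lemma \ref{Lemma 2 metrica pull back G}, since $v\times\cdot$ is your $J$ on $v^\perp$; and the endgame (eigenbasis of $\psi$, determinant $(1-\lambda_1\lambda_2)^2=(1-\det\psi)^2$, Gauss equation) is identical. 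Where you genuinely diverge is the proof of the key formula: the paper computes $d\Pi$ explicitly in the half-space model on four one-parameter isometry groups and evaluates the chart metric $-4\,dz_1dz_2/(z_1-z_2)^2$ of Proposition \ref{metrica su G}, whereas you propose an invariance/uniqueness argument (base-point independence via the Jacobi equation $Y''=Y$, then a normalization). Your route is more conceptual and coordinate-free; the paper's brute-force computation buys all signs and the constant in a single stroke.

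The gap is your claim that the cancellation computation shows the expression is ``$\C$-bilinear'': it only shows independence of the base point. $\C$-bilinearity means compatibility with the complex structure $\JJ$ of $\GG$, and $\JJ$ does \emph{not} act on normal Jacobi fields as the pointwise rotation $J$: a rotation by angle $\theta$ about $\gamma$ acts on the two endpoint factors of $T_\gamma\GG$ as $e^{i\theta}$ and $e^{-i\theta}$, so in the splitting $Y=\alpha e^{t}+\beta e^{-t}$ the structure $\JJ$ is $+J$ on one summand and $-J$ on the other. Your formula is in fact $\C$-bilinear for this $\JJ$ (a short check using $\inner{J\cdot,\cdot}=-\inner{\cdot,J\cdot}$), but this is the crux, not a formality, and it cannot be bypassed by the $\Hy^2$ normalization alone: the space of $\PSL$-invariant $\C$-valued symmetric $\R$-bilinear forms on $T\GG$ is spanned over $\C$ by $\inners_\GG$ and its complex conjugate, and on the image of $z\mapsto(z,\overline z)$, where tangent vectors have the shape $(u,\overline u)$, these two restrict to the same real form; so the $\psi\equiv 0$ check only pins down the sum of the two coefficients. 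Once $\C$-bilinearity is verified, the conjugate form is excluded and your normalization closes the argument; otherwise you must fall back on the one-point computation with the endpoint maps, which is precisely the paper's Lemma \ref{Lemma 2 metrica pull back G}. (Your convention $\nabla_{\XX}\nu=-\psi\XX$ is opposite to the paper's $\Psi=\overline\nabla\nu$; this only flips the imaginary part, and the paper's statement of the two expressions for $g$ already differs internally by exactly such a sign, so nothing essential hinges on it.)
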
 	

In order to prove the proposition, we regard $\overline\sigma$ as the 
composition of the normal section $\nu:\widetilde S\to T^1\Hy^3$ with the 
natural projection $\Pi:T^1\Hy^3\to\GG$, whose fibers are the leaves of 
the geodesic flow.

First of all let us recall that the tangent space $T_{(p,v)}(T^1\Hy^3)$ 
is naturally identified with $T_p\Hy^3\oplus v^\perp$, where $v^\perp$ is 
the orthogonal space to $v$ in $T_p\Hy^3$. The identification works as 
follows: given a path $\alpha:(-\epsilon, \epsilon)\to T^1\Hy^3$, which can be written as $\alpha(t)=(p(t),v(t))$ with $v(t)$ being a unit vector field along the path $p(t)$, the identification is given by 
\[\dot\alpha(0) = (\dot p(0),\frac{Dv}{dt}(0)).\] Notice that, since $v$ is unitary, by differentiating $g_{\Hy^3}(v(t), 
v(t))=1$ one gets that the vector $\frac{Dv}{dt}(0)$ is orthogonal to 
$v$, so the correspondence is well-posed.

This identification allows us to get a simple expression for the differential of the normal section $\nu:\widetilde S\to T^1\Hy^3$. 

\begin{Lemma}
\label{Lemma 1 metrica pull back G}
    Up to the above identification, for any $x\in\widetilde S$ and $v\in T_x\widetilde S$ we have 
    \[
    (d_x\nu)(v)=(v,\psi(v))\,.
    \]
\end{Lemma}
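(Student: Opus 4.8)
The plan is to compute $d_x\nu$ directly from the definition, exploiting the splitting $T_{(p,w)}(T^1\Hy^3)\cong T_p\Hy^3\oplus w^\perp$ recalled immediately above the statement. First I would fix $x\in\widetilde S$ and $v\in T_x\widetilde S$, choose a smooth path $c\colon(-\epsilon,\epsilon)\to\widetilde S$ with $c(0)=x$ and $\dot c(0)=v$, and form the lifted path $\alpha(t):=\nu(c(t))=(\sigma(c(t)),\nu(c(t)))$ in $T^1\Hy^3$ (here I use that, as a section, $\nu$ sends $y$ to the pair $(\sigma(y),\nu(y))$, overloading the symbol $\nu$ for both the section and the unit normal field). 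By the very definition of the identification, $\dot\alpha(0)$ decomposes as the pair consisting of $\frac{d}{dt}\sigma(c(t))|_{0}$ in the first slot and the covariant derivative $\frac{D}{dt}\nu(c(t))|_{0}$ in the second, where $\frac{D}{dt}$ is taken along $\sigma\circ c$ with respect to the Levi-Civita connection $D$ of $\Hy^3$.

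For the first component, the chain rule gives $\frac{d}{dt}\sigma(c(t))|_0=\sigma_{*}(v)$; since $\sigma$ is an isometric immersion we tacitly identify $T_x\widetilde S$ with its image $\sigma_*(T_x\widetilde S)\subset T_{\sigma(x)}\Hy^3$, so this term reads simply $v$. For the second component, $\frac{D}{dt}\nu(c(t))|_0=D_{\sigma_*(v)}\nu$, where $\nu$ on the right is the unit normal field along $\sigma$. By the definition of the shape operator of the immersion into $\Hy^3$ — the pull-back analogue of the earlier Proposition $\Psi=\overline\nabla\nu$ proved for $\XXX_{n+1}$, specialized to case $a)$ in which $\psi$ coincides with the ambient shape operator — one has $D_{\sigma_*(v)}\nu=\psi(v)$. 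Differentiating $\inner{\nu,\nu}=1$ confirms that $D_{\sigma_*(v)}\nu$ lies in $\nu(x)^\perp$, so placing it in the second slot of the splitting is legitimate. Combining the two slots yields $(d_x\nu)(v)=(v,\psi(v))$, as claimed.

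The computation is essentially routine; the one point demanding care is the sign and normalization convention for $\psi$. I would make sure that the shape operator invoked here is exactly $\psi(v)=D_v\nu$ rather than its negative, in keeping with the convention $\Psi=\overline\nabla\nu$ fixed in Section 3 and with the matching $\psi=\Psi$ declared for case $a)$ in the pseudo-Riemannian subsection, so that no stray minus sign creeps into the second component. Once this convention is pinned down, the two identifications — of $T_x\widetilde S$ with $\sigma_*(T_x\widetilde S)$ and of the covariant-derivative term with $\psi(v)$ — slot precisely into the two factors of $T_{(p,w)}(T^1\Hy^3)$, and the lemma follows.
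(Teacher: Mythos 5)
Your proposal is correct and is essentially the paper's own argument: the paper dismisses this lemma as ``trivial by definition of the shape operator,'' and your write-up simply makes that explicit, correctly unwinding the identification $T_{(p,w)}(T^1\Hy^3)\cong T_p\Hy^3\oplus w^\perp$ via a lifted path and invoking the convention $\psi(v)=D_v\nu$ (i.e.\ $\Psi=\overline\nabla\nu$, case $a)$ of the pseudo-Riemannian comparison). The sign/normalization check you flag is exactly the right point of care, and your handling of it matches the paper's conventions.
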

\begin{proof}
The proof is trivial by definition of the shape operator.
\end{proof}
\begin{Lemma}
\label{Lemma 2 metrica pull back G}
  Let us fix $(p,v)\in T^1\Hy^3$ and  $w_1,w_2\in v^\perp$. Then
    \[
    \inner{d_{(p,v)}\Pi(w_1,w_2), d_{(p,v)}\Pi(w_1,w_2)}_{\GG}=g_{\Hy^3}(w_1, w_1)-g_{\Hy^3}(w_2,w_2)+i (g_{\Hy^3}(w_1, v\times w_2)-g_{\Hy^3}(w_2, v\times w_1)),
    \]
    where $\times$ is the vector product on $T\Hy^3$.  
\end{Lemma}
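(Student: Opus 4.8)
The plan is to compute the differential $d_{(p,v)}\Pi$ explicitly at one convenient point, which suffices by the $\PSL$-equivariance of $\Pi$ together with the fact, recorded in Proposition \ref{metrica su G}, that $\PSL\cong Isom_0(\GG)$ acts by isometries on $\GG$: both sides of the claimed identity are $\PSL$-invariant, so equality at a single $(p,v)$ implies it everywhere. First I would interpret a tangent vector $(w_1,w_2)$ with $w_1,w_2\in v^\perp$ dynamically, realizing it as $\frac{d}{ds}\big|_{s=0}(p(s),v(s))$ with $(p(0),v(0))=(p,v)$, and letting $\gamma_s$ be the complete geodesic through $p(s)$ in the direction $v(s)$. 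The variation field $J(u)=\frac{\partial}{\partial s}\big|_{s=0}\gamma_s(u)$ is a Jacobi field along $\gamma_0$, and the commutation $\frac{D}{du}\frac{\partial}{\partial s}=\frac{D}{ds}\frac{\partial}{\partial u}$ gives $J(0)=w_1$ and $J'(0)=w_2$; since $w_1,w_2\in v^\perp$ this is an orthogonal Jacobi field. Because the fibres of $\Pi$ are exactly the geodesic-flow orbits, $d_{(p,v)}\Pi(w_1,w_2)$ is precisely the variation of the pair of endpoints $(\gamma_s)_{+\infty},(\gamma_s)_{-\infty}$ in $\CP^1\times\CP^1$.

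Next I would solve the Jacobi equation. In curvature $-1$ the orthogonal fields satisfy $J''=J$, so $J(u)=\cosh(u)\,W_1(u)+\sinh(u)\,W_2(u)$ with $W_1,W_2$ the parallel extensions of $w_1,w_2$. Working in the upper half-space model with $\gamma_0$ the vertical geodesic from $0$ to $\infty$, $p=(0,0,1)$ and $v=\partial_t$, I would read off the two endpoints from the asymptotics of $J$ as $u\to\pm\infty$. Identifying $v^\perp$ with $\C$ through an oriented orthonormal frame $(e_1,e_2)$ chosen so that the vector product $w\mapsto v\times w$ becomes multiplication by $i$, a first-order expansion of $\gamma_s$ gives, for the endpoint at $-\infty$ (near $0$), the variation $\tfrac12(\widehat{w_1}-\widehat{w_2})$, where $\widehat{\cdot}$ denotes the image in $\C$. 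For the endpoint at $+\infty$ one must work in the chart $z\mapsto 1/z$ at $\infty$, and there the computation produces a complex conjugate: the variation of $1/(\gamma_s)_{+\infty}$ equals $\tfrac12\overline{(\widehat{w_1}+\widehat{w_2})}$.

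Finally I would assemble the metric. By Proposition \ref{metrica su G} the metric on $\GG$ is $-\tfrac{4}{(z_1-z_2)^2}dz_1dz_2$; in the chart centred at $(\infty,0)$ obtained by replacing $z_1$ with $1/z_1$ it reduces at the base point to $4$ times the symmetric product of the two coordinate differentials. Hence $\inner{d\Pi(w_1,w_2),d\Pi(w_1,w_2)}_{\GG}$ equals $4$ times the product of the variation of $1/(\gamma_s)_{+\infty}$ and the variation of $(\gamma_s)_{-\infty}$, that is $\big(\overline{\widehat{w_1}}+\overline{\widehat{w_2}}\big)\big(\widehat{w_1}-\widehat{w_2}\big)$. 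Expanding, the modulus terms give $|\widehat{w_1}|^2-|\widehat{w_2}|^2=g_{\Hy^3}(w_1,w_1)-g_{\Hy^3}(w_2,w_2)$, while the cross terms give $2i\,\mathrm{Im}(\widehat{w_1}\,\overline{\widehat{w_2}})$, which under the identification $v\times(\cdot)\leftrightarrow i$ is exactly $i\big(g_{\Hy^3}(w_1,v\times w_2)-g_{\Hy^3}(w_2,v\times w_1)\big)$; this yields the asserted formula.

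The main obstacle I expect is the endpoint at $+\infty$: the naive expansion $\gamma_s\approx\gamma_0+sJ$ is valid only on compact $u$-intervals and degenerates as $u\to+\infty$, since $\gamma_0$ terminates exactly at the chart point $\infty$. Handling it correctly — either through the explicit semicircle description of geodesics in the half-space model or by precomposing with the inversion $z\mapsto 1/z$ — is what forces the appearance of the complex conjugate, and keeping careful track of the orientation in the identification $v^\perp\cong\C$ is precisely what makes the real part emerge as a difference of squares and the imaginary part as the stated skew term.
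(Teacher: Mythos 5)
Your proposal is correct, and the intermediate values you claim check out: with the orientation fixed so that $v\times(\cdot)$ is multiplication by $i$ under the natural identification $v^\perp\cong\C$, the orthogonal Jacobi field $J(u)=\cosh(u)W_1(u)+\sinh(u)W_2(u)$ does give endpoint variations $\tfrac12(\widehat{w_1}-\widehat{w_2})$ at $-\infty$ and $\tfrac12\overline{(\widehat{w_1}+\widehat{w_2})}$ in the chart $1/z_1$ at $+\infty$ (the conjugate is forced because the Poincar\'e extension of $z\mapsto 1/z$ acts on $v^\perp$ at the fixed point $(0,1)$ as the reflection $\widehat{w}\mapsto\overline{\widehat{w}}$), and the expansion of $\bigl(\overline{\widehat{w_1}}+\overline{\widehat{w_2}}\bigr)\bigl(\widehat{w_1}-\widehat{w_2}\bigr)$ reproduces the stated formula. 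However, your route is genuinely different from the paper's. The paper also works at a single base point, but it takes $v$ horizontal (so the geodesic has endpoints $\pm1$, avoiding any chart at $\infty$) and computes $d_{(p,v)}\Pi$ on a basis of $v^\perp\oplus v^\perp$ by differentiating four explicit one-parameter isometry groups $a(t),b(t),c(t),d(t)$ (two hyperbolic translations, two rotations): equivariance of $\Pi$ turns each such derivative into the derivative of a family of M\"obius transformations applied to the pair of endpoints, which is read off with no ODEs and no asymptotic analysis; the formula then follows by linearity and Proposition \ref{metrica su G}. Your Jacobi-field computation is more intrinsic — it identifies $T_{(p,v)}T^1\Hy^3$ with initial data of Jacobi fields and would transfer to other rank-one settings — but it buys this generality at the price of exactly the delicate step you flag: justifying that the endpoint variation equals the Euclidean limit of the Jacobi field, and handling the chart at $\infty$ (e.g.\ by the inversion trick, or the explicit semicircle parametrization). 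The paper's Killing-field argument trades that analytic care for the cleverness of choosing the four flows; both are legitimate, and since the flows' restrictions to $\gamma_0$ are precisely Jacobi fields, your argument can be seen as the paper's with the group theory stripped out.
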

\begin{proof}
We consider the half-space model of $\Hy^3=\C\times\R_{+}$. Set 
$p=(0,1)$ and, in the identification $T_p\Hy^3=\C\times\R$, set $v=(1,0)$. As a result, $v^\perp= Span_\R((i,0),(0,1)) <T_p \Hy^3$, we can canonically see $\partial \Hy^3=\overline \C$ and $\Pi(p,v)$ is the geodesic with endpoints $\Pi_+(p,v)=1$ and $\Pi_-(p,v)=-1$. The tangent space $T_{(1,-1)} \GG$ can be trivially identified with $T_1 \C\times T_{-1}\C\cong \C \times \C$.

Let us consider the following $1$-parameter groups of isometries of $\Hy^3$
\[
   \begin{matrix}
   a(t)= \exp(tX), & b(t)=\exp(itX),\\
   c(t)=\exp(tY), & d(t)=\exp(itY)\,.
   \end{matrix}
\]
where $X,Y\in\asl$ are defined as  $X=\begin{pmatrix}1/2 & 0\\0&-1/2\end{pmatrix}$, and $Y=\begin{pmatrix}0 & -i/2\\i/2 & 0\end{pmatrix}$.

Notice that $a(t)$ and $c(t)$ are groups of hyperbolic transformations with axis respectively $(0,\infty)$ and $(i,-i)$. On the other hand, $b(t)$ and $d(t)$ are pure rotations around the corresponding axes. We have chosen the normalization so that the translation lengths of $a(t)$ and $c(t)$ equal $t$, and so that the rotation angle of $b(t)$ and $d(t)$ is $t$.

Observe that $p$ lies on all the axes of  $a(t), b(t), c(t), d(t)$.
It follows that $t\to a(t)\cdot v$ is a parallel vector field along the axis of $a(t)$, so  the derivative of $a(t)\cdot(p, v)$ corresponds under the natural identification to the vector $((0,1), (0,0))$. On the other hand, the variation of the endpoints of the family of geodesics $a(t)\cdot (-1,1)\in \GG$ is given by $(-1,1)\in T_{(-1,1)}\GG$. Using that the map $\Pi$ is equivariant under the action of $\PSL$, we conclude that
\[
   d_{(p,v)}\Pi((0,1),(0,0))=(-1,1)\,.
\]

In the same fashion, using $c(t)$ we deduce that
\[
   d_{(p,v)}\Pi((-i,0), (0,0))=(-i,-i)\,.
\]

On the other side, $b(t)p=p$ for all $t$, therefore one can explcitly compute that
\[b(t)\cdot v=d_pb(t)v=\cos t v+\sin t (0,1)\times v\in T_{p} \Hy^3.\] It follows that the derivative at $t=0$ of $b(t)\cdot(p,v)$ corresponds to $((0,0),(i,0))$.
We conclude as above that
\[
 d_{(p,v)}\Pi((0,0), (i,0))=(-i,i)
\]
and analogously for $d(t)$ we get
\[
d_{(p,v)}\Pi((0,0), (0,1))=(1,1).
\]
Finally, we can explcitly compute $d_{(p,v)}\Pi_{|v^\perp}\colon v^\perp \cong i\R \times \R_+ \to T_{(1,-1)}\GG\cong \C \times \C $:
\[
  d\Pi((i\alpha,\beta), (i\gamma, \delta))=((\delta-\beta)+i(\alpha-\gamma),(\delta+\beta)+i(\alpha+\gamma)).
\]
Using the description of the metric as in Proposition \ref{metrica su G}, we get that
\begin{align*}
 ||d\Pi((i\alpha,\beta), (i\gamma, \delta))||^2&=
 -4\frac{[(\delta-\beta)+i(\alpha-\gamma)][(\delta+\beta)+i(\alpha+\gamma))]}{(1-(-1))^2}=\\
&= \alpha^2+\beta^2-\gamma^2-\delta^2 - 2i(\alpha\delta-\beta\gamma)=\\
&= ||(i\alpha,\beta)||^2-||(i\gamma, \delta))|| +2i g_{\Hy^3}((i\alpha,\beta), (1,0)\times (i\gamma, \delta)))
\end{align*}
and the thesis follows.
\end{proof}

\begin{proof} [Proof of Proposition \ref{Prop metrica in G}]
The proof follows directly by Lemmas \ref{Lemma 1 metrica pull back G} and \ref{Lemma 2 metrica pull back G}.

	If $(X_1,X_2)$ is a $h$-orthonormal frame of eigenvectors for $\psi$, with corresponding eigenvalues $\lambda_1$ and $\lambda_2$ respectively, the pull-back bilinear form via $\overline \sigma$ is described by \[
	g \leftrightarrow \begin{pmatrix}1- \lambda^2_1 & i(\lambda_1 -\lambda_2)\\
	i(\lambda_1-\lambda_2) &1-\lambda^2_2
	\end{pmatrix}
	\]	
	whose determinant is $(1-\lambda_1\lambda_2)^2$: hence, by Gauss equation, $g$ is a complex metric at $x$ if and only if $K_h(x)\ne0$.
\end{proof}

	\section{On holomorphic dependence on the initial data}
	
	\subsection{The main result}
	In this section we discuss  holomorphic dependence on the immersion data for immersions into $\mathds X_{n+1}$ and their monodromy. 
	
	Given a smooth manifold $M$  of dimension $n$ and any point $x\in M$, $\C T_x M$ is a complex vector space and provides a natural complex structure to the manifolds $\C T_x ^*M$, $Sym^2 (\C T_x^*M)$ and $End(\C T_x M)=\C T_x M \otimes \C T_x^* M$. 
	
	Given a complex manifold $\Lambda$, we will say that a family of immersion data $\{(g_\lambda, \psi_\lambda)\}_{\lambda\in \Lambda}$ for $\pi_1(M)$-equivariant immersions of $\widetilde M$ into $\mathds X_{n+1}$ is \emph{holomorphic} if, for all $x\in M$, the maps
	\begin{align*}
	\Lambda &\to Sym^2 (\C T_x^*M) \\
	\lambda &\mapsto g_\lambda(x) 
	\end{align*}
	and
	\begin{align*}
	\Lambda &\to End(\C T_x M)\\
	\lambda &\mapsto \psi_\lambda (x)
	\end{align*}
	are both holomorphic. We remark that this definition does not require any holomorphic structure on $M$.

	This section is devoted to the proof of the following theorem.
	
	\begin{Theorem}
		\label{Teo dipendenza olomorfa}
		Let $\Lambda$ be a complex manifold and $M$ be a smooth manifold of dimension $n$. 
		
		Let $\{(g_\lambda, \Psi_\lambda)\}_{\lambda\in \Lambda}$ be a holomorphic family of immersion data for $\pi_1(M)$-equivariant immersions $\widetilde M\to \mathds X_{n+1}$. Then there exists a smooth map
		\[
		\sigma\colon \Lambda \times \widetilde M \to \mathds X_{n+1}
		\]
		such that, for all $\lambda\in \Lambda$ and $x\in M$:
		\begin{itemize}
			\item $\sigma_\lambda:= \sigma(\lambda, \cdot)\colon \widetilde M \to \mathds X_{n+1}$ is an admissible immersion with immersion data $(g_\lambda, \Psi_\lambda)$;
			\item $\sigma(\cdot,x)\colon \Lambda \to \mathds X_{n+1}$ is holomorphic.
		\end{itemize}
		Moreover, defined the character variety
		\[ \mathcal X(\pi_1 (M), SO(n+2, \C )= {Hom\big(\pi_1(S), \PSL\big)} // {\PSL},\]
		the monodromy map
		\begin{align*}
		\Lambda &\to \mathcal X (\pi_1(M), SO(n+2, \C) )\\
		\lambda &\mapsto mon(\sigma_\lambda)
		\end{align*}
	is holomorphic.
	\end{Theorem}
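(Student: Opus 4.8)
The plan is to make the existence argument of Theorem \ref{Teoremone} uniform and holomorphic in the parameter $\lambda$. Since holomorphicity of $\sigma(\cdot,x)$ and smoothness in $(\lambda,x)$ are local conditions in both factors, I work on a contractible chart $\Lambda_0 \subset \Lambda$ and on an open set $U \subset \widetilde M$ carrying a fixed frame of $TU$. Running the Gram--Schmidt procedure of Section 3 starting from this fixed frame against $g_\lambda$, and choosing a holomorphic branch of each nonvanishing square root $\sqrt{g_\lambda(Y_j,Y_j)}$, produces a $g_\lambda$-orthonormal frame $(X_i(\lambda,x))_{i=1}^n$ that is smooth in $(\lambda,x)$ and holomorphic in $\lambda$. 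Consequently the coframe $\theta^i$, the Levi-Civita connection forms $\theta^i_j$, the components $\Psi^i$ of $\Psi_\lambda$, and hence the $\Lieonn$-valued $1$-form $\omega_\lambda$ of that proof, are all holomorphic in $\lambda$; assembling them along the slices gives a form $\omega^M$ on $\Lambda_0 \times U$, flat on each slice ($d\omega_\lambda + \omega_\lambda \wedge \omega_\lambda = 0$) by the Gauss--Codazzi hypothesis.

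The core step is the parametrized integration, Proposition \ref{Teorema esistenza foliazione}. I extend $\omega^M$ to a flat $\Lieonn$-valued $1$-form $\hat\omega$ on $\Lambda_0 \times U$ by adding a pure $(1,0)$-component $\eta$ in the $\Lambda$-directions. Expanding $d\hat\omega + \hat\omega \wedge \hat\omega = 0$, the $dx\wedge dx$-part is the slice flatness already established, the $d\lambda\wedge dx$-part becomes a first-order linear system $\partial_x \eta = \partial_\lambda\omega^M + [\eta, \omega^M]$ whose solvability along $U$ follows precisely from the slice Maurer--Cartan equation, and holomorphicity of $\omega^M$ in $\lambda$ makes the choice of $\eta$ of type $(1,0)$ consistent (the remaining components vanish). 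Integrating $\hat\omega$ on the simply connected $\Lambda_0\times U$ produces $\Phi\colon \Lambda_0\times U \to \SOnn$ with $\Phi(\lambda,\cdot)^*\omega_G = \omega_\lambda$ and $\Phi(\cdot,x)$ holomorphic, and I set $\sigma(\lambda,x):=\Phi(\lambda,x)\cdot\underline e$.

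Slicewise, the computation in the existence proof of Theorem \ref{Teoremone} shows that $\sigma_\lambda$ is an admissible immersion with data $(g_\lambda,\Psi_\lambda)$, while $\sigma(\cdot,x)$ is holomorphic because $\Phi(\cdot,x)$ is and $\SOnn$ acts holomorphically on $\ML\subset\C^{n+2}$. To globalize over $\Lambda\times\widetilde M$, I normalize at a base point $\tilde x_0$ by requiring $\Phi(\lambda,\tilde x_0)=I$; the rigidity of Corollary \ref{Corollario Teo unicità} then pins $\sigma_\lambda$ down uniquely, so the locally constructed pieces agree on overlaps and glue via the good-cover/path argument of Theorem \ref{Teoremone}, each being holomorphic in $\lambda$, and holomorphicity is inherited by the glued map since it is a local property in $\Lambda$.

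For the monodromy, equivariance of $\omega_\lambda$ under the deck action of $\pi_1(M)$ forces $mon(\sigma_\lambda)(\gamma)=\Phi(\lambda,\gamma\tilde x)\,\Phi(\lambda,\tilde x)^{-1}$, independent of $\tilde x$; as $\Phi(\cdot,\tilde x)$ is holomorphic and multiplication and inversion in $\SOnn$ are holomorphic, each $\lambda\mapsto mon(\sigma_\lambda)(\gamma)$ is a holomorphic map $\Lambda\to\SOnn$, giving a holomorphic map $\Lambda\to Hom(\pi_1(M),\SOnn)$; composing with the holomorphic projection to the GIT quotient $\mathcal X(\pi_1(M),\SOnn)$ yields holomorphicity of $\lambda\mapsto mon(\sigma_\lambda)$. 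I expect the main obstacle to be the parametrized integration itself, namely establishing the Frobenius-type integrability that yields the flat holomorphic extension $\hat\omega$ (the content of Proposition \ref{Teorema esistenza foliazione}); the frame-gluing, though routine, is a secondary point requiring care to preserve holomorphicity in $\lambda$.
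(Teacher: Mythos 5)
Your proposal is correct in outline and shares the paper's overall architecture: Gram--Schmidt frames chosen holomorphically in $\lambda$, the $\Lieonn$-valued form $\omega_\lambda$ from the proof of Theorem \ref{Teoremone}, a parametrized Maurer--Cartan integration producing $\Phi$, the definition $\sigma(\lambda,x)=\Phi(\lambda,x)\cdot \underline e$, globalization by the good-cover argument, and holomorphicity of the monodromy extracted from rigidity. Where you genuinely diverge is the core integration step. The paper splits it into two general-purpose tools: Proposition \ref{Teorema esistenza foliazione}, which produces a \emph{smooth} $\Phi$ with $\Phi(\lambda,\cdot)^*\omega_G=\omega_\lambda$ by integrating the distribution $D=T\Lambda\oplus D^0$ on $\Lambda\times M\times G$, and Lemma \ref{Lemma CP}, a Cauchy-problem description of $\partial_{\dot\lambda}\big(\Phi_\lambda\Phi_{\lambda_0}^{-1}\big)$, used to show $\partial_{i\dot\lambda}\xi=i\,\partial_{\dot\lambda}\xi$ and hence holomorphicity of $\Phi$ in $\lambda$ a posteriori. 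You instead build a single flat form $\hat\omega=\omega^M+\eta$ on $\Lambda_0\times U$, with $\eta$ of type $(1,0)$ in $\Lambda$ solving the linear system coming from the mixed $d\lambda\wedge dx$ part of flatness, and integrate once; holomorphicity is then built in because $\hat\omega$ has no $d\bar\lambda$-component. Your route is self-contained and in one respect more transparent: the cross terms $\partial_{\dot\lambda}\omega_\lambda$ that your $\eta$ absorbs are exactly the terms suppressed in the bracket computation of Step 2 of the paper's proof of Proposition \ref{Teorema esistenza foliazione} (brackets of lifts of $T\Lambda$ against sections of $D^0$ produce $\partial_\lambda\omega_\lambda$-terms, so $D$ as written is integrable only when these vanish); your correction is what makes a distribution of that kind honestly integrable. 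What the paper's splitting buys is generality and reusability: its two lemmas apply to arbitrary smooth families with no complex structure on $\Lambda$, and the explicit variation formula of Lemma \ref{Lemma CP} reduces the holomorphicity claim to a one-line uniqueness argument for a Cauchy problem.

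Two points in your plan need care. First, flatness of $\hat\omega$ also has a purely-$\Lambda$ component (the $d\lambda\wedge d\lambda$ part of $d\hat\omega+\hat\omega\wedge\hat\omega$); this does not vanish ``by holomorphicity'' and is not implied by the mixed equation alone. It does hold for your $\eta$ once you normalize $\eta(\cdot)(x_0)\equiv 0$: by the Bianchi identity the purely-$\Lambda$ part of the curvature then satisfies a first-order linear system along $M$ with zero initial value at $x_0$, hence vanishes identically; without this step $\hat\omega$ is not flat and cannot be integrated. Second, the closed formula $mon(\sigma_\lambda)(\gamma)=\Phi(\lambda,\gamma\tilde x)\,\Phi(\lambda,\tilde x)^{-1}$ presupposes a globally defined $\Phi$ and a deck-invariant frame, neither of which exists in general ($\Phi$ depends on the local frame, and $\widetilde M$ need not be parallelizable). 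Either choose the frame near $\gamma\tilde x$ to be the $\gamma$-pushforward of the frame near $\tilde x$ so that the two local $\Phi$'s are comparable, or argue as the paper does: $\sigma_\lambda$ and $\sigma_\lambda\circ\gamma$ are two solutions with the same data, and the uniqueness step (where the relating isometry $\phi(\lambda)$ is shown holomorphic via the frame matrices $M(\lambda)$, $M'(\lambda)$) gives holomorphicity of $\lambda\mapsto mon_\gamma(\lambda)$ directly.
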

	
	\begin{Example}
		Let $h$ be a hyperbolic metric on a closed surface $S$ and let $b\colon TS\to TS$ be a $h$-self-adjoint (1,1)-form such that $d^{\nabla_h}b=0$ and $det(b)=1$; one may choose for instance $b=id$. Then, the family $\{(g_z,\psi_z)\}_{z\in \C}$ defined by
		\[
		\begin{cases}
		g_z= \cosh^2(z) \widetilde h; \\
		\psi_z= -\tanh(z) \widetilde b
		\end{cases}
		\] 
		is a holomorphic family of $\pi_1(S)$-equivariant immersion data for constant curvature immersions $\widetilde S\to \SL$, with $K_{g_z} =- \frac{1}{\cosh(z)^2}$. Observe that $z\in \R$ corresponds to an immersion data into $\Hy^3$, while $z\in i\R$ corresponds to an immersion data into $AdS^3$. 
		
		By Theorem \ref{Teo dipendenza olomorfa}, there exists a family of immersions $\sigma_z\colon \widetilde S\to \SL$ with data $(g_z, \psi_z)$ whose monodromy is a holomorphic function in $z$. 
	\end{Example}

	\subsection{An application: the complex landslide is holomorphic}
	We postpone the proof of Theorem \ref{Teo dipendenza olomorfa} to Sections 5.3 and 5.4. 
	
	In this section, we would like to provide a direct application of Theorem \ref{Teo dipendenza olomorfa}: we show an alternative proof for the fact that the holonomy of the complex landslide, defined in \cite{cyclic}, is holomorphic. 
	
	We briefly recall some basic notions on projective structures, the notions of landslide, smooth grafting and complex landslide. One can use as main references \cite{Dumas:complex projective structures} and \cite{cyclic}.
	\vspace{5mm}
	
	Let $S$ be an oriented closed surface of genus $g\ge 2$.
	
	 Denote the character variety into $\PSL$ by \[\mathcal X (S) := {Hom\big(\pi_1(S), \PSL\big)}\ //\ {\PSL}.\]
	 
	We recall that a \emph{(complex) projective structure} on $S$ is a $(\PSL, \CP^1)$-structure on $S$. A projective structure induces a complex structure and an orientation, we will stick to projective structures compatible with the orientation on $S$. As a $(\PSL, \CP^1)$-structure, a projective structure is determined by a $(\pi_1(S), \PSL)$-equivariant developing map $\widetilde S \to \CP^1$, which is unique up to post-composition with elements in $\PSL$. We therefore define
	
		\begin{equation*}
		\begin{split}
	 \widetilde{\mathcal P}(S)&:= \{\text{projective structures on $S$}\}=\\
	& =\faktor{\left \{(f, \rho)\ \bigg|\ \begin{split}
		&\rho\colon \pi_1(S)\to \PSL,\\
		 &f\colon \widetilde S \to \CP^1 \text{ $\rho$-equiv. orientation-preserving local diffeo} 
		\end{split}
		\right \}}{\PSL} \\
	 \mathcal P(S) &:=\faktor{\widetilde{\mathcal P} (S)}{\text{Diff}_0(S)}
	\end{split}
	\end{equation*}
	
	Equip the set of pairs $\{(f,\rho)\}$ with the compact-open topology, and $\widetilde {\mathcal P}(S)$ and $\mathcal P(S)$ with the quotient topology. The space $\mathcal P(S)$ is a topological manifold and one can define on it a smooth and complex structures - compatible with the topology - via the Schwarzian parametrization (e.g. see \cite{Dumas:complex projective structures}).
	
	Also define the holonomy map
	\[
	\widetilde {Hol} \colon \widetilde{\mathcal P}(S)\to \mathcal X(S),\]
	which passes to the quotient to a map
	\[Hol\colon 	{\mathcal P}(S) \to 	\mathcal{X}(S).\]
\begin{Theorem} (Hejhal \cite{Hejhal}, Hubbard \cite{Hubbard},  Earle \cite{Earle})
	\label{Hol è diffeo}
	The holonomy map $Hol$ is a local biholomorphism. 
\end{Theorem}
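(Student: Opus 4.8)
The plan is to realise $Hol$ as a holomorphic map between two complex manifolds of the same dimension and then to prove that its differential is a $\C$-linear isomorphism at every point; since for a holomorphic map between equidimensional complex manifolds injectivity, surjectivity and bijectivity of the differential are equivalent, the inverse function theorem then yields the local biholomorphism. First I would pin down the dimensions and the complex structures. Via the Schwarzian parametrisation, $\mathcal P(S)$ is a holomorphic affine bundle over \Tei space $\mathcal T(S)$, the fibre over a complex structure $X$ being the space $Q(X)=H^0(X,K_X^{\otimes 2})$ of holomorphic quadratic differentials; by Riemann--Roch $\dim_\C Q(X)=3g-3=\dim_\C\mathcal T(S)$, so $\dim_\C\mathcal P(S)=6g-6$. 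On the other side, the holonomy of a projective structure on a closed surface of genus $g\ge 2$ is always non-elementary, so $Hol$ takes values in the smooth locus of $\mathcal X(S)$, which near such representations is a complex manifold of dimension $(2g-2)\cdot 3=6g-6$.

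For holomorphicity I would argue through the defining ODE. In a local holomorphic coordinate on $X$, a projective structure with Schwarzian $q$ has developing map $D=u_1/u_2$, where $u_1,u_2$ are two independent solutions of $u''+\tfrac12 q\,u=0$. The monodromy of this linear ODE depends holomorphically on the coefficient $q$ (and on the underlying complex structure $X$), hence the $\PSL$-valued holonomy $\rho$ does too; descending to the quotient shows that $Hol$ is holomorphic. This reduces the theorem to the pointwise statement that $d_P Hol$ is an isomorphism, and by the dimension count it suffices to prove that $d_P Hol$ is injective.

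The injectivity of $d_P Hol$ is the crux and the main obstacle. Identifying $T_P\mathcal P(S)$ with the infinitesimal deformations of the flat $\CP^1$-bundle determined by $\rho$ together with the transverse section coming from $D$, the differential is the map that forgets the section and lands in $H^1(\pi_1(S);\asl_{\mathrm{Ad}\,\rho})$, the tangent space to $\mathcal X(S)$. A class in its kernel is represented by a family $P_t$ whose holonomy is constant to first order; after conjugating I may assume $\dot\rho=0$ as a cocycle, so that the derivative $\dot D$ of the developing maps is an honest $\rho$-equivariant section of $D^*T\CP^1$, whose Schwarzian variation $\dot q$ is a holomorphic quadratic differential. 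The heart of the matter, going back to Gunning, is an infinitesimal rigidity statement forcing $\dot q=0$: one way to organise it is to note that the projective structure endows the flat bundle $V=\asl_{\mathrm{Ad}\,\rho}$ with a filtration computing $H^1(\pi_1(S);\asl)$ by the de Rham complex $V\to V\otimes K_X$, producing a canonical isomorphism $H^1(\pi_1(S);\asl)\cong H^0(X,K_X^{\otimes 2})\oplus H^1(X,K_X^{-1})$ that matches the extension $0\to Q(X)\to T_P\mathcal P(S)\to T_X\mathcal T(S)\to 0$ and realises $d_P Hol$ as exactly this identification. Equivalently, pairing $\dot q$ against $H^1(X,K_X^{-1})$ through the flat structure and using Serre duality together with the vanishing $H^0(\pi_1(S);\asl_{\mathrm{Ad}\,\rho})=0$ (no nonzero invariant vectors, as $\rho$ is non-elementary) forces the deformation to be trivial. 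This gives the injectivity of $d_P Hol$, hence that it is an isomorphism, and the proof is complete.
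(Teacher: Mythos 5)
The paper does not prove this statement at all: Theorem \ref{Hol è diffeo} is imported from the literature (Hejhal \cite{Hejhal}, Hubbard \cite{Hubbard}, Earle \cite{Earle}) and used as a black box, e.g.\ in the proof of Theorem \ref{Teo alternativo landslide}. So there is no in-paper proof to compare yours against; what you have written is a reconstruction of the standard argument from the cited sources, closest to Hubbard's cohomological proof in the form presented in Dumas's survey \cite{Dumas:complex projective structures} (also among the paper's references). Your outline is the accepted one: both $\mathcal P(S)$ and the relevant locus of $\mathcal X(S)$ have complex dimension $6g-6$; holomorphicity of $Hol$ follows from the holomorphic dependence on $q$ of the monodromy of $u''+\tfrac12 q\,u=0$; and the crux is injectivity of $d_P Hol$, obtained by identifying $H^1(\pi_1(S);\asl_{\mathrm{Ad}\,\rho})$ with the hypercohomology of the de Rham complex of the flat adjoint bundle, filtered by means of the projective (oper) structure, so that $d_P Hol$ is matched with the extension $0\to Q(X)\to T_P\mathcal P(S)\to T_X\Tau(S)\to 0$ and the conclusion follows.

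Two soft spots would need patching to make this a complete proof. First, you twice use that the holonomy of a projective structure on a closed surface of genus $\ge 2$ is non-elementary --- once for smoothness of $\mathcal X(S)$ of dimension $6g-6$ at $[\rho]$, and once for the vanishing $H^0(\pi_1(S);\asl_{\mathrm{Ad}\,\rho})=0$; this is itself a nontrivial theorem (Gunning) and must be cited or proved, not asserted. Second, the intermediate step in which a kernel class of $d_P Hol$ yields an equivariant section $\dot D$ of $D^*T\CP^1$ ``whose Schwarzian variation $\dot q$ is a holomorphic quadratic differential'' only makes sense for vertical deformations, i.e.\ those fixing the underlying complex structure: for a general kernel class the complex structure moves, $\dot D$ is not holomorphic, and $\dot q$ is not defined this way. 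Your filtration argument is the correct replacement, since it treats the full extension and reduces the theorem (via the five lemma) to showing that $d_P Hol$ carries $Q(X)$ isomorphically onto the subspace coming from $H^0(X,K_X^{\otimes 2})$ and induces an isomorphism on the quotients $T_X\Tau(S)\to H^1(X,K_X^{-1})$; but that identification --- degeneration of the hypercohomology spectral sequence, using the transversality (oper) property of the flat connection relative to the filtration --- is precisely the remaining content, and your sketch asserts it rather than proves it.
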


	\vspace{3mm}

	 Let $\Tau(S)$ be the Teichm\"uller space of $S$. 
	
	In \cite{cyclic}, the authors define two smooth functions satisfying several interesting properties: the \emph{Landslide map}
		\[
		L\colon \Tau(S)\times \Tau(S)\times \R \to \Tau(S) \times \Tau(S)
		\]
	and the \emph{Smooth Grafting map}
	\[
	SGr\colon \Tau(S) \times \Tau(S)\times \R^+ \to \mathcal P(S)
	\]
	which turn out to have several interesting geometric properties and are related respectively to the earthquake map and to the grafting map.
	
	There are several equivalent definitions of $L$ and $SGr$, we mention the ones most fitted to our pourposes.

Let $h$ be a hyperbolic metric. As a consequence of some works by Schoen \cite{Schoen} and Labourie \cite{Labourie}, it has been proved that the map
	\begin{align*}
	\left\{
	\begin{array}{l}
	b\colon TS\to TS\\
	\text{bundle isomorphism}\\
	\text{such that:}
	\end{array}\Bigg|
	 \begin{array}{l}
 \text{ $h$-self adjoint,} \\
d^{\nabla_h} b=0, \\
 det(b)=1
		\end{array} \right\}
		&\xrightarrow{\sim}  \Tau(S) \\
		b &\mapsto [h(b\cdot, b\cdot)]
	\end{align*} 
	is well-posed and bijective. We will denote the tensors in the domain as \emph{$h$-regular}.
	
\begin{itemize}
	\item Given $([h], [h(b\cdot, b\cdot)])\in \Tau(S)\times \Tau(S)$ with $b$ $h$-regular, define for all $t\in \R$ the operator $\beta_t= \cos(\frac t 2)id + \sin(\frac t 2)J b$ where $J$ is the complex structure induced by $h$. The landslide map is defined as
	\[
	L([h], [h(b\cdot, b\cdot)], t)=
	\bigg( \Big[h\Big(\beta_t\ \cdot, \beta_t \cdot \Big)\Big], \Big[h\Big(\beta_{t+\pi}\ \cdot, \beta_{t+\pi}\ \cdot \Big)\Big]  \bigg)\]
	\item Given $([h], [h(b\cdot, b\cdot)])\in \Tau(S)\times \Tau(S)$, the data $(h, b)$ satisfy Gauss-Codazzi equations for $\Hy^3$ and so does the data \[(h_s, b_s):=(\cosh^2(\frac s 2) h,  \tanh(\frac s 2) b )\] for all $s\in \R$. As a result, for all $s$ there exists a unique $\pi_1 (S)$-equivariant isometric immersion
	\[
	\sigma_s\colon (\widetilde S, \widetilde h_s) \to \Hy^3 
	\]
	with shape operator $b_s$ (here the notation is $\sigma^* 
	(D^{\Hy^3}) - \nabla^{h_s}= \IIs_{\sigma_s}\nu_s= - h_s (b_s \cdot, \cdot) \nu_s$, where $\IIs$ is the second fundamental form of $\sigma_s$ and $\nu_s$ is the normal vector field). The tensor $b_s$ having positive determinant for all $s\ne 0$, $\sigma_s$ is convex for all $s\ne 0$ and the normal is oriented towards the concave side for $s>0$. As a result, for $s>0$ the map $\sigma_s$ induces a local diffeomorphism
	\[
	\sigma_{s, +\infty}\colon \widetilde S \to \partial \Hy^3 \cong \CP^1
	\]
	defined by $\sigma_{s, +\infty} (x)$ being the endpoint of the geodesic ray starting at $\sigma(x)$ with tangent direction $\nu(x)$. The map $\sigma_{s, +\infty}$ is equivariant inducing the same monodromy into $\PSL$ as $\sigma_s$, thus it defines a projective structure on $S$. The smooth grafting is defined by
	\[
	Sgr([h], [h(b\cdot, b\cdot)], s)= [\sigma_{s, +\infty}].
	\]
\end{itemize}
In the same article, for every hyperbolic metric $h$ and $h$-regular tensor $b$, they define
\begin{align}
P_{h,b}\colon H&\to \mathcal P(S)\\
z=t+is &\mapsto Sgr\bigg( L\Big([h], [h(b\cdot, b\cdot)], -t\Big), s \bigg)
\end{align}
where $H\subset \C$ is the upper half-plane. 

\begin{Theorem}[Bonsante - Mondello - Schlenker]
	$P_{h,b}$ is holomorphic.
\end{Theorem}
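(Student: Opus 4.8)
The plan is to reduce the statement to the holomorphicity of the holonomy, to identify that holonomy as the monodromy of a holomorphic family of immersions into $\GG=\mathds X_2$, and then to invoke Theorem \ref{Teo dipendenza olomorfa}. First, since the holonomy map $Hol\colon \mathcal P(S)\to \mathcal X(S)$ is a local biholomorphism by Theorem \ref{Hol è diffeo} and $P_{h,b}$ is continuous by construction, it suffices to prove that $z\mapsto Hol(P_{h,b}(z))\in\mathcal X(S)$ is holomorphic: then locally $P_{h,b}=Hol^{-1}\circ(Hol\circ P_{h,b})$, which is holomorphic as well.

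Next I would describe this holonomy geometrically. For $z=t+is\in H$, the developing map of $P_{h,b}(z)=SGr(L([h],[h(b\cdot,b\cdot)],-t),s)$ is the endpoint map $\sigma_{z,+\infty}\colon\widetilde S\to\CP^1$ of the equivariant immersion $\sigma_z\colon\widetilde S\to\Hy^3$ produced by the smooth grafting, and $Hol(P_{h,b}(z))$ is exactly its $\PSL$-monodromy $\rho_z$. By the construction relating immersions into $\Hy^3$ with immersions into $\GG$ (Proposition \ref{Prop metrica in G}), the map $\overline{\sigma_z}=(\sigma_{z,+\infty},\sigma_{z,-\infty})\colon\widetilde S\to\GG$ is $\rho_z$-equivariant for the diagonal action, and its pull-back $g_z$ is a complex metric since $K_h=-1\neq0$. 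As $\overline{\sigma_z}$ takes values in $\GG\cong\mathds X_2$, it is a codimension-$0$ isometric immersion, so $g_z$ has constant curvature $-1$ (Theorem \ref{teoremone stessa dimensione}) and $\rho_z$ is the $\PSL$-part of its monodromy. Hence $Hol\circ P_{h,b}$ is recovered from the monodromy of the family $\{g_z\}_{z\in H}$.

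Then I would check that $z\mapsto g_z$ is holomorphic and conclude. Feeding the induced metric and the shape operator of $\sigma_z$ into Proposition \ref{Prop metrica in G} writes $g_z=h'(A_z\cdot,A_z\cdot)$, where $A_z$ is assembled from the landslide rotation $\beta_{-t}=\cos(t/2)\mathrm{id}-\sin(t/2)Jb$ and the grafting factor $\cosh(s/2)\mathrm{id}+i\sinh(s/2)J'b'$. Because $\det b=1$ forces $(Jb)^2=-\mathrm{id}$, each factor is the exponential of a (real, respectively imaginary) multiple of a complex structure, and—after tracking the change $J\rightsquigarrow J'$, $b\rightsquigarrow b'$ along the landslide and matching the orientation conventions of \cite{cyclic}—these combine into an operator $A_z$ whose entries are entire functions of $z$ (built from $\cos(z/2),\sin(z/2)$). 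Thus $z\mapsto g_z(x)\in Sym^2(\C T_x^*S)$ is holomorphic for every $x$, and $\{(g_z,\Psi_z\equiv0)\}$ is a holomorphic family of immersion data for totally geodesic immersions into $\mathds X_3\cong\SL$, as in the proof of Theorem \ref{teoremone stessa dimensione}. By Theorem \ref{Teo dipendenza olomorfa} (and Proposition \ref{Teo unicità}, used to match $\overline{\sigma_z}$ with the immersion the theorem produces up to ambient isometry), $z\mapsto mon(\overline{\sigma_z})\in\mathcal X(\pi_1(S),SO(4,\C))$ is holomorphic. Since $\overline{\sigma_z}$ maps into the fixed totally geodesic $\GG\subset\SL$, its monodromy lies in $\mathrm{Stab}(\GG)$ and projects to $\rho_z\in\PSL\cong Isom_0(\GG)$; as these representations are genuinely $\PSL$-valued and $\PSL\hookrightarrow SO(4,\C)$ induces a holomorphic map of character varieties, holomorphicity of $z\mapsto\rho_z$ follows, finishing the reduction.

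The conceptual heart—and the step I expect to be the main obstacle—is the holomorphy of $\{g_z\}$: one must merge the real-analytic landslide flow (a function of $t$ alone) and the grafting flow (a function of $s$ alone) into a single holomorphic dependence on $z=t+is$. The relation $(Jb)^2=-\mathrm{id}$ makes this possible in principle, but the careful bookkeeping of how the complex structure and the Codazzi tensor transform under the landslide—so that the two factors genuinely assemble into the exponential of a $z$-holomorphic multiple of one fixed complex structure—is the delicate point; by contrast, the reduction to the holonomy and the descent from $SO(4,\C)$- to $\PSL$-characters are routine.
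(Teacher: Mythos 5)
Your proposal is correct and follows essentially the same route as the paper's proof: reduction to the holonomy via Theorem \ref{Hol è diffeo}, conversion of that holonomy into the monodromy of a family of constant-curvature complex metrics on $\GG$ via Proposition \ref{Prop metrica in G}, holomorphicity of $z\mapsto g_z$ from the relation $(Jb)^2=-id$, and conclusion by Theorem \ref{Teo dipendenza olomorfa}. The only real difference is that the paper carries out explicitly the computation you flag as the delicate point, showing $g_z=h\big((\cos(z)\,id-\sin(z)\,Jb)\cdot,(\cos(z)\,id-\sin(z)\,Jb)\cdot\big)$ by conjugating $Jb$ by the landslide rotation $\beta_t$ and then using the angle-addition identities that $(Jb)^2=-id$ makes available to merge the real parameter $t$ and the imaginary parameter $is$ into the single variable $z$.
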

\vspace{5mm}
The argument for the proof in \cite{cyclic} uses analytic methods. Here we provide an alternative proof using Theorem \ref{Teo dipendenza olomorfa} and Proposition \ref{Prop metrica in G}.

	\begin{Theorem} 
		\label{Teo alternativo landslide}
	The holonomy of the projective structure $P_{h,b}(z)$ is equal to the monodromy of the complex metric 
	\[
	g_z = h \bigg( (\cos(z)id -  \sin(z) Jb)\ \cdot,  (\cos(z)id -  \sin(z) Jb)\cdot \bigg)
	\]
	which has constant curvature $-1$.
	
	As a consequence of Theorem \ref{Teo dipendenza olomorfa} and Theorem \ref{Hol è diffeo}, $P_{h,b}$ is holomorphic.
\end{Theorem}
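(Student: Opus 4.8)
The plan is to deduce everything from two properties of the complex metric $g_z$: that it has constant curvature $-1$, and that its monodromy representation agrees with $Hol(P_{h,b}(z))$; granting these, the holomorphicity of $P_{h,b}$ is a formal consequence of Theorem~\ref{Teo dipendenza olomorfa} and Theorem~\ref{Hol è diffeo}. To establish the two properties I would identify $g_z$ with the pull-back of the $\GG$-metric under the immersion $\overline\sigma_s\colon \widetilde S\to\GG$ obtained by sending the smooth-grafting immersion into $\Hy^3$ to its field of normal oriented geodesics, exactly as in Proposition~\ref{Prop metrica in G}.

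First I would record the effect of the landslide. Writing $\beta_\theta=\cos(\theta/2)\,id+\sin(\theta/2)\,Jb$ and using that $(Jb)^2=-id$ (because $\det b=1$), the tensors $\beta_\theta$ form a one-parameter group commuting with $Jb$, and the landslide by $-t$ carries the regular pair $(h,b)$ to $(h',b')=(h(\beta_{-t}\cdot,\beta_{-t}\cdot),\ \beta_{-t}^{-1}b\,\beta_{-t})$, with associated data $J'=\beta_{-t}^{-1}J\beta_{-t}$ and $J'b'=\beta_{-t}^{-1}(Jb)\beta_{-t}$. Feeding the grafting data $(\cosh^2(s/2)h',\tanh(s/2)b')$ into Proposition~\ref{Prop metrica in G} and absorbing the conformal factor into the operator, the pull-back metric of $\overline\sigma_s$ becomes $h'(\gamma'\cdot,\gamma'\cdot)$ with $\gamma'=\beta_{-t}^{-1}\beta_{is}\beta_{-t}$; since $\beta_{-t}$ and $\beta_{is}$ commute this collapses to $h(\beta_{\bullet}\cdot,\beta_\bullet\cdot)$ for a single complex parameter, which after matching normalizations is exactly $g_z$. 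Because $\overline\sigma_s$ is an isometric immersion into $\GG\cong\mathds X_2$, Theorem~\ref{teoremone stessa dimensione} then yields that $g_z$ has constant curvature $-1$; and because $\sigma_{s,+\infty}$ is by construction the developing map of $P_{h,b}(z)$ and is one of the two factor maps of $\overline\sigma_s$ (Proposition~\ref{surfaces in G first part}), the monodromy of $g_z$ equals the equivariance representation of $\sigma_{s,+\infty}$, namely $Hol(P_{h,b}(z))$.

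The delicate point will be this identification, and specifically reconciling the half-angle parameters of the landslide and of the grafting with the single full complex variable $z=t+is$ appearing in $g_z$: the linear algebra is routine once one knows that the landslide conjugates $b$ and $J$ by $\beta_{-t}$ and that the grafting rescaling $(\cosh^2,\tanh)$ assembles into the single commuting factor $\beta_{is}$, but getting the normalization and the sign of the shape operator right (which only swaps the roles of $\sigma_{s,+\infty}$ and $\sigma_{s,-\infty}$, hence does not affect the monodromy) requires care. As an independent check of the constant-curvature claim one may observe that $\theta\mapsto h(\beta_\theta\cdot,\beta_\theta\cdot)$ is a holomorphic family whose curvature is a holomorphic function of $\theta$, identically $-1$ on the real axis where the metric is hyperbolic, hence $\equiv-1$ everywhere.

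Finally, for the holomorphicity statement: by Theorem~\ref{teoremone stessa dimensione} the constant-curvature metric $g_z$ corresponds to a totally geodesic immersion of $\widetilde S$ into $\mathds X_3\cong\SL$, i.e. to the immersion datum $(g_z,0)$ with vanishing shape operator. The entries of $g_z$ are holomorphic in $z$ (through $\cos z$ and $\sin z$) and the shape operator is identically zero, so $\{(g_z,0)\}_{z\in H}$ is a holomorphic family of immersion data; Theorem~\ref{Teo dipendenza olomorfa} then shows that $z\mapsto mon(g_z)$ is holomorphic into $\mathcal X(\pi_1(S),SO(4,\C))$, and since it takes values in the holomorphically embedded $\mathcal X(\pi_1(S),\PSL)=\mathcal X(S)$ it is holomorphic there as well. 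As $Hol\circ P_{h,b}=\big(z\mapsto mon(g_z)\big)$ is then holomorphic and $Hol$ is a local biholomorphism by Theorem~\ref{Hol è diffeo}, writing $P_{h,b}=Hol^{-1}\circ(z\mapsto mon(g_z))$ locally shows that $P_{h,b}$ is holomorphic.
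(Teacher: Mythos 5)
Your proposal follows essentially the same route as the paper's own proof: compute the pull-back metric of the Gauss-map immersion into $\GG$ of the grafting data via Proposition \ref{Prop metrica in G}, use the landslide conjugation identities $\bar b_{-t}=\beta_t b\beta_{-t}$, $\bar J_{-t}=\beta_t J\beta_{-t}$ together with the one-parameter group property of $\beta_\theta$ (coming from $(Jb)^2=-id$) to collapse everything to $g_z$, obtain constant curvature $-1$ from the isometric immersion into $\GG\cong \mathds X_2$, and then deduce holomorphicity formally from Theorem \ref{Teo dipendenza olomorfa} and Theorem \ref{Hol è diffeo}. The half-angle and shape-operator sign normalizations you flag as delicate are indeed the only subtle points, and the paper itself handles them more loosely than you do, so your plan is sound.
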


\begin{proof}[Proof of Theorem \ref{Teo alternativo landslide}]
Let $z=t+is$ and recall $P_{h,b}(z)= Sgr\bigg( L\Big([h], [h(b\cdot, b\cdot)], -t\Big), s \bigg)$.

Defining $\bar h_x:= h\Big(\beta_x\ \cdot, \beta_x \cdot \Big)$ for all $x\in \R$, one has
  \[L\Big([h], [h(b\cdot, b\cdot)], -t\Big)= \bigg([\bar h_{-t}], [\bar h_{-t+\pi}] \bigg). \] 
It is easy to convince oneself that  
\[\bar h_{-t+\pi} = \bar h_{-t} (\beta_{t}b \beta_{-t} \cdot, \beta_{t}b \beta_{-t} \cdot)=: \bar h_{-t}(\bar b_{-t}, \bar b_{-t}) \] 
and that the complex structure induced by $\bar h_{-t}$ is 
\[
\bar J_{-t}= \beta_{t}J\beta_{-t}.
\]
In order to compute the monodromy of $Sgr([\bar h_{-t}], [\bar h_{-t+\pi}], s)$, consider the immersion into $\Hy^3$ with immersion data $(cosh^2(s)\bar h_{-t}, \tanh(s)\bar b_{-t})$ and apply Proposition \ref{Prop metrica in G} to conclude that it has the same monodromy as the metric
\begin{align*}
	&\cosh^2(s) \bar h_{-t}\bigg( (id +i \tanh(s)\bar J_{-t} \bar b_{-t})\cdot, (id +i \tanh(s)\bar J_{-t} \bar b_{-t})\cdot  \bigg)=\\
	=&\bar h_{-t}\bigg( (\cosh(s) id +i \sinh(s) \beta_{t} J b \beta_{-t})\cdot, (\cosh(s) id +i \sinh(s) \beta_{t} J b \beta_{-t})\cdot  \bigg)= \\
	=& h \bigg( \big( (\cos(is) id - \sin(i s) J b \big)\circ\beta_{-t}\big )\cdot, \big( (\cos(is) id -i \sinh(s) J b \big)\circ\beta_{-t}\big )\cdot \bigg)= \\
	=&h \bigg( (\cos(is) id - \sin(i s) J b )(\cos(t) id - \sin(t) J b )\cdot, (\cos(is) id - \sin(i s) J b )(\cos(t) id - \sin(t) J b )\cdot \bigg)=\\
	=&h \bigg( (\cos(z) -\sin(z)Jb)\cdot, (\cos(z) -\sin(z)Jb)\cdot \bigg)=g_z
\end{align*}
where in the last passage we used - as one can check explicitly - that $JbJb=-id$.

By construction, $(g_z,0)$ is an immersion data for an immersion into $\GG$ for all $z$.
 
Clearly, for any $X, Y\in \Gamma(TS)$, \[g_z(X,Y)= \cos^2(z)h(X,X)- \sin^2(z) h(bX,bY)- \sin z \cos z ( h(X, JbY)+ h(JbX, Y ))\] is holomorphic in $z$. By Theorem \ref{Teo dipendenza olomorfa}, the monodromy of $g_z$, hence the holonomy of $P_{h,b}$, is holomorphic in $z$; by Theorem \ref{Hol è diffeo}, the projective structure $P_{h,b}(z)$ depends on $z$ holomorphicly.
\end{proof}

	\subsection{Some Lie Theory lemmas}
	
	We start the proof of Theorem \ref{Teo dipendenza olomorfa}.
	
	A great part of the proof of this result is a matter of integrating a distribution
	on a manifold. We start from a technical result which extends Lemma \ref{Lemma chiave unicità}.

	\begin{Proposition}
		\label{Teorema esistenza foliazione}
		Let $M$ and $\Lambda$ be two simply connected manifolds, $G$ a Lie group with Lie algebra $\Lieg$. 
		
		Consider a smooth family of forms $\{\omega_\lambda\}_{\lambda\in \Lambda}\subset \Omega^1(M, \Lieg)$, namely a smooth map $\Lambda\to \Omega^1 (M, \Lieg)$.
		
		The following are equivalent:
		\begin{itemize}
			\item for all $\lambda\in \Lambda$	\begin{equation}
			\label{eq MC parametrica}
			d\omega_\lambda + [\omega_\lambda, \omega_\lambda]=0;
			\end{equation}
			\item there exists a smooth map $\Phi \colon \Lambda \times M\to G$ such that, for all $\lambda\in \Lambda$, \begin{equation} 
			\label{eq pull-back MC}
			(\Phi(\lambda, \cdot))^* \omega_G= \omega_\lambda,
			\end{equation}
			where $\omega_G$ is the Maurer-Cartan form of $G$.
		\end{itemize}
		Moreover, both $\Phi$ and $\Phi'$ satisfy equation $\eqref{eq pull-back MC}$ if and only if
		\[
		\Phi' (\lambda, x)= \psi(\lambda) \cdot \Phi(\lambda,x)
		\]
		for some smooth $\psi\colon \Lambda\to G$.
		
		In other words, for every fixed $x_0\in M$, smooth $\psi_0\colon \Lambda \to G$ and for every smooth collection of $\Lieg$-valued 1-forms $\{\omega_\lambda\}_{\lambda\in \Lambda}$, there exists a unique $\Phi\colon \Lambda\times M\to G$ such that 
		\[
		\begin{cases}
		&\omega_\lambda= \Phi(\lambda, \cdot)^* \omega_G \\
		&\Phi(\lambda, x_0)=\psi_0(\lambda)
		\end{cases}
		\]
		
		For $\Lambda=\{pt\}$ one has Lemma \ref{Lemma chiave unicità}.
	\end{Proposition}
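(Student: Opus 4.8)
The plan is to deduce the statement from the non-parametric Lemma \ref{Lemma chiave unicità} applied on the product $\Lambda\times M$, which is simply connected because both factors are. The reverse implication is immediate: if a $\Phi$ satisfying \eqref{eq pull-back MC} exists, then each $\omega_\lambda=\Phi(\lambda,\cdot)^*\omega_G$ is a pullback of the Maurer--Cartan form and hence satisfies \eqref{eq MC parametrica}. For the substantial direction, the only thing I need is a \emph{single} $\Lieg$-valued $1$-form $\eta$ on $\Lambda\times M$ satisfying the global equation $d\eta+[\eta,\eta]=0$ and restricting to $\omega_\lambda$ on each slice $\{\lambda\}\times M$. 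Indeed, once such an $\eta$ is produced, Lemma \ref{Lemma chiave unicità} yields a smooth $\Phi\colon\Lambda\times M\to G$ with $\Phi^*\omega_G=\eta$, unique up to left translation; restricting to a slice gives $\Phi(\lambda,\cdot)^*\omega_G=\omega_\lambda$, which is the first bullet, while the joint smoothness of $\Phi$ is then automatic. Thus everything reduces to the construction of $\eta$.

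To build $\eta$ I would write $\eta=\omega+\alpha$, where $\omega$ is the $1$-form of bidegree $(1,0)$ that restricts to $\omega_\lambda$ on $TM$ and annihilates $T\Lambda$ (smooth precisely because the family is smooth), and $\alpha$ is an unknown $1$-form of bidegree $(0,1)$ encoding the $\Lambda$-directional data. Decomposing the curvature $F(\eta)=d\eta+[\eta,\eta]$ by the $(M,\Lambda)$-bidegree, its $(2,0)$-component is exactly $d_M\omega+[\omega,\omega]$, which vanishes slice-by-slice by hypothesis~(1); its $(1,1)$-component is the first-order linear equation $d_M\alpha+2[\omega,\alpha]=-d_\Lambda\omega$ for $\alpha$ in the $M$-directions; and its $(0,2)$-component is $d_\Lambda\alpha+[\alpha,\alpha]$. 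I first solve the $(1,1)$-equation: for each fixed tangent direction of $\Lambda$ it is a linear transport equation along $M$ whose integrability is supplied by the structure equations together with the already-established vanishing of the $(2,0)$-component, so prescribing $\alpha$ along $\Lambda\times\{x_0\}$ to equal $\psi_0^*\omega_G$ determines $\alpha$ uniquely and smoothly on the simply connected $M$.

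The main obstacle is then to verify that this choice of $\alpha$ also annihilates the $(0,2)$-component, so that $\eta$ is genuinely flat. Here I would invoke the Bianchi identity $dF(\eta)+2[\eta,F(\eta)]=0$: taking its $(1,2)$-component and using that the $(2,0)$- and $(1,1)$-parts of $F(\eta)$ already vanish, it collapses to the homogeneous linear transport equation $d_M F^{(0,2)}+2[\omega,F^{(0,2)}]=0$ for the unknown $(0,2)$-component. Hence $F^{(0,2)}$ is determined by its restriction to $\Lambda\times\{x_0\}$; but there $\alpha$ coincides with $\psi_0^*\omega_G$, so $F^{(0,2)}\big|_{\Lambda\times\{x_0\}}=\psi_0^*\bigl(d\omega_G+[\omega_G,\omega_G]\bigr)=0$. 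By uniqueness of solutions of the transport equation on the simply connected $M$ we conclude $F^{(0,2)}\equiv 0$, which completes the construction of the flat $\eta$. (Equivalently, one may obtain $\eta=\Phi^*\omega_G$ directly from the slicewise solutions $\Phi_\lambda$ of Lemma \ref{Lemma chiave unicità} and prove joint smoothness through smooth dependence of solutions of the associated linear system on the parameter $\lambda$ and on the initial value $\psi_0(\lambda)$; this is the same content as the Bianchi argument above.)

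It remains to read off normalization and uniqueness. With $\eta$ as constructed, $\eta=\psi_0^*\omega_G$ along $\Lambda\times\{x_0\}$, so the left-translation freedom in Lemma \ref{Lemma chiave unicità} can be fixed to arrange $\Phi(\lambda,x_0)=\psi_0(\lambda)$. If $\Phi'$ is any other map satisfying \eqref{eq pull-back MC}, then on each slice the uniqueness clause of Lemma \ref{Lemma chiave unicità} forces $\Phi'(\lambda,x)=\psi(\lambda)\,\Phi(\lambda,x)$ with $\psi(\lambda)=\Phi'(\lambda,x_0)\,\Phi(\lambda,x_0)^{-1}$; this $\psi$ is smooth because $\Phi$ and $\Phi'$ are, giving the asserted description of the freedom, and imposing in addition $\Phi'(\lambda,x_0)=\psi_0(\lambda)$ forces $\psi\equiv e$, hence the uniqueness in the final clause. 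Specializing to $\Lambda=\{pt\}$ recovers Lemma \ref{Lemma chiave unicità}.
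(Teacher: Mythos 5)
Your argument is correct, but it takes a genuinely different route from the paper's, and the comparison is instructive. The paper never invokes Lemma \ref{Lemma chiave unicità}: it works upstairs on $\Lambda\times M\times G$, shows via Frobenius that the slicewise distribution $D^0=\{(0,\dot x,\dot g)\ :\ \omega_\lambda(\dot x)=\omega_G(\dot g)\}$ is integrable, argues that leaves are graphs over slices, and reads $\Phi$ off a leaf, so that the Lemma is \emph{recovered} as the case $\Lambda=\{pt\}$. You work downstairs on $\Lambda\times M$: you extend the family to a single flat form $\eta=\omega+\alpha$, solving the $(1,1)$-component as a linear Pfaffian system and killing the $(0,2)$-component by Bianchi plus a homogeneous transport equation with zero initial data along $\Lambda\times\{x_0\}$, and then apply the non-parametric Lemma once on the simply connected product. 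Two trade-offs follow. First, your proof consumes Lemma \ref{Lemma chiave unicità} as an external input (legitimate, since the paper justifies it independently via the Cartan-connection sketch of Section 4), but it therefore cannot double as a proof of that Lemma; your closing sentence that specializing to $\Lambda=\{pt\}$ ``recovers'' the Lemma is circular in your setup and should be dropped. Second, your route actually sidesteps the weakest step of the paper's proof: Step 2 there asserts that the enlarged distribution $D=T\Lambda\oplus D^0$ is integrable because brackets split componentwise, but sections of $D^0$ depend on $\lambda$, so $[\partial_\lambda,Y]$ for $Y\in\Gamma(D^0)$ need not lie in $D$; concretely, for $G=\R$, $M=\Lambda=\R$, $\omega_\lambda=\lambda\,dx$ one gets $D=\ker(dg-\lambda\,dx)$, which is not integrable, and a graph tangent to $D$ would be forced to be locally constant in $\lambda$. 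What the paper's construction really needs (and what your flat-extension argument supplies in a different guise) is only the integrability of $D^0$ together with smooth dependence of its leaves on $(\lambda,g)$, which is exactly the joint smoothness your single application of the Lemma yields for free. Finally, one step of yours deserves to be spelled out rather than asserted: the Frobenius integrability of the overdetermined system $d_M\alpha(\dot\lambda)+c\,[\omega,\alpha(\dot\lambda)]=-\partial_{\dot\lambda}\omega$ uses not only the slicewise Maurer--Cartan equation (your vanishing $(2,0)$-component) but also its derivative in the $\Lambda$-directions, which is available because \eqref{eq MC parametrica} holds identically in $\lambda$; as written, ``the structure equations'' leaves this point implicit.
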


	\begin{proof}
		It is clear by the differential equation of the Maurer-Cartan form that the second statement implies the first one. We prove the opposite implication. 
		
		Endow the trivial bundles $\pi_{\Lambda\times M}\colon \Lambda\times M\times G\to \Lambda \times M$ and $\pi_M\colon  M\times G\to M$ with the natural left action of $G$ given by left translations on the last component.
		
		On the tangent bundle of $\Lambda\times M\times G$, let $D=\{D_{(\lambda,x,g)}\}_{(\lambda,x,g)}$ be the distribution
		\[
		D_{(\lambda, x,g)}=\{(\dot \lambda,\dot x, \dot g)\in T_{(\lambda, x,g)} (\Lambda \times M\times G) \ |\ \omega_\lambda(\dot x)= \omega_G(\dot g)  \}= T_{(\lambda,x,g)} \Lambda \oplus D^0_{(\lambda, x, g)} .
		\]
		where
		\[
		D^0_{(\lambda, x, g)} =\{(0,\dot x, \dot g)\in T_{(\lambda, x,g)} \Lambda \times M\times G \ |\ \omega_\lambda(\dot x)= \omega_G(\dot g)  \}.
		\]
		Both $D$ and $D^0$ are invariant under the left action of $G$ because the Maurer-Cartan form on $G$ is left invariant.
		
		\begin{itemize}
			\item [\emph{Step 1.}]
			The distribution $D^0$ is integrable. 
			
			For all $\lambda\in \Lambda$, consider on $M\times G$ the $\Lieg$-valued form
			\[
			\widetilde \omega_\lambda = \pi_G^* \omega_G - \pi_M^* \omega_\lambda \in  \Omega^1(M\times G, \Lieg).
			\]

			If $\iota_\lambda\colon M\times G\to \Lambda \times M\times G$ is the inclusion $(x,g)\mapsto (\lambda, x, g)$, then \[
			D^0= \bigcup_{\lambda\in \Lambda} (\iota_\lambda)_*\big( Ker (\widetilde\omega_\lambda) \big).
			\]
			
			For all $X, Y\in \Gamma(Ker({\widetilde\omega_\lambda}))$, one has
			\[
			d \widetilde \omega_\lambda(X, Y)= X(\widetilde \omega_\lambda (Y))- Y(\widetilde \omega_\lambda (X))- \widetilde \omega_\lambda ([X,Y])=-\widetilde \omega_\lambda ([X,Y]),
			\]therefore
			\begin{align*}
			d\widetilde \omega_\lambda (X,Y)&= d \omega_G ({\pi_G}_* X,{\pi_G}_* Y)- d\omega_\lambda ({\pi_M}_* X, {\pi_M}_* Y)=\\
			&= -[\omega_G({\pi_G}_* X),\omega_G({\pi_G}_* Y)]+[\omega_\lambda({\pi_M}_* X),\omega_\lambda({\pi_M}_* Y)]=\\
			&= -[\omega_G({\pi_G}_* X),\omega_G({\pi_G}_* Y)]+[\omega_G({\pi_G}_* X),\omega_G({\pi_G}_* Y)]=0
			\end{align*}
			so $[X,Y]\in Ker (\widetilde \omega_\lambda)$: by Frobenius Theorem, the distribution $Ker(\widetilde\omega_\lambda)$ is integrable, call  $\mathcal F_\lambda$ the integral foliation.
			As a result, $D^0$ is integrable with integral foliation
			\[
			\bigcup_{\lambda\in \Lambda} \iota_\lambda (\mathcal F_\lambda).
			\]

			\item[Step 2.] The distribution $D$ is integrable. 
			
			Recall that $D=T\Lambda \oplus D^0$ and that $D^0$ is integrable. Then, clearly, for all $X_1, Y_1\in \Gamma(T\Lambda)$ and $X_2,Y_2\in \Gamma(D^0)$, 
			\[
			[(X_1,X_2), (Y_1,Y_2)]=([X_1,Y_1], [X_2,Y_2]) \in \Gamma(T\Lambda) \oplus \Gamma(D^0)=\Gamma(D)
			\]
			and the integrability of $D$ follows by Frobenius Theorem. Denote with $\mathcal F$ the integral foliation. 
			
			\item[Step 3] Each leaf of $\mathcal F$ projects diffeomorphically onto $\Lambda\times M$.
			
			Since the distribution $D$ on $\Lambda\times M\times G$ is invariant by the left action of $G$, $G$ has a well-posed left action on the foliation, namely, given a maximal leaf $\Sigma$ passing by the point $(\lambda_0, x_0,g_0)$, $L_g(\Sigma)=g \cdot \Sigma$ is the maximal leaf passing by the point $(\lambda_0, x_0, g g_0)$.

			Since $\omega_G(\dot g)=0$ if and only if $\dot g=0$, the fibers of $\pi_{\Lambda\times M}$ are transverse to the distribution $D$, hence transverse to any maximal leaf of $\mathcal F$. 
			As a result, for each leaf $\Sigma\in \mathcal F$, the projection map $(\pi_{\Lambda\times M})_{|\Sigma}\colon \Sigma \to \Lambda \times M$ is a local diffeomorphism. 
			
			We show that it is also a proper map. Assume $\{(\lambda_n, x_n, g_n)\}_n\subset \Sigma$ is a sequence such that $(\lambda_n, x_n)\in \Lambda\times M$ converges to some $(x,\lambda)$ in $\Lambda \times M$, then, for any choice of $\bar g\in G$, the maximal leaf $\hat\Sigma$ passing by $(\lambda, x, \bar g)$ projects via a local diffeomorphism to $\Lambda\times M$: so, definetely for $n$, there exists $\hat g$ such that $(\lambda_n, x_n, \hat g g_n)\in \Sigma'$, therefore $\Sigma= \hat g^{-1} \hat \Sigma$ and the sequence $(\lambda_n, x_n, g_n)\in \Sigma$ converges in $\Sigma$ to $(\lambda, x, \hat g ^{-1} \bar g)$.
			
			Since $(\pi_{\Lambda \times M})_{|\Sigma}\colon \Sigma \to \Lambda \times M$ is a local diffeomorphism and a proper map, it is a covering map, hence a diffeomorphism because $M$ is simply connected. 
		\end{itemize}
		
		For any fixed leaf $\Sigma\in \mathcal F$, one can finally conclude that the map
		\[
		\Phi= \pi_G \circ (\pi_{\Lambda\times M})_{|\Sigma}^{-1}\colon \Lambda\times M \to G
		\]
		is smooth and such that $\Sigma=Graph(\Phi)$, therefore  \[\mathcal F=
		\{g\cdot \Sigma\}_{g\in G}=\{g\cdot Graph(\Phi) \}_{g\in G}= \{Graph(L_g \circ \Phi) \}_{g\in G}. \]
		As a result, for all $(\dot \lambda, \dot x)\in T_{(\lambda, x)}\Lambda\times M$, one has that 
		\[
		\omega_\lambda (\dot x)= \omega_G( \Phi_* (\dot \lambda,\dot x) ),
		\]
		in particular, \[\omega_\lambda (\dot x)= \omega_G (\Phi_* (0, \dot x) )= \omega_G(\Phi(\lambda, \cdot)_* (\dot x) ).\]
		\vspace{5mm}
		
		We prove uniqueness. 
		
		Assume $\Phi_1, \Phi_2\colon \Lambda\times M\to G$ both satisfy 
		\[
		(\Phi_1(\lambda, \cdot))^*\omega_G= (\Phi_2(\lambda, \cdot))^*\omega_\lambda
		\]
		for all $\lambda\in \Lambda$, i.e.
		\[
		\omega_\lambda(\dot x)= \omega_G( (\Phi_1 (\lambda, \cdot))_*\dot x)= \omega_G( (\Phi_2 (\lambda, \cdot))_*\dot x).
		\]
		Then, the graphs of $\Phi_1(\lambda, \cdot)$ and $\Phi_2 (\lambda, \cdot)$ are both integral manifolds for the distribution $D^0$. As a result, since $D^0$ is left invariant, for each $\lambda\in \Lambda$ there exists $\psi(\lambda)\in G$ such that
		\[
		\Phi_2(\lambda,x)=\psi(\lambda)\cdot \Phi_1(\lambda,x)
		\]
		for all $x\in M$. A posteriori, $\psi$ is smooth.
	\end{proof}

	With the same notations as in the previous theorem, let $\{\omega_\lambda \}_{\lambda\in \Lambda}\subset \Omega^1(M, \Lieg)$ be a smooth family of $\Lieg$-valued 1-forms, fix $x_0\in M$ and let $e\in G$ be the unity. Let $\Phi\colon \Lambda\times M\to G$ such that
	\begin{equation}
	\label{Phi integra omega base}
		\begin{cases}
	&\omega_\lambda= \Phi(\lambda, \cdot )^* \omega_G \\
	&\Phi(\lambda, x_0)=e
	\end{cases}.
	\end{equation}
	for all $\lambda$.
	
	Denote $\Phi_\lambda=\Phi(\lambda, \cdot)\colon M\to G$.
	
	The following technical Lemma will be useful to compute the derivatives of $\Phi$ with respect to the parameter $\lambda$.
	
	\begin{Lemma}
		\label {Lemma CP}
		With the above notation, assume (\ref{Phi integra omega base}) holds. Let $\lambda_0\in \Lambda$ and $\dot \lambda\in T_{\lambda_0} \Lambda$.
		
		The Cauchy problem 
		\begin{equation}
		\label{Problema di Cauchy chiave}
		\begin{cases}
		&f_*(\cdot) = Ad(\Phi_{ \lambda_0} )\circ  \partial_{\dot \lambda} \big(\omega(\cdot)\big) \colon TM\to \Lieg \\
		&f(x_0)= 0
		\end{cases}
		\end{equation}
		with unknown quantity $f \colon M \to \Lieg$ has  
		\[f(x)= \partial_ {\dot \lambda} \big(\Phi(\lambda,x) \cdot (\Phi(\lambda_0,x))^{-1} \big)\]
		as unique solution.
	\end{Lemma}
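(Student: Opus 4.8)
The plan is to verify directly that the proposed $f$ solves the Cauchy problem \eqref{Problema di Cauchy chiave} and then to deduce that it is the unique solution. I would fix a smooth curve $\lambda(t)$ in $\Lambda$ with $\lambda(0)=\lambda_0$ and $\dot\lambda(0)=\dot\lambda$, and abbreviate $\Psi(t,x):=\Phi(\lambda(t),x)$, so that $f(x)=\partial_t|_{t=0}\big(\Psi(t,x)\,\Psi(0,x)^{-1}\big)$. The first two things to record are cheap: since $\Psi(0,x)\,\Psi(0,x)^{-1}=e$, this $t$-derivative lands in $T_eG=\Lieg$, so $f$ is genuinely $\Lieg$-valued; and since $\Phi(\lambda,x_0)=e$ for every $\lambda$, the curve $t\mapsto \Psi(t,x_0)\Psi(0,x_0)^{-1}$ is constantly $e$, whence $f(x_0)=0$ and the initial condition holds.

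The substance is the computation of the spatial differential $df_x(v)$ for $v\in T_xM$. I would carry this out in matrix notation; the only identities used—the product rule and $\partial_v(P^{-1})=-P^{-1}(\partial_vP)P^{-1}$—have coordinate-free analogues in any Lie group, so no generality is lost. Writing $P:=\Psi(0,x)$ and $\dot P_t:=\partial_t|_{0}\Psi$, the fact that $\Psi(0,x)^{-1}$ is constant in $t$ gives $f=\dot P_t\,P^{-1}$, and differentiating spatially yields
\[
df_x(v)=(\partial_v\partial_t\Psi)\,P^{-1}-\dot P_t\,P^{-1}(\partial_vP)\,P^{-1}.
\]
I would then feed in the hypothesis $\Phi_\lambda^*\omega_G=\omega_\lambda$, which reads $\Psi(t,x)^{-1}\,\partial_v\Psi(t,x)=\omega_{\lambda(t)}(x)(v)=:\eta(t)$, i.e.\ $\partial_v\Psi=\Psi\,\eta$. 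Differentiating this in $t$ (the mixed partials commute, as $\Phi$ is smooth in both arguments) and evaluating at $t=0$ gives $\partial_v\partial_t\Psi=\dot P_t\,\eta(0)+P\,\dot\eta(0)$, with $\eta(0)=\omega_{\lambda_0}(x)(v)=P^{-1}(\partial_vP)$ and $\dot\eta(0)=\partial_{\dot\lambda}\big(\omega(x)\big)(v)$. Substituting, the two terms containing $\dot P_t$ cancel exactly, leaving
\[
df_x(v)=P\,\dot\eta(0)\,P^{-1}=Ad\big(\Phi_{\lambda_0}(x)\big)\Big(\partial_{\dot\lambda}\big(\omega(x)\big)(v)\Big),
\]
which is precisely the prescribed differential equation \eqref{Problema di Cauchy chiave}.

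For uniqueness I would argue that the problem fixes the full differential $df$ together with $f(x_0)=0$: if $f_1,f_2$ both solve it, then $f_1-f_2$ is a $\Lieg$-valued function with vanishing differential, hence locally constant, and it vanishes at $x_0$; connectedness of $M$ then forces $f_1\equiv f_2$. The one genuinely delicate point is the bookkeeping in the noncommutative computation—keeping the left/right placements of $P$ and $P^{-1}$ straight so that the cancellation occurs and the $Ad$ factor emerges on the correct side—together with the justification of the interchange $\partial_v\partial_t\Psi=\partial_t\partial_v\Psi$, which is exactly where smoothness of $\Phi$ jointly in $\lambda$ and $x$ is invoked.
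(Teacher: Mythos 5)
Your proof is correct, and its skeleton coincides with the paper's: both verify that the $\lambda$-derivative of $\xi_\lambda(x)=\Phi(\lambda,x)\,(\Phi(\lambda_0,x))^{-1}$ (your $\Psi(t,x)\Psi(0,x)^{-1}$) solves the Cauchy problem (\ref{Problema di Cauchy chiave}), and both obtain uniqueness from the fact that the problem prescribes the full differential, so the difference of two solutions is constant and vanishes at $x_0$. Where you genuinely diverge is in how the differential equation is verified. The paper first establishes the finite (non-infinitesimal) conjugation identity $\omega_\lambda-\omega_{\lambda_0}=Ad\big(\Phi_{\lambda_0}(x)^{-1}\big)\big((\xi_\lambda)^*\omega_G\big)$, its equation (\ref{eq 5.2}), and then differentiates it in $\lambda$ by extending $\xi$ to a family of diffeomorphisms $\Xi$ of $M\times G$ and computing the Lie derivative of $\widetilde\omega=\pi_G^*\omega_G$ along $\partial_{\dot\lambda}\Xi$ via the Cartan formula. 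You instead differentiate the defining formula for $f$ directly: you write the pullback hypothesis as $\partial_v\Psi=\Psi\,\eta$, exchange the mixed partials, and let the two terms containing $\dot P_t$ cancel. The two computations encode the same fact --- your cancellation is precisely the linearization of the paper's identity (\ref{eq 5.2}) at $\lambda=\lambda_0$, where $\xi_{\lambda_0}\equiv e$ --- but your route is more elementary and self-contained, avoiding the auxiliary space $M\times G$ and the Lie-derivative formalism entirely. What the paper's formulation buys is that it is manifestly coordinate-free, valid verbatim for an abstract Lie group; your version needs the remark (which you do make) that the product rule and $\partial_v(P^{-1})=-P^{-1}(\partial_vP)P^{-1}$ have invariant analogues in terms of $(L_g)_*$ and $(R_g)_*$, and in the paper's application $G=SO(n+2,\C)$ is in any case a matrix group, so this caveat is harmless.
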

		\begin{proof}Assume $f_1$ and $f_2$ are both solutions to $(\ref{Problema di Cauchy chiave})$, then $(f_1-f_2)_*=0$, hence $f_1-f_2$ is a constant, and $f_1(x_0)=f_2(x_0)$, hence $f_1\equiv f_2$. This proves uniqueness.

		Define $ \xi:\Lambda\times M\to G $ as
		\begin{equation}
		\label{def Sigma}\xi(\lambda, x)=\xi_\lambda(x)=\Phi(\lambda, x)(\Phi(\lambda_0, x))^{-1}.
		\end{equation}
		
		In particular,  $\xi(\lambda_0,\cdot)= \xi(\cdot, x_0)\equiv e$.
		
		We need to prove that $\partial_{\dot \lambda} \xi\colon M\to \Lieg$ satisfies (\ref{Problema di Cauchy chiave}). Clearly, $\partial_{\dot \lambda} \xi (x_0)=0$.

		Differentiating
			\[
		\Phi_\lambda(x)= \xi_\lambda(x) \cdot \Phi_{\lambda_0}(x).
		\]
		 one gets
		\[
		(\Phi_\lambda)_{*x}= (L_{\xi(\lambda, x)})_{*} \circ (\Phi_{\lambda_0})_{*x} + (R_{\Phi_{\lambda_0}(x)})_*\circ (\xi_\lambda)_{*x}.
		\]
		Recalling that $\omega_G$ is left-invariant, the $\omega_\lambda$'s satisfy
		\begin{equation}
		\label{eq 5.2}
		\begin{split}
		\omega_\lambda-\omega_{\lambda_0}= \Phi_\lambda^* (\omega_G) -\omega_{\lambda_0}&=  \Phi_{\lambda_0}^*(\omega_G) + (\xi_\lambda)^* \Big( Ad(\Phi_{\lambda_0}(x) ^{-1}) (\omega_G) \Big)-\omega_{\lambda_0}=\\
		&= \Phi_{\lambda_0}^*(\omega_G) +  Ad(\Phi_{\lambda_0}(x) ^{-1})\Big( (\xi_\lambda)^*  (\omega_G) \Big)-\omega_{\lambda_0}=\\
		&=Ad(\Phi_{\lambda_0}(x) ^{-1})\Big( (\xi_\lambda)^*  (\omega_G) \Big).
		\end{split}
		\end{equation}
		We will deduce the first equation of $(\ref{Problema di Cauchy chiave})$ with $f=\partial_{\dot \lambda} \xi$ by differentiating equation $(\ref{eq 5.2})$.
		
		Extend $\xi$ to
		\begin{align*}\Xi\colon \Lambda\times M\times G&\to M\times G\\
		(\lambda, x, g)&\mapsto (x,\  g\cdot \xi(\lambda, x) ),
		\end{align*}
		so that $\Xi(\lambda, \cdot, \cdot )$ can be seen as a family of diffeomorphisms of $M\times G$ depending on $\lambda$. 
		
		Clearly $(\lambda, x, e)= (x, \xi_\lambda (x))$ and
		\[
		\partial_{\dot \lambda} \Xi\in  \Gamma (T(M\times G))
		\]
		satifies
		\[
		(\partial_{\dot \lambda} \Xi)_{(x,g)}=  (0, (L_g)_*(\partial_{\dot \lambda} \xi)_x  ), 
		\]
		hence, defining $\widetilde \omega:= \pi_G^* (\omega_G)$, 
		\[
		\widetilde \omega( \partial_{\dot \lambda} \Xi)=  \partial_{ \dot \lambda} \xi\,.
		\]
		
		Differentiating equation (\ref{eq 5.2}) and evaluating in the direction ${\dot \lambda}$ one gets
		\begin{equation}
		\label{eq 5.3}
		\partial_{\dot\lambda} \omega = Ad\Big((\Phi_{\lambda_0} (x) )^{-1}\Big) \big(j_e ^*(\mathcal L _{\partial_{\dot  \lambda} \Xi} \ \widetilde \omega) \big)
		\end{equation}
		
		where 
		$j_e\colon M\to M\times G$ is the immersion $j_e(x)=(x,e)$ and $\widetilde \omega:= \pi_G^* (\omega_G)$ is a $\Lieg$-valued 1-form on $M\times G$, $\pi_G$ being the projection $\pi_G\colon M\times G\to G$.
		
		Recalling the properties of the Lie derivative with respect to differentiation and contraction, for all $\dot x\in TM$,
		\begin{align*}
		j_e^* \big(\mathcal L _{\partial_{\dot  \lambda}\Xi} \widetilde \omega \big) (\dot x) &= \Big(\mathcal L_{\partial_{\dot  \lambda}\Xi}\ \widetilde \omega\Big) (\dot x, 0)= \\
		&=d \widetilde \omega (\partial_{\dot  \lambda}\Xi,  (\dot x, 0)) + d\big( \widetilde \omega (\partial_{\dot  \lambda}\Xi) \big)({( \dot x,0)})=\\
		&=d \omega_G (\partial_{\dot \lambda} \xi, 0)+ (d\ \partial_{\dot  \lambda} \xi ) (\dot x)= \\
			&=( \partial_{\dot  \lambda} \xi )_* (\dot x).
			\end{align*}
			From the last equation and $(\ref{eq 5.3})$, the thesis follows.
		\end{proof}
	
	\subsection{Proof of Theorem \ref{Teo dipendenza olomorfa}}

	We are now able to discuss the holomorphic dependence on the immersion data for immersions into $\mathds X_{n+1}$.

	\begin{proof}[Proof of Theorem \ref{Teo dipendenza olomorfa}]
	
	We prove that for all $(\lambda_0, x_0)\in \Lambda \times M$ there exists an open neighbourhood $U\times V$ and a $\sigma\colon U\times V\to \mathds X_{n+1}$ as in the statement of Theorem \ref{Teo dipendenza olomorfa} and that, for any other $\sigma'$ satisfying the thesis like $\sigma$, $\sigma'(\lambda, x)= \phi(\lambda)\circ \sigma(\lambda, x)$ with $\phi\colon U \mapsto Isom(\mathds X_n)$ holomorphic. Then, since  $M\times \Lambda$ is simply connected, the proof of the Theorem follows by the standard analytic continuation method as in the proof of Theorem \ref{Teoremone}.

    Choose the neighbourhood $U\times V$ so that there exists on $V$ a $g_{\lambda_0}$-orthonormal frame $(X_1,\dots X_n)$ and the Gram-Schmidt algorithm applies on $U\times V$ to this frame in order to have for all $\lambda\in U$ a $g_\lambda$-orthonormal frame $(e_{1;\lambda}, \dots, e_{n;\lambda})$ on $V$ in the form 
    \[
    e_{i;\lambda}(x)=\sum_j T_{i}^j(\lambda, x) X_j(x)
    \]
    
		where \[T=(T^j_i)_{i,j}\colon U\times V\to Mat(n, \C)
		\] is such that $T(\cdot, x)$ is holomorphic for all $x\in V$.
		Define $\{\theta_\lambda^i\}_i$ as the dual coframe. Both the $e_{i; \lambda}$'s and the $\theta^i_\lambda$'s are holomorphic in $\lambda$.
		
		We recall the steps in the construction of the immersions from the immersion data, as we showed in section 4, and show the holomorphic dependence is preserved at each step.
		
		\begin{itemize}
			\item From $(g_\lambda, \Psi_\lambda)$, locally construct the form $\omega_\lambda \in \Omega^1(V, \Lieonn)$  
			\[
			\omega_\lambda=
			\begin{pmatrix}
			{\scalebox{2} {$\Theta_\lambda$} }& \begin{matrix}
			-\Psi_\lambda^1 & -i\theta_\lambda^1 \\
			\dots &\dots \\
			-\Psi_\lambda^n & -i\theta_\lambda^n
			\end{matrix} \\
			\begin{matrix}
			\Psi_\lambda^1 & \dots & \Psi_\lambda^n\\
			i\theta_\lambda^1 & \dots & i\theta_\lambda^n
			\end{matrix} & 
			\begin{matrix}0\text{    } & 0\text{    } \\ 
			0\text{    } & 0\text{    }
			\end{matrix}
			\end{pmatrix}  
			\]
			where $(\theta^i_\lambda)_{i=1}^n$ is the dual of $(e_{i, \lambda})_{i=1}^n$, $\Theta_\lambda=(\theta^i_{j, \lambda})_{i,j=1}^n$ is the matrix of the Levi-Civita connection forms w.r.t. the $\theta_\lambda^i$'s and $\Psi_\lambda= \Psi_\lambda^i \otimes e_{i, \lambda}$.
			
			Let us show that the map $\lambda\mapsto\omega_\lambda$ is holomorphic in $\lambda$. One can easily check in coordinate charts that the $d\theta^i_\lambda$'s are holomorphic in $\lambda$; defining 
			\[
			d\theta^i=: \alpha^i_{j, k; \lambda} \theta^j \wedge \theta^k,
			\]
			the coefficients $\alpha^i_{j, k; \lambda}$'s are holomorphic in $\lambda$, hence after checking that 
			\[
			\theta^i_{k;\lambda} := -\alpha^i_{j,k; \lambda} \theta^j
			\]
			one concludes that $\Theta_\lambda$ is holomorphic in $\lambda$; finally, the $\Psi_\lambda^i$'s are holomorphic in $\lambda$ since $\Psi_\lambda= \Psi_\lambda^i\otimes e_{i;\lambda}$.
			
			\item Up to shrinking $U$ and $V$, assume they are simply connected. By Proposition \ref{Teorema esistenza foliazione}, there exists a smooth map $\Phi\colon U\times V\to \SOnn$ such that
			$\Phi(\cdot, x_0)\equiv e$ and $\Phi(\lambda,\cdot)^*\omega_G= \omega_\lambda$ where $\omega_G$ is the Maurer-Cartan form of $G$. 
			
			We prove that $\Phi$ is holomorphic w.r.t. $\lambda$. 
			
			Define $\xi$ as in Equation $(\ref{def Sigma})$. For all $\dot \lambda \in T_{\lambda_0}\Lambda$, using Lemma $\ref{Lemma CP}$ and the fact that the $\omega_\lambda$'s vary in a holomorphic way in $\lambda$, one has
			\[
			d(\partial_{i \dot \lambda} \xi) = Ad(\Phi_{\lambda_0})\circ \partial_{i\dot \lambda} {\omega} = Ad(\Phi_{\lambda_0})\circ \big(i\partial_{\dot \lambda} {\omega} \big)= i d(\partial_{\dot \lambda} \xi).
			\]
			
			As a result, both $\partial_{i \dot \lambda}\xi$ and $i\partial_{ \dot \lambda}\xi$ solve the Cauchy problem (\ref{Problema di Cauchy chiave}) for $i\dot \lambda$, hence they coincide. 
			
			By $(\ref{def Sigma})$, one has that for all $x\in V$ and $\lambda\in U$
			\begin{align*}
			(\partial_{i\dot \lambda}\Phi) (\lambda_0, x) &= (L_{\Phi(\lambda_0,x)})_*( (\partial_{i\dot \lambda}\xi)(\lambda_0,x)) = (L_{\Phi(\lambda_0,x)})_*( i(\partial_{\dot \lambda} \xi)(\lambda_0,x))  = \\
			&=i (\partial_{\dot \lambda}\Phi) (\lambda_0, x).
			\end{align*} 
			
			\item By the proof of Theorem \ref{Teoremone}, defining the map
			\[
			\sigma\colon 
			U\times V\to \mathds X_{n+1}
			\]
			by \[ \sigma (\lambda, x)= \Phi(\lambda,x) \cdot e, \]
			one has that $\sigma(\lambda, \cdot):V \to \mathds X_{n+1}$ is an immersion with immersion data $(g_\lambda, \Psi_\lambda)$.  Finally, $\sigma$ is holomorphic w.r.t. $\lambda$ since $\Phi$ is.
			\item By Theorem $\ref{Teoremone}$, if $\sigma'$ satisfies the statement of Theorem \ref{Teo dipendenza olomorfa} like $\sigma$, then $\sigma'(\lambda,x)= \phi(\lambda)\cdot \sigma(\lambda,x)$ for a unique function $\phi\colon U\to Isom_0(\mathds X_n)\cong SO(n+2,\C)$. 
			
			We prove that $\phi$ is holomorphic.
			
			Fix $x\in V$ and a basis $X_1,\dots, X_n\in T_x V$. Then, the functions $M, M'\colon U\to GL(n+2, \C)$ defined by
			\begin{equation*}
			\begin{split}
			M(\lambda)&= \begin{pmatrix} \sigma(\lambda, \cdot)_* (X_1) & \dots & \sigma(\lambda, \cdot)_* (X_n) & \nu^{\sigma(\lambda, \cdot)}(x) & \sigma(x) \end{pmatrix} \qquad \text{and}\\ M'(\lambda)&= \begin{pmatrix} \sigma'(\lambda, \cdot)_{*} (X_1) & \dots & \sigma'(\lambda, \cdot)_{*} (X_n) & \nu^{\sigma'(\lambda, \cdot)}(x) & \sigma'(x) \end{pmatrix} 
			\end{split}
			\end{equation*}
			are holomorphic with respect to $\lambda$: this is an elementary consequence of the assumptions on $\sigma$ and $\sigma'$ and of the fact that vector fields on $U$ and on $V$ commute with each other when seen as vector fields on $U\times V$. By definition of $\phi$ and by the chain rule, $\phi(\lambda)= M'(\lambda) (M(\lambda))^{-1}$, hence $\phi$ is holomorphic.

	\end{itemize}
		
		By the analytic continuation argument mentioned above, one gets the existence of a global equivariant $\sigma\colon M\times \Lambda\to \mathds X_{n+1}$ as in the statement.
		
		It is now simple to check that the monodromy of $\sigma(\lambda, \cdot)$ is holomorphic with respect to $\lambda$. Indeed,
		for all $\gamma\in \pi_1(M)$, the map 
		\[mon_\gamma \colon \Lambda\to SO(n+2,\C) \]
		is defined by 
		\[\sigma( \lambda, \gamma(x))= mon_\gamma (\lambda)\circ \sigma (\lambda,x):\]
		since both $\sigma$ and $\sigma \circ \gamma$ satisfy the conditions in the statement of the theorem for the data $(g,\Psi)$, by the previous step $mon_\gamma$ is holomorphic in $\lambda$.

	Similarly as in the previous step, the map $mon_\gamma (\lambda)$ is uniquely defined by the image of the point $\sigma(\lambda, x)$ and by the image of vectors of the form $\sigma(\lambda, \cdot)_{*x}(X_i)$ where $(X_1, \dots, X_n)$ is a basis for $T_x M$: this way one gets a description of $mon_\gamma(\lambda)\in SO(n+2,\C)$ 
	\end{proof}

	\section{A Gauss-Bonnet Theorem and a Uniformization Theorem for complex metrics}\label{six}
	
	\subsection{Positive complex metrics}
	
	In this section we suggest an intrinsic study of complex metrics on surfaces. To this aim, we will introduce a natural generalization of the concept of complex structure for surfaces and we will present a uniformization theorem in this setting.
	
	Given a surface $S$, the natural inclusion $TS\hookrightarrow \CTS$ factors to a bundle inclusion
\[
	\begin{tikzcd}
	\Proj_\R (TS) \arrow [rr, hook]\arrow[rd] & & \Proj_{\C} (\CTS) \arrow[ld]\\
	& S & 
	\end{tikzcd}.\]
	 
	Fiberwise, for every $x\in S$, $\Proj_\R (T_x S)$ is mapped homeomorphicly into a circle in $\Proj_\C(\C T_xS)$ whose complementary is the disjoint union of two open discs. The conjugation map on $\C TS$ descends to a bundle isomorphism on $\Proj_\C(\C T S)$ that fixes $\Proj_\R(TS)$ and that swaps the two discs fiberwise. 
	\begin{Definition}	
	\label{Def bicomplex structure}
		A \emph{bicomplex structure} on a surface $S$ is a tensor \textbf{J} $\in \Gamma(\C T^* S \otimes \CTS)$ such that
		\begin{itemize}
			\item $\textbf{J}^2 = \textbf{J} \circ \textbf{J}=-id_{\CTS}$; as a result, $\textbf{J}$ is diagonalizable with eigenvalues $\pm i$ and eigenspaces $V_i (\textbf{J}), V_{-i}(\textbf{J})$ with complex dimension $1$.
			\item for all $x\in S$, the eigenspaces $V_i (\textbf J_x)$ and $V_{-i}(\textbf J_x)$ of $\textbf J_x$ have trivial intersection with $T_x S$ and are such that the points $\Proj_\C(V_{i}(\textbf J_x))$ and $\Proj_\C(V_{-i}(\textbf J_x))$ lie in different connected components of $\Proj_\C(\C T_x S)\setminus \Proj_\C (T_xS)$.
		\end{itemize}
	\end{Definition}

	\begin{Remark}
		Clearly complex structures extend to bicomplex structures. Observe that a bicomplex structure $\textbf{J}$ is not a complex structure for $S$ in general, since it may not restrict to a section of $TS \otimes T^*S$. In other words, $\textbf J(\overline X)\ne \overline {\textbf J(X)}$ in general.
		
		Nevertheless, $\textbf{J}$ induces two complex structures $J_1, J_2$ defined by the conditions
		\[
		V_i (\textbf{J})= V_i (J_2) =  \overline{V_{-i} (J_2)} \qquad \text{and} \qquad V_{-i} (\textbf{J})=V_{-i}(J_1)= \overline{V_{i} (J_1)}
		\] 
		which totally characterize $J_1$ and $J_2$.
		
		Also observe that $J_1$ and $J_2$ induce the same orientation. Indeed, representing the set of complex structures on $S$ as $C(S)\sqcup C(\overline S)$, the map 
		\begin{align*}
		C(S)\sqcup C(\overline S)  &\to \Proj_\C (\C T_x S)\setminus \Proj_\R (T_xS) \\
		J_0 &\mapsto \Proj_\C (V_i(J_0)_x)
		\end{align*}
		induces a bijection between the connected components.
		
		As a result, every bicomplex structure induces an orientation. An orientation of $S$ fixed, denote with $BC(S)$ the set of orientation-consistent bicomplex strucures. With the notations above, we therefore have a bijection 
		\begin{equation}
		\label{BC(S) to C(S)xC(S)}
		\begin{split}
		BC(S) &\xrightarrow\sim C(S)\times C(S)\\
		 \mathbf {J} &\mapsto (J_1, J_2). 
		\end{split}
		\end{equation}
		Endow $BC(S)$ with the pull-back topology for which, in particular, it is connected.
	\end{Remark}

	\begin{Definition}
		\label{Def positive metrics}
		Let $g$ be a complex metric on $S$.
		\begin{itemize}
		\item	Denote the set of isotropic vectors of $g$ as
		\[
		ll(g):=\{v\in \C T S \ |\ g(v,v)=0 \}\setminus \{0_S\}
		\]
		and define $ll(g_x):=ll(g) \cap \C T_x S$. Notice that $ll(g_x)$ is the union of two complex lines with trivial intersection.
		\item
		A \emph{positive complex metric} on a surface $S$ is a complex metric $g$ such that the two points $\Proj_\C(ll(g_x))$ lie in different connected components of $\Proj_\C(\C T_x S)\setminus \Proj_\C (T_xS)$.
		\item Denote with $CM^+ (S)$ the space of positive complex metrics on $S$ with the usual $C^{\infty}$-topology for spaces of sections.
				\end{itemize}
	\end{Definition}

	\begin{Proposition}
		Let $g$ be a positive complex metric on a surface $S$.
		\begin{itemize}
			\item[1)] For all $x\in S$ there exist exactly two opposite endomorphisms $\textbf{J}_x, -\textbf{J}_x\in End(\C TS)$ such that 
		\begin{equation}
		\label{compatibility}
			\begin{cases}
		g_x(\textbf{J}_x\cdot,\textbf{J}_x \cdot)= g_x(\cdot, \cdot),\\
		\textbf{J}_x^2=-id_x
			\end{cases}.
			\end{equation}
		The two solutions $\textbf{J}_x, -\textbf{J}_x$ depend smoothly on $x$ and on $g_x$.
			\item[2)] For all $x_0\in S$, there exists a local neighbourhood $U=U(x_0)$ over which $\textbf{J}=\{\textbf{J}_x\}_{x\in U}$ and $-\textbf{J}=\{-\textbf{J}_x\}_{x\in U}$ define two bicomplex structures and
			\[	
			ll(g_x)=V_i(\textbf{J}_x)\cup V_{-i} (\textbf{J}_x).
			\] 
			We will say that $\textbf{J}$ and $-\textbf{J}$ are (locally) \emph{compatible} with $g$.
			\item [3)] If $\textbf{J}$ is compatible with $g$ and $\textbf{J}\equiv (J_1,J_2)$ in correspondence $(\ref{BC(S) to C(S)xC(S)})$, then locally 
			\[
			g= f dw_1 \cdot d\overline w_2
			\]
			where $f$ is a $\C$-valued smooth function and $w_1$, $w_2$ are local holomorphic coordinates for the complex structures $J_1$ and $J_2$ respectively. 
			\item[4)]
			If $\textbf{J}$ is compatible with $g$ on $U$, for any norm-1 local vector field $X$ on $U$, a local $g$-orthonormal frame on $U$ is given by $\{X,\textbf JX\}$. 
		\end{itemize}
	\end{Proposition}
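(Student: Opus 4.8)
The whole proposition reduces to fiberwise linear algebra on the $2$-dimensional complex space $\C T_x S$, organized around one fact: a non-degenerate symmetric $\C$-bilinear form on $\C T_x S$ has exactly two distinct isotropic lines, and these are precisely the two lines making up $ll(g_x)$. Indeed, in a $g_x$-orthonormal basis the equation $g_x(v,v)=0$ reads $\alpha^2+\beta^2=0$ in the projective coordinate $\alpha/\beta$, whose two roots are distinct exactly because $g_x$ is non-degenerate. The endomorphism we are after is the one acting as multiplication by $+i$ on one isotropic line and by $-i$ on the other. For part (1), I would first observe that any $T\in \mathrm{End}(\C T_x S)$ with $g_x(T\cdot,T\cdot)=g_x$ and $T^2=-id$ is a $g_x$-isometry, hence permutes the two isotropic lines $L_1,L_2$. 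A short computation in a basis $(f_1,f_2)$ with $f_j\in L_j$ shows that if $T$ swapped $L_1$ and $L_2$ then the isometry condition would force $T^2=id$, contradicting $T^2=-id$; so $T$ preserves each $L_j$, acting by scalars $\lambda_1,\lambda_2$. Since $g_x$ pairs $L_1$ with $L_2$ non-degenerately, isometry gives $\lambda_1\lambda_2=1$, while $T^2=-id$ gives $\lambda_j^2=-1$, so $\{\lambda_1,\lambda_2\}=\{i,-i\}$: this leaves exactly the two opposite endomorphisms $\pm\textbf{J}_x$, each of which visibly satisfies both equations.

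To make everything concrete and to obtain the smoothness and locality claims, I would work on a neighbourhood $U$ of $x_0$ carrying a local $g$-orthonormal frame $(X_1,X_2)$, produced by the Gram--Schmidt procedure for complex metrics recalled in Section 3. There the isotropic lines are $L_1=\C(X_1+iX_2)$ and $L_2=\C(X_1-iX_2)$, and $\textbf{J}$ is given by $\textbf{J}X_1=-X_2$, $\textbf{J}X_2=X_1$, which is manifestly smooth in $x$ and depends smoothly on $g_x$; this settles part (1) and the smoothness assertion. For part (2), the eigenvectors of this $\textbf{J}$ are exactly $X_1\pm iX_2$, so $V_i(\textbf{J}_x)\cup V_{-i}(\textbf{J}_x)=\C(X_1+iX_2)\cup\C(X_1-iX_2)=ll(g_x)$, which is the stated identity; that $\textbf{J}$ (and likewise $-\textbf{J}$) is a bicomplex structure in the sense of Definition \ref{Def bicomplex structure} then follows immediately, because its eigenspaces are the isotropic lines of $g$, and the hypothesis that $g$ is positive (Definition \ref{Def positive metrics}) says exactly that these two lines avoid $T_xS$ and project into different connected components of $\Proj_\C(\C T_x S)\setminus\Proj_\C(T_xS)$.

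Part (4) I would dispatch purely formally: if $g(X,X)=1$ then the isometry property gives $g(\textbf{J}X,\textbf{J}X)=g(X,X)=1$, and applying the isometry property to the pair $(X,\textbf{J}X)$ together with $\textbf{J}^2=-id$ and the symmetry of $g$ yields $g(X,\textbf{J}X)=g(\textbf{J}X,-X)=-g(X,\textbf{J}X)$, hence $g(X,\textbf{J}X)=0$; since $\textbf{J}$ has no real eigenvalue the vectors $X,\textbf{J}X$ are $\C$-independent, so $\{X,\textbf{J}X\}$ is a $g$-orthonormal frame.

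The remaining and most bookkeeping-sensitive step is part (3). Here I would pass from $\textbf{J}$ to the pair $(J_1,J_2)$ of genuine complex structures via the correspondence (\ref{BC(S) to C(S)xC(S)}), so that $V_i(J_2)=V_i(\textbf{J})=L_1$ and $V_{-i}(J_1)=V_{-i}(\textbf{J})=L_2$; these $J_1,J_2$ are smooth and, being almost complex structures on a surface, integrable, so local holomorphic coordinates $w_1$ (for $J_1$) and $w_2$ (for $J_2$) exist. Under the standard identifications $V_i(J_2)=\C\partial_{w_2}$ and $V_{-i}(J_1)=\C\partial_{\bar w_1}$, the symmetric form $dw_1\cdot d\bar w_2$ is isotropic on $\C\partial_{w_2}$ (since $d\bar w_2(\partial_{w_2})=0$) and on $\C\partial_{\bar w_1}$ (since $dw_1(\partial_{\bar w_1})=0$); that is, it has the same pair of isotropic lines as $g$. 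Because a non-degenerate symmetric bilinear form on a $2$-dimensional complex space is determined up to a nonzero scalar by its two distinct isotropic lines, there is a nowhere-vanishing $\C$-valued function $f$ with $g=f\,dw_1\cdot d\bar w_2$, and $f$ is smooth since $g,w_1,w_2$ are. The only genuine subtlety in the whole argument is keeping the $+i/-i$ eigenline labelling consistent between the definition of the correspondence (\ref{BC(S) to C(S)xC(S)}) and the holomorphic-coordinate conventions; once that is pinned down, every claim is fiberwise $2$-dimensional linear algebra together with the local existence of orthonormal frames and of isothermal coordinates.
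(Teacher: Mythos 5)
Your proposal is correct, and in parts 2), 3) and 4) it runs along the same lines as the paper's own proof: part 2) combines the eigenline description of $ll(g_x)$ with positivity to obtain the bicomplex structure, and part 3) rests on exactly the key fact the paper invokes, namely that a non-degenerate symmetric $\C$-bilinear form on a $2$-dimensional complex vector space is determined up to a nonzero scalar by its pair of isotropic lines, applied to $g$ and to $dw_1\cdot d\overline w_2$. The genuine difference is in part 1). The paper expresses $\textbf{J}_x$ in a local $g$-orthonormal frame and solves the matrix system $B_x^2=-I_2$, ${}^tB_xB_x=I_2$ directly, finding the two opposite constant solutions $B_x=\pm\begin{pmatrix}0&1\\-1&0\end{pmatrix}$; this single computation delivers existence, uniqueness and smoothness in one stroke. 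You instead classify solutions by their action on the two isotropic lines: swapping them is incompatible with $T^2=-id$, while preserving them forces the eigenvalues $\{i,-i\}$. Your route is more invariant and makes the count of solutions transparent without choosing a frame, but you still need the orthonormal-frame description (which you supply, and which coincides with the paper's matrix solution) to get smooth dependence on $x$ and on $g_x$, so the two arguments converge there.

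One small repair in part 4): the linear independence of $X$ and $\textbf{J}X$ does not follow from ``$\textbf{J}$ has no real eigenvalue''. Unlike a real almost complex structure acting on $TS$, here $\textbf{J}_x$ is a $\C$-linear endomorphism of the complex plane $\C T_xS$, so it certainly has eigenvectors, with eigenvalues $\pm i$; those eigenvectors are precisely the isotropic vectors. The correct justification is one you have already established: if $X$ and $\textbf{J}X$ were proportional, then $X$ would lie in $V_i(\textbf{J}_x)\cup V_{-i}(\textbf{J}_x)=ll(g_x)$, contradicting $g(X,X)=1$. Equivalently, $\textbf{J}X=\lambda X$ combined with your identity $g(X,\textbf{J}X)=0$ forces $\lambda=0$, contradicting $g(\textbf{J}X,\textbf{J}X)=1$. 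This is a one-line fix, not a structural gap; note that the paper itself dismisses part 4) with a single sentence.
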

	\begin{proof}
		\begin{itemize}
	\item[1)]  With respect to two local $g$-orthonormal vector fields $X_1$ and $X_2$, $\textbf{J}_x$ is represented by a matrix $B_x\in Mat(2,\C)$ satisfying:
		\[
		\begin{cases}
		B_x^2=-I_2\\
		^tB_x B_x= I_2
		\end{cases},
		\]
		which admits exactly two opposite constant solutions for $B_x$, namely $B_x=\pm \begin{pmatrix} 0 &1\\-1 &0 \end{pmatrix}$, so the two solutions $\textbf J_x$ and $-\textbf J_x$ are smooth.
		\item[2)] The equation comes directly from $(\ref{compatibility})$. Since $g$ is positive, $\Proj_\C(V_{i}(\textbf{J}_x))$ and $\Proj_\C(V_{-i}(\textbf{J}_x))$ lie in different connected components of $\Proj_\C(\C T_x S)\setminus \Proj_\C (T_xS)$, so $\textbf{J}$ and $-\textbf{J}$ are bicomplex structures.
		
		\item[3)] A $\C$-bilinear form on a 2-dimensional $\C-$vector space is determined up to a constant by its isotropic directions. For local holomorphic coordinates $w_1,w_2$ for $J_1$ and $J_2$, we have
		\[
		V_i (\textbf{J})= V_i (J_2)= \C \frac{\partial}{\partial w_2} \qquad V_{-i} (\textbf{J})= V_{-i} (J_1)= \C \frac{\partial}{\partial \overline w_1}, 	
		\]
		hence the locally defined complex metrics $g$ and $ dw_1 \cdot d\overline  w_2= \frac 1 2 (dw_1 \otimes d\overline  w_2+ d\overline  w_2\otimes dw_1)$ are pointwise equal up to a constant since they have the same isotropic directions. Explicitly, we have 
		\[
		g= g(\frac {\partial}{\partial w_1}, \frac {\partial}{\partial \overline w_2} ) dw_1 d\overline w_2
		\]
		\item[4)]  The thesis follows directly by the compatibility of $\textbf{J}$ with $g$.
			\end{itemize}
	\end{proof}

	\begin{Theorem}
		\label{Teorema orientazione}
		Let $g$ be a positive complex metric on a surface $S$. 
		
		The following are equivalent:
		\begin{itemize}
			\item $S$ is orientable;
			\item there exists a global positive bicomplex structure $\textbf{J}$ on $S$ compatible with $g$;
			\item there exists a complex $1$-form $dA\in \Omega^2 (\C TS, \C)$, unique up to a sign, such that $\|dA\|^2_g=1$, i.e. for all local $g-$orthonormal frame $(X_1,X_2)$ one has $\nobreak{dA(X_1,X_2)\in\{-1,1\} }$.
			
			We will call $dA$ the \emph{area form} of $g$.
		\end{itemize}
	\end{Theorem}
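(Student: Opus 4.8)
The plan is to make the compatible bicomplex structure the central object, proving $(1)\Leftrightarrow(2)$ and $(2)\Leftrightarrow(3)$, where $(1)$, $(2)$, $(3)$ denote the three listed conditions in order. The one local ingredient I would lean on is the previous Proposition: around every point there are exactly two compatible bicomplex structures, $\textbf{J}$ and $-\textbf{J}$, both smooth. Thus each statement is really a statement about choosing one of these two branches consistently over all of $S$, and the whole theorem amounts to identifying orientability as the precise obstruction to such a global choice.

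For $(2)\Leftrightarrow(3)$ I would exhibit an explicit sign-preserving bijection between compatible bicomplex structures and area forms, given by $dA(X,Y):=g(\textbf{J}X,Y)$. Using compatibility $g(\textbf{J}\cdot,\textbf{J}\cdot)=g(\cdot,\cdot)$ and $\textbf{J}^2=-\mathrm{id}$ one gets $g(\textbf{J}X,Y)=-g(X,\textbf{J}Y)=-g(\textbf{J}Y,X)$, so $dA$ is alternating; evaluating on the orthonormal frame $(X,\textbf{J}X)$ of part 4 of the Proposition gives $dA(X,\textbf{J}X)=g(\textbf{J}X,\textbf{J}X)=1$, and since any two $g$-orthonormal frames differ by an element of $O(2,\C)$ this forces $dA(X_1,X_2)\in\{-1,1\}$ on every orthonormal frame, i.e. $\|dA\|^2_g=1$. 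Replacing $\textbf{J}$ by $-\textbf{J}$ clearly sends $dA$ to $-dA$. Conversely, non-degeneracy of $g$ lets me recover from any area form a unique $\textbf{J}$ with $g(\textbf{J}\cdot,\cdot)=dA$; computing in an orthonormal frame with $dA(X_1,X_2)=1$ yields $\textbf{J}X_1=X_2$ and $\textbf{J}X_2=-X_1$, whence $\textbf{J}^2=-\mathrm{id}$, $\textbf{J}$ is $g$-compatible, and its eigenlines are spanned by $X_1\mp iX_2$, i.e. exactly $ll(g_x)$. Positivity of $g$ then places these two eigenlines in opposite components, so $\textbf{J}$ really is a bicomplex structure. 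This bijection gives $(2)\Leftrightarrow(3)$ together with the uniqueness of $dA$ up to sign.

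For $(1)\Leftrightarrow(2)$: if a global compatible $\textbf{J}$ exists, the Remark after Definition~\ref{Def bicomplex structure} produces from the eigenspace conditions two global complex structures $J_1,J_2$ (these conditions are purely pointwise and require no orientation), and any complex structure on a surface induces an orientation, so $S$ is orientable. For the converse I would fix an orientation and, in each chart, select the branch among $\pm\textbf{J}$ whose induced pair $(J_1,J_2)$ lies in $BC(S)$, i.e. is orientation-consistent. The key observation is that passing from $\textbf{J}$ to $-\textbf{J}$ swaps $V_i$ and $V_{-i}$, hence replaces $J_2$ by $-J_2$ and thereby reverses the induced orientation; consequently exactly one branch is orientation-consistent at each point. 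This selected field is smooth (it is locally one of the two smooth branches) and chart-independent, so the local pieces glue to a global compatible bicomplex structure.

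The routine parts are the linear-algebra identities carried out in a local $g$-orthonormal frame. The genuinely delicate step, which I would treat most carefully, is the orientation bookkeeping in $(1)\Rightarrow(2)$: namely checking that the sign flip $\textbf{J}\mapsto-\textbf{J}$ reverses the orientation induced through the bijection $BC(S)\xrightarrow{\sim}C(S)\times C(S)$, so that the fixed orientation of $S$ singles out one branch unambiguously and the selection varies smoothly. Once this is in place the gluing is automatic, and combined with the correspondence of $(2)\Leftrightarrow(3)$ it closes the cycle of equivalences.
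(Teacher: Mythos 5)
Your proposal is correct and takes essentially the same route as the paper: the equivalence of the second and third conditions via the correspondence $dA=g(\textbf{J}\cdot,\cdot)$ (unique up to sign, since the space of complex $2$-forms is one-dimensional fiberwise), and the equivalence with orientability by observing that a fixed orientation singles out exactly one of the two smooth local branches $\pm\textbf{J}$, which therefore glue to a global compatible bicomplex structure. One minor bookkeeping slip that does not affect the argument: replacing $\textbf{J}$ by $-\textbf{J}$ sends the induced pair $(J_1,J_2)$ to $(-J_2,-J_1)$ rather than merely negating $J_2$; since $J_1$ and $J_2$ induce the same orientation, your conclusion that the branch swap reverses the induced orientation remains valid.
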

	\begin{proof}
		As we showed, a bicomplex structure induces two complex structures which induce an orientation on $S$. Conversely, given an orientation on $S$, locally there exists a unique bicomplex structure compatible with $g$ and inducing the same orientation, hence there exists a unique global one.
		
		If $\textbf J$ is a bicomplex structure compatible with $g$, then $dA=g(\textbf J\cdot, \cdot)$ is a  2-form of norm 1. Conversely, if $\textbf J$ is a local bicomplex structure compatible with $g$, the local $2$-forms $g(\textbf J\cdot, \cdot)$ and $g(-\textbf J\cdot, \cdot)$ are opposite, have module 1 and locally they are the only $2$-forms of $g$-module 1, since $dim_\C\bigwedge^2 \C T_xS=1$: as a result, for a given $dA$, one has $dA=g(\textbf J \cdot, \cdot)$ for a unique global $\textbf J$.

	\end{proof}
	We will say that $\{X, Y \}$ is a \emph{positive orthonormal frame} for $\CTS$ with respect to the orientation induced by $\textbf{J}$ if $\textbf J (X)=Y$. 
	
	With the language of fiber bundles, a positive complex metric on a surface $S$ provides a reduction of the frame bundle of $\CTS \to S$ to the structure group $O(2, \C)$, while the pair of a positive complex metric and an orientation on $S$ provides a reduction to $SO(2,\C)$.

	\begin{Cor}
		Given an oriented surface $S$, the map that sends a positive complex metric $g$ to the orientation-consistent bicomplex structure $\textbf{J}$ compatible with $g$ induces a homeomorphism
		\begin{equation}
		\label{conformal structures and BC}
		\begin{split}
		\faktor{CM^+(S)}{C^{\infty} (S, \C^*)}&\xrightarrow{\sim}   BC(S) \cong C(S)\times C(S)\\
	C^{\infty} (S, \C^*)\cdot g &\mapsto \textbf{J}_g
	\end{split}
	\end{equation}
		Conformal structures of Riemannian metrics correspond to elements of the diagonal of $C(S)\times C(S)$.
	\end{Cor}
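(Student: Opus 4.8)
The plan is to establish that $g\mapsto \textbf{J}_g$ induces a bijection on the quotient and then that it is a homeomorphism. First I would record well-definedness and descent: by Theorem \ref{Teorema orientazione} every $g\in CM^+(S)$ carries a unique orientation-consistent compatible bicomplex structure $\textbf{J}_g$, and by part (2) of the preceding Proposition $\textbf{J}_g$ is characterized through $ll(g_x)=V_i(\textbf{J}_g)\cup V_{-i}(\textbf{J}_g)$, i.e. purely by the isotropic directions of $g$. Since these directions are unchanged when $g$ is multiplied by $f\in C^\infty(S,\C^*)$, the map $CM^+(S)\to BC(S)$ factors through the quotient by $C^\infty(S,\C^*)$.

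Injectivity on the quotient is linear algebra. If $\textbf{J}_g=\textbf{J}_{g'}$ then $g$ and $g'$ have the same pair of isotropic lines at every point, and a nondegenerate symmetric $\C$-bilinear form on the two-dimensional space $\C T_xS=L_1\oplus L_2$ (with $L_1,L_2$ its isotropic lines) is determined up to a nonzero scalar by the pairing $L_1\times L_2\to\C$. Hence $g'_x=f(x)g_x$ for a unique $f(x)\in\C^*$; choosing local sections $u,v$ with $g(u,v)\ne 0$ and writing $f=g'(u,v)/g(u,v)$ shows $f\in C^\infty(S,\C^*)$, so $g$ and $g'$ lie in the same class.

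The surjectivity is where the orientation hypothesis is used, and I expect it to be the main point. Given $\textbf{J}\equiv(J_1,J_2)\in BC(S)$, its eigenline bundles are $L_1=V_{-i}(\textbf{J})=\overline{T^{1,0}_{J_1}S}$ and $L_2=V_i(\textbf{J})=T^{1,0}_{J_2}S$, and by the computation above a complex metric compatible with $\textbf{J}$ is exactly a nowhere-vanishing section of $(L_1\otimes L_2)^*$; any such metric is automatically positive, since by Definition \ref{Def bicomplex structure} the lines $\Proj_\C(V_{\pm i}(\textbf{J}_x))$ lie in different components of $\Proj_\C(\C T_xS)\setminus\Proj_\C(T_xS)$, and orientation-consistency singles out $\textbf{J}$ (rather than $-\textbf{J}$) as $\textbf{J}_g$. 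Such a section exists iff the complex line bundle $L_1\otimes L_2$ is smoothly trivial: for noncompact $S$ this holds because $H^2(S;\Z)=0$, while for closed $S$ it follows from a degree count. Indeed, since $J_1$ and $J_2$ both induce the fixed orientation, $\deg L_2=\deg T^{1,0}_{J_2}S=\chi(S)$ and $\deg L_1=-\deg T^{1,0}_{J_1}S=-\chi(S)$, so $\deg(L_1\otimes L_2)=0$. The conceptual content is that the compatibility obstruction vanishes precisely because a bicomplex structure forces $J_1$ and $J_2$ to induce the same orientation.

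It remains to check the topology and the final remark. Continuity of $CM^+(S)\to BC(S)$ is part (1) of the Proposition (smooth dependence of $\textbf{J}_g$ on $g$), so the induced map on the quotient is a continuous bijection; for the inverse I would produce, near any $\textbf{J}_0$ with a chosen compatible metric $g_0$, a compatible positive complex metric depending continuously on $\textbf{J}$ (by adjusting $g_0$ along the varying isotropic directions), yielding local continuous sections of the quotient map and hence continuity of the inverse. Finally, a Riemannian metric $h$, extended $\C$-bilinearly, has isotropic lines $T^{1,0}_{J}S$ and $T^{0,1}_{J}S$ for its associated complex structure $J=J_h$, so $\textbf{J}_h\equiv(J,J)$ lies on the diagonal; conversely every diagonal point $(J,J)$ is realized by any Riemannian metric compatible with $J$, so the diagonal of $C(S)\times C(S)$ is exactly the locus of complex-conformal classes containing a Riemannian representative.
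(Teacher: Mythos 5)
Your proposal is correct, but it takes a genuinely different route from the paper at the key step. The paper disposes of surjectivity, continuity of the inverse, and the diagonal statement all at once with an explicit construction: it fixes a real volume form $\alpha\in\Omega^2(TS)$ on the oriented surface, extends it $\C$-bilinearly, and sends $\textbf{J}$ to the metric $g_{\textbf{J}}:=\alpha(\textbf{J}\cdot,\cdot)+\alpha(\cdot,\textbf{J}\cdot)$ (read with the sign that makes this symmetric); this is a globally defined section of the quotient map, algebraic and hence continuous in $\textbf{J}$, compatible with $\textbf{J}$ by inspection, and choosing $\alpha$ to be a volume form makes the correspondence between the diagonal of $C(S)\times C(S)$ and Riemannian conformal classes immediate. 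You instead prove surjectivity by obstruction theory: a metric compatible with $\textbf{J}$ is a nowhere-vanishing section of $(L_1\otimes L_2)^*$, and on a closed surface $\deg L_1=-\chi(S)$ and $\deg L_2=\chi(S)$ cancel precisely because $J_1$ and $J_2$ induce the same orientation (with the noncompact case handled by $H^2(S;\Z)=0$). This is more conceptual: it isolates exactly where orientation-consistency enters and why the obstruction vanishes. The price is that it is non-constructive, so you need a separate argument for continuity of the inverse, and that is the one sketchy point in your write-up ("adjusting $g_0$ along the varying isotropic directions"); the gap is fillable, e.g. by setting $g_{\textbf{J}}:=g_0(\pi_1^{\textbf{J}}\cdot,\pi_2^{\textbf{J}}\cdot)+g_0(\pi_2^{\textbf{J}}\cdot,\pi_1^{\textbf{J}}\cdot)$ with $\pi_k^{\textbf{J}}$ the eigenprojections of $\textbf{J}$, which is nondegenerate for $\textbf{J}$ near $\textbf{J}_0$, or more simply by the paper's volume-form formula, which renders the local argument unnecessary. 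Your descent and injectivity arguments (the isotropic lines determine the metric up to a nonzero smooth factor) are exactly the linear algebra that the paper leaves implicit behind part (3) of the preceding Proposition, so no issue there.
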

	\begin{proof}
	We showed that the map is continuous. In order to construct an inverse, fix a real volume form $\alpha\in \Omega^2(TS)$, which extends by $\C$-bilinearity to an element in $\Omega^2(\CTS,\C)$, and define for any bicomplex structure $\textbf{J}$ the metric
	\[g:= \alpha(\textbf{J}\cdot, \cdot)+ \alpha(\cdot, \textbf{J} \cdot);\]
		such complex metric induces $\textbf{J}$ in turn, hence it is a positive complex metric.
		
		By choosing $\alpha$ as a real volume form for $S$, one gets an explicit correspondence between the diagonal of $C(S)\times C(S)$ and the set of conformal structures of Riemannian metrics.
	\end{proof}
	
	Finally, we remark that Proposition \ref{surfaces in G first part} can be restated in an interesting way for positive complex metrics in terms of developing maps for projective structures. 
	
	\begin{Theorem}
		\label{Totally geodesic immersions}
		Let $S$ be an oriented surface and $g$ a positive complex metric on $S$. Denote with $(\widetilde S, \widetilde g)$ the universal cover. 
		
		$(S,g)$ has constant curvature $-1$ if and only if there exists a smooth map \[(f_1, f_2)\colon \widetilde S \to \GGG=\CP^1\times \CP^1 \setminus\Delta\] such that: 
		\begin{itemize}
			
			\item 
			$\widetilde g=(f_1, f_2)^* \inners=-\frac{4}{(f_1 - f_2)^2} df_1 \cdot df_2$
			\item $f_1$ and $f_2$ are local diffeomorphisms, respectively preserving and reversing the orientation.
			\item $f_1$ and $f_2$ are $(\pi_1 (S), \PSL)$-equivariant local diffeomorphisms with the same monodromy \[mon_{f_1}=mon_{f_2}=mon_{(f_1, f_2)}\colon \pi_1 (S)\to\PSL
			\]
			
		\end{itemize}
	Moreover, if $\textbf{J}$ is the bicomplex structure on $S$ induced by $g$, then, in identification (\ref{BC(S) to C(S)xC(S)}),
	\[
	\textbf{J} \equiv (f_1 ^* (\mathbb{J}^{\CP^1}), f_2 ^* (-\mathbb{J}^{\CP^1}) )
	\] 
	where $\mathbb{J}^{\CP^1}$ is the standard complex structure on $\CP^1$. 
	\begin{proof}
		As we observed in Proposition \ref{surfaces in G first part}, the fact that the complex metric $g$ is positive implies that $f_1$ and $f_2$ are local diffeomorphisms. 
		
		Let $w_k$ be a local holomorphic chart on $x\in S$ for the pull-back complex structure $f_k^* \mathbb{J}^{\CP^1}$. Then, by Cauchy-Riemann equations,
		\[
		Ker(df_k \colon \C T_xS \to \C)= Span_\C \Big(\frac{\partial}{\partial \overline {w_k} }_{|x} \Big)
		\]
		so by the explicit description of the metric one has
		\[ll(g_x)=\bigg(
		Span_\C \Big(\frac{\partial}{\partial \overline {w_1} }_{|x} \Big)\oplus Span_\C \Big(\frac{\partial}{\partial \overline {w_2} }_{|x} \Big) 
		\bigg) \setminus \{0_x\}
		  \]
		and the two points of $\Proj_\C(ll(g_x))$ lie in two different components of $\Proj_\C (\C T_x S)\setminus \Proj_\C (T_xS)$ if and only if the two pull-back complex structures via $f_1$ and $f_2$ induce opposite orientations. 
		
		The choice of $(f_1, f_2)$ inducing the same $\widetilde g$ is unique up to post-composition with elements of $\PSL$, since the composition with the diagonal swap  $s:(x,y)\mapsto(y,x)$ on $\GGG$ gives an immersion with orientation-reversing first component.
		
		All the elements of $\pi_1(S)$ are orientation-preserving, hence $\sigma$ is $(\pi_1(S),\PSL)$-equivariant and, since $\PSL$ acts diagonally on $\GGG$, both $f_1$ and $f_2$ are equivariant with the same holonomy.
		
		From the description of $ll(g)$ above, one can conclude the last part of the theorem.
	\end{proof}
	\end{Theorem}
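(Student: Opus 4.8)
The plan is to read this theorem as a sharpening of Proposition \ref{surfaces in G first part} in the case $n=2$, under the extra hypothesis that $g$ is \emph{positive}, and to extract the stronger conclusions (local diffeomorphism, orientation behaviour, reduction of the monodromy to $\PSL$, and the identification of the bicomplex structure) directly from positivity. First I would invoke Proposition \ref{surfaces in G first part}: since $(S,g)$ has constant curvature $-1$, there is a $(\pi_1(S), Isom(\GGG))$-equivariant isometric immersion $(f_1,f_2)\colon(\widetilde S,\widetilde g)\to\GGG$ with $\widetilde g=-\frac{4}{(f_1-f_2)^2}df_1\,df_2$, and the isotropic cone is $ll(\widetilde g_x)=\ker(df_1)_x\cup\ker(df_2)_x$. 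Positivity of $g$ says precisely that the two points $\Proj_\C(ll(g_x))$ lie in the two distinct open discs of $\Proj_\C(\C T_xS)\setminus\Proj_\R(T_xS)$; in particular neither isotropic line is real, so no real vector is isotropic, and by the last bullet of Proposition \ref{surfaces in G first part} both $f_1$ and $f_2$ are local diffeomorphisms. This settles the first bullet of the statement.

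Next I would relate the two isotropic lines to the pull-back complex structures. Choosing a local holomorphic coordinate $w_k$ for $f_k^*(\mathbb{J}^{\CP^1})$, the Cauchy--Riemann equations give $\ker(df_k)_x=Span_\C(\tfrac{\partial}{\partial\overline{w_k}})=V_{-i}(f_k^*\mathbb{J}^{\CP^1})$, whence $ll(g_x)=V_{-i}(f_1^*\mathbb{J})\cup V_{-i}(f_2^*\mathbb{J})$. Using the bijection $C(S)\sqcup C(\overline S)\to\Proj_\C(\C T_xS)\setminus\Proj_\R(T_xS)$ recorded earlier (sending $J_0\mapsto\Proj_\C(V_i(J_0)_x)$), the two isotropic lines lie in different components exactly when $f_1^*\mathbb{J}$ and $f_2^*\mathbb{J}$ induce opposite orientations, i.e.\ when one of $f_1,f_2$ preserves and the other reverses the orientation of $S$. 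Positivity therefore forces this dichotomy, and after possibly composing with the swap $s\colon(x,y)\mapsto(y,x)$ of $\GGG$ (which reverses the orientation of the first factor) I may arrange that $f_1$ preserves and $f_2$ reverses orientation, giving the second bullet.

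For the monodromy I would observe that every deck transformation of $\widetilde S\to S$ is orientation-preserving, hence cannot interchange the two factors of $\GGG$; thus the $\Z_2$-component of the monodromy $mon_g\colon\pi_1(S)\to\PSL\times\Z_2$ produced by Proposition \ref{surfaces in G first part} is trivial, the monodromy lands in $\PSL$ acting diagonally, and $f_1,f_2$ are separately equivariant with this common monodromy. The final assertion on $\textbf{J}$ then follows by matching eigenspaces: since $f_2$ reverses orientation, $f_2^*(-\mathbb{J})\in C(S)$ and $V_i(f_2^*(-\mathbb{J}))=V_{-i}(f_2^*\mathbb{J})=\ker(df_2)$, while $V_{-i}(f_1^*\mathbb{J})=\ker(df_1)$; reading off the identification rule $V_{-i}(\textbf{J})=V_{-i}(J_1)$, $V_i(\textbf{J})=V_i(J_2)$ together with $ll(g_x)=V_i(\textbf{J}_x)\cup V_{-i}(\textbf{J}_x)$ yields $\textbf{J}\equiv(f_1^*\mathbb{J},f_2^*(-\mathbb{J}))$.

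The converse direction is immediate: given $(f_1,f_2)$ with the stated properties, $\widetilde g=(f_1,f_2)^*\inners$ is the pull-back of the curvature-$(-1)$ metric on $\GGG\cong\mathds X_2$, so $(S,g)$ has constant curvature $-1$ by Proposition \ref{surfaces in G first part}. I expect the only delicate point to be the orientation bookkeeping --- correctly pairing each isotropic line with the correct factor of $\GGG$ and with the correct eigenspace of $\textbf{J}$ through the Cauchy--Riemann identification, and checking that positivity is equivalent to the opposite-orientation condition; everything else is a direct application of the cited results.
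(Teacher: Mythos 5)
Your proposal is correct and follows essentially the same route as the paper's own proof: Proposition \ref{surfaces in G first part} for the local-diffeomorphism property, the Cauchy--Riemann identification of $\ker(df_k)$ with the isotropic lines, positivity read as the opposite-orientation condition on the two pull-back complex structures, orientation-preservation of deck transformations to kill the $\Z_2$-factor of the monodromy, and eigenspace matching for the identification of $\textbf{J}$. The only differences are cosmetic (you spell out the eigenspace bookkeeping and the converse direction slightly more explicitly than the paper does), so there is nothing further to add.
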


	\subsection{A Gauss-Bonnet Theorem for  positive complex metrics}
	
	In this subsection, our aim is to prove a generalization of Gauss-Bonnet Theorem in the setting of positive complex metrics.

	Let $g$ be a positive complex metric on an oriented surface. We defined the area form $dA$ of $g$ as the unique 2-form with constant $g$-norm $1$ and compatible with the orientation on $S$, in the sense explained in Theorem \ref{Teorema orientazione}. Locally, 
	\[
	dA= \theta^1 \wedge \theta^2
	\]
	for a local positive $g$-coframe $\{\theta^1,\theta^2\}$.

	\begin{Theorem}
		\label{Gauss Bonnet}
		Let $S$ be an oriented surface and $g$ a complex metric of constant curvature $K$.
		Then \[
		\int_S K dA = 2 \pi \chi(S)
		\]
		where $\chi(S)$ is the Euler-Poincaré characteristic of $S$.
	\end{Theorem}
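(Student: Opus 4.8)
The plan is to reduce the statement to a Chern--Weil computation of the degree of a complex line bundle, exactly as in the classical proof via the Cartan structure equations; the only new feature is that every quantity involved is $\C$-valued, so the whole point will be to check that the complex case still produces the real integer $\chi(S)$.

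First I would work on a chart with a local positive $g$-orthonormal frame $\{X_1,X_2\}$, $X_2=\mathbf J X_1$, dual coframe $\{\theta^1,\theta^2\}$ and connection form $\theta^1_2=-\theta^2_1$, so that $dA=\theta^1\wedge\theta^2$ there. The structure equation already recorded in the proof of Theorem~\ref{Teoremone}, namely $d\theta^i_j+\sum_k\theta^i_k\wedge\theta^k_j=R(X_i,X_j,\cdot,\cdot)$, specializes for $n=2$ (using $\theta^1_1=\theta^2_2=0$) to $d\theta^1_2=R(X_1,X_2,\cdot,\cdot)$. Evaluating the $2$-form on $(X_1,X_2)$ and recalling the definition of the sectional curvature gives $R(X_1,X_2,\cdot,\cdot)=K\,dA$, hence $d\theta^1_2=K\,dA$ locally. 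Although $\theta^1_2$ depends on the chosen frame, both $K$ and $dA$ are globally defined (the latter by Theorem~\ref{Teorema orientazione}, using that $S$ is oriented and $g$ positive), so $K\,dA$ is a global $\C$-valued $2$-form.

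Next I would package this as a curvature. A positive complex metric together with the orientation reduces the structure group of $\CTS\to S$ to $SO(2,\C)\cong\C^*$, and the globally defined bicomplex structure $\mathbf J$ compatible with $g$ singles out the isotropic line sub-bundle $L_+:=V_i(\mathbf J)\subset\CTS$. Writing $e_+:=X_1-iX_2$, a local trivializing section of $L_+$, a direct computation from $\nabla X_1=\theta^2_1\otimes X_2$ and $\nabla X_2=\theta^1_2\otimes X_1$ yields $\nabla e_+=-i\,\theta^1_2\otimes e_+$; thus $\nabla$ preserves $L_+$ and induces on it a connection $\nabla^+$ whose curvature is the global form $F=d(-i\theta^1_2)=-i\,K\,dA$. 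Finally I would invoke Chern--Weil: for any connection on a smooth complex line bundle $L$ over a closed oriented surface one has $\deg L=\tfrac{i}{2\pi}\int_S F$, independently of the connection. Applied to $\nabla^+$ this gives $\deg L_+=\tfrac{i}{2\pi}\int_S(-iK\,dA)=\tfrac{1}{2\pi}\int_S K\,dA$, and since $V_i(\mathbf J)=V_i(J_2)=T^{1,0}_{J_2}S$ is the holomorphic tangent bundle for the complex structure $J_2$ inducing the given orientation, $\deg L_+=\chi(S)$; combining, $\int_S K\,dA=2\pi\chi(S)$.

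I expect the only delicate points to be the two bookkeeping facts that force the complex case to behave like the real one: that the Chern--Weil identity $\deg L=\tfrac{i}{2\pi}\int_S F$ applies to the non-unitary $\C$-connection $\nabla^+$ (which follows from the fact that $\tfrac{i}{2\pi}\int_S F$ is connection-independent on a complex line bundle, so one may compare with a unitary connection), and the correct sign in the identification $L_+\cong T^{1,0}_{J_2}S$ giving degree $+\chi(S)$ rather than $-\chi(S)$. A more elementary alternative, avoiding Chern--Weil, would triangulate $S$, apply Stokes to $d\theta^1_2=K\,dA$ on each face, and sum the boundary integrals of the local connection forms: the transition terms take values in $SO(2,\C)\cong\C^*$, which retracts onto $SO(2,\R)$, so the total reduces to $2\pi$ times the integer rotation index of $\mathbf J$ and again equals $2\pi\chi(S)$.
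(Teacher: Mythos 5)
Your proposal is correct, but it takes a genuinely different route from the paper's proof. The paper argues by deformation: it fixes a real vector field $\XX$ with isolated zeros, works with the connection form $\omega$ of the frame $\big(\XX/\|\XX\|,\ \mathbf{J}(\XX/\|\XX\|)\big)$ away from the zeros, and computes $\int_S K\,dA$ via Stokes as a sum of boundary integrals around the zeros; it then proves that this quantity is a conformal invariant, that it always lies in $\pi\Z$ (as $\pi$ times the sum of degrees of the circle-valued functions $e^{2i\alpha_k}$), and finally uses the connectedness of $BC(S)$ together with continuity of $g\mapsto \int_S K_g\,dA_g$ to conclude that the integral is constant on all positive complex metrics, hence equal to its Riemannian value $2\pi\chi(S)$. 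You instead give a direct Chern--Weil argument: the Levi-Civita connection of $g$ preserves the isotropic eigenline bundle $L_+=V_i(\mathbf{J})\subset\CTS$ (your computation $\nabla e_+=-i\,\theta^1_2\otimes e_+$), its induced curvature is the global form $-iK\,dA$, and connection-independence of the first Chern class of a complex line bundle identifies $\frac{1}{2\pi}\int_S K\,dA$ with $\deg L_+=\deg T^{1,0}_{J_2}S=\chi(S)$. Your approach buys directness and a conceptual explanation of why $\int_S K\,dA$ is a topological invariant (it is $2\pi\,c_1(L_+)[S]$), dispensing with both the conformal-invariance lemma and the deformation through $BC(S)$; the paper's approach buys self-containedness (only Stokes and winding numbers, no appeal to Chern--Weil for non-unitary connections) and intermediate facts — conformal invariance and $\pi\Z$-integrality of the total curvature — that have independent interest. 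Two caveats are shared by both arguments and handled the same way: the statement's ``complex metric'' must be read as ``positive complex metric'' on a \emph{closed} oriented surface (both proofs need the global bicomplex structure $\mathbf{J}$, hence positivity, and compactness for the integral and for $\chi(S)$ to be meaningful), and your sign conventions (your $d\theta^1_2=K\,dA$ versus the paper's $d\omega=-K\,dA$, and the identification $\deg L_+=+\chi(S)$ rather than $-\chi(S)$) must be fixed consistently — as you note, they are pinned down by requiring the formula to reduce to the classical Riemannian Gauss--Bonnet.
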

	\begin{proof}
		Consider a real vector field $\XX\in \Gamma(TS)$ with finite set of zeros $\Lambda=\{x_1, \dots, x_n\}$. Since the complex metric is positive, $\| \XX\|^2\ne 0$ over $S\setminus \Lambda$. Let the index $i$ vary in $\{1, \dots, n\}$. 
		
		Consider for all $i$ two open disks $B_i$ and $\widetilde B_i$ in $S$, such that \[x_i\in B_i \subset \overline B_i \subset \widetilde B_i
		\]
		and such that $B_i \cap B_j\ne \emptyset$ iff $i=j$.
		
		Let $\textbf{J}$ be the bicomplex structure induced by $g$ and by the orientation on $S$. The $g$-frame $\bigg(\frac \XX {\|\XX\|}, \textbf{J} \big(\frac \XX {\|\XX\|}\big) \bigg)$ can be defined locally on $S\setminus \Lambda$ but is defined globally only up to a sign and the same holds for the induced dual frame $\pm(\theta^1, \theta^2)$. Nevertheless, the area form $dA=\theta^1\wedge\theta^2$ is globally well-defined on $S\setminus \Lambda$ and so is the corresponding Levi-Civita connection form $\omega$ defined by 
		\begin{align*}
		d\theta^1 =\omega \wedge \theta^2 \\
		d\theta^2= -\omega \wedge \theta^1.
		\end{align*}
		On the whole $S\setminus \Lambda$ we have $d\omega=-KdA$ as in standard Riemannian Geometry (see \cite{Kobayashi-Nomizu 1}).
		
		On each $\widetilde B_i$ we fix a norm-$1$ complex vector field $e_{i}$ and consider the coframe $(\theta^1_i, \theta^2_i)$ defined as the dual of $(e_i, \textbf{J}(e_i))$; let $\omega_i$ be the Levi-Civita connection form with respect to $(\theta^1_i, \theta^2_i)$. 
		Orient each $\partial {\widetilde B_i}$ with the orientation induced by $\widetilde B_i$. By Stokes formula,
		\begin{align*}                  
		\int_S K dA &= \int_{S\setminus \bigcup_i  B_i} K dA + \sum_{i=1}^n \int_{\overline B_i} KdA= \\
		=& -\int_{S\setminus \bigcup_i \widetilde B_i} d\omega - \sum_{i=1}^n \int_{\overline B_i} d\omega_i=\\
		=& \sum_{i=1}^n \int_{\partial B_i} \omega - \omega_i.
		\end{align*}

		We prove that $\int_S KdA$ is a conformal invariant. 
		
		For all smooth $f\colon S\to \C$, $\overline g= f g$ is a positive complex metric conformal to $g$. For some local choice of a square root of $f$ and of $g(\XX,\XX)$, one can define a local $\overline g$-orthonormal frame $(\frac \XX { \sqrt{f \|\XX,\XX\|_g^2}  }, \frac {\textbf{J}\XX} { \sqrt{f \|\XX,\XX\|_g^2}  })$ with $\overline{g}$-dual given by the coframe $(\overline \theta^1, \overline \theta^2)$, which in turn defines a Levi-Civita connection form $\overline \omega$ for $\overline g$ independent of the choice of the root. On each 
		$\widetilde B_i$, fix a square root $\sqrt f$ of 
		$f$, so a $\overline g$-orthonormal frame is given by 
		$(\frac{e_1} {\sqrt f }, \frac{\textbf{J} (e_1)} {\sqrt f })$ with 
		associated $\overline g$-coframe $(\overline \theta_i^1, \overline \theta_i ^2)$ and Levi-Civita connection form $\overline \omega_i$. By an explicit computation, one can check that on each $\widetilde B_i \setminus \{x_i\} $ 
		\[
		\overline \omega - \omega= \overline \omega_i - \omega_i = -\frac1{2f}df\circ \textbf{J}.
		\]
		As a result, 
		\[
		\int_S K_{\overline g} dA_{\overline g} = \sum_{i=1}^n \int_{\partial B_i} \overline \omega -\overline \omega_i= \sum_{i=1}^n \int_{\partial B_i} \omega -\omega_i= \int_S K_g dA_g.
		\]

		Now, on each $\widetilde B_k \setminus \{x_k\}$, the coframes $(\theta^1_i, \theta^2_i)$ and $(\theta^1, \theta^2)$ (the last one is well-defined only locally) are both $m$-orthonormal and $\textbf{J}$-oriented, so we locally have 
		\[
		\begin{pmatrix}
		\theta^1\\
		\theta^2
		\end{pmatrix}=
		\begin{pmatrix}
		\cos(\alpha_k) & \sin(\alpha_k) \\
		-\sin(\alpha_k) & \cos(\alpha_k)
		\end{pmatrix} \begin{pmatrix}
		\theta_k ^1 \\
		\theta_k^2
		\end{pmatrix}
		\]
		where $e^{i\alpha_k} \in \C^*$ is a locally defined smooth function, defined only up to a sign on $\widetilde B_k \setminus \{x_k\}$. Nevertheless, $e^{2i\alpha_k}\colon \partial B_k \to S^1$ is a well defined smooth function and $d\alpha_k$ is a well-defined one form on $\partial B_k$.
		
		A standard calculation shows that $\omega - \omega_k =d \alpha_k$, hence
		
		\begin{align*}
		\int_S KdA &= \sum_{k=1}^n \int_{\partial B_k} \omega - \omega_k= \sum_{k=1}^n \int_{\partial B_k} d \alpha_k =\\
		&= \sum_{k=1}^n \int_{\partial B_k} \frac 1 {2i} \frac{ d (e^{2i\alpha_k} )}{e^{2i \alpha_k}}= \sum_{k=1}^n \pi deg(e^{2i\alpha_k}) \in \pi \Z.
		\end{align*}

		Recalling that $BC(S)$ is connected and that the quantity $\int_S K_g dA_g$ is a conformal invariant, the map 
		\[
		\int_S K \colon BC(S) \to \pi \Z
		\]
		is well-posed and continuous, hence it is constant. Since $\int_S K dA= 2 \pi \chi(S)$ for Riemannian metrics, the thesis follows.
	\end{proof}

	\subsection{A Uniformization Theorem through Bers Theorem}
	
	The aim of this subsection is to prove a generalization of the classic Riemann's Uniformization Theorem for bicomplex structures and complex metrics.

Before stating the main theorem, we recall some notions about quasi-Fuchsian representations.
	
	A representation $\rho\colon \pi_1(S)\to \PSL$ is \emph{quasi-Fuchsian} if its limit set $\Lambda_\rho\subset \CP^1$, i.e. the set of accumulation points of any $\rho(\pi_1(S))-$orbit in $\CP^1$, is a Jordan curve in $\CP^1$. Since this condition is preserved by the action of $\PSL$, the set of quasi-Fuchsian representations defines an open subset of the character variety	$QF(S)\subset \mathcal X (S)$.
	
	\begin{Theorem} [Bers Simultaneous Uniformization Theorem, \cite{Bers}]
		\label{Bers}
		Let $S$ be an oriented surface with $\chi (S)<0$. For all $J_1, J_2$ complex structures over $S$, there exists a unique quasi-Fuchsian representation $\rho=\rho(J_1,J_2) \colon \pi_1(S) \to \PSL$ such that, defined $\Lambda_\rho\subset \CP^1$ as the limit set of $\rho$ and $\Omega_+, \Omega_-$ as the connected components of $\CP^1 \setminus \Lambda_\rho$, 
		\begin{itemize}
			\item there exists a unique diffeomorphism $f_1 \colon \widetilde S \to \Omega_+$, which is $J_1$-holomorphic and $\rho$-equivariant;
			\item there exists a unique diffeomorphism $f_2 \colon \widetilde S \to \Omega_-$, which is $J_2$-antiholomorphic and $\rho$-equivariant.
		\end{itemize} 
		Moreover, $(\rho, f_1, f_2)$ are continuous functions of $(J_1, J_2)\in BC(S)$.
		
		This correspondence determines a homeomorphism in the quotient
		\begin{equation}
		\label{Bers homeo}
		 \mathfrak B\colon\Tau(S)\times \Tau(S)\xrightarrow{\sim}QF(S)
		\end{equation}
		where $\Tau(S)$ is the Teichm\"uller space of $S$.
	\end{Theorem}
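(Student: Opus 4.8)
The plan is to reduce the statement to the Ahlfors--Bers measurable Riemann mapping theorem, following the classical quasiconformal deformation argument. First I would fix the complex structure $J_1$ and invoke the ordinary uniformization theorem: since $\chi(S)<0$, there is a Fuchsian group $\Gamma<PSL(2,\R)$ and a $\Gamma$-equivariant biholomorphism identifying $(S,J_1)$ with $U/\Gamma$, where $U\subset\CP^1$ is the upper half-plane and $\R_\infty:=\R\cup\{\infty\}$ is its boundary circle; let $L$ denote the lower half-plane. The second structure $J_2$ enters through a Beltrami differential: since $S$ has finite type, $J_2$ is recorded on $(S,J_1)$ by a $\Gamma$-invariant Beltrami coefficient $\mu$ with $\|\mu\|_\infty<1$, which I will transplant to $L$ by reflection across $\R_\infty$, the reflection being exactly what produces the antiholomorphic (orientation-reversing) side.

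The key construction is the gluing. I would define a Beltrami coefficient $\widehat\mu$ on all of $\CP^1$ by setting $\widehat\mu\equiv 0$ on $U$ and $\widehat\mu$ equal to the reflected copy of $\mu$ on $L$; the $\Gamma$-invariance is preserved because $\Gamma$ fixes $\R_\infty$. The measurable Riemann mapping theorem then yields a unique quasiconformal homeomorphism $w\colon\CP^1\to\CP^1$ solving $w_{\bar z}=\widehat\mu\,w_z$ and normalized to fix three points. By uniqueness of solutions with a prescribed Beltrami coefficient, $w\circ\gamma\circ w^{-1}$ is a M\"obius transformation for every $\gamma\in\Gamma$, so $\rho(\gamma):=w\gamma w^{-1}$ defines a representation $\rho\colon\pi_1(S)\to\PSL$. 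Its limit set is $\Lambda_\rho=w(\R_\infty)$, the image of a round circle under a quasiconformal map, hence a quasicircle and in particular a Jordan curve; thus $\rho$ is quasi-Fuchsian with domains of discontinuity $\Omega_+=w(U)$ and $\Omega_-=w(L)$.

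It then remains to identify the conformal structures on the two sides. Because $\widehat\mu\equiv 0$ on $U$, the restriction $w|_{U}$ is conformal, so composing with the uniformizing map gives a $\rho$-equivariant biholomorphism $f_1\colon\widetilde S\to\Omega_+$ realizing $J_1$; on $L$ the coefficient $\widehat\mu$ is precisely the reflected one recording $J_2$, and unwinding the reflection yields the $\rho$-equivariant $J_2$-antiholomorphic diffeomorphism $f_2\colon\widetilde S\to\Omega_-$. Uniqueness of $(\rho,f_1,f_2)$ follows from the normalization of $w$ together with the fact that a quasi-Fuchsian group is determined up to conjugacy by the marked conformal structures on the quotients of its two invariant components. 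Finally, the continuity of $(\rho,f_1,f_2)$ in $(J_1,J_2)\in BC(S)$ is the continuous (indeed holomorphic) dependence of $w$ on its Beltrami coefficient, again part of Ahlfors--Bers; this makes $\mathfrak B$ continuous, and continuity of the inverse map sending a quasi-Fuchsian group to the pair of quotient Riemann surfaces upgrades the bijection to the asserted homeomorphism $\Tau(S)\times\Tau(S)\xrightarrow{\sim}QF(S)$.

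The main obstacle is the measurable Riemann mapping theorem itself: the existence, uniqueness, and analytic dependence on parameters for solutions of the Beltrami equation form the analytic core on which every step rests, and I would import it as a black box. Granting that, the subtlest remaining point is the geometric verification that the deformed group is genuinely quasi-Fuchsian, namely that $w(\R_\infty)$ is a Jordan curve bounding two topological disks on which $J_1$ and $J_2$ are correctly realized, rather than merely that $\rho(\Gamma)$ is a discrete subgroup of $\PSL$.
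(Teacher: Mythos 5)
This statement is not proved in the paper at all: it is quoted as a known result and attributed to \cite{Bers}, so there is no internal argument to compare yours against. What you have written is (a sketch of) the classical quasiconformal-deformation proof, and it is essentially correct: uniformize $(S,J_1)=U/\Gamma$, encode $J_2$ by a $\Gamma$-invariant Beltrami coefficient, reflect it to the lower half-plane, solve the Beltrami equation by Ahlfors--Bers, and conjugate $\Gamma$ by the resulting quasiconformal map $w$; conformality of $w$ on $U$, the quasicircle $w(\R\cup\{\infty\})$, and holomorphic dependence on the coefficient then give existence, quasi-Fuchsianity, and continuity of $\mathfrak B$. Two caveats are worth recording. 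First, your reduction silently strengthens the hypotheses: you need the identity $(S,J_1)\to(S,J_2)$ to be quasiconformal, which holds when $S$ is closed (or of finite type with cusp ends) but not for arbitrary oriented surfaces with $\chi(S)<0$; indeed the theorem as stated in the paper is only used there for closed surfaces, and for, say, a structure with a funnel end the limit set is a Cantor set rather than a Jordan curve, so the restriction is genuinely needed, not cosmetic. Second, your uniqueness step is close to circular: ``a quasi-Fuchsian group is determined up to conjugacy by the marked conformal structures on its two quotient components'' is precisely the injectivity being claimed. The standard repair is to take two triples $(\rho,f_1,f_2)$, $(\rho',f_1',f_2')$, form the equivariant conformal maps $f_1'\circ f_1^{-1}\colon\Omega_+\to\Omega_+'$ and $f_2'\circ f_2^{-1}\colon\Omega_-\to\Omega_-'$, show they match on the limit set, and conclude the glued homeomorphism of $\CP^1$ is Möbius because the limit set is a quasicircle of zero area (a removability argument), which then conjugates $\rho$ to $\rho'$ and forces $f_i'=A\circ f_i$. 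With those two points patched, your proposal is a faithful reconstruction of Bers' original argument.
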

\vspace{5mm}
	
	Define 
\begin{gather*}
\widetilde{\mathcal P}_{QF}(S)= \widetilde{Hol}^{-1}(QF(S)), \\
\mathcal P_{QF} (S)= Hol^{-1} (QF(S) )
\end{gather*}
which, by continuity of the holonomy maps, are open subsets of $\widetilde{\mathcal P}(S)$ and of ${\mathcal P}(S)$. 
	
	\begin{Theorem}[W. Goldman, \cite{Goldman: projective structures}]
		\label{Teo Goldman} 
		Let $S$ be a compact oriented surface of genus $g(S)\ge 2$.
		
		An open connected component of the space $\widetilde {\mathcal P}_{QF} (S)$ is
		\[
	\widetilde {\mathcal P}_0  (S):=\{ f\colon \widetilde S \to \CP^1,  \text{$\rho$-equivariant for some quasi-Fuchsian $\rho\colon \pi_1(S)\to \CP^1$, with } f(\widetilde S)\cap \Lambda_\rho=\emptyset   \}.
		\]
		Moreover, the restriction 
		 \[
		 Hol \colon \faktor{\widetilde {\mathcal P}_0 (S)}{\text{Diff}_0 (S)}=: {\mathcal P}_0 (S) \to QF(S)
		 \]
	is a homeomorphism. 
	
	With the notations of Theorem \ref{Bers} and equation \eqref{Bers homeo}, the inverse is given by \[[\rho]\mapsto [f_1 (\rho)]=\Big[f_1 \Big(\mathfrak B^{-1} ([\rho])\Big)\Big].
\]	

\end{Theorem}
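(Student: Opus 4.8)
The plan is to build an explicit inverse to $Hol$ on $\mathcal P_0(S)$ out of Bers' Theorem and then to identify $\mathcal P_0(S)$ with $QF(S)$ by a covering-space argument. First I would construct a section. Given $[\rho]\in QF(S)$, Theorem \ref{Bers} produces the $\rho$-equivariant biholomorphism $f_1=f_1(\rho)\colon \widetilde S\to \Omega_+$; being holomorphic it is an orientation-preserving local diffeomorphism, hence a developing map of a projective structure whose holonomy is $\rho$ and whose image $\Omega_+$ misses $\Lambda_\rho$. Thus $[f_1(\rho)]\in \mathcal P_0(S)$ and $Hol([f_1(\rho)])=[\rho]$, so $s\colon [\rho]\mapsto [f_1(\rho)]$ is a right inverse of $Hol|_{\mathcal P_0(S)}$, and it is continuous by the continuous dependence of $(\rho,f_1)$ on $(J_1,J_2)$ in Theorem \ref{Bers}. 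In particular $Hol|_{\mathcal P_0(S)}$ is surjective.

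The heart of the argument, and the step I expect to be the main obstacle, is showing that $s$ is also a left inverse, i.e. that a projective structure in $\mathcal P_0(S)$ is determined by its holonomy. Let $f\colon \widetilde S\to\CP^1$ be an orientation-preserving $\rho$-equivariant local diffeomorphism with $\rho$ quasi-Fuchsian and $f(\widetilde S)\cap\Lambda_\rho=\emptyset$. Since $\widetilde S$ is connected and $\CP^1\setminus\Lambda_\rho=\Omega_+\sqcup\Omega_-$, the image lies in a single component; as $\Lambda_\rho$ is a Jordan curve, that component is a topological disk on which $\rho(\pi_1(S))$ acts freely and properly discontinuously with closed quotient, so $f$ descends to a local diffeomorphism $\bar f\colon S\to\Omega_\pm/\rho$ between closed surfaces inducing (via equivariance) an isomorphism of fundamental groups. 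Hence $\bar f$ has degree $1$ and $f$ is a diffeomorphism onto that component. Comparing orientations with Bers' maps $f_1$ (holomorphic, orientation-preserving) and $f_2$ (antiholomorphic, orientation-reversing) forces the component to be $\Omega_+$. Finally $f_1^{-1}\circ f$ is a $\pi_1(S)$-equivariant self-diffeomorphism of $\widetilde S$ inducing the identity on $\pi_1(S)$, so it descends to a diffeomorphism of $S$ homotopic, hence isotopic, to the identity; therefore $[f]=[f_1(\rho)]=s([\rho])$. This gives injectivity of $Hol|_{\mathcal P_0(S)}$, which is then a continuous bijection with continuous inverse $s$, i.e. a homeomorphism, and whose inverse is the asserted $[\rho]\mapsto [f_1(\rho)]$.

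It remains to see that $\widetilde{\mathcal P}_0(S)$ is an open connected component of $\widetilde{\mathcal P}_{QF}(S)$. Openness follows from compactness of $S$: fixing a compact fundamental domain $K\subset\widetilde S$, the set $f_0(K)$ is a compact subset of $\Omega_+$ at positive distance from $\Lambda_{\rho_0}$, and since $QF(S)$ is open and the limit set varies continuously with the quasi-Fuchsian holonomy, nearby developing maps with nearby holonomy keep $f(K)$, hence the whole $\rho$-orbit $f(\widetilde S)$, disjoint from $\Lambda_\rho$; passing to the quotient, $\mathcal P_0(S)$ is open in $\mathcal P_{QF}(S)$. Closedness is cleanest via the section: if $p_n\in\mathcal P_0(S)$ and $p_n\to p$ in $\mathcal P_{QF}(S)$, then $p_n=s(Hol(p_n))\to s(Hol(p))\in\mathcal P_0(S)$ by continuity of $s$ and of $Hol$, so $p\in\mathcal P_0(S)$. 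Being open, closed and nonempty in $\mathcal P_{QF}(S)$, and connected because $s$ identifies it with $QF(S)\cong\Tau(S)\times\Tau(S)$, which is connected, $\mathcal P_0(S)$ is a connected component; since $\mathrm{Diff}_0(S)$ is connected, so is its preimage $\widetilde{\mathcal P}_0(S)$, which is therefore an open connected component of $\widetilde{\mathcal P}_{QF}(S)$. The only inputs beyond point-set topology are Bers' Theorem \ref{Bers}, the fact (Theorem \ref{Hol è diffeo}) that $Hol$ is a local biholomorphism, and the classical fact that homotopic diffeomorphisms of a closed surface are isotopic.
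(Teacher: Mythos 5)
This theorem is quoted by the paper as Goldman's result \cite{Goldman: projective structures} and no proof of it appears in the text, so there is no internal argument to compare yours against; what follows assesses your reconstruction on its own terms. Your route --- building the section $s\colon[\rho]\mapsto[f_1(\rho)]$ from Theorem \ref{Bers}, proving it is a two-sided inverse of $Hol$ restricted to $\mathcal P_0(S)$, and then deducing the component statement from openness and closedness --- is sound, and the two substantive steps are handled correctly: the covering/degree argument showing that any $f\in\widetilde{\mathcal P}_0(S)$ is an equivariant diffeomorphism onto one component of $\CP^1\setminus\Lambda_\rho$, and the orientation comparison ruling out $\Omega_-$ (which, made fully explicit, runs: if $f(\widetilde S)=\Omega_-$ then $f_2^{-1}\circ f$ descends to an orientation-reversing self-diffeomorphism of $S$ inducing the identity on $\pi_1(S)$, impossible because such a map is homotopic to the identity and hence has degree $+1$).

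Two points need tightening. First, the continuity of $s$ does not follow verbatim from the continuity clause of Theorem \ref{Bers}: that clause concerns actual pairs of complex structures $(J_1,J_2)\in C(S)\times C(S)$, whereas $s$ is defined on conjugacy classes $[\rho]\in QF(S)$. You either need a short descent argument --- the map $(J_1,J_2)\mapsto [f_1(J_1,J_2)]\in\mathcal P(S)$ is invariant under $\text{Diff}_0(S)\times \text{Diff}_0(S)$ by the uniqueness in Theorem \ref{Bers}, hence factors continuously through $\Tau(S)\times\Tau(S)$, and $s$ is this factored map precomposed with $\mathfrak B^{-1}$ --- or, more cleanly, you can avoid invoking continuity of $s$ at all: once you know $Hol|_{\mathcal P_0(S)}$ is a bijection onto $QF(S)$ and $\mathcal P_0(S)$ is open (your limit-set argument), Theorem \ref{Hol è diffeo} makes $Hol|_{\mathcal P_0(S)}$ an open continuous bijection, hence a homeomorphism, and continuity of $s$ comes for free; your closedness argument then runs exactly as written. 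Second, your openness argument uses that $\Lambda_\rho$ varies continuously in the Hausdorff topology as $\rho$ ranges over $QF(S)$; this is true but is a nontrivial input from quasiconformal deformation theory (Marden stability), and it belongs in your list of external facts alongside Bers, Theorem \ref{Hol è diffeo}, and homotopy-implies-isotopy. With these repairs the proof is complete; note also that Goldman's cited theorem is stronger (it describes all components of $\widetilde{\mathcal P}_{QF}(S)$ via grafting), but your argument does establish exactly the portion stated here, which is all the paper uses.
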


	We are now able to state a generalization of the Uniformization Theorem.
	
		For an oriented surface $S$, define 
	\[CM^+_{-1} (S):=\{\text{Positive complex metrics with constant curvature $-1$} \}\]
	endowed with the subspace topology from $CM^+ (S)$.

	\begin{Theorem} 
		\label{Uniformization 1}
		 
		Let $S$ be an oriented surface with $\chi(S)<0$. 
		\begin{itemize}
			\item[1)] 
			For all $g\in CM^+(S)$, there exists a smooth $f\colon S\to \C^*$ such that $f\cdot g$ has constant curvature $-1$ and quasi-Fuchsian holonomy.
			
			More precisely, the map
			\begin{align*}
			\mathfrak{{J}} \colon C M^+(S) &\to BC (S) \\
			g &\mapsto [g]=\textbf J_g
			\end{align*}
			admits a continuous right inverse 
			\[
			\mathfrak{U}\colon  C(S)\times C(S)\cong BC(S) \to C M^+_{-1} (S)
			\]
			such that the diagram 
				\[
			\begin{tikzcd}
			C(S)\times C(S)\cong BC(S) \arrow[r, "\mathfrak U"] \arrow[d] &CM_{-1}^+(S) \arrow[d, "mon"]  \\
			\Tau(S)\times \Tau(S) \arrow[r, "\mathfrak B"] & \mathcal X(S)
			\end{tikzcd}
			\]
			commutes, where $\mathfrak B$ is defined as in $(\ref{Bers homeo})$.

			\item[2)]	If $S$ is closed, the image of $\mathfrak U$ is the connected component of $C M^+_{-1} (S)$ containing Riemannian metrics.
		\end{itemize}
	\end{Theorem}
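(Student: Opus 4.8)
The plan is to build the right inverse $\mathfrak U$ directly out of Bers' Theorem and then read Theorem \ref{Totally geodesic immersions} backwards. Given $(J_1,J_2)\in C(S)\times C(S)\cong BC(S)$, I would apply Theorem \ref{Bers} to obtain the quasi-Fuchsian representation $\rho=\rho(J_1,J_2)$ together with the $\rho$-equivariant diffeomorphisms $f_1\colon\widetilde S\to\Omega_+$ (which is $J_1$-holomorphic) and $f_2\colon\widetilde S\to\Omega_-$ (which is $J_2$-antiholomorphic). Since $\Omega_+$ and $\Omega_-$ are disjoint, $(f_1,f_2)$ maps $\widetilde S$ into $\GGG=\CP^1\times\CP^1\setminus\Delta$ and is $\rho$-equivariant, so the pull-back $-\tfrac{4}{(f_1-f_2)^2}df_1\,df_2$ descends to a complex metric on $S$; I define $\mathfrak U(J_1,J_2)$ to be this descended metric. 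By Theorem \ref{Totally geodesic immersions}, read in the reverse direction, $\mathfrak U(J_1,J_2)$ is a positive complex metric of constant curvature $-1$, and its associated bicomplex structure is $(f_1^*\mathbb J^{\CP^1},f_2^*(-\mathbb J^{\CP^1}))=(J_1,J_2)$ because $f_1$ is $J_1$-holomorphic and $f_2$ is $J_2$-antiholomorphic. Hence $\mathfrak J\circ\mathfrak U=\mathrm{id}$, continuity of $\mathfrak U$ follows from the continuous dependence of $(\rho,f_1,f_2)$ on $(J_1,J_2)$ asserted in Theorem \ref{Bers}, and the diagram commutes since $mon(\mathfrak U(J_1,J_2))=\rho=\mathfrak B([J_1],[J_2])$. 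The opening sentence of (1) is then automatic: for an arbitrary $g\in CM^+(S)$ the metric $\mathfrak U(\mathbf J_g)$ has the same bicomplex structure as $g$, so by the homeomorphism $CM^+(S)/C^\infty(S,\C^*)\cong BC(S)$ it equals $f\cdot g$ for some smooth $f\colon S\to\C^*$, and it is of constant curvature $-1$ with quasi-Fuchsian holonomy by construction.

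For part (2) I set $W:=\mathfrak U(BC(S))\subset CM^+_{-1}(S)$ and aim to prove $W$ is open, closed, connected, and meets the Riemannian locus; these four facts together identify $W$ with the connected component of $CM^+_{-1}(S)$ containing the Riemannian metrics. Connectedness is immediate from the continuity of $\mathfrak U$ and the connectedness of $BC(S)\cong C(S)\times C(S)$. That $W$ contains Riemannian metrics is checked on the diagonal: for $J_1=J_2=J$ Bers returns a Fuchsian $\rho$ with $f_2=\overline{f_1}$, so $\mathfrak U(J,J)$ is the pull-back of $z\mapsto(z,\overline z)$, i.e. the hyperbolic metric $(\mathrm{Im}\,f_1)^{-2}|df_1|^2$, which is Riemannian (Proposition \ref{metrica su G} and the identification of Riemannian conformal classes with the diagonal of $C(S)\times C(S)$).

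Closedness is formal. Since $\mathfrak J\circ\mathfrak U=\mathrm{id}$, the continuous self-map $\mathfrak U\circ\mathfrak J\colon CM^+_{-1}(S)\to CM^+_{-1}(S)$ is idempotent with image $W$; an idempotent continuous map into a Hausdorff space has image equal to its fixed-point set, which is closed, so $W$ is closed. Openness is where Goldman's theorem is needed. I would exhibit $W$ as the preimage of the open component $\mathcal P_0(S)\subset\mathcal P(S)$ under the continuous map $CM^+_{-1}(S)\to\mathcal P(S)$ sending $g$ to the projective structure $[f_1]$ developed by the orientation-preserving factor $f_1$ of the immersion $(f_1,f_2)\colon\widetilde S\to\GGG$ furnished by Theorem \ref{Totally geodesic immersions}. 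If $g\in W$ then $[f_1]\in\mathcal P_0(S)$ by construction; conversely, if $[f_1]\in\mathcal P_0(S)$ then $mon(g)$ is quasi-Fuchsian and $f_1$ avoids the limit set, so with $J_1=f_1^*\mathbb J^{\CP^1}$ and $J_2=f_2^*(-\mathbb J^{\CP^1})$ the uniqueness clause in Theorem \ref{Bers} forces $f_1,f_2$ to be the Bers diffeomorphisms for $(J_1,J_2)$, whence $g=\mathfrak U(J_1,J_2)\in W$. As $\mathcal P_0(S)$ is open by Theorem \ref{Teo Goldman}, $W$ is open.

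The hard part will be the openness step, and in particular the identification $W=(g\mapsto[f_1])^{-1}(\mathcal P_0(S))$. Two points demand care. First, one must check that $g\mapsto[f_1]$ is a well-defined, continuous map to $\mathcal P(S)$: the immersion $(f_1,f_2)$ is determined by $g$ only up to the diagonal $\PSL$-action and up to $\mathrm{Diff}_0(S)$, and its continuous dependence on $g$ rests on the continuous dependence of solutions of the integration problem of Section 5 (Proposition \ref{Teorema esistenza foliazione}). Second, to reconstruct $g$ as $\mathfrak U(J_1,J_2)$ one needs that an equivariant local diffeomorphism $f_1$ whose image misses the limit set of a quasi-Fuchsian $\rho$ is in fact a global diffeomorphism onto $\Omega_+$; this is exactly where the compactness of the closed surface $S$ is indispensable, via a properness/covering argument over the simply connected domain $\Omega_+$, and it is precisely the content that Theorem \ref{Teo Goldman} packages for us. Matching the metric-theoretic data of $(S,g)$ with this projective-structures picture, rather than any single computation, is the crux of the argument.
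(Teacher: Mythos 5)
Your part (1) is correct and is essentially the paper's own construction: feed $(J_1,J_2)$ into Theorem \ref{Bers}, pull back the metric of $\GGG$ through the resulting equivariant pair $(f_1,f_2)$, and use Theorem \ref{Totally geodesic immersions} to identify the induced bicomplex structure, continuity coming from the continuous dependence in Bers' theorem. Likewise, your closedness argument (fixed-point set of the idempotent $\mathfrak U\circ\mathfrak J$), the connectedness argument, and the explicit check that the diagonal of $C(S)\times C(S)$ produces hyperbolic Riemannian metrics all agree with (or make explicit what is implicit in) the paper's proof of part (2).

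The genuine gap is in your openness step. You characterize $W=\mathrm{Im}(\mathfrak U)$ as the preimage of $\mathcal P_0(S)$ under $g\mapsto[f_1]$, imposing the Goldman condition only on the orientation-preserving factor, and then assert that ``the uniqueness clause in Theorem \ref{Bers} forces $f_1,f_2$ to be the Bers diffeomorphisms.'' But Bers' uniqueness applies to equivariant \emph{diffeomorphisms onto} $\Omega_+$ and $\Omega_-$: Goldman's theorem gives you this for $f_1$, whereas for $f_2$ you only know that it is an equivariant, orientation-reversing local diffeomorphism satisfying the pointwise condition $f_2(x)\ne f_1(x)$. Nothing in your argument rules out that $f_2$ is an \emph{exotic} developing map with the same quasi-Fuchsian holonomy (a $2\pi$-grafting of the standard structure on $\overline S$ along a simple closed geodesic), whose image meets $\Omega_+$ and the limit set; pointwise avoidance of the diagonal is far weaker than disjointness of images. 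In fact the claimed equality $W=(g\mapsto[f_1])^{-1}(\mathcal P_0(S))$ is false, not merely unproven: one can choose the marking diffeomorphism of such a grafted structure so that its developing map avoids the graph of the standard $f_1$ --- the only coincidences could occur on the grafting strips, where the parts developing into $\Omega_+$ can be composed with an equivariant radial dilation that eliminates them --- yielding $g\in CM^+_{-1}(S)$ with $[f_1]$ standard and $[f_2]$ exotic, hence $g\notin W$. This is exactly why the paper works with the map $proj\colon g\mapsto([f_1],[f_2])$ into $\widetilde{\mathcal P}(S)\times\widetilde{\mathcal P}(\overline S)$ and identifies $W$ with the preimage of the \emph{product} $\widetilde{\mathcal P}_0(S)\times\widetilde{\mathcal P}_0(\overline S)$: with the Goldman condition imposed on both factors, Goldman's theorem shows each of $f_1$, $f_2$ is a Bers diffeomorphism onto a component of the domain of discontinuity, and only then does Bers' uniqueness identify $g$ with $\mathfrak U(J_1,J_2)$; openness then follows since this preimage of a product of open sets is open. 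Your proof would be repaired by replacing the single condition on $[f_1]$ with this two-sided condition.
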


	\begin{proof} [Proof of Theorem $\ref{Uniformization 1}$]
		\begin{itemize}
			\item[1)]
		Consider $\textbf{J}\in BC(S)$, which corresponds to two complex structures $(J_1, J_2)\in C(S)\times C(S)$.
		
		Applying Theorem $\ref{Bers}$ to the couple $(J_2, J_1)$, there exist:
		\begin{itemize}
			\item $\rho=\rho(J_1,J_2)\colon \pi_1 (S)\to \PSL$ quasi-Fuchsian;
			\item $f_1 \colon \widetilde S \to \CP^1$ $J_1$-antiholomorphic and $\rho$-equivariant embedding;
			\item $f_2 \colon \widetilde S \to \CP^1$ $J_2$-holomorphic and $\rho$-equivariant embedding
		\end{itemize}
		with $Im(f_1)\cap Im(f_2)=\emptyset$. Hence, we have an admissible, $\rho$-equivariant embedding
		\[
		\sigma=(f_1, f_2) \colon \widetilde S \to {\GGG}= \CP^1 \times \CP^1 \setminus \Delta.
		\]
		By Theorem \ref{Totally geodesic immersions}, the pull-back complex metric on $\widetilde S$ is positive and projects to a positive complex metric $g$ on $S$. 
		
		After we show that the constructed $g$ is compatible with $\textbf{J}$, the thesis follows by defining $\mathfrak{U}(\textbf{J}):=g$. Let $w_1$ and $w_2$ be the complex coordinates on $\widetilde S$ induced by $J_1$ and $J_2$ respectively. Then,
		\begin{align*}
	g &= \frac{4}{(f_1  - f_2 )^2} f_1^* dz_1 \cdot f_2^* dz_2= \frac{4}{(f_1 - f_2 )^2}  d f_1 \cdot d f_2 = \\
		&= \frac{4}{(f_1  - f_2 )^2}  \frac{\partial f_1}{\partial \overline {w_1}} \frac{\partial f_2}{\partial w_2}  d\overline {w_1} \cdot d w_2;
		\end{align*}
		hence $V_i(J_1)=V_i(\textbf{J})$ and $V_{-i} (J_2)=V_{-i}(\textbf{J})$ are the  isotropic directions of $g$.
\item[2)] 
Since one can see
\[
Im(\mathfrak U)=\{g\ |\ (\mathfrak U \circ \mathfrak J) (g)=g \}\subset C M_{-1}^+(S),
\]
$Im(\mathfrak U)$ is a closed connected subset of $\C M_{-1}^+ (S)$. We need to prove that it is also open.

		As we stated in Theorem \ref{Totally geodesic immersions}, for any $g\in CM_{-1}^+$ there exist two developing maps for projective structures $f_1$ and $f_2$ on $S$ and $\overline S$ respectively, uniquely defined up to post-composition with elements in $\PSL$, such that $\sigma=(f_1,f_2)\colon (\widetilde S, \widetilde g)\to \GGG$ is an isometric immersion. We therefore have a map 
		\begin{align*}
		proj\colon CM_{-1} ^+ (S)&\to \mathcal {\widetilde P}(S)\times \mathcal{\widetilde  P}(\overline S)\\
		 g=(f_1,f_2)^* \inners_\GGG &\mapsto ([f_1], [f_2]).
		\end{align*}
		
		The thesis follows after we show that \[Im (\mathfrak U )= proj^{-1}  (\mathcal{\widetilde P}_0(S)\times \mathcal{\widetilde  P}_0(\overline S) )\] since $\mathcal{\widetilde P}_0(S)$ is an open subset of $\mathcal{\widetilde P}_{QF}(S)$, hence of $\mathcal{\widetilde P}(S)$.
		
		By construction of $\mathfrak U$, clearly \[Im (\mathfrak U )\subseteq proj^{-1}  (\mathcal{\widetilde P}_0(S)\times \mathcal{\widetilde  P}_0(\overline S) ).\]
		
		Conversely, assume $g=(f_1,f_2)^* \inners_\GGG \in proj^{-1}  (\mathcal{\widetilde P}_0(S)\times \mathcal{\widetilde  P}_0(\overline S) )$. Since $hol_{f_1}=hol_{f_2}\colon \pi_1(S)\to \PSL$, $f_1$ and $f_2$, by Theorem \ref{Teo Goldman}, the maps $f_1$ and $f_2$ correspond exactly to the maps one gets through Theorem \ref{Bers} from the couple of complex structures $(f_1^* (\mathbb J^{\CP^1}), f_2^*(-\mathbb J^{\CP^1})$, hence 
		\[
		(f_1,f_2)^*\inners_\GGG = \mathfrak U (f_1^* (\mathbb J^{\CP^1}), f_2^*(-\mathbb J^{\CP^1}) ). 
		\]

	\end{itemize}
	\end{proof}

\end{document}